\newglossaryentry{Balls}{name={\ensuremath{\bB^{[r]}}},description={set of $r$ balls},sort={Br}}
\newcommand{\Balls}[1]{\glsonlyfirst{Balls}{\bB^{\left[#1\right]}}}
\newcommand{\st}{\mathrm{sr}}
\newglossaryentry{ballsst}{name={\ensuremath{\bB^{[r]}_{\st}}},description={set of $r$ balls of the same type},sort={Brst}}
\newcommand{\Ballsst}[1]{\glsonlyfirst{ballsst}{\bB^{\left[#1\right]}_{\st}}}
\newcommand{\ACVFanneq}{\ACVFann<\eqop>}
\newglossaryentry{points}{name={\ensuremath{\_^{\Ss}}},description={points in balls},sort={Ss^}}
\newcommand{\points}[1]{\glsonlyfirst{points}{#1^{\Ss}}}
\newcommand{\Ordeq}{\trianglelefteqslant}
\newcommand{\Ord}{\triangleleft}
\newcommand{\tLeq}{\tL^{eq}}
\newcommand{\tNeq}{\tN^{\eqop}}
\newcommand{\grad}{\mathrm{grad}}
\newglossaryentry{Funform}{name={\ensuremath{\Psi_{\Delta,F}}},description={set of formulas generated by $\Delta$ and $F$},sort={\Psi\Delta F}}
\newcommand{\Funform}[2]{\glsonlyfirst{Funform}{\Psi_{#1,#2}}}
\newcommand{\stabop}{\mathrm{Stab}}
\title{Imaginaries and invariant types in existentially closed valued differential fields}
\author{Silvain Rideau\thanks{Partially supported by ValCoMo (ANR-13-BS01-0006)}}
\date{\today}
\begin{document}

\maketitle

\begin{abstract}
We answer three related open questions about the model theory of valued differential fields introduced by Scanlon. We show that they eliminate imaginaries in the geometric language introduced by Haskell, Hrushovski and Macpherson and that they have the invariant extension property. These two result follow from an abstract criterion for the density of definable types in enrichments of algebraically closed valued fields. Finally, we show that this theory is metastable.
\end{abstract}

In \cite{Sca-DValF}, Scanlon showed that equicharacteristic zero fields equipped with both a valuation and a contractive derivation (i.e. a derivation $\D$ such that for all $x$, $\val(\D(x))\geq \val(x)$) have a reasonably tractable model theory. Scanlon proved that the class of existentially-closed such differential valued differential fields, which we will denote $\VDF$, is elementary and he proved a quantifier elimination theorem for these structures. In this paper, we wish to investigate further their model theoretic properties.

A theory is said to eliminate imaginaries if for every definable set $D$ and every definable equivalence relation $E\subseteq D^{2}$, there exists an definable function $f$ such that $x E y$ if and only if $f(x) = f(y)$; in other words, a theory eliminates imaginaries if the category of definable sets is closed under quotients. In \cite{HasHruMac-ACVF}, Haskell, Hrushovski and Macpherson proved that, algebraically closed valued fields ($\ACVF$) do not eliminate imaginaries in any of the "usual" languages, but it suffices to add certain collections of quotients, the \emph{geometric sorts}, to obtain elimination of imaginaries. By analogy with the fact that differentially closed fields of characteristic zero ($\DCF[0]$) have no more imaginaries than algebraically closed fields ($\ACF$), it was conjectured that $\VDF$ also eliminates imaginaries in the geometric language with a symbol added for the derivation.

To prove their elimination results Haskell, Hrushovski and Macpherson developed the theory of \emph{metastability}, an attempt at formalising the idea that, if we ignore the value group, algebraically closed valued fields behave in a very stable-like way (cf. Section\,\ref{subsec:metastab} for precise definitions). Few examples of metastable theories are known, but $\VDF$ seemed like a promising candidate. Once again, the analogy with differentially closed fields is tempting. Among stable fields, algebraically closed fields are extremely well understood but are too tame (they are strongly minimal) for any of the more subtle behaviour of stability to appear. The theory $\DCF[0]$ of differentially closed fields in characteristic zero, on the other hand, is still quite tame (it is $\omega$-stable) but some pathologies begin to show and, by studying $\DCF[0]$, one gets a better understanding of stability. The theory $\VDF$ could play a similar role with respect to $\ACVF$: it is a more complicated in which to experiment with metastability.

Nevertheless, it was quickly realised that the metastability of $\VDF$ was an open question, one of the difficulties being to prove the invariant extension property. A theory has the invariant extension property if, as in stable theories, every type over an algebraically closed set $A$ has a "nice" global extension: an extension which is preserved under all automorphisms that fix $A$ (Definition\,\ref{def:inv ext}). In Theorem\,\ref{thm:VDF}, we solve these three questions, by showing that $\VDF$ eliminates imaginaries in the geometric language, has the invariant extension property and is metastable over its value group.

Following the general idea of \cite{Hru-EIACVF,Joh-EIACVF}, elimination of imaginaries relative to the geometric sorts is obtained as a consequence of the density of definable types over algebraically closed parameters and of computing the canonical basis of definable types in $\VDF$. This second part of the problem is tackled in \cite{RidSim-NIP}. Moreover, the invariant extension property is also a consequence of the density of definable types. One of the goals of this paper is, therefore, to prove density of definable types in $\VDF$: given any $A$-definable set $X$ in a model of $\VDF$, we find a type in $X$ which is definable over the algebraic closure of $A$.

Let $\Ldiv$ be the one sorted language for $\ACVF$ and $\LDdiv := \Ldiv\cup\{\D\}$ be the one sorted language for $\VDF$, where $\D$ is a symbol for the derivation. It follows from quantifier elimination in $\VDF$ that, to describe the $\LDdiv$-type of $x$ (denoted $p$), it suffices to give the $\Ldiv$-type of $\prol[\omega](x) := (\D^{n}(x))_{n<\omega}$ (denoted $\tpprol[\omega](p)$). Moreover, $p$ is consistent with $X$ if and only if $\tpprol[\omega](p)$ is consistent with $\prol[\omega](X)$. Note that $\tpprol[\omega](p)$ is the pushforward of $p$ by $\prol[\omega]$ restricted to $\Ldiv$ and, thus, $\tpprol[\omega](p)$ is definable if and only if $p$ is. Therefore it is enough to find a "generic" definable $\Ldiv$-type $q$ consistent with $\prol[\omega](X)$.

A definable $\Ldiv$-type is a consistent collection of definable $\Delta$-types where $\Delta$ is a finite set of $\Ldiv$-formulas and so we can ultimately reduce to finding, for any such finite $\Delta$, a "generic" definable $\Delta$-type consistent with some $\LDdiv(M)$-definable set (see Proposition\,\ref{prop:ind approx}). It follows that most of the preparatory work in the second part of this paper (Sections\,\ref{sec:type ball} to \ref{sec:reparam}) will focus on understanding $\Delta$-types for finite $\Delta$ in $\ACVF$.

An example of this convoluted back and forth between two languages $\Ldiv$ and $\LDdiv$ is underlying the proof of elimination of imaginaries in $\DCF[0]$; in that case the back and forth is between the language of rings and the language of differential rings, although, in the classical proof, it may not appear clearly. Take any set $X$ definable in $\DCF_{0}$, let $X_{n} := \prol[n](X)$ where $\prol[n](x) := (\D^{i}(x))_{0\leq i\leq n}$ and let $Y_{n}$ be the Zariski closure of $X_{n}$. Now, choose a consistent sequence $(p_{n})_{n<\omega}$ of $\ACF$-types such that $p_{n}$ has maximal Morley rank in $Y_{n}$. Because $\ACF$ is stable, all the $p_{n}$ are definable and, by elimination of imaginaries in $\ACF$, they already have canonical bases in the field itself. Then the complete type of points $x$ such that $\prol[n](x)\models p_{n}$ is also definable, it has a canonical basis of field points, and it is obviously consistent with $X$.

In $\ACVF$, we cannot use the Zariski closure because we also need to take into account valuative inequalities. But the balls in $\ACVF$ are combinatorially well-behaved and we can approximate sets definable in $\VDF$ by finite fibrations of balls over lower dimensional sets: cells in the $C$-minimal setting (see Section\,\ref{sec:approx}). Because $C$-minimality is really the core property of $\ACVF$ which we are using, the results presented here generalise naturally to any  $C$-minimal extension of $\ACVF$. We hope it might lead in the future to a proof that $\VDF$ with analytic structure has the invariant extension property and has no more imaginaries than $\ACVF$ with analytic structure (denoted $\ACVFann$) which is $C$-minimal. Note that we have no concrete idea of what those analytic imaginaries might be (see \cite{HasHruMac-An}).\medskip

The paper is organised as follows. The first part contains model theoretic considerations about $\VDF$. In Section\,\ref{sec:background}, we give some background and state Theorem\,\ref{thm:VDF}, our main new theorem about $\VDF$ whose proof uses most of what appears later in the paper. Section\,\ref{sec:prol} explores the properties of an analogue of prolongations on the type space. In Section\,\ref{sec:dcl VDF}, we study the definable and algebraic closures. Finally, in Section\,\ref{sec:metastab VDF}, we prove that metastability bases exist in $\VDF$.

Sections\,\ref{sec:type ball} to \ref{sec:approx} contain the proof of Theorem\,\ref{thm:dens def}, an abstract criterion for the density of definable types. In Section\,\ref{sec:type ball} we study certain "generic" $\Delta$-types, for $\Delta$ finite, in a $C$-minimal expansion $T$ of $\ACVF$ (see Definition\,\ref{def:gen type}). In Section\,\ref{sec:imp def}, we introduce the notion of \emph{quantifiable} types and we show that the previously defined "generic" types are quantifiable. In Section\,\ref{sec:reparam}, we consider definable families of functions into the value group, in $\ACVF$ and $\ACVFann$. We show that their germs are internal to the value group. In Section\,\ref{sec:approx}, we put everything together to prove Theorem\,\ref{thm:dens def}. Finally, in Section\,\ref{sec:EI IE}, we use this density result to give a criterion for elimination of imaginaries and the invariant extension property.

This paper also contains, as an appendix, improvements of known results on stable embeddedness in pairs of valued fields which are used in order to apply the results of \cite{RidSim-NIP}.

\part*{Model theory of valued differential fields}

\section{Background and main results}
\label{sec:background}

Whenever $X$ is a definable set (or a union of definable sets) and $A$ is a set of parameters, $X(A)$ will denote $X\cap A$. Usually in this notation there is an implicit definable closure, but we want to avoid that here because more often than not there will be multiple languages around. Similarly, if $\mathcal{S}$ is a set of definable sorts, we will write $\mathcal{S}(A)$ for $\bigcup_{S\in\mathcal{S}}S(A)$. Also, the symbol $\subset$ will denote strict inclusion.

For all the definitions concerning stability or the independence property, we refer the reader to \cite{Sim-Book}.

\subsection{Valued differential fields}

We will mostly study \emph{equicharacteristic zero} valued fields in the leading term language. It consists of three sorts $\K$, $\lt$ and $\Valgp$, maps $\ltf : \K \to \lt$ and $\vallt : \lt \to \Valgp$, the ordered group language on $\Valgp$ and the ring language on $\lt$ and $\K$. The group structure will be denoted multiplicatively on $\lt$ and additively on $\Valgp$.

A valued field $(K,\val)$ has a canonical $\Llt$-structure given by interpreting $\Valgp$ as its value group and $\lt$ as $(K/(1+\Mid))$ where $\Mid$ denotes the maximal ideal of  the valuation ring $\Val\subseteq K$. The map $\ltf$ is interpreted as the canonical projection $K \to \lt$. The function $\cdot$ on $\lt$ is interpreted as its (semi)-group structure. We have a short exact sequence $1\to\inv*{res}\to\inv*{lt}\to\Valgp\to 0$ where $\res := \Val/\Mid$ is the residue field. The function $+$ is interpreted as the function induced by the addition on the fibres $\lt[\gamma] := \vallt<-1>(\gamma)\cup\{0\}$ (and for all $x$, $y\in\lt$ such that $\vallt(x) < \vallt(y)$, we define $x + y = y + x := x$). Note that $(\lt[\gamma],+,\cdot)$ is a one dimensional $\res$-vector space and that $\lt[0] = \res$. Although $\vallt(0)$ is usually denoted $+\infty\neq\gamma$, we consider that $0$ lies in each $\lt[\gamma]$. In fact, it is the identity of the group $(\lt[\gamma],+)$.

The valued fields we consider are also endowed with a derivation $\D$ such that for all $x\in \K$, $\val(\D(x)) \geq \val(x)$. Such a derivation is called contractive. We denote $\LltD$ the language $\Llt$ enriched with two new symbols $\D : \K \to \K$ and $\Dlt : \lt \to \lt$. In a valued differential field with a contractive derivation, we interpret $\D$ as the derivation and $\Dlt$ as the function induced by $\D$ on each $\lt[\gamma]$. This function $\Dlt$ turns $\lt[\gamma]$ into a differential $\res$-vector space and for any $x$, $y\in\lt$, we have $\Dlt(x\cdot y) = \Dlt(x)\cdot y + x\cdot\Dlt(y)$. We denote by $\LltDlt$ the restriction of $\LltD$ to the sorts $\lt$ and $\Valgp$.

Let $\prol![n](x)$ denote $(x,\D(x),\ldots,\D^{n}(x))$ and $\prol[\omega](x)$ denote $(\D^{i}(x))_{i\in\Nn}$.

\begin{definition}($\D$-Henselian)
Let $(K,\val,\D)$ be a valued differential field. The field $K$ is said to be $\D$-Henselian if for all $P\in\Val(K)\{X\} := \Val(K)[X^{(i)}\mid i\in\Nn]$ and $a\in\Val(K)$, if $\val(P(a)) > 0$ and $\min_{i}\{\val(\frac{\partial}{\partial X^{(i)}}P(a))\} = 0$, then there exists $c\in\Val$ such that $P(c) = 0$ and $\resf(c) = \resf(a)$.
\end{definition}

\begin{definition}(Enough constants)
Let $(K,\val,\D)$ be a valued differential field. We say that $K$ has enough constants if $\val(\cst{K}) = \val(K)$ where $\cst{K} := \{x\in K\mid\D(x) = 0\}$ denotes the field of constants.
\end{definition}

Let $\Ldiv!$ be the one sorted language for valued fields. It consists of the ring language enriched with a predicate $x\Div y$ interpreted as $\val(x)\leq\val(y)$. Let $\LDdiv! := \Ldiv\cup\{\D\}$ and $\VDF!$ be the $\LDdiv$-theory of valued fields with a contractive derivation which are $\D$-Henselian with enough constants, such that the residue field is differentially closed of characteristic zero and the value group is divisible.

\begin{example}[Hahn VDF]
Let $(k,\D)\models \DCF[0]$ and $\Valgp$ be a divisible ordered Abelian group. We endow the Hahn field $K = k[[t^{\Gamma}]]$ of power series $\sum_{\gamma\in\Gamma} a_{\gamma} t^{\gamma}$ with well ordered support and coefficients in $k$, with the derivation $\D(\sum_{\gamma\in\Gamma} a_{\gamma} t^{\gamma}) := \sum_{\gamma\in\Gamma} \D(a_{\gamma}) t^{\gamma}$. Then $(K,\val,\D)\models\VDF$.
\end{example}

As in the case of $\ACVF$, $\VDF$ can also be considered in the one sorted, two sorted, the three sorted languages and the leading term language which are enrichments of the valued field versions with symbols for the derivation. Recall that an $\LL$-definable set $D$ in some $\LL$-theory $T$ is said to be stably embedded if, for all $M\models T$ and all $\LL(M)$-definable sets $X$, $X\cap D$ is $\LL(D(M))$-definable.

\begin{theorem}[known VDF](\cite{Sca-DValF,Sca-RelFrob})
\begin{thm@enum}
\item The theory $\VDF$ eliminates quantifiers and is complete in the one sorted language, the two sorted language, the three sorted language and the leading term language;
\item\label{str Valgp} The value group $\Valgp$ is stably embedded. It is a pure divisible ordered Abelian group;
\item\label{str res} The residue field $\res$ is stably embedded. It is a pure model of $\DCF[0]$;
\end{thm@enum}
\end{theorem}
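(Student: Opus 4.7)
The theorem is classical and attributed to Scanlon, so the plan is to sketch the standard proof: use a back-and-forth argument for quantifier elimination in the richest language (the leading term language $\LltD$), then deduce the other language versions and derive (ii)--(iii) from QE.

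For (i), I would fix two $\aleph_{1}$-saturated models $M,N\models\VDF$ and a countable partial $\LltD$-isomorphism $f:A\to B$ between substructures closed under $\D$ (and thus generated, as valued fields, by $\prol[\omega]$ of their field parts). The goal is to extend $f$ one field element at a time. Given $a\in\K(M)$, I split the analysis based on whether $\prol[\omega](a)$ is algebraically independent over $A$ in the differential-algebraic sense. In the transcendental case, the extension is controlled by the $\Valgp$-part and the $\lt$-part of the new type; here I first extend $f$ on the value group using QE for divisible ordered abelian groups, then on the leading term sort using the short exact sequence $1\to\res\inv\to\lt\inv\to\Valgp\to 0$ combined with QE for $\DCF[0]$ on the residue field (which requires axiomatic closure of $\res$ under the induced $\Dlt$). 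In the algebraic case, I reduce to finding a realization in $N$ with prescribed leading term data, and this is where $\D$-Henselianity is used directly: given the appropriate approximate solution and non-degeneracy of a partial derivative, $\D$-Henselianity produces a genuine zero with the correct residue. ``Enough constants'' is what makes this compatible with value group extensions, since it lets us absorb any scalar in $\Valgp$ into $\cst{K}$ before applying $\D$-Henselianity. Once QE holds in $\LltD$, completeness follows by observing that the prime $\LltD$-substructure of any model of $\VDF$ embeds into every other model (the empty substructure works since all atomic data on $\emptyset$ is fixed). QE in the two-sorted, three-sorted, and one-sorted languages is then extracted by quantifying out the auxiliary sorts, noting that $\lt$, $\res$, and $\Valgp$ are interpretable in $\K$ via $\val$ and $\D$.

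For (ii), QE in (say) the leading term language reduces any $\LltD$-definable subset $X\subseteq\Valgp^{n}$ over parameters from $M$ to a quantifier-free formula. Inspecting the atomic formulas, any one whose free variables lie in $\Valgp$ is either purely in the ordered group language on $\Valgp$, or involves terms of the form $\val(t)$ or $\vallt(t)$ for terms $t$ in other sorts, which are themselves elements of $\Valgp(M)$. Hence $X$ is defined over $\Valgp(M)$ by a pure ordered-group formula; this gives both stable embeddedness and purity. The argument for (iii) is the same: after QE, any definable subset of $\res^{n}$ is defined by a quantifier-free $\LltD$-formula whose variables live in $\res$; such atomic formulas reduce to differential-ring formulas in the residue field, with parameters that can be pushed into $\res(M)$ via $\resf$. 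Combined with the fact that $\res$ is differentially closed of characteristic zero by hypothesis, this exhibits $\res$ as a pure model of $\DCF[0]$.

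The main obstacle is the back-and-forth step in the algebraic case: one must show that, given an algebraic differential equation over $A$ with an approximate solution in $M$, the corresponding equation over $B$ has a solution in $N$ with matching leading term. This is precisely the content of $\D$-Henselianity, but invoking it requires producing a witness whose $\vallt$-data has been pre-aligned across $f$, which in turn forces one to handle value group extensions first using enough constants. Everything else---completeness, deduction of (ii)--(iii), translation to the other languages---is then routine once this core extension lemma is established.
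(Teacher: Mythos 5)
Your proposal takes a genuinely different route from the paper. The paper does not reprove quantifier elimination from scratch: it cites Scanlon directly — \cite[Theorem\,7.1]{Sca-DValF} for field quantifier elimination in the three-sorted language, and \cite[Corollary\,5.8 and Theorem\,6.3]{Sca-RelFrob} for quantifier elimination relative to $\lt$ in the leading term language — and the only new ingredient is Lemma\,\ref{lem:EQRV}, a back-and-forth on the leading-term sorts $\lt\cup\Valgp$ alone (never touching $\K$), which shows that the $\LltDlt$-structure induced on $\lt$ eliminates quantifiers. The order of deductions is also reversed relative to yours: the paper first gets field QE, deduces stable embeddedness and purity of $\res$ and $\Valgp$ from that alone, then combines with QE in $\DCF[0]$ and in divisible ordered abelian groups to obtain full three-sorted QE, whence one- and two-sorted QE; the leading-term case is handled separately via relative QE plus the lemma. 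You instead propose to run a full $\D$-Henselian back-and-forth on the whole $\LltD$-structure, including the field sort, and derive (ii)--(iii) afterwards. Both orders are logically fine; the paper's route is more economical because it offloads the hard step to Scanlon's published proof, while yours is self-contained but leaves the genuinely delicate part (the extension lemma over $\K$, where $\D$-Henselianity, enough constants, and the Kaplansky-style case analysis of immediate versus ramified versus residual extensions all interact) as a sketch.

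One phrase in your proposal is imprecise enough to flag: ``QE in the two-sorted, three-sorted, and one-sorted languages is then extracted by quantifying out the auxiliary sorts, noting that $\lt$, $\res$, and $\Valgp$ are interpretable in $\K$.'' Interpretability in $\K$ does not by itself transfer quantifier elimination downward: QE in the richer language says every formula is equivalent to a quantifier-free $\LltD$-formula, but such a formula may use $\lt$-sort atomics and $\Dlt$-terms that are not quantifier-free in the poorer language. To extract (say) one-sorted QE, one must check that every quantifier-free $\LltD$-formula whose free variables are all of sort $\K$ can be rewritten as a quantifier-free $\LDdiv$-formula, which means unwinding atomics like $\ltf(P(x))=\ltf(Q(x))$ into $\Div$-formulas and, more delicately, handling $\Dlt$-terms applied to $\ltf$ of polynomials. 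This is routine but it is a real step, and ``quantifying out'' is not the operation involved (if anything it sounds like the opposite).
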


\begin{proof}
By \cite[Theorem\,7.1]{Sca-DValF} we have field quantifier elimination in the three sorted language. The stable embeddedness and purity results for $\res$ and $\Valgp$ follow (see, for example, \cite[Remark\,A.10.2]{Rid-ADF}). Now, the theory induced on $\res$ and $\Valgp$ are, respectively, differentially closed fields and divisible ordered Abelian groups. Both of these theories eliminate quantifiers. Quantifier elimination in the three sorted language follows and so does qualifier elimination in the one sorted and two sorted languages.

As for the leading term structure, by \cite[Corollary\,5.8 and Theorem\,6.3]{Sca-RelFrob}, $\VDF$ eliminates quantifiers relative to $\lt$. Hence one can easily check that $\lt$ is stably embedded and it is a pure $\LltDlt$-structure. Quantifier elimination for $\VDF$ in the leading term language now follows from quantifier elimination for the structure induced on $\lt$ which we prove in the following lemma.
\end{proof}

\begin{lemma}[EQRV]
Let $T_{\lt}$ be the $\LltDlt$-theory of short exact sequences of Abelian groups $1\to \inv*{res}\to\inv*{lt}\to\Valgp\to 0$ such that $\res\models\DCF[0]$, for all $\gamma\in\Valgp$, $(\lt[\gamma],+,\cdot,\D)$ is a differential $\res$-vector space, for all $x$, $y\in\lt$, $\D(x\cdot y) = \D(x)\cdot y + x\cdot \D(y)$ and $\Valgp$ is a divisible ordered Abelian group. Then $T$ eliminates quantifiers.
\end{lemma}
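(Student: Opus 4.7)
The plan is to apply the standard embedding criterion for quantifier elimination. Let $M_1, M_2 \models T_\lt$ with $M_1$ sufficiently saturated, let $A$ be a common $\LltDlt$-substructure, and let $b \in M_2$. It suffices to extend the embedding $A \hookrightarrow M_1$ to an embedding of $A\langle b \rangle$ into $M_1$, after which a back-and-forth finishes the job.

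I would first enlarge $A$ simultaneously inside both models to a ``nice'' substructure satisfying: (i) $\res(A) \models \DCF[0]$, (ii) $\Valgp(A)$ is divisible, and (iii) the restriction of $\vallt$ to $\lt(A)$ surjects onto $\Valgp(A)$. Properties (i) and (ii) follow from the quantifier eliminations of $\DCF[0]$ and of divisible ordered abelian groups, together with saturation of $M_1$. For (iii), given $\gamma \in \Valgp(A)$ without a preimage in $\lt(A)$, pick any preimage $c \in \lt(M_1)$; its logarithmic derivative $\D(c)/c$ lies in $\res(M_1)$, and replacing $c$ by $c \cdot e$ for $e \in \res(M_1)$ nonzero shifts this by $\D(e)/e$. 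Because the map $e \mapsto \D(e)/e$ is surjective in $\DCF[0]$ (the linear ODE $\D(y) = \alpha y$ has a nonzero solution in any differentially closed field of characteristic zero), I can rescale to realize any prescribed $\DCF[0]$-type of log derivative over $\res(A)$ coming from a matching preimage in $M_2$.

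With $A$ nice, adjunction of $b$ splits by sort. If $b \in \Valgp(M_2)$, realize its cut in $\Valgp(M_1)$ by QE of divisible ordered abelian groups. If $b \in \res(M_2) = \lt[0](M_2)$, realize its $\DCF[0]$-type in $\res(M_1)$. If $b \in \lt[\gamma](M_2)$ with $\gamma \in \Valgp(A)$ nonzero, pick the preimage $a \in \lt(A)$ of $\gamma$ furnished by niceness; then $b \cdot a^{-1} \in \res(M_2)$ reduces to the previous case. If $\gamma \notin \Valgp(A)$, first adjoin $\gamma$ by the previous case, then pick any preimage in $\lt[\gamma](M_1)$ and rescale by an element of $\res(M_1)$, again using the log-derivative surjection, so that $\D(b)/b$ realizes the correct $\DCF[0]$-type over $\res(A)$.

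The main obstacle is the new-$\gamma$ case: I must check that the quantifier-free $\LltDlt$-type of $b$ over $A$ is completely determined by $\gamma$ together with the $\DCF[0]$-type of $\D(b)/b$ over $\res(A)$. Since $\Valgp(A)$ is divisible and $\gamma \notin \Valgp(A)$, the cosets $n\gamma + \Valgp(A)$ for $n \in \mathbb{Z}$ are pairwise distinct, so the $\lt$-part of $A\langle b \rangle$ is spanned by the products $b^n \cdot \lt(A)$ and the $\res$-part is generated over $\res(A)$ by the iterated derivatives $\D^k(\D(b)/b)$. One then verifies that these data generate isomorphic $\LltDlt$-substructures of $M_1$ and $M_2$ over $A$, the algebraic input making all this matching possible being precisely the surjectivity of the logarithmic derivative on a differentially closed field of characteristic zero.
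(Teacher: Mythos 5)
Your overall plan --- a back-and-forth using quantifier elimination in $\DCF[0]$ and divisible ordered abelian groups, with surjectivity of the logarithmic derivative $e\mapsto\D(e)/e$ in a differentially closed field as the key algebraic input --- is the same as the paper's. The paper's implementation is slightly cleaner: instead of matching the $\DCF[0]$-type of $\D(c)/c$ over $\res(A)$, it solves $\D(\mu)+\lambda\mu=0$ and rescales $c$ so that $\D(c)=0$, after which the remaining matching takes place inside the constant field and only uses that the constants of a $\DCF[0]$-model form an algebraically closed field.

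There is, however, a gap in your ``niceness'' closure (iii). When you pick a preimage $c\in\lt(M_1)$ of $\gamma\in\Valgp(A)$ and rescale its log derivative, you never address the possibility that some power $c^{n_0}$ already lies in $\lt(A)$ for a minimal $n_0>1$; this can happen because $\vallt(\lt(A))$ need not be a pure subgroup of $\Valgp(A)$, even when $\Valgp(A)$ is divisible. In that case the quantifier-free type of $c$ over $A$ is \emph{not} determined by $\gamma$ and the $\DCF[0]$-type of $\D(c)/c$ alone: you must also match the element $c^{n_0}\in\lt[n_0\gamma](A)$, and these two requirements conflict with your rescaling trick. If you preserve $(ce)^{n_0}=c^{n_0}$ you force $e^{n_0}=1$, hence $\D(e)=0$ in characteristic zero, so the log derivative cannot be moved at all; if you allow $e^{n_0}\notin\res(A)$, then you have introduced a second parameter whose $\DCF[0]$-type over $\res(A)$ must also be controlled and is not free. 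You defer the torsion-free assumption to the later ``new-$\gamma$'' paragraph, but torsion arises precisely in achieving (iii), which must come first. The paper handles it head-on: after normalizing $\D(c)=0$, the power $c^{n_0}$ is also a constant, and the matching element $b$ with $\D(b)=0$ and $b^{n_0}=f(c^{n_0})$ is extracted inside the (algebraically closed) constant field; no separate type-matching of log derivatives is needed.
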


\begin{proof}
If suffices to prove that for any $M$, $N\models T_{\lt}$ such that $N$ is $\card{M}^{+}$-saturated and for any partial isomorphism $f : M \to N$, there exists an isomorphism $g : M \to N$ extending $f$ defined on all of $M$.

Let $A$ be the domain of $f$. We construct the extension step by step. By quantifier elimination in divisible ordered Abelian groups, there exists $g : \Valgp(M) \to \Valgp(N)$ defined on all of $\Valgp(M)$ and extending $\restr{f}{\Valgp}$. It is easy to see that $f\cup g$ is a partial isomorphism. So we my assume that $\Valgp(A) = \Valgp(M)$. We may also assume that $A$ is closed under inverses: for any $a$, $b\in\lt(A)$, we define $g(a^{-1}\cdot b) := f(a)^{-1}\cdot f(b)$. Then $g \cup f$ is a partial isomorphism. Finally, By elimination of quantifiers in $\DCF[0]$, and saturation of $N$, $\restr{f}{\res}$can be extended to $h : \res(M) \to \res(N)$. Now define $g(\lambda\cdot a) = h(\lambda)\cdot f(a)$ for all $\lambda\in\res$ and $a\in A$. As $A$ is closed under inverse, this is well-defined and one can check that $f\cup g$ is indeed a partial isomorphism. So we may assume that $\res(M)\subseteq A$.

Let $a \in M$ and $\gamma = \vallt(a)$. If $a\nin A$, then $\lt[\gamma](A) = \emptyset$. Pick any $c\in\inv*{lt}[\gamma](M)$. We have $\D(c) = \lambda\cdot c$ for some $\lambda\in\res(M)$. We want to find $\mu\neq 0$ such that $\D(\mu\cdot c) = 0$, i.e. $\D(\mu)\cdot c + \mu\lambda\cdot c = 0$ and equivalently, $\D(\mu) + \lambda\mu = 0$. But this equation has a solution in $\res(M)$ as it is differentially closed. Thus, we may assume that $\D(c) = 0$. If there exist an $n\in\Nn_{>0}$ such that $c^{n}\in A$, let $n_{0}$ be the minimal such $n$, if such an $n$ does not exist, let $n_{0} = 0$. In both cases, let $b \in \lt[f(\gamma)](N)$ be such that $\D(b) = 0$ and $b^{n_{0}} = f(c^{n_{0}})$.

Now, for all $a\in \lt(A)$ and $n\in\Nn$, define $g(a\cdot c^{n}) = f(a)\cdot b^{n}$. It is easy to check that $g\cup f$ is a partial isomorphism. Applying this last construction repetitively, we obtain a morphism $g : M \to N$.
\end{proof}

\begin{remark}
If $M\models\VDF$, $\K(M)$ and $\cst{\K}(M)$ are algebraically closed, but $\K(M)$ is not differentially closed. In fact, the set $\{x\in \K(M)\mid \exists y\,\D(y) = x y\}$ defines the valuation ring.
\end{remark}

\subsection{Elimination of imaginaries}

Let us now recall some facts about elimination of imaginaries. A more thorough introduction can be found in \cite[Sections\,16.4 and 16.5]{Poi-MT}. An imaginary is a point in an interpretable set or equivalently a class of a definable equivalence relation. To every theory $T$ we can associate a theory $\eq{T}$ obtained by adding all the imaginaries. More precisely a new sort and a new function symbol are added for every $\emptyset$-interpretable set and they are interpreted respectively as the interpretable set itself and the canonical projection to the interpretable set. A model of $\eq{T}$ is usually denoted $\eq{M}$. We write $\dcleq$ and $\acleq$ to denote the definable and algebraic closure in $\eq{M}$.

We will also need to speak of the imaginaries internal to some $\star$-definable set. 

\begin{definition}[strict star-def]
Let $N$ an $\LL$-structure and $x$ a (potentially infinite) tuple of variables. Let $P$ be a set of $\LL$-formulas with variables $x$. The set $P(N) := \{m\in N^{x} \mid \forall \phi\in P,\,N\models\phi(m)\}$ is said to be $(\LL,x)$-definable. We say that an $(\LL,\star)$-definable set is strict $(\LL,\star)$-definable if the projection on any finite subset of $x$ is $\LL$-definable. If we do not want to specify $x$, (respectively $\LL$) we will simply say that a set is $(\LL,\star)$-definable (respectively $\star$-definable).
\end{definition}

If $X$ is an $(\LL(A),\star)$-definable set for some set of parameters $A$, $X$ can be considered as a structure with one predicate for each $\LL(A)$-definable subset of some Cartesian power of $X$. Then $\eq{X}$ will denote the structure imaginary structure on $X$, which can be seen as an $(\LL(A),\star)$-definable subset of $\eq{M}$. We might have to specify for which language is the induced structure is considered, in which case, we will write $\eq{X}_{\LL}$.

To every set $X$ definable (with parameters in some model $M$ of $T$), we can associate the set $\code{X}\subseteq \eq{M}$ which is the smallest definably closed set of parameters over which $X$ is defined. We usually call $\code{X}$ the code or canonical parameter of $X$.

Let $T$ be a theory in a language $\LL$ and $\Real$ a set of $\LL$-sorts. The theory $T$ eliminates imaginaries up to $\Real$ if every set $X$ definable with parameters is in fact definable over $\Real(\code{X})$; we say that $X$ is coded in $\Real$. If every $X$ is only definable over $\Real(\acleq(\code{X}))$, we say that $T$ weakly eliminates imaginaries. A theory eliminates imaginaries up to $\Real$ if and only if it weakly eliminates imaginaries up to $\Real$ and every finite set from the sorts $\Real$ is coded in $\Real$.

In \cite{HasHruMac-ACVF}, Haskell, Hrushovski and Macpherson introduced the geometric language $\LG$. It consists of a sort $\K$ for the valued field and, for all $n\in\Nn$, the sorts $\Latt[n] = \GL{n}(K)/\GL{n}(\Val)$ and the sorts $\Tor[n] = \GL{n}(K)/\GL{n,n}(\Val)$ where $\GL{n,n}(\Val)\leq\GL{n}(\Val)$ consists of the matrices which are congruent modulo the maximal ideal $\Mid$ to the matrix whose last columns contains only zeroes except for a one on the diagonal. The language also contains the ring language on $\K$ and the canonical projections onto $\Latt[n]$ and $\Tor[n]$. We will denote by $\Geom$ the sorts of the geometric language. Note that $\Latt[1]$ is exactly the value group and the canonical projection from $\inv*{K}$ onto $\Latt[1]$ is the valuation.

The main "raison d'être" of this geometric language is the following theorem:

\begin{theorem}(\cite[Theorem\,1.0.1]{HasHruMac-ACVF})
The theory $\ACVFG$ of algebraically closed valued fields in the geometric language eliminates imaginaries.
\end{theorem}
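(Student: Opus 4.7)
The plan is to follow the classical reduction: first establish weak elimination of imaginaries up to $\Geom$, then separately show that every finite subset of $\Geom$ is coded. For the weak part I would produce, for each nonempty $A$-definable set $X$, a global type $p$ consistent with $X$ whose canonical base lies in $\Geom(\acleq(A))$. Once this is achieved, a standard automorphism argument applies: any automorphism $\sigma$ fixing $\code{X}$ pointwise must permute the canonical family of such types attached to $X$; if the construction is sufficiently rigid, then $\sigma$ must fix the base of $p$, which in turn forces $\code{X}\in\dcleq(\Geom(\acleq(\code{X})))$.

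First I would set up density of definable types in one variable, using the Swiss-cheese style decomposition available in $\ACVF$: every $\Ldiv$-definable subset of $\K$ is a finite boolean combination of (possibly infinite-radius) balls, and each ball is coded by a point of one of the sorts $\Latt[1]$, $\Latt[2]$, $\Tor[1]$, $\Tor[2]$. From such a decomposition I extract a ``generic type on a minimal ball contained in $X$'', whose code then lies in $\Geom(\acleq(A))$. The induction to several variables proceeds by slicing $X$ along a coordinate projection and combining density on the generic fibre with density on the base, using stable embeddedness of $\res$ and $\Valgp$ together with the fact that in each induced theory ($\ACF$ on $\res$, divisible ordered abelian groups on $\Valgp$) every type over an algebraically closed set is definable with canonical base in the sort itself. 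In parallel I must verify that the canonical base of the definable type so produced is controlled by the parameters of the balls appearing in these decompositions, i.e.\ lives in $\Geom$; this amounts to expressing the defining scheme of $p$ in terms of lattices and torsors.

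The step I expect to be the real obstacle is coding finite subsets of $\Geom$ itself. Coding a finite subset of $\K$ is routine via elementary symmetric polynomials, but a finite set of elements of $\Latt[n]$, or worse of $\Tor[n]$, admits no such symmetric code a priori. The standard device is to associate to an unordered tuple $(L_1,\ldots,L_k)$ of lattices in $\K^n$ a canonical ambient lattice $L$ containing all of them, then to pass to the $\res$-vector space $L/\Mid L$ in which the configuration $(L_i/\Mid L)$ becomes a finite family of subspaces; these subspaces are coded by linear-algebraic data that pulls back to further elements of $\Latt$ and $\Tor$. Making this uniform across sizes $k$ and dimensions $n$, and extending it to mixed $\Latt$/$\Tor$ tuples, is the delicate combinatorial part. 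Once finite sets are coded, combining with the weak elimination above yields the theorem.
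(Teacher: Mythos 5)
You should note first that this paper does not prove this theorem---it is cited from Haskell, Hrushovski and Macpherson's original work, so there is no in-text proof to compare against. What you sketch is a valid route, but it is \emph{not} the route taken in the cited reference: your argument is the later ``density of definable types over algebraically closed parameters, plus coding of finite sets'' criterion due to Hrushovski and to Johnson (which, the introduction explains, is exactly the framework the present paper adapts in Sections~\ref{sec:type ball}--\ref{sec:EI IE} to prove elimination of imaginaries for $\VDF$), whereas the original HHM proof is organized quite differently, around unary codes, a key proposition on germs of definable functions into the geometric sorts, and prime resolutions. So your proposal is recognisably ``this paper's method applied back to $\ACVF$,'' not a reconstruction of HHM.

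Within your sketch, a few steps are glossed in a way that hides where the real content sits. First, ``generic type on a minimal ball contained in $X$'' is not quite the right object: a unary definable set need not contain a canonical smallest ball, and what one actually takes is the generic type of an outer ball of a Swiss cheese in the canonical (hence $\acleq(A)$-invariant) decomposition. Second, the claim that the canonical base of the definable type one produces lies in $\Geom$ is precisely the statement that definable types in $\ACVF$ are coded in the geometric sorts, and in every known proof this is a substantial theorem in its own right---proved in HHM via germs of functions and, in the framework of the present paper, via the $\Valgp$-internality arguments of Section~\ref{sec:reparam} together with the analysis in \cite{RidSim-NIP}. Third, in the finite-set coding, forming the ambient lattice $L = L_1 + \cdots + L_k$ and looking at images of the $L_i$ in $L/\Mid L$ does not by itself recover the $L_i$: one must track the whole value-group-indexed filtration $\gamma L$, and extending this to mixed $\Latt$/$\Tor$ tuples is the combinatorial core of HHM's Section on coding. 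None of these is a fatal flaw in your plan---it is a correct high-level description of the Hrushovski--Johnson approach---but the two steps you flag as ``delicate'' (geometric canonical bases and finite-set coding) are exactly where the proof actually lives.
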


\subsection{Metastability}
\label{subsec:metastab}

Let $T$ be a theory, $M\models T$ be sufficiently saturated and $A\subseteq M$. The set $X$ is stable stably embedded if it is stably embedded and the $\LL(A)$-induced structure on $X$ is stable. We denote by $\St[A]<\LL>$ the structure whose sorts are the stable stably embedded sets which are $\LL(A)$-definable, equipped with their $\LL(A)$-induced structure. We will denote by $\indep_{C}^{\LL}$ forking independence in $\St[C]<\LL>$. When it is not necessary, we will not specify $\LL$.

\begin{definition}(Stable domination)
Let $M$ be an $\LL$-structure, $C\subseteq M$, $f$ an $(\LL(C),\star)$-definable map to $\St[C]$ and $p\in\TP(C)$. We say that $p$ is stably dominated via $f$ if for every $a\models p$ and $B\subseteq M$ such that $\St[C](\dcl(CB))\indep_{C} f(a)$,
\[\tp(B)[Cf(a)]\vdash\tp(B)[Ca].\]

We say that $p$ is stably dominated if it is stably dominated via some map $f$. It is then stably dominated via any map enumerating $\St[C](\dcl(Ca))$.
\end{definition}

\begin{definition}[inv ext](Invariant extension property)
Let $T$ be an $\LL$-theory that eliminates imaginaries, $A\subseteq M$ for some $M\models T$. We say that $T$ has the invariant extension property over $A$ if, for all $N\models T$, every type $p\in\TP(A)$ can be extended to an $\aut(N)[A]$-invariant type.

We say that $T$ has the invariant extension property if $T$ has invariant extensions over any $A = \acl(A)\subseteq M\models T$.
\end{definition}

\begin{definition}(Metastability)
Let $T$ be a theory and $\Valgp$ an $\emptyset$-definable stably embedded set. We say that $T$ is metastable over $\Valgp$ if:
\begin{thm@enum}
\item The theory $T$ has the invariant extension property.
\item For all $A\subseteq M$, there exists $C\subseteq M$ containing $A$ such that for all tuples $a\in M$, $\tp(a)[C\Valgp(\dcl(Ca))]$ is stably dominated. Such a $C$ is called a metastability basis.
\end{thm@enum}
\end{definition}

In \cite{HasHruMac-Book}, Haskell, Hrushovski and Macpherson showed that $\ACVF$ is metastable over its valued group and that  maximally complete fields are metastability bases. Recall that a valued field $(K,\val)$ is maximally complete if every chain of ball contains a point or equivalently every pseudo-Cauchy sequence from $K$ (a sequence $(x_{\alpha})_{\alpha\in\kappa}$ such that for all $\alpha<\beta<\gamma$, $\val(x_{\gamma}-x_{\beta}) > \val(x_{\beta} -x_{\alpha})$) have a pseudo-limit in $K$ (a point  $a\in K$ such that for all $\alpha<\beta$, $\val(a-x_{\beta}) > \val(a-x_{\alpha})$).

To finish this section, let us introduce two other kinds of types which coincide with stably dominated types in $\NIP$ metastable theories.

\begin{definition}(Generic stability)
Let $M$ be some $\NIP$ $\LL$-structure and $p\in\TP(M)$. The type $p$ is said to be generically stable if it is $\LL(M)$-definable and finitely satisfiable in some (small) $N\subsel M$.
\end{definition}

\begin{definition}(Orthogonality to $\Valgp$)
Let $M\models T$ be sufficiently saturated, $C\subseteq M$, $\Valgp$ be an $\LL$-definable set and $p\in\TP(M)$ be an $\aut(M)[C]$-invariant type. The type $p$ is said to be orthogonal to $\Valgp$ if for all $B\subseteq M$ containing $A$ and $a\models \tprestr{p}{B}$, $\Valgp(\dcl(Ba)) = \Valgp(\dcl(B))$.
\end{definition}

\subsection{New results about \texorpdfstring{$\VDF$}{VDF}}

Let $\LGD$ be the language $\LG$ enriched with a symbol for the derivation $\D : \K\to \K$ and let $\VDFG!$ be the $\LGD$-theory of models of $\VDF$. The goals of this paper is to prove the following:

\begin{theorem}[VDF]
The theory $\VDFG$ eliminates imaginaries, has the invariant extension property and is metastable. Moreover, over algebraically closed sets of parameters, definable types are dense.
\end{theorem}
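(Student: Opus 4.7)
The plan is to prove the four assertions in the following order: density of definable types, invariant extension, elimination of imaginaries, and metastability. The central and most delicate step is the density of definable types; everything else will cascade from it combined with tools already set up in the paper.

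First I would reduce density to an essentially $\ACVF$-problem via the prolongation map $\prol[\omega]$. By quantifier elimination in $\VDF$ (Theorem~\ref{thm:known VDF}), the $\LDdiv$-type of a tuple $x$ is determined by the $\Ldiv$-type of $\prol[\omega](x)$, and the pushforward $p\mapsto \tpprol[\omega](p)$ preserves definability. Hence, given an $\LDdiv(A)$-definable set $X$, it suffices to find an $\Ldiv$-definable type $q$, $\acl(A)$-definable in the $\Ldiv$-reduct, consistent with the $\star$-definable set $\prol[\omega](X)$. Since $q$ is the compatible limit of its definable $\Delta$-reducts for $\Delta$ ranging over finite sets of $\Ldiv$-formulas, I would reduce (via Proposition~\ref{prop:ind approx}) to the following local statement: for every finite $\Delta$ and every $\LDdiv(M)$-definable set $Y$, there is an $\acl(A)$-definable ``generic'' $\Delta$-type $q_{\Delta}$ consistent with $Y$, and the various $q_{\Delta}$ are compatible as $\Delta$ grows.

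Producing such generic $\Delta$-types is where the technical heart of Sections~\ref{sec:type ball}--\ref{sec:approx} is spent. The approach is to use $C$-minimality to approximate $\prol[n](Y)$ by finite fibrations of balls over lower-dimensional sets (cells), then use the ``generic'' types on balls introduced in Section~\ref{sec:type ball}, and show that they are quantifiable (Section~\ref{sec:imp def}) so that the cell-by-cell choices glue into a global definable type. A key auxiliary input will be the internality-to-$\Valgp$ result for germs of definable families of functions into the value group (Section~\ref{sec:reparam}), which is what makes the canonical parameter of the generic type $\acl$-definable. I expect the main obstacle to be precisely this coherence: choosing the generic $\Delta$-type in a way that is canonical enough to be $\acl(A)$-definable uniformly in $\Delta$, since the approximations by cells depend on $\Delta$ in a nontrivial way and one must show the germ data lies over $\Valgp$.

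Once density of definable types over $\acl$-closed sets is established, invariant extension is immediate: any $\LDdiv(A)$-definable set with $A = \acl(A)$ contains a global $\LDdiv$-definable type $p$ over $\acl(A) = A$, and any such definable type is automatically $\aut(N/A)$-invariant. Elimination of imaginaries then follows by feeding density into the abstract criterion of Section~\ref{sec:EI IE}, together with the computation of canonical bases of definable types in $\VDF$ from~\cite{RidSim-NIP}: these give that every definable type is coded in $\Geom$, whence by weak elimination plus the finite-set coding in $\Geom$ (inherited from $\ACVFG$) we conclude full elimination. Finally, metastability combines the invariant extension property just proved with the existence of metastability bases established in Section~\ref{sec:metastab VDF}, which together realize the two clauses of the definition over $\Valgp$.
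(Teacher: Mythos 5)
Your proposal matches the paper's proof in both structure and substance: the same reduction to density of definable types via $\prol[\omega]$ and quantifier elimination, the same appeal to the cell/ball machinery of Sections~\ref{sec:type ball}--\ref{sec:approx} (generic and quantifiable $\Delta$-types, $\Valgp$-reparametrisations, bounded tree of balls via $\exists^\infty$-elimination) to produce a definable type over $\acl$-closed parameters, the same derivation of invariant extensions and weak elimination of imaginaries from density plus the criterion of Section~\ref{sec:EI IE}, the use of \cite{RidSim-NIP} to land the definition scheme in the geometric sorts, and the same combination with the metastability bases of Section~\ref{sec:metastab VDF}. This is precisely the chain Corollary~\ref{cor:VDF dens def} $\Rightarrow$ Corollary~\ref{cor:VDF EI IE} $\Rightarrow$ Corollary~\ref{cor:VDF meta} that the paper's proof of Theorem~\ref{thm:VDF} cites.
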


By density of definable types, we mean that every definable set $X$ is consistent with a global $\LGD(\acleq(\code{X}))$-definable type $p$.

\begin{proof}
The density of definable types is proved in Corollary\,\ref{cor:VDF dens def}. Elimination of imaginaries and the invariant extension property are proved in Corollary\,\ref{cor:VDF EI IE}. Finally, the existence of metastability bases is proved in Corollary\,\ref{cor:VDF meta}
\end{proof}

At the very end of \cite{HasHruMac-Book}, an incorrect proof of the metastability of $\VDF$ (in particular, of the invariant extension property) is sketched. Because it overlooks major difficulties inherent to the proof of the invariant extension property, there can be no easy way to fix this proof and new techniques had to be developed.

\section{Prolongation of the type space}
\label{sec:prol}

The goal of this section is to study the relation between types in $\VDF$ and types in $\ACVF$. This construction plays a fundamental role in the rest of this paper. However, in the proof of Theorem\,\ref{thm:dens def}, it appears in a more abstract setting.

For all $x\in\K$ or $x\in\lt$, let $\prol[\omega](x)$ denote $(\Dlt<n>(x))_{n\in\Nn}$. If $x\in\Valgp$, let $\prol[\omega](x)$ denote $(x)_{n\in\Nn}$. If $x$ is a tuple of variables, we denote by $x_{\infty}$ the tuple $(x^{(i)})_{i\in\Nn}$ where each $x^{(i)}$ is sorted like $x$. Let $M\models\VDF$ be sufficiently saturated and $A\substr M$ be a substructure. We write $\TP[x]<\LL>(A)$ for the space of complete $\LL$-types over $A$ in the variable $x$.

\begin{definition}
Let $A\subseteq\K\cup\Valgp\cup\lt$. We define $\tpprol![\omega]:\TP[x]<\LltD>(A) \to \TP[x_{\infty}]<\Llt>(A)$ to be the map which sends a complete type $p$ to the complete type
\[\tpprol[\omega](p) := \{\phi(x_{\infty},a)\mid \phi\text{ is an $\Llt$-formula and }\phi(\prol[\omega](x),a)\in p\}.\]
\end{definition}

\begin{proposition}
The function $\tpprol[\omega]$ is a homeomorphism onto its image (which is closed).
\end{proposition}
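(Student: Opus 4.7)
The plan is to leverage quantifier elimination in $\VDF$ (in the leading term language) to show that every $\LltD$-formula is, modulo $\VDF$, equivalent to an $\Llt$-formula in the prolonged variables $x_{\infty}$. Once this is in hand, the topological statement follows from general facts about continuous maps between spectral/Stone spaces.

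More precisely, the first step is to establish the rewriting claim: for every $\LltD$-formula $\phi(x,y)$ there is an $\Llt$-formula $\psi(x_{\infty},y)$ (only finitely many of the $x^{(i)}$ occurring) such that $\VDF \vdash \phi(x,a) \leftrightarrow \psi(\prol[\omega](x),a)$. This is proved by reducing via quantifier elimination to quantifier-free $\LltD$-formulas and then eliminating the symbols $\D$ and $\Dlt$ syntactically: each occurrence of $\D^i$ applied to a sub-term in $x$ becomes a reference to the coordinate $x^{(i)}$, and terms involving $\Dlt$ on the sort $\lt$ can similarly be recoded because the prolongation records all derivatives. Here we must be a little careful to note that atomic $\Llt$- and $\Valgp$-formulas involving $\D$ are transparent in the same way, which is where the fact that $\prol[\omega]$ is defined on every sort gets used.

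From this rewriting, injectivity of $\tpprol[\omega]$ is immediate: if two $\LltD$-types $p,p'$ have $\tpprol[\omega](p)=\tpprol[\omega](p')$, then for any $\LltD$-formula $\phi(x,a)\in p$, the equivalent formula $\psi(\prol[\omega](x),a)$ lies in $\tpprol[\omega](p)=\tpprol[\omega](p')$, hence $\phi(x,a)\in p'$. Continuity of $\tpprol[\omega]$ is similarly immediate: the preimage of the basic open set $\{q:\psi(x_{\infty},a)\in q\}$ is $\{p:\psi(\prol[\omega](x),a)\in p\}$, which is clopen in $\TP[x]<\LltD>(A)$.

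Finally, $\TP[x]<\LltD>(A)$ is compact and the target space $\TP[x_{\infty}]<\Llt>(A)$ is Hausdorff, so $\tpprol[\omega]$ is a closed map. This simultaneously yields that the image is closed and that the continuous injection onto its image is a homeomorphism (a continuous bijection from a compact space to a Hausdorff space is a homeomorphism). The only nontrivial step is the rewriting, but with quantifier elimination in the leading term language already stated as a \emph{known VDF} result, it reduces to a straightforward induction on the complexity of quantifier-free $\LltD$-formulas.
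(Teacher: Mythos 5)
Your proof is correct and follows essentially the same route as the paper's: both reduce to showing continuity (clopen preimages of clopen sets) and injectivity (via the quantifier-elimination rewriting of $\LltD$-formulas as $\Llt$-formulas in $\prol[\omega](x)$), then invoke the compact-to-Hausdorff fact to conclude homeomorphism onto a closed image. The only difference is that you spell out the syntactic elimination of $\D$ and $\Dlt$ in more detail, whereas the paper compresses this into a single appeal to quantifier elimination.
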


\begin{proof}
As $\TP[x]<\LltD>(A)$ is compact and $\TP[x_{\infty}]<\Llt>(A)$ is Hausdorff, it suffices to show that $\tpprol[\omega]$ is continuous and injective. Let us first show continuity. Let $U = \cl{\phi(x_{\infty},a)} \subseteq  \TP[x_{\infty}]<\Llt>(A)$, then $\tpprol[\omega]<-1>(U) = \cl{\phi(\prol[\omega](x),a)}\subseteq \TP[x]<\LltD>(A)$. As for $\tpprol[\omega]$ being injective, let $p$ and $q\in \TP[x]<\LltD>(A)$ and let $\phi(x,a)$ be an $\LltD$-formula in $p \sminus q$. By quantifier elimination, we can assume that $\phi$ is of the form $\theta(\prol[\omega](x),a)$ for some $\Llt$-formula $\theta$. Then $\theta(x_{\infty},a)\in\tpprol[\omega](p)\sminus\tpprol[\omega](q)$.
\end{proof}

We will now look at how $\tpprol[\omega]$ and its inverse behave with respect to various properties of types. Transferring certain properties actually presents real challenges: proving Propositions\,\ref{prop:prol def} and \ref{prop:prol inv} required the development of \cite{RidSim-NIP}. Note that in \cite{RidSim-NIP} the variables of the type $p$ are in $\K$, the same proof applies if the variables are in $\K$, $\lt$ and $\Valgp$ (we have to use elimination of quantifiers in $\LltD$ instead).

\begin{proposition}[prol def](\cite[Corollary\,3.3]{RidSim-NIP})
Let $p\in\TP<\LltD>(M)$. Assume $A = \acleq[\LltD](A)$. The following are equivalent:
\begin{thm@enum}
\item $p$ is $\LltDeq(A)$-definable; 
\item $\tpprol[\omega](p)$ is $\LG(\Geom(A))$-definable; 
\item $p$ is $\LGD(\Geom(A))$-definable.
\end{thm@enum}
\end{proposition}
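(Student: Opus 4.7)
The plan is to prove (ii)$\,\Leftrightarrow\,$(iii) and (iii)$\,\Rightarrow\,$(i) directly from quantifier elimination in $\VDFG$ together with interpretability of the geometric sorts, and to reduce (i)$\,\Rightarrow\,$(ii) to \cite[Corollary\,3.3]{RidSim-NIP}.

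For (ii)$\,\Leftrightarrow\,$(iii), quantifier elimination in $\VDFG$ says that every $\LGD$-formula $\phi(x;y)$ is equivalent modulo $\VDFG$ to some $\LG$-formula $\psi(\prol[n](x);y)$ for a finite $n$; the assignment $\phi \mapsto \psi$ is uniform in $y$ and turns an $\LG(\Geom(A))$-definition scheme for $\tpprol[\omega](p)$ into an $\LGD(\Geom(A))$-definition scheme for $p$, and conversely. For (iii)$\,\Rightarrow\,$(i), the geometric sorts are $\emptyset$-interpretable in $\LltD$, so $\Geom(A) \subseteq \dcleq[\LltD](A) \subseteq \acleq[\LltD](A) = A$, and any $\LGD(\Geom(A))$-definition is a fortiori an $\LltDeq(A)$-definition.

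The substantive direction is (i)$\,\Rightarrow\,$(ii). From an $\LltDeq(A)$-definition scheme for $p$, restriction to $\Llt$-formulas in the variable $x_{\infty}$ immediately yields an $\LltDeq(A)$-definition scheme for $\tpprol[\omega](p)$; in other words $\tpprol[\omega](p)$ is a definable $\Llt$-type whose canonical basis, computed inside $\LltDeq$, lies in $A$. One then has to upgrade this from arbitrary $\LltD$-imaginaries to geometric ones over $\Geom(A)$, which is precisely the content of \cite[Corollary\,3.3]{RidSim-NIP}: in an $\NIP$ expansion of $\ACVF$ satisfying suitable stable embeddedness in pairs of valued fields---hypotheses that $\VDF$ meets, partly thanks to the stable-embeddedness-in-pairs results assembled in the appendix of the present paper---every $\Llt$-definable type whose code lives in $\LltDeq(A)$ is already $\LG(\Geom(A))$-definable. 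Applied to $\tpprol[\omega](p)$ this delivers (ii).

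The main obstacle is exactly this last step. A priori the derivation produces intricate new definable equivalence relations in $\LltDeq$ whose classes need not be coded in $\Geom$, and there is no formal reason why the $\Llt$-reduct of such a code should live in the geometric sorts; one has to leverage both $\NIP$ and the fine analysis of stable embeddedness in pairs $(M,A)$ to rule out the bad cases. Everything else is formal manipulation of definition schemes under quantifier elimination.
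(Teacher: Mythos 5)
Your proposal takes essentially the same approach as the paper: the paper offers no proof of its own, simply citing \cite[Corollary\,3.3]{RidSim-NIP} (with a remark about extending from $\K$-variables to $\K\cup\lt\cup\Valgp$-variables), and your plan likewise routes all of the substance through that citation while making explicit which of the other implications are soft. One imprecision deserves flagging. You assert that (ii)$\,\Leftrightarrow\,$(iii) is purely formal, but only (ii)$\,\Rightarrow\,$(iii) is. For (iii)$\,\Rightarrow\,$(ii): given an $\LGD(\Geom(A))$-scheme for $p$ and an $\LG$-formula $\psi(x^{(\leq n)};y)$, applying the scheme to the $\LGD$-formula $\psi(\prol[n](x);y)$ produces a defining formula $\theta(y)$ that is a priori only an $\LGD(\Geom(A))$-formula; converting it to an $\LG(\Geom(A))$-formula is again exactly the stable-embeddedness problem you rightly identify as the obstacle in (i)$\,\Rightarrow\,$(ii). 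This is harmless for the overall argument, since the cycle (i)$\,\Rightarrow\,$(ii)$\,\Rightarrow\,$(iii)$\,\Rightarrow\,$(i) closes without an independent proof of (iii)$\,\Rightarrow\,$(ii), but it should not be labelled formal. A second, smaller slip: ``quantifier elimination in $\VDFG$'' is an overstatement---neither $\VDFG$ in $\LGD$ nor $\ACVFG$ in $\LG$ eliminates quantifiers. The reduction $\phi(x;y)\equiv\psi(\prol[n](x);y)$ you need instead follows from quantifier elimination in the leading-term (or three-sorted) language together with the $\emptyset$-interpretability of the geometric sorts, which is how the paper itself invokes it elsewhere.
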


\begin{proposition}[prol inv](\cite[Corollary\,3.5]{RidSim-NIP})
Let $p\in\TP<\LltD>(M)$. Assume $A = \acleq[\LltD](A)$. The following are equivalent:
\begin{thm@enum}
\item $p$ is $\aut[\LltDeq](M)[A]$-invariant;
\item $\tpprol[\omega](p)$ is $\aut[\LG](M)[\Geom(A)]$-invariant;
\item $p$ is $\aut[\LGD](M)[\Geom(A)]$-invariant.
\end{thm@enum}
\end{proposition}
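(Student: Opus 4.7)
The plan is to establish the three equivalences in the spirit of Proposition~\ref{prop:prol def}: dispatch the easy directions directly using the homeomorphism $\tpprol[\omega]$, then import the hard directions from \cite{RidSim-NIP}.

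For $(\mathrm{ii})\Rightarrow(\mathrm{iii})$, given $\sigma\in\aut[\LGD](M)[\Geom(A)]$, $\sigma$ is in particular an $\LG$-automorphism fixing $\Geom(A)$, and since $\sigma$ preserves $\D$ it commutes with $\prol[\omega]$. Hence $\tpprol[\omega](\sigma(p))=\sigma(\tpprol[\omega](p))=\tpprol[\omega](p)$ by $(\mathrm{ii})$, and the injectivity of $\tpprol[\omega]$ from the previous proposition yields $\sigma(p)=p$. The implication $(\mathrm{iii})\Rightarrow(\mathrm{i})$ is equally direct: any $\sigma\in\aut[\LltDeq](M)[A]$ restricts to an $\LGD$-automorphism fixing $\Geom(A)\subseteq A$, and $(\mathrm{iii})$ applies.

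The substance of the proposition therefore lies in $(\mathrm{iii})\Rightarrow(\mathrm{ii})$ and $(\mathrm{i})\Rightarrow(\mathrm{iii})$. In each case one must transfer invariance under one group of automorphisms to invariance under a strictly larger one. For $(\mathrm{iii})\Rightarrow(\mathrm{ii})$, an arbitrary $\LG$-automorphism of $M$ need not preserve $\D$, and so cannot be viewed as an $\LGD$-automorphism to which $(\mathrm{iii})$ applies; indeed, for such a $\sigma$ the very object $\sigma(p)$ is not $\LltD$-well-defined, which is what makes the forward translation break down. For $(\mathrm{i})\Rightarrow(\mathrm{iii})$, an $\LGD$-automorphism fixing $\Geom(A)$ lifts to an $\LltDeq$-automorphism fixing $\Geom(A)$ but a priori not all of $A=\acleq[\LltD](A)$, so $(\mathrm{i})$ cannot be directly invoked.

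Both obstructions are resolved by the $\NIP$-based stable embeddedness transfer of \cite[Section~3]{RidSim-NIP}. Working in the expansion of $\LltDeq(M)$ that names $p$ by a new predicate, one proves that the $\LG$-reduct remains stably embedded over $\Geom(A)$; this simultaneously converts $\LGD(\Geom(A))$-invariance of $p$ into $\LG(\Geom(A))$-invariance of the $\Llt$-type $\tpprol[\omega](p)$, yielding $(\mathrm{iii})\Rightarrow(\mathrm{ii})$, and shows that $\Geom(A)$-invariance of $p$ in $\LGD$ already captures its full $A$-invariance in $\LltDeq$, yielding $(\mathrm{i})\Rightarrow(\mathrm{iii})$. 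I expect this transfer step to be the main obstacle, since it depends essentially on the $\NIP$ hypothesis and has no purely formal proof from the homeomorphism alone.
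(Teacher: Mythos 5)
The paper itself offers no proof of this statement: it is quoted directly from \cite[Corollary~3.5]{RidSim-NIP}, just as Proposition~\ref{prop:prol def} is quoted from Corollary~3.3 of the same reference. Your proposal is consistent with that attribution, and your direct verification of $(\mathrm{ii})\Rightarrow(\mathrm{iii})$ and $(\mathrm{iii})\Rightarrow(\mathrm{i})$ via the homeomorphism $\tpprol[\omega]$, the commutation of $\LGD$- and $\LltDeq$-automorphisms with $\prol[\omega]$, and the inclusion $\Geom(A)\subseteq A$ is sound; so is your identification of why the remaining directions genuinely need the $\NIP$ stable-embeddedness machinery of \cite{RidSim-NIP} rather than a purely formal argument.

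One economy you miss: $(\mathrm{ii})\Rightarrow(\mathrm{i})$ is also purely formal, by exactly the same argument as your $(\mathrm{ii})\Rightarrow(\mathrm{iii})$. Any $\sigma\in\aut[\LltDeq](M)[A]$ induces an $\LG$-automorphism fixing $\Geom(A)\subseteq A$ and commutes with $\prol[\omega]$, so $\tpprol[\omega](\sigma(p))=\sigma(\tpprol[\omega](p))=\tpprol[\omega](p)$ and injectivity gives $\sigma(p)=p$. With $(\mathrm{ii})\Rightarrow(\mathrm{i})$, $(\mathrm{ii})\Rightarrow(\mathrm{iii})$ and $(\mathrm{iii})\Rightarrow(\mathrm{i})$ all in hand, a single nontrivial implication $(\mathrm{i})\Rightarrow(\mathrm{ii})$ closes the cycle; you instead propose to import both $(\mathrm{iii})\Rightarrow(\mathrm{ii})$ and $(\mathrm{i})\Rightarrow(\mathrm{iii})$ from \cite{RidSim-NIP}, which is correct but uses more of the external machinery than is strictly necessary.
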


\begin{proposition}[prol inv st dom]
Let $p\in\Prol[x](A)$, $f$ be an $(\LG(A),\star)$-definable map defined on $p$ and let $D$ be the image of $f$. Assume that $A\subseteq\K\cup\Valgp\cup\lt$, $p$ is stably dominated via $f$ and that $\eq{D}_{\Llt}(\acleq[\LltD](A)) = \eq{D}_{\Llt}(\acleq[\Llt](A))$. Then $\tpprol[\omega]<-1>(p)$ is also stably dominated (via $f\comp\prol[\omega]$).
\end{proposition}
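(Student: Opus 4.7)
The plan is to reduce stable domination of $q:=\tpprol[\omega]<-1>(p)$ via $g:=f\comp\prol[\omega]$ in $\VDFG$ to the hypothesised stable domination of $p$ via $f$ in $\ACVFG$, by exploiting quantifier elimination of $\VDF$ through prolongations. Fix $a\models q$, set $a':=\prol[\omega](a)\models p$ and $c:=f(a')=g(a)$, and take $B\subseteq M$ (which we may assume lies in the base sorts $\K\cup\lt\cup\Valgp$) satisfying $\St[A]<\LGD>(\dcl[\LGD](AB))\indep_{A}^{\LGD} c$. The goal is $\tp(B)[Ac]\vdash\tp(B)[Aa]$.

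Pick $B_{1}\equiv_{Ac}^{\LGD} B$; we aim for $B_{1}\equiv_{Aa}^{\LGD} B$. Since $\prol[\omega]$ is $\LGD(A)$-definable, $\prol[\omega](B_{1})\equiv_{Ac}^{\LGD}\prol[\omega](B)$, and a fortiori $\prol[\omega](B_{1})\equiv_{Ac}^{\LG}\prol[\omega](B)$. By quantifier elimination for $\VDF$, $B_{1}\equiv_{Aa}^{\LGD} B$ is equivalent to $\prol[\omega](B_{1})\equiv_{Aa'}^{\LG}\prol[\omega](B)$. By the stable domination of $p$ via $f$, this latter equivalence holds as soon as $\St[A]<\LG>(\dcl[\LG](A\prol[\omega](B)))\indep_{A}^{\LG} c$.

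It remains to transfer the $\LGD$-independence hypothesis to the required $\LG$-independence. Fix $s\in\St[A]<\LG>(\dcl[\LG](A\prol[\omega](B)))$. Then $s$ lies in some $\LG(A)$-definable stable stably embedded set (residue-field-internal in $\ACVF$), which remains stable stably embedded in $\VDFG$; combined with $\dcl[\LG](A\prol[\omega](B))\subseteq\dcl[\LGD](AB)$, this gives $s\in\St[A]<\LGD>(\dcl[\LGD](AB))$, and the hypothesis yields $s\indep_{A}^{\LGD} c$.

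The main obstacle is the last step, deducing $s\indep_{A}^{\LG} c$ from $s\indep_{A}^{\LGD} c$, and this is precisely where the assumption $\eq{D}_{\Llt}(\acleq[\LltD](A)) = \eq{D}_{\Llt}(\acleq[\Llt](A))$ enters. The canonical base of $\tp(c)[As]$ in the stable structure on $D$ lives inside $\eq{D}_{\Llt}$, and non-forking over $A$ in a stable theory is detected by algebraicity of this canonical base over $A$. The hypothesis asserts that the $\LltD$- and $\Llt$-algebraic closures of $A$ coincide on $\eq{D}_{\Llt}$, so $\LGD$-algebraicity over $A$ of the canonical base forces its $\LG$-algebraicity, giving $s\indep_{A}^{\LG} c$ and completing the argument.
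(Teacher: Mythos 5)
Your proof follows the paper's strategy essentially verbatim: transport the $\LGD$-independence hypothesis to $\LG$-independence using both the fact that $\LG(A)$-definable stable stably embedded sets remain stable stably embedded in $\VDF$ and the assumption $\eq{D}_{\Llt}(\acleq[\LltD](A)) = \eq{D}_{\Llt}(\acleq[\Llt](A))$, then apply stable domination of $p$ via $f$ inside $\ACVF$, and pull back through the injectivity of $\tpprol[\omega]$. The one step on which both you and the paper are a bit terse is the independence transfer; your canonical-base sketch is the right idea, though one should be careful to specify that the relevant object is the $\Llt$-canonical base of the restriction to $\Llt$ of the global nonforking $\LltD$-extension, and that $\LGD$-independence makes it invariant under $\aut^{\LltD}(N/\acleq[\LltD](A))$, hence in $\eq{D}_{\Llt}(\acleq[\LltD](A))$.
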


\begin{proof}
We will need the following result:
\begin{claim}
Let $D$ be $\LG(M)$-definable. If $D$ is stable and stably embedded in $\ACVF$, then it is also stable and stably embedded in $\VDF$.
\end{claim}

\begin{proof}
It follows from \cite[Lemma\,2.6.2 and Remark\,2.6.3]{HasHruMac-ACVF} that $D\subseteq\dcl[\LG](E\cup\res)$ for some finite $E\subseteq D$. Because $\res$ also eliminates imaginaries, is stable and stably embedded in $\VDF$, it immediately follows that $D$ is stably embedded and stable in $\VDF$ too.
\end{proof}

Now let $c\models \tpprol[\omega]<-1>(p)$ and $B\subseteq\K$ be such that
\[\St[A]<\LltD>(\dcl[\LltD](AB)) \indep_{A}^{\LltD} f(\prol[\omega](c)).\]
By hypothesis, $\eq{D}_{\Llt}(\acleq[\LltD](A)) = \eq{D}_{\Llt}(\acleq[\Llt](A))$, and hence
\[\St[A]<\Llt>(\dcl[\Llt](A\prol[\omega](B))) \indep_{A}^{\Llt} f(\prol[\omega](c)).\]
Since $\prol[\omega](c)\models p$ and $p$ is stably dominated via $f$,
\[\begin{eqn}
\tp[\LltD](B)[A f(\prol[\omega](c))]&\vdash& \tp[\Llt](\prol[\omega](B))[A f(\prol[\omega](c))]\\
&\vdash&\tp[\Llt](\prol[\omega](B))[A\prol[\omega](c)]\\
&\vdash&\tp[\LltD](B)[Ac].
\end{eqn}\]
The last implication comes from the fact that $\tpprol[\omega]$ is one to one on the space of types.
\end{proof}

\begin{proposition}
Let $M\models\VDF$ be sufficiently saturated and homogeneous, $A\subseteq M$ and $p\in\TP[x]<\LGD>(M)$ be $\aut[\LGD](M)[A]$-invariant. The following are equivalent:
\begin{multicols}{2}
\begin{thm@enum}
\item $p$ is stably dominated;
\item $p$ is generically stable;
\item $p$ is orthogonal to $\Valgp$;
\item $\tpprol[\omega](p)$ is stably dominated;
\item $\tpprol[\omega](p)$ is generically stable;
\item $\tpprol[\omega](p)$ is orthogonal to $\Valgp$;
\end{thm@enum}
\end{multicols}
\end{proposition}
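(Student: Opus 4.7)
The plan is to split the six conditions into two columns---(i)--(iii) concerning the $\VDF$-type $p$, (iv)--(vi) concerning its $\ACVF$-prolongation $\tpprol[\omega](p)$---and to deduce all equivalences from two ingredients: (a) the classical coincidence of these three notions for invariant types in $\ACVF$, and (b) three vertical bridges transferring the properties through $\tpprol[\omega]$.

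For (a), Proposition\,\ref{prop:prol inv} tells us that $\tpprol[\omega](p)$ is $\aut[\LG](M)[\Geom(A)]$-invariant, so the equivalences (iv)$\,\Leftrightarrow\,$(v)$\,\Leftrightarrow\,$(vi) become a direct application of the stable domination theory of \cite{HasHruMac-Book}: in the $\NIP$ metastable theory $\ACVF$, an invariant type is stably dominated iff it is generically stable iff it is orthogonal to $\Valgp$.

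For (b), I would establish three vertical bridges. The equivalence (iii)$\,\Leftrightarrow\,$(vi) is essentially tautological: a realization of $\tprestr{\tpprol[\omega](p)}{B}$ is a prolongation $\prol[\omega](a)$ of some $a \models \tprestr{p}{B}$, and since $\D$ preserves $\K$, quantifier elimination forces $\Valgp(\dcl[\LGD](Ba)) = \Valgp(\dcl[\LG](B\prol[\omega](a)))$. The equivalence (ii)$\,\Leftrightarrow\,$(v) combines Proposition\,\ref{prop:prol def} (transfer of definability) with the observation that, by quantifier elimination, $p$ is finitely satisfiable in a small $N$ iff $\tpprol[\omega](p)$ is finitely satisfiable in $\prol[\omega](N)$. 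Finally, (iv)$\,\Rightarrow\,$(i) is Proposition\,\ref{prop:prol inv st dom}; its imaginary hypothesis is verified using the claim inside its proof (that stable stably embedded $\ACVF$-sets remain so in $\VDF$) together with the control of $\acleq[\LltD]$ from Section\,\ref{sec:dcl VDF}.

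The main obstacle is the remaining direction (i)$\,\Rightarrow\,$(iv), the converse of Proposition\,\ref{prop:prol inv st dom}. My plan is to route through orthogonality to the value group. Stable domination always implies generic stability for invariant types, and a generically stable invariant type is orthogonal to any $\emptyset$-definable o-minimal stably embedded sort: pushing forward by an $A$-definable map into $\Valgp$ produces a generically stable type on the divisible ordered abelian group $\Valgp$, which must be realized. This yields (i)$\,\Rightarrow\,$(ii)$\,\Rightarrow\,$(iii); the bridge (iii)$\,\Leftrightarrow\,$(vi) transports us across, and (a) then closes the loop at (iv).
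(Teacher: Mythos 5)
Your proposal closes the cycle and is correct in outline, but the route from (iii) to (iv) is genuinely different from the paper's. The paper proves (iii)$\,\Rightarrow\,$(iv) directly: it takes a maximally complete $C\models\VDF$ in the invariance base of $\tpprol[\omega](p)$, uses orthogonality of $p$ together with Proposition\,\ref{prop:dcl acl G res} to conclude $\Valgp(\dcl[\Llt](C\prol[\omega](c)))=\Valgp(C)$ for $c\models\tprestr{p}{C}$, and then invokes \cite[Theorem\,12.18.(ii)]{HasHruMac-Book} (types over maximally complete fields augmented by their value-group trace are stably dominated) to get stable domination of $\tprestr{\tpprol[\omega](p)}{C}$ and hence of its invariant extension. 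You instead prove a full bridge (iii)$\,\Leftrightarrow\,$(vi) and feed it into the $\ACVF$ equivalence (vi)$\,\Rightarrow\,$(iv). That rearrangement is tidier and more modular, but your justification of the bridge is too glib: for a small base $B$, a realization of $\tprestr{\tpprol[\omega](p)}{B}$ need \emph{not} be of the form $\prol[\omega](a)$ (the restricted $\Llt$-type does not mention $\D$), and the base of invariance of $\tpprol[\omega](p)$ is $\Geom(A)$ rather than $A$. The equivalence does hold, but the clean way to see it is via the characterization of orthogonality by pushforwards: every $\LG(M)$-definable $g:\K^{m}\to\Valgp$ gives $\push{g}\tpprol[\omega](p)=\push{(g\circ\prol[\omega])}p$, and conversely by quantifier elimination every $\LGD(M)$-definable $f$ into $\Valgp$ factors as $g\circ\prol[n]$ for some $\LG(M)$-definable $g$; alternatively one reduces to $B$ a $\VDF$-model, where realizations are all $B$-conjugate. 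Your (ii)$\,\Leftrightarrow\,$(v) bridge is redundant for closing the cycle (neither proof needs it). Both arguments ultimately rest on the HHM maximally-complete-field machinery; the paper uses only the weak form (orthogonality over the one base $C$, then 12.18(ii)), which is why it can avoid the full vertical equivalence you prove.
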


\begin{proof}
Since $\tpprol[\omega](p)$ is an $\ACVF$-type. The equivalence of (iv), (v) and (vi) is proved in \cite[Proposition\,2.8.1]{HruLoe}. Actually, the implications (i) $\imp$ (ii) $\imp$ (iii) hold in any $\NIP$ theory where $\Valgp$ is ordered. 

We proved in Proposition\,\ref{prop:prol inv st dom} that (iv) implies (i). Let us now prove that (iii) implies (iv). By Proposition\,\ref{prop:prol def}, $\tpprol[\omega](p)$ is $\aut[\LG](M)[C]$-invariant for some $C\subseteq M$. We may assume that $C\models\VDF$ and is maximally complete. Let $c\models\tprestr{p}{C}$. As $p$ is orthogonal to $\Valgp$, we have $\Valgp(C) \subseteq \Valgp(\dcl[\Llt](Cc)) \subseteq \Valgp(\dcl[\LltD](Cc)) = \Valgp(C)$. As $C$ is maximally complete, we have $\tp[\Ldiv](\prol[\omega](c))[C\Valgp(\dcl[\Llt](Cc))]$ is stably dominated (see \cite[Theorem\,12.18.(ii)]{HasHruMac-Book}). But
\[\tp[\Ldiv](\prol[\omega](c))[C\Valgp(\dcl[\Llt](Cc))] = \tp[\Ldiv](\prol[\omega](c))[C] = \tprestr{\tpprol[\omega](p)}{C}\] and hence $\tpprol[\omega](p)$ is also stably dominated.
\end{proof}

\section{Definable and algebraic closure in \texorpdfstring{$\VDF$}{VDF}}
\label{sec:dcl VDF}

In this section, we investigate the definable and algebraic closures in $\VDF$. We show that they are not as simple as one might hope. In $\DCF[0]$, the definable closure of $a$ is exactly the field generated by $\prol[\omega](a)$. In $\VDF$, we have, at least, to take in account the Henselianisation, but we show that the definable closure (in the field sort) of a new field element $a$ can be even larger than the Henselianisation of the field generated by $\prol[\omega](a)$. This fact was already known to Ehud Hrushovski and Thomas Scanlon but was never written down. However, we also show that the $\Valgp$, $\res$ and $\lt$ points of the definable closure (respectively algebraic closure) is exactly what one would expect: the $\ACVF$ definable closure (respectively algebraic closure) of the differential structure generated by the parameters.

We will, again, be working in the leading term language and all the sets of parameters that appear in this section will be living in the sorts $\K\cup\Valgp\cup\lt$. We denote by $\dalrg{A}$ (respectively $\dalfield{A}$) the $\LltD$-structure generated by $A$ (respectively the closure of $A$ under both $\LltD$-terms and inverses).

\begin{proposition}
Let $M\models\VDF$ be sufficiently saturated. For all $C\subseteq M$, there exists $A\subseteq M$, such that $C\subseteq A$ and $\K(\dcl[\Llt](\gen{\D}{A})) = \hens{\K(\dalfield{A})}\subset\K(\dcl[\LDdiv](A))$. In fact, there exists $a\in \K(\dcl[\LDdiv](A))$ which is transcendental over $\gen{\D}{A}$. In particular, we also have $a\nin \acl[\Llt](\gen{\D}{A})$.
\end{proposition}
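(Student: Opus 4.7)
My plan splits into two parts: (i) the equality $\K(\dcl[\Llt](\gen{\D}{A})) = \hens{\K(\dalfield{A})}$, and (ii) the strict inclusion witnessed by a transcendental element.

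For (i), the equality actually holds for any $\LltD$-substructure $A$ whose $\K$-part is a domain. Indeed, the $\Llt$-reduct of $\VDF$ agrees with $\ACVF$ in the leading term language on the field sort (by the quantifier elimination of $\VDF$ restricted to $\Llt$), so $\dcl[\Llt](\gen{\D}{A})\cap\K$ is the $\ACVF$-$\dcl$ of the valued field $\K(\dalfield{A})$, which is its henselisation; the $\lt$ and $\Valgp$ parameters contribute no new field elements. This handles the equality with no particular choice of $A$ required.

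For (ii), the tool is $\D$-Henselianity. I would pick $\alpha \in M$ with $\val(\alpha) > 0$ and $\alpha$ differentially transcendental over $C$ (available by saturation of $M$), set $A := \gen{\D}{C\cup\{\alpha\}}$, and consider the Riccati-type $\D$-polynomial $P(Y) := \D(Y) - Y^{2} - \alpha \in A\{Y\}$. At $Y = 0$ we have $P(0) = -\alpha$ with $\val(-\alpha) > 0$, and $\partial P/\partial Y^{(1)}(0) = 1$ is a unit, so the $\D$-Hensel hypotheses are satisfied at $0$. $\D$-Hensel's lemma then produces a unique $a \in \Val(M)$ with $P(a) = 0$ and $\resf(a) = 0$; this uniqueness places $a \in \K(\dcl[\LDdiv](A))$ essentially by definition.

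The main obstacle is proving $a$ is transcendental over $\gen{\D}{A}$. My strategy is to exploit the classical linearisation $Y = -\D(u)/u$, which turns $\D(Y) = Y^{2} + \alpha$ into the linear ODE $\D^{2}(u) + \alpha u = 0$. For $\alpha$ differentially transcendental over $C$, standard differential Galois theory yields that this linear equation has differential Galois group $\mathrm{SL}_{2}$ over $C\langle\alpha\rangle$; in particular no nonzero solution $u$ is algebraic over $C\langle\alpha\rangle$, and consequently no solution of the Riccati equation is algebraic over $C\langle\alpha\rangle \supseteq \gen{\D}{A}$. Any field-algebraic relation between $a$ and $\gen{\D}{A}$ would persist in a differentially closed extension of $M$ and contradict this analysis, so $a$ is transcendental over $\gen{\D}{A}$. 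Combined with (i) this yields $a \in \K(\dcl[\LDdiv](A)) \setminus \hens{\K(\dalfield{A})}$; transcendence over $\gen{\D}{A}$ also forces $a \notin \acl[\Llt](\gen{\D}{A})$ since on the field sort the $\Llt$-algebraic closure of a valued field coincides with its field-theoretic algebraic closure.
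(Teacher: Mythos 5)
Your argument for the equality $\K(\dcl[\Llt](\gen{\D}{A})) = \hens{\K(\dalfield{A})}$ is fine and agrees with what the paper takes for granted. The problem is in part (ii), at the step where you claim that $\D$-Hensel's lemma "produces a \emph{unique} $a$"; the definition of $\D$-Henselianity in the paper is a pure existence statement, and for the Riccati equation $P(Y)=\D(Y)-Y^2-\alpha$ uniqueness in fact fails, which is fatal because without it $a$ need not lie in $\dcl[\LDdiv](A)$ at all.

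To see the failure concretely, suppose $a,b\in\Mid$ both satisfy $P=0$ and set $\eta=a-b$. Subtracting gives $\D(\eta)=(a+b)\eta$, and contractivity only yields $\val(\D(\eta))\geq\val(\eta)$, which is perfectly compatible with $\val((a+b)\eta)=\val(a+b)+\val(\eta)>\val(\eta)$ --- there is no contradiction, and indeed one can build nontrivial such $\eta$ with positive valuation (e.g.\ in $k((t^{\Rr}))$ by solving $\D(\eta_\gamma)=\sum_{\delta>0}(a+b)_\delta\,\eta_{\gamma-\delta}$ recursively, with the lowest coefficient a free constant, which is exactly why the solutions are permuted by automorphisms over $A$). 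This is precisely the issue the paper's choice of $\D$-polynomial is designed to avoid: it uses $Q_a(x)=x-a+\epsilon P(x)$, so that the linear $x$ term has coefficient of valuation $0$ while every term involving $\D$ or higher powers is multiplied by $\epsilon\in\Mid$ and hence has \emph{strictly} larger valuation; the direct computation then forces $\val(Q_a(y))=\val(\eta)$ for $\eta\neq 0$, giving uniqueness. Your Riccati polynomial has the dominant Jacobian entry in the $\partial/\partial Y^{(1)}$ slot rather than the $\partial/\partial Y^{(0)}$ slot, and a contractive derivation does not strictly contract, so the Newton-step uniqueness mechanism breaks down. Once the uniqueness claim is removed, the remainder of your argument (linearising to $\D^2 u + \alpha u = 0$ and appealing to differential Galois theory) is not reached, but note it is also a genuinely different route from the paper's, which instead picks the concrete polynomial $x-a-\epsilon\D(x)$, computes the explicit solution $\sum_i\D^i(a)\epsilon^i$ in $k[[\epsilon]]$, and shows non-algebraicity by exhibiting an automorphism of $k[[\epsilon]]\cup C$ over $C\cup\{\prol[n](a),\epsilon\}$ with infinite orbit. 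If you want to keep a Riccati-flavoured example, you would need to multiply it by a small $\epsilon$ to regain strict contraction, at which point you essentially recover the paper's $Q_a$.
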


We show that certain differential equations which have infinitely many solutions in differentially closed fields have only one solution in models of $\VDF$. We then show that this unique solution is not algebraic over the parameters.

\begin{proof} Let $P(X)\in\Val(M)\{X\}$, $a\in\Val(M)$ and $\epsilon\in\Mid(M)$. Let $Q_{a}(x) = x - a + \epsilon P(x)$. Then $Q_{a}$ has a unique zero in $M$. Indeed $\val(Q_{a}(a)) > 0$, $\val(\frac{\partial Q_{a}}{\partial X^{(0)}}(a)) = \val(1) = 0$ and $\val(\frac{\partial Q_{a}}{\partial X^{(i)}}(a)) = \val(\epsilon) + \val(\frac{\partial P}{\partial X_{i}}(a)) > 0$, hence $\sigma$-Henselianity applies and $Q_a$ has at least one zero.

If $Q_{a}(x) = Q_{a}(y) = 0$, then $\resf(x) = \resf(a) = \resf(y)$. Let $\eta := x-y$, we have \[Q_{a}(y) = x + \eta - a + \epsilon P(x+\eta) = x + \eta - \epsilon(\sum_{I}P_{I}(a)\eta^{I}) = \eta + \epsilon(\sum_{\card{I} > 0}P_{I}(a)\eta^{I}).\]
But, if $\eta\neq 0$, $\val(\epsilon P_{I}(a)\eta^{I}) > \card{I}\val(\eta) \geq \val(\eta)$ and hence $\val(Q_{a}(y)) = \val(\eta) \neq \infty$, a contradiction. Hence the equation $Q_a(x) = 0$ has a unique solution in $M$.

Let us now show that, if $a$ and $P$ are chosen correctly, the solution to this equation is not algebraic. We may assume that $C = \dcl[\Llt](\K(C))$. Let $k$ be a differential field, $\widetilde{a}\in k$ be differentially transcendental. Let us equip $k[[\epsilon]]$ with the usual contractive derivation (cf. Example\,\ref{ex:Hahn VDF}). We embed $k[[\epsilon]]$ in $M$ so that $k$ and $\res(C)$ are independent and $\K(C)(\epsilon)$ is a transcendental ramified extension of $\K(C)$. To avoid any confusion, let us denote by $a$ the image of $\widetilde{a}$ by the embedding of $k$ into $k[[\epsilon]]$ and into $M$. One can check that for all $n\in\Nn$, $\resf(\K(C)(\epsilon,\prol[n](a))) = \res(C)(\prol[n](\widetilde{a}))$.

Let us now try to solve $x-a - \epsilon \D(x) = 0$ in $k[[\epsilon]]$. Let $x = \sum x_{i}\epsilon^{i}$ where $x_{i} \in k$, the equation can then be rewritten as:
\[\sum x_{i}\epsilon^{i} = a\epsilon^{0} + \sum \D(x_{i})\epsilon^{i+1},\]
Hence $x_{0} = a$ and $x_{i+1} = \D(x_{i}) = \D^{i+1}(a)$. If $x\in\alg{\dalfield{C,a,\epsilon}}$ then for some $n\in\Nn$, we must have $x\in\alg{\K(C)(\prol[n](a),\epsilon)}$. Any automorphism of $\sigma:k\cup\res(C)$ fixing $\res(C)$ can be lifted into an automorphism of $k[[\epsilon]]\cup C$ fixing $C$ and sending $\sum x_{i}\epsilon^{i}\in k[[\epsilon]]$ to $\sum \sigma(x_{i})\epsilon^{i}$. Because $\D^{n+1}(\widetilde{a})$ is transcendental over $\res(C)(\prol[n](\widetilde{a}))$, it follows that $x$ has an infinite orbit over $A = \K(C)(\prol[n](a),\epsilon)$. Therefore $x \in\dcl[\LltD](A) \sminus \alg{A}$.
\end{proof}

Let us now consider what happens for $\Valgp$ and $\res$.

\begin{proposition}[dcl acl G res]
Let $M\models\VDF$ and $A\subseteq M$, then
\[\Valgp(\dcl[\LltD](A)) = \Valgp(\acl[\LltD](A)) = \DIV{\Valgp(\dalfield{A})},\]
\[\res(\dcl[\LltD](A)) = \res(\dalfield{A})\text{ and }\res(\acl[\LltD](A)) = \alg{\res(\dalfield{A})}.\]
\end{proposition}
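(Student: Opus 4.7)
The plan is to reduce each equality to a computation inside the sort $\Valgp$ or $\res$ itself, using stable embeddedness and purity of these sorts (items (ii) and (iii) of the ``known VDF'' theorem), and then to apply the standard descriptions of $\dcl$ and $\acl$ in divisible ordered abelian groups and in $\DCF[0]$.

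First I would verify the easy inclusions $\supseteq$. Because $\dalfield{A}$ is by construction closed under $\LltD$-terms and inverses, $\Valgp(\dalfield{A})$ is a subgroup of $\Valgp(M)$; and since $\Dlt$ preserves $\lt[0] = \res$ and all the ring operations on $\res$ are $\LltD$-terms, $\res(\dalfield{A})$ is a differential subfield of $\res(M)$. In $M$ the value group is divisible and torsion-free, so if $\gamma \in \DIV{\Valgp(\dalfield{A})}$ with $n\gamma \in \Valgp(\dalfield{A})$ for some $n > 0$, then $\gamma$ is the unique solution of $n x = n\gamma$ and hence lies in $\dcl[\LltD](A)$. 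Similarly, $\res \models \DCF[0]$ in $M$, so every element field-theoretically algebraic over $\res(\dalfield{A})$ has finitely many conjugates and thus sits in $\acl[\LltD](A)$.

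For the reverse inclusions, the heart of the argument is a parameter-reduction step. By quantifier elimination in $\LltD$, any $\LltD(A)$-definable subset of $\Valgp^n$ (resp.\ $\res^n$) is a boolean combination of atomic formulas whose $\Valgp$-parameters (resp.\ $\res$-parameters) are values of $\LltD$-terms applied to tuples from $A$, and hence already lie in $\Valgp(\dalfield{A})$ (resp.\ in $\res(\dalfield{A})$); purity of $\Valgp$ and $\res$ then says each such set is in fact definable in the pure theory (divisible ordered abelian groups, resp.\ $\DCF[0]$) over these parameters. Applying this to $\{\gamma\}$ with $\gamma \in \Valgp(\acl[\LltD](A))$, and to $\{\lambda\}$ with $\lambda \in \res(\acl[\LltD](A))$ or $\res(\dcl[\LltD](A))$, reduces the computations to $\dcl$ and $\acl$ in the reduced theories. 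In divisible ordered abelian groups, $\dcl$ and $\acl$ both coincide with the divisible hull of the subgroup generated by the parameters, which for the subgroup $\Valgp(\dalfield{A})$ is exactly $\DIV{\Valgp(\dalfield{A})}$. In $\DCF[0]$, the definable closure of a differential subfield is itself, giving $\res(\dalfield{A})$, while the algebraic closure adjoins field-theoretic algebraic elements, yielding $\alg{\res(\dalfield{A})}$.

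The main technical point I expect to have to be careful about is the bookkeeping in the parameter-reduction step: when an atomic $\LltD$-formula mixes the leading term maps $\ltf$, $\vallt$ and the induced derivation $\Dlt$ on $\lt$, one must check that rewriting it as a pure $\Valgp$- or $\res$-formula only introduces auxiliary parameters already in $\Valgp(\dalfield{A})$ or $\res(\dalfield{A})$. This is immediate from the definition of $\dalfield{A}$ as closed under every $\LltD$-term and under inverses, but does need tracking through the fibered structure of $\lt$ and through the restriction of $\Dlt$ to $\res$.
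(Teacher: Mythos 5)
Your proposal follows essentially the same route as the paper: use quantifier elimination in $\LltD$ to reduce the parameters of any formula with free variables in $\Valgp$ (resp.\ $\res$) to $\Valgp(\dalfield{A})$ (resp.\ $\res(\dalfield{A})$), then invoke stable embeddedness and purity to finish the computation inside $\DOAG$ and $\DCF[0]$. The easy $\supseteq$ inclusions and the final reductions are all in order.

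The one place you underestimate the work is exactly the step you single out as a ``bookkeeping'' concern. Closure of $\dalfield{A}$ under $\LltD$-terms and inverses gets you $\lt(\dalfield{A})$-parameters, not automatically $\res(\dalfield{A})$-parameters, and it is not immediate that an atomic condition on $\res$ with coefficients in higher fibres of $\lt$ can be rewritten over $\res(\dalfield{A})$. The paper handles this with a concrete normalization: for a relation of the form $\sum_{i} a_{i} x^{i} = 0$ with $a_{i} \in \lt(\dalfield{A})$ and $x$ ranging over $\res$, one first observes that only the monomials of minimal $\vallt$-value contribute, so one may assume all surviving $a_{i}$ have the same valuation, and then divides by one of them to bring all coefficients into $\lt[0] = \res$. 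That small but genuine argument is what your ``immediate from the definition'' conceals; with it spelled out, the proof is complete and matches the paper's.
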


\begin{proof}
Let us first show that $\Gamma(\acl[\LltD](A)) = \DIV{\val(\dalfield{A})}$. By quantifier elimination in the leading term language, any formula with variables in $\Valgp$ and parameters in $A$ is of the form $\phi(x,a)$ where $a\in\Valgp(\dalrg{A})$ (note that this is a stronger result than stable embeddedness of $\Valgp$ as we have strong control over the new parameters). In particular, any $\gamma\in\Valgp(M)$ algebraic over $A$ is algebraic over $\Valgp(\dalrg{A})$, in $\Valgp$ which is a pure divisible ordered Abelian group. It follows immediately that $\gamma\in \DIV{\Valgp(\dalfield{A})}$. Finally, as $\DIV{\val(\dalfield{A})}$ is rigid over $\val(\dalfield{A}) \subseteq\Valgp(\dcl[\LltD](A))$ the equality $\Valgp(\dcl[\LltD](A)) = \Valgp(\acl[\LltD](A))$ also holds.

As for the results concerning $\res$, they are proved similarly. Indeed, any formula with variables in $\res$ and parameters in $A$ is of the form $\phi(x,a)$ where $a\in\res(\dalfield{A})$ is a tuple. The proof of this fact requires a little more work than for $\Valgp$ because formulas of the form $\sum_{i\in I} a_{i} x^{i} = 0$ where $a_{i}\in\lt(\dalfield{A})$ are not immediately seen to be of the right form. But we may assume that all $a_{i}$ have the same valuation (as only the monomials with minimal valuation are relevant to this equation). Hence, this formula is equivalent to $\sum_{i\in I} a_{i}a_{i_{0}}^{-1} x^{i} = 0$ which is of the right form.

The results now follow from the fact that in $\DCF[0]$ the definable closure is just the differential field generated by the parameters and the algebraic closure is its field theoretic algebraic closure.
\end{proof}

\begin{proposition}[acl RV]
For all $M\models\VDF$ and $A \subseteq M$, $\lt(\dcl[\LltD](A)) = \lt(\dalfield{A})$ and $\lt(\acl[\LltD](A)) = \lt(\acl[\Llt](\dalrg{A}))$.
\end{proposition}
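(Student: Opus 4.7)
The argument mirrors that of Proposition~\ref{prop:dcl acl G res}, with the extra work of passing through the pure leading term theory $T_\lt$ of Lemma~\ref{lem:EQRV}. The easy inclusions are immediate: $\lt(\dalfield{A})\subseteq\lt(\dcl[\LltD](A))$ since $\dcl[\LltD]$ is closed under $\LltD$-terms and under multiplicative inverses in $\lt$, and $\lt(\acl[\Llt](\dalrg{A}))\subseteq\lt(\acl[\LltD](A))$ since $\dalrg{A}\subseteq\acl[\LltD](A)$ and $\acl[\Llt]\subseteq\acl[\LltD]$.

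For the converse direction I would first reduce to a question about the pure $T_\lt$-structure on $\lt$. By quantifier elimination in the leading term language (Theorem~\ref{thm:known VDF}) and the normalization trick from Proposition~\ref{prop:dcl acl G res}---an atomic relation $\sum_i t_i(a)\cdot x^i=0$ with the $t_i(a)\in\lt(\dalrg{A})$ in a common fiber is replaced by $\sum_i t_i(a)t_{i_0}(a)^{-1}\cdot x^i=0$, which has coefficients in $\lt(\dalfield{A})$---every $\LltD(A)$-formula in variables of sort $\lt$ is equivalent to an $\LltDlt$-formula with parameters in $B:=\lt(\dalfield{A})\cup\Valgp(\dalfield{A})$. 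Combined with the stable embeddedness and purity of $\lt$ this yields $\lt(\dcl[\LltD](A))=\lt(\dcl[\LltDlt](B))$ and $\lt(\acl[\LltD](A))=\lt(\acl[\LltDlt](B))$, so Lemma~\ref{lem:EQRV} applies.

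Next I would compute these closures in $T_\lt$ fiber-by-fiber via the back-and-forth of Lemma~\ref{lem:EQRV}. Fix $b\in\lt[\gamma](M)$ with $\gamma=\vallt(b)$. For the definable closure, if $\gamma\in\vallt(\lt(B))$ pick $c\in\lt[\gamma](B)$ and write $b=\lambda c$ with $\lambda\in\res(M)$; then $\lambda=bc^{-1}\in\res(\dcl[\LltD](A))=\res(\dalfield{A})\subseteq B$ by Proposition~\ref{prop:dcl acl G res}, so $b\in\lt(\dalfield{A})$. If $\gamma\notin\vallt(\lt(B))$, the extension step in the proof of Lemma~\ref{lem:EQRV}---choose a representative $c\in\lt[\gamma](M)$, reduce to $\Dlt(c)=0$ by an $\res$-rescaling using the linear ODE $\Dlt(\mu)+\lambda\mu=0$, then remark that $c$ is $\LltDlt$-transcendental over $B$---produces infinitely many $B$-conjugates of $b$ since $\res\models\DCF[0]$, contradicting definability. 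The algebraic closure case is parallel: when $\gamma\in\vallt(\lt(B))$, $\lambda$ only needs to lie in $\res(\acl[\LltD](A))=\alg{\res(\dalfield{A})}\subseteq\res(\acl[\Llt](\dalrg{A}))$; when $\gamma\notin\vallt(\lt(B))$, finite orbit of $b$ forces $b^n\in B$ for some minimal $n\geq 1$, so $b$ is an $n$-th root of an element of $\lt(\dalfield{A})\subseteq\lt(\dcl[\Llt](\dalrg{A}))$, which in turn lies in $\lt(\acl[\Llt](\dalrg{A}))$ since $n$-th roots of $\lt$-elements are $\Llt$-algebraic over $\dalrg{A}$.

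The main obstacle is the "new fiber" case of the definable closure: one must extract from Lemma~\ref{lem:EQRV} that the freedom in choosing a $\Dlt$-constant representative of $\lt[\gamma](M)$, modulo the linear differential constraint, really yields infinitely many $B$-conjugates of $b$, rather than collapsing under some hidden algebraic constraint imposed by $b$ being definable. Once that is established, the remaining fiber analysis and the parallel argument for $\acl$ are routine applications of the back-and-forth together with Proposition~\ref{prop:dcl acl G res}.
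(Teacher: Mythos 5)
Your proposal follows the same skeleton as the paper's proof: first reduce, via quantifier elimination in the leading term language and the normalisation trick from Proposition\,\ref{prop:dcl acl G res}, to parameters $\tA := (\lt\cup\Valgp)(\dalfield{A})$; then analyse each fibre $\lt[\gamma]$ according to whether $\gamma$ comes from $\tA$ or not, handling the "old fibre" case by rescaling into $\res$ and invoking Proposition\,\ref{prop:dcl acl G res}. That part is sound.

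The gap you flag — showing that in a "new" fibre the orbit of $b$ is genuinely infinite — is a real gap, and the back-and-forth of Lemma\,\ref{lem:EQRV} does not close it by itself: it produces embeddings into a saturated target, not $\LltD$-automorphisms of $M$ over $\tA$, and converting the freedom in the back-and-forth into conjugates requires an extra argument. The paper instead constructs the automorphisms directly: choose a compatible system $(d_i)_{i\in\Nn}$ in $\cst{\res}$ with $d_{mn}^{n}=d_m$, split $\Valgp = \DIV{\Valgp(\tA)}\oplus\bigoplus_i\Qq\gamma_i$, define a group morphism $\sigma\colon\Valgp\to\inv*{\res}(M)$ trivial on $\DIV{\Valgp(\tA)}$ and on all but one chosen $\gamma_i$, and set $\tau(x) := \sigma(\vallt(x))\cdot x$. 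One checks $\tau$ is an $\LltDlt$-automorphism fixing $\tA$ (the Leibniz compatibility follows since the $d_i$ are $\D$-constants), and it acts on $\lt[\gamma]$ by multiplication by $d_1$. When $\gamma\nin\DIV{\vallt(\lt(\tA))}$ the scalar $d_1$ is unconstrained in $\cst{\inv*{\res}}$, giving an infinite orbit; when $\gamma\in\DIV{\vallt(\lt(\tA))}\sminus\vallt(\lt(\tA))$ with $n$ minimal such that $\gamma^n\in\vallt(\lt(\tA))$, one takes $d_1=1$ and $d_n$ an $n$-th root of unity, giving orbit size exactly $n>1$. This is why the paper's case split is three-fold, not two-fold: the two "new fibre" subcases behave differently, and only in the second one does the fibre meet $\acl[\Llt](\tA)$.

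There is also a small but genuine error in your $\acl$ argument for the new-fibre case: finite orbit does not force $b^n\in B$. It forces $\gamma^n\in\vallt(\lt(\tA))$, so $b^n$ lies in an old fibre, but then $b^n = \lambda a'$ for some $a'\in\tA$ and $\lambda\in\res(\acl[\LltD](\tA)) = \alg{\res(\tA)}$; the scalar $\lambda$ need not lie in $\res(\tA)$. The paper avoids this by instead exhibiting an $\Llt$-algebraic element $a$ of the fibre $\lt[\gamma]$ itself (Claim\,\ref{claim:fibre div}) and then writing $b = (b a^{-1})\cdot a$ with $b a^{-1}\in\res(\acl[\LltD](\tA)) = \res(\acl[\Llt](\tA))$. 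Your conclusion is still reachable, but the route through "$b^n\in B$" as written is false and needs this correction.
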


\begin{proof} Let $\tA := (\lt\cup\Valgp)(\dalfield{A})$. By quantifier elimination for $\VDF$ in the leading term language, any formula with variables in $\lt$  and parameters in $A$ is of the form $\phi(x,a)$ where $a\in\tA$ is a tuple. In particular, $\lt(\dcl[\LltD](A)) = \lt(\dcl[\LltD](\tA))$ and $\lt(\acl[\LltD](A)) = \lt(\acl[\LltD](\tA))$.

\begin{claim}[fibre out div]
For all $\gamma\in\Valgp\sminus\DIV{\vallt(\lt(\tA))}$, $\lt[\gamma](\acl[\LltD](\tA)) = \emptyset$.
\end{claim}

\begin{proof}
Pick any $(d_{i})_{i\in\Nn}\in \res$ such that $d_{m n}^{n} = d_{m}$ and $\D(d_{m}) = 0$ for all $m$ and $n\in\Nn$. Let us write $\Valgp = (\DIV{\Valgp(\tA)})\bigoplus_{i\in I} \Qq\gamma_{i}$ where one of the $\gamma_{i}$ is $\gamma$. Define a group morphism $\sigma : \Valgp \to \res(M)$ sending all of $\DIV{\Valgp(\tA)}$ and all $\gamma_{i}\neq\gamma$ to $0$ and $p/q\cdot \gamma$ to $d_{q}^{p}$. For all $x\in\lt$, we now define $\tau(x) = \sigma(\vallt(x))\cdot x$. It is easy to check that $\tau$ is an $\LltDlt$-automorphism of $\lt$ and that $\tau$ fixes $\tA$.

On the fibre $\lt[\gamma]$, $\tau$ sends $x$ to $d_{1}\cdot x$. It immediately follows that, because we have infinitely many choices for $d_{1}$ (as $\cst{\res}$ is algebraically closed), the $\aut[\LltDlt](\lt)[\tA]$-orbit of $x$ is infinite. Thus $\lt[\gamma](\dcl[\LltD](\tA)) = \lt[\gamma](\acl[\LltD](\tA)) = \emptyset$.
\end{proof}

\begin{claim}[fibre div]
For all $\gamma\in\DIV{\vallt(\lt(\tA))}\sminus\vallt(\lt(\tA))$, $\lt[\gamma](\dcl[\LltD](\tA)) = \emptyset$ and $\lt[\gamma](\acl[\Llt](\tA)) \neq \emptyset$.
\end{claim}

\begin{proof}
Let $n$ be minimal such that $\delta = \gamma^{n}\in\vallt(\lt(\tA))$. Taking $d_{i}$ as above, with $d_{1} = 1$, and defining $\sigma$ such that $\sigma(p/q \cdot\gamma) = d_{q}^{p}$, we obtain an $\LltD$-automorphism $\tau$ which fixes $\tA$ and acts on $\lt[\gamma]$ by multiplying by $d_{n}$. As there are $n$ choices for $d_{n}$, we obtain that $\lt[\gamma](\dcl[\LltD](\tA)) = \emptyset$.

Now, let us show that $\lt[\gamma](\acl[\Llt](\tA)) \neq \emptyset$. Let $c\in \lt[\gamma]$. We have $\vallt(c^{n}) = \gamma^{n}$ and there exists $\lambda\in\res$ such that $\lambda\cdot c^{n}\in \tA$. Let $\mu\in\res$ be such that $\mu^{n} = \lambda$ and let $a = \mu\cdot c$. Then $a^{n} = \lambda\cdot c^{n}\in \tA$. As, the kernel of $x\mapsto x^{n}$ is finite, we have $a \in \acl[\Llt](\tA)$.
\end{proof}

Let $c\in\lt(\dcl[\LltD](\tA))$. By Claims\,\ref{claim:fibre div} and \ref{claim:fibre out div}, $\vallt(c) = \vallt(a)$ for some $a\in\tA$. It follows that $c\cdot a^{-1}\in \res(\dcl[\LltD](\tA))$, which, by Proposition\,\ref{prop:dcl acl G res} is equal to $\res(\tA)$. Hence $c = (c\cdot a^{-1})\cdot a \in \lt(\tA)$. If $c\in\lt(\acl[\LltD](\tA))$, by Claim\,\ref{claim:fibre out div} and \ref{claim:fibre div}, $\vallt(c) = \vallt(a)$ for some $a\in\acl[\Llt](\tA)$. Then $c\cdot a^{-1}\in \res(\acl[\LltD](\tA)) = \res(\acl[\Llt](\tA))$.
\end{proof}

Concerning the definable closure and algebraic closure in the sort $\K$, although the situation is not ideal, we nevertheless have some control over it:

\begin{corollary}[acl imm]
Let $M\models\VDF$ and $A\subseteq \K(M)$, then $\K(\acl[\LltD](A))$ is an immediate extension of $\alg{\K(\dalfield{A})}$.
\end{corollary}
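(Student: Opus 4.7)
The plan is to compute the value group and residue field of $\K(\acl[\LltD](A))$ and check that both agree with those of $\alg{F}$, where $F := \K(\dalfield{A})$. Since $\dalfield{A}\subseteq\acl[\LltD](A)$ and the latter is closed under field-theoretic algebraicity, we automatically get $\alg{F}\subseteq\K(\acl[\LltD](A))$, so the real task is to rule out any value group or residue field growth on top of $\alg{F}$.

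The key preliminary observation is that, because $A\subseteq\K(M)$, the $\LltD$-substructure $\dalfield{A}$ acquires no new value group or residue field elements beyond those already visible in $F$. Every element of $\lt(\dalfield{A})$ is obtained from $\ltf(F)$ by iterated group operations (multiplication, inverse, fibrewise addition) and applications of $\Dlt$; since $\Dlt$ preserves each fibre $\lt[\gamma]$ (collapsing to the zero of that fibre when $\val(\D(x))>\val(x)$), no new value arises, so $\Valgp(\dalfield{A})=\val(F)$. Specialising the fibre argument to $\lt[0]=\res$, and noting that the restriction of $\Dlt$ to $\res$ is just the derivation induced by $\D$ on the residue field $\resf(\Val(F))$ (which is already visible in $F$, since $\K(\dalfield{A})$ is a differential field), one gets $\res(\dalfield{A})=\resf(\Val(F))$.

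Feeding these equalities into Proposition \ref{prop:dcl acl G res} yields $\Valgp(\acl[\LltD](A))=\DIV{\val(F)}=\val(\alg{F})$ and $\res(\acl[\LltD](A))=\alg{\resf(\Val(F))}=\res(\alg{F})$, the final equalities being the standard fact that the algebraic closure of a valued field has divisible value group and algebraically closed residue field. The two-sided inclusion
\[\val(\alg{F})\subseteq\val(\K(\acl[\LltD](A)))\subseteq\Valgp(\acl[\LltD](A))=\val(\alg{F})\]
(and the analogous one for residues) then collapses to equality, which is precisely what it means for $\K(\acl[\LltD](A))/\alg{F}$ to be immediate.

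The only genuinely delicate step should be the fibre-preservation analysis for $\Dlt$ together with the identification of its restriction to $\res$ with the usual induced derivation on $\resf(\Val(F))$; both are routine unpackings of the leading term language definitions, but they are what lets Proposition \ref{prop:dcl acl G res} be applied cleanly without introducing spurious value group or residue field elements.
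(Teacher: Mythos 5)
Your proposal is correct and follows essentially the same route as the paper: both apply Proposition\,\ref{prop:dcl acl G res} to bound the value group and residue field of $\K(\acl[\LltD](A))$ by $\DIV{\Valgp(\dalfield{A})}$ and $\alg{\res(\dalfield{A})}$, then identify these with the value group and residue field of $\alg{\K(\dalfield{A})}$. You have in addition spelled out the small step, left implicit in the paper's one-line proof, that $\Valgp(\dalfield{A})=\val(\K(\dalfield{A}))$ and $\res(\dalfield{A})=\resf(\Val(\K(\dalfield{A})))$ when $A\subseteq\K(M)$.
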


\begin{proof}
By Proposition\,\ref{prop:dcl acl G res}, we have that $\val(\K(\acl[\LltD](A)))\subseteq \val(\alg{\K(\dalfield{A})})$ and that $\resf(\K(\acl[\LltD](A)))\subseteq \resf(\alg{\K(\dalfield{A})})$.
\end{proof}

\begin{corollary}[dcl imm]
Let $M\models\VDF$ and $A\subseteq \K(M)$ then $\K(\dcl[\LltD](A))$ is an immediate extension of $\K(\dalfield{A})$.
\end{corollary}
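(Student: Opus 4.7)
My plan is to prove the two defining conditions of immediacy---no new valuations and no new residues in passing from $\K(\dalfield{A})$ to $\K(\dcl[\LltD](A))$. The strategy is to transport the control we have on the sorts $\lt$ and $\res$ (given by Propositions~\ref{prop:acl RV} and \ref{prop:dcl acl G res}) to the field sort $\K$ through the leading term map. The reverse inclusions ($\val(\K(\dalfield{A})) \subseteq \val(\K(\dcl[\LltD](A)))$ and similarly for residues) are trivial by nesting, so only the forward inclusions demand argument.

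The one auxiliary fact I would establish first is that $\lt(\dalfield{A}) = \ltf(\K(\dalfield{A}))$. This is a short induction on the generation of $\dalfield{A}$: multiplication and inversion on $\lt$ lift through $\ltf$ multiplicatively; addition of $\ltf(a)$ and $\ltf(b)$ on a common fibre lifts to $\ltf(a+b)$ (or to $\ltf(0)=0$ when the sum drops in valuation); and the induced derivation $\Dlt$ on $\lt[\gamma]$ sends $\ltf(a)$ either to $\ltf(\D(a))$ or to $0$, both of which remain in $\ltf(\K(\dalfield{A}))$ since $\K(\dalfield{A})$ is closed under $\D$.

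For the value group, I would take nonzero $a \in \K(\dcl[\LltD](A))$. Then $\ltf(a) \in \lt(\dcl[\LltD](A))$, which by Proposition~\ref{prop:acl RV} equals $\lt(\dalfield{A}) = \ltf(\K(\dalfield{A}))$. Picking $b \in \K(\dalfield{A})$ with $\ltf(b) = \ltf(a)$ then gives $\val(a) = \vallt(\ltf(a)) = \val(b) \in \val(\K(\dalfield{A}))$. For the residue field, for any $c \in \K(\dcl[\LltD](A)) \cap \Val$, Proposition~\ref{prop:dcl acl G res} gives $\resf(c) \in \res(\dcl[\LltD](A)) = \res(\dalfield{A})$; since $\res = \lt[0]$ and $\lt(\dalfield{A}) = \ltf(\K(\dalfield{A}))$, we have $\res(\dalfield{A}) = \resf(\K(\dalfield{A}) \cap \Val)$, whence $\resf(c) \in \resf(\K(\dalfield{A}))$.

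The proof essentially packages Propositions~\ref{prop:acl RV} and \ref{prop:dcl acl G res}, so no significant obstacle arises. The only mildly delicate step is the preliminary identification $\lt(\dalfield{A}) = \ltf(\K(\dalfield{A}))$, which hinges on the compatibility of $\Dlt$ with $\D$ granted by contractivity.
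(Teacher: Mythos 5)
Your argument is correct, but it takes a genuinely different route from the paper's. The paper only invokes Proposition~\ref{prop:dcl acl G res}, which gives merely $\val(L)\subseteq\DIV{\val(\hens{\K(\dalfield{A})})}$ (for $L=\K(\dcl[\LltD](A))$), and then closes the gap from the divisible hull by a dedicated argument: pick $n$ minimal with $n\cdot\val(c)\in\val(F)$, use residue matching and Henselianity to reduce to $c^n\in F$, then observe that uniqueness of the extensions of both the valuation (Henselian case) and the derivation (algebraic case) forces all $n$ algebraic conjugates of $c$ to be $\LDdiv$-conjugates, so $n=1$ since $\K(M)$ contains nontrivial roots of unity. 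Your proof instead routes everything through Proposition~\ref{prop:acl RV}, whose statement $\lt(\dcl[\LltD](A))=\lt(\dalfield{A})$ already kills the divisible-hull issue, and then transports the conclusion through $\ltf$ via the (correct, and worth stating) identification $\lt(\dalfield{A})=\ltf(\K(\dalfield{A}))$ --- the verification of which does require the contractivity check on $\Dlt$ that you flag. Both arguments are valid; yours is shorter and makes the role of the $\lt$ sort transparent, whereas the paper's is more self-contained at this point (Proposition~\ref{prop:acl RV} is itself proved by nontrivial automorphism constructions building on Proposition~\ref{prop:dcl acl G res}) and exhibits a conjugate-counting technique that is useful elsewhere in the theory.
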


\begin{proof}
Let $L := \K(\dcl[\LltD](A))$ and $F := \hens{\K(\dalfield{A})}$. By Proposition\,\ref{prop:dcl acl G res}, we have that $\resf(L)\subseteq \resf(F)$ and that $\val(L)\subseteq \DIV{\val(F)}$. Let $c\in L$. We already know that $\val(c) \in\DIV{\val(F)}$. Let $n$ be minimal such that $n\cdot\val(c) = \val(a)$ for some $a\in F$. Let us show that $n=1$. We have $\resf(ac^{-n}) \in\resf(L) = \resf(F)$, so we can find $u\in F$ such that $\resf(ac^{-n}) = \resf(u)$. As $L$ must be Henselian (indeed $\hens{L} = \dcl[\Ldiv](L) = L$), we can find $v\in L$ such that $v^{n} = ac^{-n}u^{-1}$, i.e. $(c v)^{n} = a u^{-1} \in F$. Hence we may assume that $c^{n}$ itself is in $F$.

Derivations have a unique extension to algebraic extensions and, as $F$ is Henselian, the valuation also has a unique extension to the algebraic closure. It follows that any algebraic conjugate of $c$ is also an $\LDdiv$-conjugate of $c$. As $\K(M)$ is algebraically closed, it contains non trivial $n$-th roots of the unit. It follows that we must have $n = 1$.

We have just proved that $\K(\dcl[\LltD](A))$ is an immediate extension of $\hens{\K(\dalfield{A})}$ and hence of $\K(\dalfield{A})$.
\end{proof}

\section{Metastability in \texorpdfstring{$\VDF$}{VDF}}
\label{sec:metastab VDF}

In this section we prove that maximally complete models of $\VDF$ are metastability bases. The main issue is that we can only prove Proposition\,\ref{prop:prol inv st dom} when we control the $\Llt$-algebraic closure of the parameters inside the stable part. Thus we cannot apply it blindly to sets of the form $C\Valgp(\dcl[\LGD](Cc))$. However, in $\ACVF$, we have a more precise description of types over maximally complete fields:

\begin{proposition}[st dom max](\cite[Remark\,12.19]{HasHruMac-Book})
Let $M\models\ACVF$, $C\subseteq M$ be maximally complete and algebraically closed, $a\in\K(M)$ be a tuple and $H := \Valgp(\dcl[\Llt](Ca))$. Then $\tp(a)[CH]$ is stably dominated via $\ltf(C(a))$, where $\ltf(x)$ is seen as an element of $\lt[\val(x)] \subset \St[CH]$.
\end{proposition}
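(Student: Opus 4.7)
The plan is to extract this as a refinement of the main stable domination theorem for maximally complete fields in $\ACVF$ (the theorem already cited as \cite[Theorem\,12.18.(ii)]{HasHruMac-Book}): that result guarantees that $\tp(a)[CH]$ is stably dominated via the enumeration $g(a)$ of the stable part $\St[CH](\dcl[\Llt](Ca))$. The task therefore reduces to verifying two things about the map $f : a\mapsto \ltf(C(a))$: first, that $f(a)$ lives in the stable part; second, that over $CH$ it is interdefinable with $g(a)$, so that it carries the same independence-theoretic information.

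For the first point, each $\ltf(c)$ with $c\in C(a)$ lies in $\lt[\val(c)]$ with $\val(c)\in H$. Over $CH$, the sort $\lt[\gamma]$ for $\gamma\in H$ is an $\res$-torsor: it is trivialised by any nonzero element of $\lt[\gamma]\cap C$ when $\gamma\in\Valgp(C)$, and otherwise becomes $\res$-internal as soon as one of its own points is named. Since $\res$ is stable and stably embedded in $\ACVF$, so is every $\res$-internal set, and $f(a)\in\St[CH]$ follows.

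For the second point, I would check that every stable element of $\dcl[\Llt](Ca)$ already lies in $\dcl[\Llt](CH\,f(a))$. A residue-field element so definable should arise as $\resf(p_1(a)/p_2(a))$ for $p_1,p_2\in C[X]$ (or coordinate tuples thereof) with matching valuations at $a$, and is then recovered as $\ltf(p_1(a))/\ltf(p_2(a))$ read in $\res$ after trivialisation of the ambient torsor; elements landing in a torsor $\lt[\gamma]$ for $\gamma\in H$ reduce to the residue-field case by first trivialising. Once $g(a)\subseteq\dcl[\Llt](CH\,f(a))$, the HHM stable domination property transfers: if $\St[CH](\dcl[\Llt](CHB))\indep_{CH} f(a)$, the same independence holds with $g(a)$ in place of $f(a)$ (since $f(a)$ generates $g(a)$ in the stable part), and then $\tp(B)[CHf(a)]\vdash \tp(B)[CHg(a)]\vdash \tp(B)[CHa]$.

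The step I expect to be the main obstacle is the second one, i.e.\ showing that leading terms of polynomial evaluations at $a$ really exhaust the stable part of $\dcl[\Llt](Ca)$. Maximal completeness of $C$ is essential here: it rules out immediate extensions of $C$ by $a$ that would contribute residue-field information not visible from polynomial leading terms, and so is what allows the explicit normal form $\resf(p_1(a)/p_2(a))$ for definable residue-field elements to be produced.
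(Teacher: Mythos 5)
The paper does not prove this proposition; it is stated as a citation to \cite[Remark~12.19]{HasHruMac-Book}, so there is no argument in the paper to measure you against. Your reduction to \cite[Theorem~12.18(ii)]{HasHruMac-Book} is the natural strategy, and the transfer of domination from the full enumeration $g(a)$ of $\St[CH](\dcl[\Llt](Ca))$ to $f(a)=\ltf(C(a))$ is logically sound once $g(a)\subseteq\dcl[\Llt](CHf(a))$ is established: independence from $f(a)$ passes to independence from $g(a)$ by monotonicity, and $\tp(B)[CHf(a)]\vdash\tp(B)[CHg(a)]\vdash\tp(B)[CHa]$. You have correctly isolated that this containment is the whole content of the cited remark, and that maximal completeness enters via separatedness (Proposition~\ref{prop:SCimpSep}), which makes $\val$ and $\ltf$ of $C$-linear combinations of a separated basis computable from the basis data and hence makes leading terms of polynomial evaluations well-behaved. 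Two points you should tighten. First, $H=\Valgp(\dcl[\Llt](Ca))$ is the divisible hull of $\val(C(a))$, not $\val(C(a))$ itself; for $\gamma\in H\sminus\val(C(a))$ the tuple $\ltf(C(a))$ misses $\lt[\gamma]$ entirely, so you must argue that $\lt[\gamma](\dcl[\Llt](Ca))=\emptyset$ for those $\gamma$ --- which holds because $\res(C)$ is algebraically closed, so a root $b$ with $\val(b)=\gamma$ has $\resf(C)$-many conjugate leading terms and none is fixed. Second, your normal form $\resf(p_1(a)/p_2(a))$ tacitly asserts that the stable part of $\dcl[\Llt](Ca)$ comes entirely from the field $C(a)$, with no contribution from its Henselization or algebraic closure; this again leans on $C$ (hence $\res(C)$) being algebraically closed, a hypothesis independent of maximal completeness that you should invoke explicitly rather than attribute to maximal completeness alone.
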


It follows that, to prove the existence of metastability bases, we have to study the $\LltD$-algebraic closure in $\eq{\lt}_{\Llt}$. In Proposition\,\ref{prop:acl RV}, we showed that we have control over the $\LltD$-algebraic closure hence it suffices to prove that $\lt$ with its $\Llt$-induced structure eliminates imaginaries. As a matter of fact, we only need to prove elimination for the $\Llt$-structure induced on $\lt[H] = \bigcup_{\gamma\in H} \lt[\gamma]$ where each fibre is a distinct sort.

In \cite{Hru-GpIm}, Hrushovski studies such structures. He shows, in \cite[Lemma\,5.6]{Hru-GpIm}, that they eliminate imaginaries. Note that, as every $\lt[\gamma]$ is one dimensional, these structures have flags.

\begin{proposition}[meta basis VDF]
Let $M\models\VDF$, $C\subseteq \K(M)$ be a maximally complete algebraically closed differential subfield and $a\in \K(M)$. Then, the type $\tp[\LltD](a)[C\Valgp(\dcl[\LltD](Ca))]$ is stably dominated.
\end{proposition}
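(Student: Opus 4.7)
The plan is to apply Proposition\,\ref{prop:st dom max} to the tuple $\prol[\omega](a)$ in the $\ACVF$-reduct of $M$, and then transfer the resulting stable domination back to $\LltD$ via Proposition\,\ref{prop:prol inv st dom}. Set $H := \Valgp(\dcl[\LltD](Ca))$. Proposition\,\ref{prop:dcl acl G res} identifies $H$ with $\DIV{\val(C(\prol[\omega](a)))}$; on the $\ACVF$ side the same divisible hull equals $\Valgp(\dcl[\Llt](C\prol[\omega](a)))$, so the value-group parameters coincide in the two languages.

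Since $C$ is a maximally complete algebraically closed valued subfield, Proposition\,\ref{prop:st dom max} applied to $\prol[\omega](a)$ yields that $p := \tp[\Llt](\prol[\omega](a))[CH]$ is stably dominated via $f(x) := \ltf(C(x))$, with image $D$ contained in $\lt[H] := \bigcup_{\gamma\in H}\lt[\gamma]$. By construction $p = \tpprol[\omega](\tp[\LltD](a)[CH])$ lies in $\Prol[x](CH)$, and $CH \subseteq \K\cup\Valgp$. Hence Proposition\,\ref{prop:prol inv st dom} applies with $A := CH$, and its conclusion is precisely that $\tpprol[\omega]<-1>(p) = \tp[\LltD](a)[CH]$ is stably dominated via $f\comp\prol[\omega]$.

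The hard point is checking the remaining hypothesis of Proposition\,\ref{prop:prol inv st dom}, namely $\eq{D}_{\Llt}(\acleq[\LltD](CH)) = \eq{D}_{\Llt}(\acleq[\Llt](CH))$. The plan for this is to use the elimination of imaginaries in the $\Llt$-structure induced on $\lt[H]$, provided by Hrushovski \cite[Lemma\,5.6]{Hru-GpIm}; each fibre $\lt[\gamma]$ is a one-dimensional $\res$-torsor, which supplies the flags needed there. This reduces the displayed equality to $\lt[H](\acl[\LltD](CH)) = \lt[H](\acl[\Llt](CH))$, which in turn follows from Proposition\,\ref{prop:acl RV}: it gives $\lt(\acl[\LltD](CH)) = \lt(\acl[\Llt](\dalrg{CH}))$, and since $C$ is a differential field and $\Dlt$ does not act on $\Valgp$, the $\LltD$-structure $\dalrg{CH}$ has the same $\lt$-content as the $\Llt$-structure generated by $CH$. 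With this last hypothesis verified, the desired stable domination follows.
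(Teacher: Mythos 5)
Your proof follows exactly the route the paper takes: compute $H$ via Proposition\,\ref{prop:dcl acl G res}, apply Proposition\,\ref{prop:st dom max} to $\prol[\omega](a)$, and then pull the stable domination back through Proposition\,\ref{prop:prol inv st dom} after verifying the algebraic-closure hypothesis via Hrushovski's elimination of imaginaries for $\lt[H]$ and Proposition\,\ref{prop:acl RV}. You have in fact repaired a small typo in the paper (the first citation should be to Proposition\,\ref{prop:dcl acl G res}, not Proposition\,\ref{prop:st dom max}) and made explicit the step, left implicit in the paper, that $\lt(\acl[\Llt](\dalrg{CH})) = \lt(\acl[\Llt](CH))$ because $C$ is a differential field.
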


\begin{proof}
Let $H := \Valgp(\dcl[\LltD](Ca))$. By Proposition\,\ref{prop:st dom max}, \[H =\DIV{\Valgp(\dalfield{Ca})} = \Valgp(\dcl[\Llt](C\prol[\omega](a))).\] By Proposition\,\ref{prop:st dom max}, $\tp[\Llt](\prol[\omega](a))[CH]$ is stably dominated via $\ltf(C(a)) \subseteq \lt[H]$. Moreover, by Proposition\,\ref{prop:acl RV}, we have
\[\left(\lt[H]<\Llt>\right)^{\eqop}(\acleq[\LltD](CH)) = \lt[H](\acl[\LltD](CH)) =  \lt[H](\acl[\Llt](CH)).\] Proposition\,\ref{prop:prol inv st dom}, now allows us to conclude that $\tp[\LltD](a)[CH]$ is stably dominated.
\end{proof}

\begin{corollary}[VDF meta]
The theory $\VDF$ admits metastability bases.
\end{corollary}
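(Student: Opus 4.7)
The plan is to derive the corollary from Proposition \ref{prop:meta basis VDF}, which handles $\K$-tuples over a maximally complete, algebraically closed, $\D$-closed subfield $C_\K \subseteq \K(M)$. Two tasks remain: producing such a $C_\K$ containing (a representative of) an arbitrary $A \subseteq M$, and reducing tuples in $\lt$ or $\Valgp$ to $\K$-tuples.

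For the construction, first enlarge $A$ so that its $\lt$- and $\Valgp$-content is witnessed in $\K$: for each $r \in \lt(A)$ pick $s_r \in \K(M)$ with $\ltf(s_r) = r$, and for each $\gamma \in \Valgp(A)$ pick $x_\gamma \in \K(M)$ with $\val(x_\gamma) = \gamma$. Starting from $A_0 := (A \cup \{s_r : r \in \lt(A)\} \cup \{x_\gamma : \gamma \in \Valgp(A)\}) \cap \K$, iterate the standard closure operations---closure under $\D$ and the ring operations, Henselianisation, algebraic closure inside $\K$, and completion by pseudo-Cauchy limits---through countably many stages, using saturation of $M$. The resulting subfield $C_\K \subseteq \K(M)$ is maximally complete, algebraically closed, and $\D$-closed, and $C := \dcl[\LltD](C_\K)$ contains $A$.

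To check that $C$ is a metastability basis, fix a tuple $a$ in $M$ and set $H := \Valgp(\dcl[\LltD](Ca))$. Split $a$ into sub-tuples $a' \in \K$, $a'' \in \lt$, and $a''' \in \Valgp$; since $a''' \subseteq H$, it lies in $\dcl[\LltD](CH)$ and can be absorbed. Proposition \ref{prop:meta basis VDF} applied to $C$ and $a'$ gives that $\tp[\LltD](a')[C\Valgp(\dcl[\LltD](Ca'))]$ is stably dominated; because $\Valgp(\dcl[\LltD](Ca')) \subseteq H$ and the additional $\Valgp$-parameters are orthogonal to the stable part in $\VDF$, this upgrades to stable domination of $\tp[\LltD](a')[CH]$. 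Each $r \in a''$ belongs to a fibre $\lt[\gamma]$ for some $\gamma \in H$, and the sort $\lt[H] = \bigcup_{\gamma \in H} \lt[\gamma]$ lies in the stable part of $\VDF$ over $CH$---via Proposition \ref{prop:acl RV} for $\LltD$-algebraic closure in $\lt$, combined with elimination of imaginaries for the $\Llt$-structure on $\lt[H]$ as discussed just before Proposition \ref{prop:meta basis VDF}. Adjoining $a''$ therefore preserves stable domination, yielding stable domination of $\tp[\LltD](a)[CH]$.

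The main obstacle is the transfer in the final step, namely extending the base from $C\Valgp(\dcl[\LltD](Ca'))$ up to $CH$ and then adjoining the $\lt$-coordinates $a''$ to the tuple. The first step relies on the orthogonality of $\Valgp$ to the stable part in $\VDF$, already implicit in the proof of Proposition \ref{prop:meta basis VDF}. The second depends on the precise control of $\LltD$-algebraic closure in $\lt$ furnished by Proposition \ref{prop:acl RV}, together with the stability-theoretic behaviour of $\lt[H]$ over $CH$; once these are in hand the $\lt$-coordinates cannot spoil stable domination, and the argument of Proposition \ref{prop:meta basis VDF} extends uniformly to tuples in all sorts.
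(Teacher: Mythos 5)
The construction of the maximally complete differential subfield $C_\K$ is where your argument has a genuine gap. You prescribe "completion by pseudo-Cauchy limits" as a closure operation, interleaved with closure under $\D$, but this is precisely the delicate step: if you adjoin an \emph{arbitrary} pseudo-limit $a$ of a maximal pseudo-convergent sequence and then close under $\D$, the differential field generated need not be an immediate extension of what you started with. New residues or new elements of the value group can appear from $\D(a), \D^2(a), \ldots$, and then the iteration has no reason to stabilise at a small subfield of $\K(M)$ — saturation of $M$ guarantees existence of points but not that your process terminates or stays small. The paper handles this by a careful choice: among all maximal pseudo-convergent sequences in $\K(A)$ with no pseudo-limit, pick one whose pseudo-solved minimal differential polynomial $P$ has minimal order-degree, and then adjoin a pseudo-limit that is a root of $P$; \cite[Proposition\,7.32]{Sca-DValF} is exactly what guarantees this gives an \emph{immediate} differential field extension, which is what bounds the cardinality and lets the transfinite iteration converge to a small maximally complete $C$. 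Your proposal never invokes this result or anything playing its role, so as written the existence of a small such $C$ inside $\K(M)$ is unjustified.

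On the second half: the paper leaves the reduction from arbitrary tuples to $\K$-tuples entirely implicit ("By Proposition\,\ref{prop:meta basis VDF} we only have to show \ldots"), relying on dominance of $\K$. Your elaboration — splitting $a$ into $\K$-, $\lt$-, and $\Valgp$-parts and treating them separately — is in principle a reasonable way to make the reduction explicit, but the two steps you flag as "upgrades" are themselves non-trivial: passing from stable domination of $\tp(a')$ over $C\Valgp(\dcl(Ca'))$ to stable domination over the possibly larger $CH$, and then adjoining the $\lt$-coordinates $a''$, both need an actual argument (orthogonality of $\Valgp$ to the stable part, and preservation of stable domination under adjoining stable-part elements, respectively) rather than an appeal to "the argument of Proposition\,\ref{prop:meta basis VDF} extends uniformly." This is not where the paper puts its effort, but if you choose to spell out the reduction you should supply those two lemmas rather than assert them.
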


\begin{proof}
By Proposition\,\ref{prop:meta basis VDF}, we only have to show that  any $A\subseteq M\models\VDF$ is contained in a (small) maximally complete $C\subseteq\K(M)$. As the sort $\K$ is dominant, we may assume that $A\subseteq\K(A)$. Taking any lifting in $\K$ of the points in $A$, we may assume that $A\subseteq \dcl[\LGD](\K(A))$. If $\K(A)$ is not maximally complete, take $(x_{\alpha})$ to be a maximal pseudo-convergent sequence with no pseudo-limit in $\K$ and such that the order-degree of the minimal differential polynomial $P$ pseudo-solved by $(x_{\alpha})$ is minimal among all such pseudo-convergent sequences. Then the extension by any root of $P$ which is also a pseudo-limit $a$ is immediate, see \cite[Proposition\,7.32]{Sca-DValF}. Iterating this last step as many times as necessary, we obtain an immediate extension $C$ of $A$ which is maximally complete. Because $\K(C)$ is an Henselian immediate extension of an algebraically closed field, it follows that $\K(C)$ is also algebraically closed. 
\end{proof}

\part*{Definable types in enrichments of \texorpdfstring{$\ACVF$}{ACVF}}

\section{Types and uniform families of balls}
\label{sec:type ball}

Let $\LL\supseteq\Ldiv$ and $T\supseteq\ACVF$ be an $\LL$-theory that eliminates imaginaries. We assume that $T$ is $C$-minimal, i.e. every $\LL$-sort is the image of an $\LL$-definable map with domain some $\K<n>$ (we say that $\K$ is dominant) and for all $M\models T$, every $\LL(M)$-definable unary set $X\subseteq \K$ is a Boolean combination of balls. For a more extensive introduction to $C$-minimal theories, one can refer to \cite{Cub-Cmin}.

In this section, we wish to make precise the idea that, in $C$-minimal theories, $n+1$-types can be viewed as generic types of balls parametrised by realisations of an $n$-type. This is an obvious higher dimensional generalisation of the unary notion of genericity in a ball (see \cite[Definition\,2.3.4]{HasHruMac-ACVF}). To do so, we introduce a class of $\Delta$-types (see Definition\,\ref{def:gen type}) for $\Delta$ a finite set of $\LL$-formulas that will play a central role in the rest of this text. We also show that at the cost of enlarging $\Delta$, we may assume that all types are of this specific form.

The points in $\K$ are closed balls of radius $+\infty$ and $\K$ itself is an open ball of radius $-\infty$.

\begin{definition}($\Balls{l}$ and $\Ballsst{l}$)
Let $\cBallSet$ be the set of all closed balls (potentially with radius $+\infty$), $\oBallSet$ be the set of all open balls (potentially with radius $-\infty$) and $\BallSet := \cBallSet\cup\oBallSet$. For $l\in\Nn_{>0}$. We define $\Balls{l} := \{B\subseteq\BallSet\mid\card{B}\leq l\}$. We also define $\Ballsst{l} := \{B\in\Balls{l}\mid$ all the balls in $B$ have the same radius and they are either all open or all closed$\}$.
\end{definition}

The index $\st$ stands for "same radius".

\begin{notation}
For all $B\in\Balls{l}$, we will be denoting by $\Points(B)$ the set $\bigcup_{b\in B}b$, i.e. the set of valued field points in the balls of $B$. Because the balls can be nested, $\Points$ is not an injective function. However, in each fibre of $\Points$ there is a unique element with minimal cardinality, the one where there is no intersection between the balls. We denote by $\balls$ this section of $\Points$.
\end{notation}

Points in $\Ballsst{l}$ behave more or less like balls. For example if $B_{1}$, $B_{2}\in\Ballsst{l}$ are such that $\Points(B_{1})\subset \Points(B_{2})$, then either all the balls in $B_{1}$ have smaller radius than the balls in $B_{2}$ or if they have equal radiuses, then the balls in $B_{1}$ must be open and those in $B_{2}$ must be closed.

\begin{definition}(Generalised radius)
Let $B\in\Ballsst{l}\sminus\{\emptyset\}$. We define the generalised radius of $B$ (denoted $\grad(B)$) to be the pair $(\gamma,0)$ when the balls in $B$ are closed of radius $\gamma$ and the pair $(\gamma,1)$ when they are open of radius $\gamma$. The set of generalised radiuses is ordered lexicographically. We define the generalised radius of $\emptyset$ to be $(+\infty,1)$, i.e. greater than any generalised radius of non empty $B\in\Ballsst{l}$.
\end{definition}

\begin{proposition}[inter balls]
Let $(B_{i})_{i\in I} \subseteq \Ballsst{l}$. Assume that there exists $i_{0}$ such that the balls in $B_{i_{0}}$ have generalised radius greater or equal than all the other $B_{i}$. Then $\balls(\bigcap_{i}\Points(B_{i})) \subseteq B_{i_{0}}$. Moreover, there exists $(i_{j})_{0<j\leq l}\in I$ such that $\bigcap_{i}\Points(B_{i}) = \bigcap_{j=0}^{l}\Points(B_{i_{j}})$.
\end{proposition}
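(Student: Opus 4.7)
The proof relies on the ultrametric dichotomy: any two balls in $\K$ are either disjoint or nested, and if one strictly contains the other then the containing ball has strictly smaller generalised radius. The plan is first to show that the balls of $B_{i_0}$ behave as atoms of every pairwise intersection, so that $\bigcap_{i\in I}\Points(B_i)$ is literally a disjoint union of balls from $B_{i_0}$; then to exploit $\card{B_{i_0}} \leq l$ to select the required finite subfamily.

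The key observation is the following: if $b$, $b'$ are balls such that the generalised radius of $\{b\}$ is at least that of $\{b'\}$, then either $b \cap b' = \emptyset$ or $b \subseteq b'$. In particular, distinct balls of a single $B_i$ are disjoint, and for every $b \in B_{i_0}$ and every ball $b' \in B_i$ with $i \in I$ we have $b \cap b' \in \{\emptyset, b\}$. Therefore $b \cap \Points(B_i) \in \{\emptyset, b\}$ and
\[\Points(B_{i_0}) \cap \Points(B_i) = \bigsqcup\{b \in B_{i_0} \mid b \subseteq \Points(B_i)\}.\]
Intersecting over $i \in I$ gives $\bigcap_{i\in I}\Points(B_i) = \bigsqcup_{b \in B_0} b$, where $B_0 := \{b \in B_{i_0} \mid b \subseteq \Points(B_i) \text{ for every } i \in I\}$. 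Since $B_0$ is a disjoint collection of at most $l$ balls with the correct union, the uniqueness of the disjoint representation recalled just before the proposition yields $\balls(\bigcap_{i\in I}\Points(B_i)) = B_0 \subseteq B_{i_0}$, which is the first assertion.

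For the second assertion, for each $b \in B_{i_0} \setminus B_0$ pick some $i(b) \in I$ with $b \not\subseteq \Points(B_{i(b)})$; by the key observation this forces $b \cap \Points(B_{i(b)}) = \emptyset$. Since $\card{B_{i_0}} \leq l$, at most $l$ such indices are produced; list them (padded with $i_0$ if necessary) as $i_1, \ldots, i_l$. Applying the same computation to the finite family $(B_{i_j})_{0 \leq j \leq l}$ gives
\[\bigcap_{j=0}^{l}\Points(B_{i_j}) = \bigsqcup\{b \in B_{i_0} \mid b \subseteq \Points(B_{i_j}) \text{ for all } 1 \leq j \leq l\},\]
which by construction of the $i(b)$ is precisely $\bigsqcup_{b \in B_0} b = \bigcap_{i\in I}\Points(B_i)$. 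No serious obstacle is expected: the only delicate point is the interaction of open and closed balls of equal radius, and this is exactly what the lexicographic generalised radius handles; everything else reduces to the elementary combinatorics of ultrametric balls.
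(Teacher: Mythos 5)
Your proof is correct and follows essentially the same route as the paper's: reduce everything to the ultrametric dichotomy (for $b\in B_{i_0}$ of maximal generalised radius, $b\cap\Points(B_i)\in\{\emptyset,b\}$), conclude that $\bigcap_i\Points(B_i)$ is exactly the union of those balls of $B_{i_0}$ it meets, and then use $\card{B_{i_0}}\leq l$ to select one index per excluded ball. The only difference is cosmetic — you spell out the dichotomy and the disjointness of $B_0$ explicitly whereas the paper leaves them implicit.
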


\begin{proof}
For any $b\in B_{i_{0}}$, if $\bigcap_{i}\Points(B_{i})\cap b\neq\emptyset$ then $b\subseteq \bigcap_{i}\Points(B_{i})$. It follows that:
\[\bigcap_{i} \Points(B_{i}) = \Points(\{b\in B_{i_{0}}\mid b\cap\bigcap_{i} B_{i}\neq\emptyset\}).\] Thus $\balls(\bigcap_{i}\Points(B_{i})) \subseteq B_{i_{0}}$. Moreover, if $\bigcap_{i}\Points(B_{i})\cap b = \emptyset$, then there exists $i_{b}$ such that $b\cap \Points(B_{i_{b}}) = \emptyset$ and $\bigcap_{i}\Points(B_{i})$ can be obtained by intersecting $B_{i_{0}}$ with the $B_{i_{b}}$ of which there are at most $l$.
\end{proof}

\begin{definition}[di]($d_{i}(B_{1},B_{2})$)
Let $b_{1}$, $b_{2}\in\BallSet$. When $b_{1}\cap b_{2} = \emptyset$, we define $d(b_{1},b_{2})$ to be $\val(x_{1}-x_{2})$, where $x_{i}\in b_{i}$, which does not depend on the choice of the $x_{i}$. When $b_{1}\cap b_{2} \neq \emptyset$, we define $d(b_{1},b_{2}) = \min\{\rad(b_{1}),\rad(b_{2})\}$, where $\rad$ denotes the radius.

For all $B_{1}$, $B_{2}\in\Balls{l}$, let us define $D(B_{1},B_{2}) := \{d(b_{1},b_{2})\mid b_{1}\in B_{1}$ and $b_{2}\in B_{2}\}$. Let us list the elements in $D(B_{1},B_{2})$ as $d_{1} > d_{2} > \cdots > d_{k}$. For all $i \leq k$, we define $d_{i}(B_{1},B_{2}) := d_{i}$.
\end{definition}

When $B_{1}$, $B_{2}\in\Ballsst{l}$, we also define $d_{0}(B_{1},B_{2}) := \min\{\rad(B_{1}),\rad(B_{2})\}$; it is equal to $d_{1}(B_{1},B_{2})$ when $\Points(B_{1})\cap\Points(B_{2})\neq\emptyset$. Later, for coding purposes, we might want $d_{i}(B_{1},B_{2})$ to be defined for all $i\leq l^{2}$ in which case, for $i >k$, we set $d_{i}(B_{1},B_{2}) = d_{k}$.\medskip

Let $M\models T$, $F = (F_{\lambda})_{\lambda\in\Lambda}$ be an $\LL(M)$-definable family of functions $\K<n>\to\Ballsst{l}$ and $\Delta(x,y;t)$ be a finite set of $\LL$-formulas where $x\in\K<n>$, $y\in\K$ and $t$ is a tuple of variables. To simplify notation, we will be denoting $\Points(F_{\lambda}(x))$ by $\points{F_{\lambda}}(x)$. We define $\Funform{\Delta}{F}(x,y;t,\lambda)$ to be the set for formulas $\Delta(x,y;t)\cup\{y\in \points{F}_{\lambda}(x)\wedge\lambda\in\Lambda\}$.

Note that if $n=0$, all of what we prove in this section and in Section\,\ref{sec:imp def} holds. It is, in fact, much more straightforward because we are considering fixed balls instead of parametrised balls.

\begin{definition}($\Delta$ adapted to $F$)
We say that $\Delta$ is adapted to $F$ if for all $p\in\TP[x,y]<\Delta>(M)$, $\lambda$, $(\mu_{i})_{0\leq i< l}\in\Lambda(M)$ and $i\leq l^{2}$, $p(x,y)$ decides:
\begin{thm@enum}
\item If $\points{F_{\lambda}}(x) \square \bigcup_{0\leq i< l}\points{F_{\mu_{i}}}(x)$ (respectively $F_{\lambda}(x) \square \bigcup_{0\leq i<l} F_{\mu_{i}}(x)$), where $\square\in\{=,\subseteq\}$;
\item If $\points{F_{\lambda}}(x) = \points{F_{\mu_{1}}}(x) \cap \points{F_{\mu_{2}}}(x)$;
\item If the balls in $F_{\lambda}(x)$ are closed;
\item If $\rad(F_{\lambda_{1}}(x)) \square d_{i}(F_{\mu_{1}}(x),F_{\mu_{2}}(x))$ where $\square\in\{=,\leq\}$.
\end{thm@enum}
Moreover, we require that there exist $\lambda_{\emptyset}$, $\lambda_{\K}\in\Lambda$ such that for all $x\in\K<n>$, $F_{\lambda_{\emptyset}}(x) = \emptyset$ and $F_{\lambda_{\K}}(x) = \{\K\}$.
\end{definition}

Note that none of the above formulas actually depend on $y$ so what is really relevant is not $p$ but the closed set induced by $p$ in $\TP[x]<\LL>(M)$. Until Proposition\,\ref{prop:exists F Delta}, let us assume that $\Delta$ is adapted to $F$. Let $p\in\TP[x,y]<\Delta>(M)$.

\begin{definition}(Generic intersection)
We say that $F$ is closed under generic intersection over $p$ if for all $\lambda_{1}$, $\lambda_{2}\in\Lambda(M)$, there exists $\mu\in\Lambda(M)$ such that 
\[p(x,y)\vdash \points{F_{\mu}}(x) = \points{F_{\lambda_{1}}}(x)\cap\points{F_{\lambda_{2}}}(x).\]
\end{definition}

Let us assume, until Proposition\,\ref{prop:exists F Delta}, that $F$ is closed under generic intersection over $p$.

\begin{definition}(Generic irreducibility)
For all $\lambda\in\Lambda(M)$, we say that $F_{\lambda}$ is generically irreducible over $p$ if for all $\mu\in\Lambda(M)$, if $p(x,y)\vdash F_{\mu}(x)\subseteq F_{\lambda}(x)$ and $p(x,y)\vdash F_{\mu}(x)\neq\emptyset$ then $p(x,y)\vdash F_{\mu}(x) = F_{\lambda}(x)$.

We say that $F$ is generically irreducible over $p$ if for every $\lambda\in\Lambda(M)$, $F_{\lambda}$ is generically irreducible over $p$.
\end{definition}

Let us now show that generically irreducible families of balls behave nicely under generic intersection.

\begin{proposition}[inter gen irr]
Let $\lambda_{1}$, $\lambda_{2}\in\Lambda(M)$ be such that $F_{\lambda_{1}}$ and $F_{\lambda_{2}}$ are generically irreducible over $p$ and $p(x,y)$ implies that the balls in $F_{\lambda_{1}}(x)$ have smaller or equal generalised radius than the balls in $F_{\lambda_{2}}(x)$. Then either $p(x,y)\vdash \points{F_{\lambda_{1}}}(x)\cap \points{F_{\lambda_{2}}}(x)= \emptyset$ or $p(x,y)\vdash \points{F_{\lambda_{1}}}(x)\cap \points{F_{\lambda_{2}}}(x) = \points{F_{\lambda_{1}}}(x)$.
\end{proposition}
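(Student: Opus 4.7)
The plan is to realise the intersection $\points{F_{\lambda_{1}}}(x)\cap\points{F_{\lambda_{2}}}(x)$ as $\points{F_{\mu}}(x)$ for a single $\mu\in\Lambda(M)$, upgrade the resulting equality of point-sets to an inclusion of ball-sets using Proposition~\ref{prop:inter balls}, and conclude by generic irreducibility.

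First, by the closure of $F$ under generic intersection over $p$, pick $\mu\in\Lambda(M)$ with
\[p(x,y)\vdash \points{F_{\mu}}(x)=\points{F_{\lambda_{1}}}(x)\cap\points{F_{\lambda_{2}}}(x).\]
Since $\Delta$ is adapted to $F$, the type $p$ decides whether $\points{F_{\mu}}(x)=\emptyset$; if it does, the first alternative of the conclusion holds, so I may assume $p(x,y)\vdash\points{F_{\mu}}(x)\neq\emptyset$ from now on.

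The key combinatorial step is to show that $F_{\mu}(x)$ is already the canonical minimal ball-decomposition of its point-set, i.e.\ $F_{\mu}(x)=\balls(\points{F_{\mu}}(x))$. Since $F_{\mu}(x)\in\Ballsst{l}$, its balls all have the same generalised radius and are therefore pairwise disjoint. No coarser decomposition by strictly fewer, strictly larger balls can exist: such a decomposition would have to tile each strictly larger ball by balls of the original radius, but in any model of $\ACVF$ the residue field is infinite, so a strictly larger ball contains infinitely many balls of the original radius, whereas $\card{F_{\mu}(x)}\leq l$. Hence $F_{\mu}(x)$ realises the minimum cardinality in its $\Points$-fibre. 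Combining this with Proposition~\ref{prop:inter balls} applied to $F_{\lambda_{1}}(x)$ and $F_{\lambda_{2}}(x)$, with the family of maximal generalised radius playing the role of $B_{i_{0}}$, yields $F_{\mu}(x)\subseteq F_{\lambda_{1}}(x)$ as sets of balls; this inclusion is decided by $p$ by clause (i) of the adaptation of $\Delta$ to $F$.

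Finally, since $F_{\mu}(x)$ is non-empty modulo $p$, the generic irreducibility of $F_{\lambda_{1}}$ over $p$ forces $p(x,y)\vdash F_{\mu}(x)=F_{\lambda_{1}}(x)$, and hence
\[p(x,y)\vdash \points{F_{\lambda_{1}}}(x)\cap\points{F_{\lambda_{2}}}(x)=\points{F_{\mu}}(x)=\points{F_{\lambda_{1}}}(x),\]
which is the second alternative. The main obstacle is the combinatorial identification $F_{\mu}(x)=\balls(\points{F_{\mu}}(x))$: without it, the point-set equality supplied by generic intersection does not upgrade to a set-of-balls inclusion, and generic irreducibility, being phrased in terms of ball-sets rather than point-sets, could not be invoked. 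Everything else reduces to the adaptation conditions on $\Delta$ and to the irreducibility hypothesis.
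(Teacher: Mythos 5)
Your proof follows the same route as the paper's: realise $\points{F_{\lambda_1}}(x)\cap\points{F_{\lambda_2}}(x)$ as $\points{F_\mu}(x)$ via generic intersection, use Proposition~\ref{prop:inter balls} to upgrade the point-set identity to a ball-set inclusion of $F_\mu(x)$ into one of the two families, and conclude by generic irreducibility. The only difference is that you make explicit the step $F_\mu(x)=\balls(\points{F_\mu}(x))$ (a ball cannot be tiled by at most $l$ disjoint strictly smaller balls over an infinite residue field), which the paper uses tacitly when passing from $\balls(\points{F_{\lambda_1}}(a)\cap\points{F_{\lambda_2}}(a))$ to $F_\mu(a)$; this is the same argument with one step spelled out rather than a different approach.
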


\begin{proof}
Let $(a,c)\models p$. By Proposition\,\ref{prop:inter balls}, we have that $\balls(\points{F_{\lambda_{1}}}(a)\cap \points{F_{\lambda_{2}}}(a))  \subseteq \points{F_{\lambda_{1}}}(a)$. By generic intersection, there exists $\mu$ such that $p(x,y)\vdash \points{F_{\mu}}(x) = \points{F_{\lambda_{1}}}(x)\cap \points{F_{\lambda_{2}}}(x)$. Then $F_{\mu}(a)\subseteq F_{\lambda_{1}}(a)$. Hence, if $F_{\mu}(a)\neq\emptyset$, $F_{\mu}(a) = F_{\lambda_{1}}(a)$.
\end{proof}

\begin{corollary}[inter gen irr cor]
Assume $p$ is $\LL(M)$-definable. Then $\Lambda_{p} := \{\lambda\in\Lambda\mid F_{\lambda}$ is generically irreducible over $p\}$ is $\LL(M)$-definable and the $\LL(M)$-definable family $(F_{\lambda})_{\lambda\in\Lambda_{p}}$ is closed under generic intersection over $p$.
\end{corollary}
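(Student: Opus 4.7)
The plan is to treat the two claims separately. For $\LL(M)$-definability of $\Lambda_{p}$, I would exploit the fact that $p$, being $\LL(M)$-definable, admits a definition schema: for every $\LL$-formula $\phi(x,y;t)$, there is an $\LL(M)$-formula $d_{p}\phi(t)$ such that $p(x,y)\vdash\phi(x,y;c)$ iff $\models d_{p}\phi(c)$. Since $\Delta$ is adapted to $F$, items (i) and (ii) of that definition guarantee that the relations ``$p \vdash \points{F_{\mu}}(x)\subseteq\points{F_{\lambda}}(x)$'', ``$p \vdash \points{F_{\mu}}(x) = \points{F_{\lambda}}(x)$'', and ``$p \vdash \points{F_{\mu}}(x) \neq \emptyset$'' (this last one by comparing $F_{\mu}$ to $F_{\lambda_{\emptyset}}$) are each $\LL(M)$-definable in the parameters $(\lambda,\mu)$. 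The defining property of $\Lambda_{p}$, which is a universal quantification over $\mu\in\Lambda$ built from these conditions, is therefore itself $\LL(M)$-definable.

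For closure of $(F_{\lambda})_{\lambda\in\Lambda_{p}}$ under generic intersection over $p$, fix $\lambda_{1}$, $\lambda_{2}\in\Lambda_{p}(M)$. By items (iii) and (iv) of ``$\Delta$ adapted to $F$'', $p$ decides whether the balls in each $F_{\lambda_{i}}(x)$ are open or closed and compares their radii; consequently $p$ decides the ordering of the generalised radii of $F_{\lambda_{1}}(x)$ and $F_{\lambda_{2}}(x)$. Without loss of generality, $p$ implies $\grad(F_{\lambda_{1}}(x)) \leq \grad(F_{\lambda_{2}}(x))$, and Proposition~\ref{prop:inter gen irr} then forces
$p(x,y)\vdash \points{F_{\lambda_{1}}}(x)\cap\points{F_{\lambda_{2}}}(x) \in \{\emptyset,\,\points{F_{\lambda_{1}}}(x)\}$. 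In the first subcase set $\mu = \lambda_{\emptyset}$; in the second, $\mu = \lambda_{1}$. Both choices live in $\Lambda_{p}$: $\lambda_{1}$ does by hypothesis, while $\lambda_{\emptyset}$ does vacuously since no $F_{\nu}$ can satisfy both $p \vdash F_{\nu}\subseteq F_{\lambda_{\emptyset}}$ and $p \vdash F_{\nu}\neq\emptyset$ simultaneously.

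I do not anticipate a real obstacle here. The only subtle point is the need to find the generic intersection witness \emph{inside} $\Lambda_{p}$ rather than merely inside $\Lambda$, and this is precisely what the dichotomy above provides: a direct consequence of Proposition~\ref{prop:inter gen irr} together with the fact that adaptedness of $\Delta$ to $F$ lets $p$ decide which of $F_{\lambda_{1}}(x)$, $F_{\lambda_{2}}(x)$ has the smaller generalised radius.
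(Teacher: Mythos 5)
Your proof is correct and follows the same route as the paper's (very terse) proof: definability of $p$ as a $\Delta$-type, together with adaptedness of $\Delta$ to $F$, makes the irreducibility condition $\LL(M)$-definable in the parameter $\lambda$, and Proposition~\ref{prop:inter gen irr} then supplies an intersection witness already lying in $\Lambda_{p}$ (either $\lambda_{1}$ itself or $\lambda_{\emptyset}$). One small notational slip: the definition of generic irreducibility compares $F_{\mu}(x)$ with $F_{\lambda}(x)$ as finite sets of balls, not $\points{F_{\mu}}(x)$ with $\points{F_{\lambda}}(x)$ as subsets of $\K$---but clause~(i) of adaptedness has $p$ decide both kinds of inclusion, so the argument goes through unchanged.
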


\begin{proof}
The definability of $\Lambda_{p}$ is a consequence of the definability of $p$. The closure of $(F_{\lambda})_{\lambda\in\Lambda_{p}}$ under generic intersection follows from Proposition\,\ref{prop:inter gen irr}.
\end{proof}

Until Proposition\,\ref{prop:exists F Delta}, let us also assume that $F$ is generically irreducible over $p$. 

\begin{definition}[gen type](Generic type of $E$ over $p$)
Let $E\subset\Lambda(M)$. We define $\Gen{E}[p](x,y)$, the $(\Delta,F)$-generic type of $E$ over $p$, to be the following $\Funform{\Delta}{F}$-type over $M$:
\[\begin{eqn}
p(x,y)&\cup&\{y\in \points{F_{\lambda}}(x)\mid\lambda\in E\}\\
&\cup&\{y\nin \points{F_{\mu}}(x)\mid\mu\in\Lambda(M)\text{ and for all }\lambda\in E,\,p(x,y)\vdash \points{F_{\mu}}(x)\subset \points{F_{\lambda}}(x)\}.
\end{eqn}\]
\end{definition}

Note that, most of the time, $\Delta$ and $F$ will be obvious from the context, so it will not be an issue that the notation $\Gen{E}[p]$ mentions neither $\Delta$ nor $F$.

\begin{proposition}[Gen compl]
Let $E\subset\Lambda(M)$ be such that $\Gen{E}[p]$ is consistent, then $\Gen{E}[p]$ generates a complete $\Funform{\Delta}{F}$-type over $M$.
\end{proposition}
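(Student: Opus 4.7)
The plan is to show that $\Gen{E}[p]$ decides every $\Funform{\Delta}{F}$-formula over $M$. Such formulas split into two families: those in $\Delta(x,y;M)$, already decided by $p\subseteq\Gen{E}[p]$, and atomic formulas of the shape $y\in\points{F_\mu}(x)$ for $\mu\in\Lambda(M)$. So I fix such a $\mu$ and argue that $\Gen{E}[p]$ decides $y\in\points{F_\mu}(x)$.

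The easy case is when $p(x,y)\vdash\points{F_\mu}(x)\subset\points{F_\lambda}(x)$ for every $\lambda\in E$: then $y\notin\points{F_\mu}(x)$ is, by construction, one of the defining axioms of $\Gen{E}[p]$. Otherwise, pick $\lambda_0\in E$ witnessing the failure of this strict inclusion. Because $\Delta$ is adapted to $F$, both $\points{F_\mu}(x)\subseteq\points{F_{\lambda_0}}(x)$ and $\points{F_\mu}(x)=\points{F_{\lambda_0}}(x)$ are $\Delta$-formulas and are thus decided by $p$. If $p\vdash\points{F_\mu}(x)=\points{F_{\lambda_0}}(x)$, the axiom $y\in\points{F_{\lambda_0}}(x)\in\Gen{E}[p]$ immediately gives $y\in\points{F_\mu}(x)$, and we are done.

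The remaining sub-case is $p\vdash\points{F_\mu}(x)\not\subseteq\points{F_{\lambda_0}}(x)$, and it is here that I use Proposition~\ref{prop:inter gen irr}. Since $F_\mu$ and $F_{\lambda_0}$ are both generically irreducible over $p$, and since $p$ decides the comparison of their generalised radii by clause~(iv) of adaptedness, the proposition says that, modulo $p$, the intersection $\points{F_\mu}(x)\cap\points{F_{\lambda_0}}(x)$ is either empty, or equal to $\points{F_\mu}(x)$, or equal to $\points{F_{\lambda_0}}(x)$. The option $\points{F_\mu}(x)$ is excluded, since it would entail $\points{F_\mu}(x)\subseteq\points{F_{\lambda_0}}(x)$, contradicting our standing hypothesis. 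If the intersection is empty, the axiom $y\in\points{F_{\lambda_0}}(x)$ forces $y\notin\points{F_\mu}(x)$; if it equals $\points{F_{\lambda_0}}(x)$, then $\points{F_{\lambda_0}}(x)\subseteq\points{F_\mu}(x)$ and the same axiom yields $y\in\points{F_\mu}(x)$. In all outcomes, $\Gen{E}[p]$ decides the formula.

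I foresee no genuine obstacle here: the statement is essentially a bookkeeping completeness check, and the genuine work has already been invested in setting up generic intersection, generic irreducibility, and Proposition~\ref{prop:inter gen irr}. The only point that must not be overlooked is that the appeal to that proposition requires $p$ to decide the radius comparison between $F_\mu$ and $F_{\lambda_0}$, which is precisely what adaptedness of $\Delta$ supplies.
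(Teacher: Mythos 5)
Your proof is correct and follows essentially the same route as the paper's: both rely on the definition of $\Gen{E}[p]$, adaptedness of $\Delta$ to $F$, and Proposition~\ref{prop:inter gen irr} to argue that, for any $\mu$, the intersection behaviour with members of $E$ forces a decision on $y\in\points{F_\mu}(x)$. The paper organises the argument into three cases directly on $\mu$ (some $\lambda\in E$ has empty intersection with $\mu$; some $\lambda\in E$ is contained in $\mu$; neither, in which case $\mu$ is strictly contained in all of $E$), whereas you split on whether the third case holds for all $\lambda\in E$ and then pick a witness $\lambda_0$ for its failure, but after unwinding the negations the two case analyses coincide and invoke the same ingredients.
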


\begin{proof}
Pick any $\mu\in\Lambda(M)$. If there is $\lambda\in E$ such that $p(x,y)\vdash \points{F_{\mu}}(x)\cap \points{F_{\lambda}}(x) = \emptyset$, then $\Gen{E}[p](x,y)\vdash y\nin \points{F_{\mu}}(x)$. If there exists $\lambda\in E$ such that $p(x,y)\vdash\points{F_{\lambda}}(x)\subseteq\points{F_{\mu}}(x)$, then $\Gen{E}[p](x,y)\vdash y\in \points{F_{\mu}}(x)$. If non of these cases apply, for all $\lambda\in E$, $p(x,y)\vdash\points{F_{\mu}}(x)\subset\points{F_{\lambda}}(x)$ and $\Gen{E}[p](x,y)\vdash y\nin \points{F_{\mu}}(x)$.
\end{proof}

When it is consistent, we will identify $\Gen{E}[p]$ with the type it generates.

\begin{remark}
Any $q\in\TP[x,y]<\Funform{\Delta}{F}>(M)$ is of the form $\Gen{E}[p]$. Indeed, let $p := \restr{q}{\Delta}$ and $E = \{\lambda\in\Lambda(M)\mid q(x,y)\vdash y\in F_{\lambda}(x)\}$. Then, quite clearly, $q = \Gen{E}[p]$.
\end{remark}

Let us show that any finite set of formulas with variables in $\K^{n+1}$ can be decided by some $\Funform{\Delta}{F}$ for well chosen $\Delta$ and $F$.

\begin{proposition}[exists F Delta](Reduction to $\Funform{\Delta}{F}$)
For all finite sets $\Theta(x,y;t)$ of $\LL$-formulas, where $x\in\K<n>$ and $y\in\K$, there exists an $\LL$-definable family $(F_{\lambda})_{\lambda\in\Lambda}$ of functions $\K<n>\to\Balls{l}$ and a finite set of $\LL$-formulas $\Delta(x;s)$ such that any $\Funform{\Delta}{F}$-type decides all the formulas in $\Theta$.
\end{proposition}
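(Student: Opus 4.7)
The plan is to use $C$-minimality of $T$ together with a compactness argument to uniformly decompose each $\theta \in \Theta$ as a Boolean combination of ball-membership statements, whose defining balls can then be bundled into a single $\LL$-definable family.

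First, $C$-minimality tells us that for each $\theta(x, y; t) \in \Theta$ and every choice of parameters $(x_{0}, t_{0})$, the fibre $\{y \in \K : \theta(x_{0}, y; t_{0})\}$ is a Boolean combination of balls. A standard compactness argument upgrades this to a uniform decomposition: there exist an integer $k_{\theta}$, $\LL$-definable functions $h_{\theta, 1}, \dots, h_{\theta, k_{\theta}} : \K<n>\times\K<|t|> \to \Balls{1}$ and a propositional formula $\phi_{\theta}$ in $k_{\theta}$ Boolean variables such that
\[T \vdash \forall x, y, t : \theta(x, y; t) \leftrightarrow \phi_{\theta}\bigl(y \in \points{h_{\theta,1}}(x, t), \dots, y \in \points{h_{\theta,k_{\theta}}}(x, t)\bigr).\]

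Next, bundle all the $h_{\theta, i}$ into a single $\LL$-definable family $F = (F_{\lambda})_{\lambda \in \Lambda}$ by letting $\lambda$ encode the finite data $(\theta, i)$ together with an instance of the parameter tuple $t$, and setting $F_{\lambda}(x) := h_{\theta, i}(x, t)$. The output lies in $\Balls{1} \subseteq \Balls{l}$, so any $l \geq 1$ works. We also adjoin two distinguished indices $\lambda_{\emptyset}$ and $\lambda_{\K}$ giving the constant functions $x \mapsto \emptyset$ and $x \mapsto \{\K\}$. For $\Delta(x; s)$, take the finite set of $\LL$-formulas in $x$ and parameter variables $s_{1}, s_{2}, s_{3}$ ranging over $\Lambda$ listed in the definition of ``adapted to $F$'': inclusions and equalities among the $F_{s_{i}}(x)$ (possibly relative to unions of boundedly many others), intersections $F_{s_{1}}(x) = F_{s_{2}}(x) \cap F_{s_{3}}(x)$, the predicate saying that the balls of $F_{s_{1}}(x)$ are closed, and the comparisons of $\mathrm{rad}(F_{s_{1}}(x))$ with the distances $d_{i}(F_{s_{2}}(x), F_{s_{3}}(x))$ for $i \leq l^{2}$. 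Only finitely many such formulas appear, so $\Delta$ is finite. For the bare statement alone one could even take $\Delta$ empty; the present choice additionally ensures that $\Delta$ is adapted to $F$, which is convenient for later use.

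Finally, let $q \in \TP[x, y]<\Funform{\Delta}{F}>(M)$ and $b \in M^{|t|}$. For each $\theta \in \Theta$ and each $i \leq k_{\theta}$, the index $\lambda_{i} := (\theta, i, b)$ lies in $\Lambda(M)$, so $q$ decides the instance $y \in \points{F_{\lambda_{i}}}(x)$, which by construction equals $y \in \points{h_{\theta, i}}(x, b)$. Through the Boolean combination $\phi_{\theta}$, $q$ therefore decides $\theta(x, y; b)$, as required. The main obstacle is the uniform decomposition in the first step: $C$-minimality per se yields only a pointwise decomposition, and extracting a uniform bound on the number of balls together with $\LL$-definable functions realising it requires a compactness argument (using that $\Theta$ is finite). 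The rest of the argument is bookkeeping.
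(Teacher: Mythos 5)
Your overall strategy runs parallel to the paper's: use $C$-minimality to decompose each fibre $\theta(x_{0},M;t_{0})$ into balls, compactness to make the decomposition uniform, bundle the resulting ball-valued functions into a single $\LL$-definable family $F$, and check that a $\Funform{\Delta}{F}$-type then decides $\Theta$. However, there is a genuine gap in the decomposition step that you cannot repair without essentially reverting to what the paper does.

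You claim $\LL$-definable functions $h_{\theta,j}:\K^{n}\times\K^{|t|}\to\Bb^{[1]}$ (single-ball-valued) such that $\theta(x,y;t)$ is uniformly equivalent to a fixed propositional combination of the statements $y\in\points{h_{\theta,j}}(x,t)$. This requires picking out \emph{individual} balls of the decomposition as definable functions of the parameters, but in $C$-minimal theories this is not possible in general: the balls in the canonical Swiss cheese representation may be permuted by automorphisms fixing $(x_{0},t_{0})$. For a concrete failure in $\ACVF$, the set $\{y:\val(y^{2}-t)>0\}$ for generic $t$ with $\val(t)=0$ is the disjoint union of two open balls centred at $\sqrt{t}$ and $-\sqrt{t}$; these two balls are conjugate over $t$, so no $\LL(t)$-definable function selects one of them, while the unordered pair is $\LL(t)$-definable. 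This is precisely why the paper's decomposition uses $\Bb^{[l]}$-valued functions $H,G$ (unordered finite sets of balls) and expresses the Swiss cheese as $\points{H}(x)\sminus\points{G}(x)$, never appealing to individual balls. To repair your argument you would need to replace $\Bb^{[1]}$ by $\Bb^{[l]}$ throughout, and with that the propositional skeleton reduces to the single form $p\wedge\neg q$ with $H,G$ as the two atoms, which is the paper's formulation.

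A secondary point worth flagging: the paper's $\Delta$ is not the ``adapted to $F$'' set of formulas you propose and cannot be replaced by the empty set. It consists of the formulas $\forall y\,(\phi(x,y;t)\iffform y\in\points{F_{\mu}}(x)\sminus\points{F_{\nu}}(x))$ for $\phi\in\Theta$, with $\mu,\nu$ as parameter variables. These are needed because the compactness argument only yields finitely many candidate decompositions, and \emph{which} indices $\mu,\nu$ realise the decomposition of $\phi(x,y;c)$ can depend on the $\Delta$-type of $x$ and not just on $c$; the $\Delta$-type must record this choice. Your route sidesteps this by asserting a single uniform decomposition valid for all $(x,t)$ simultaneously, but the compactness argument only produces a finite cover of the parameter space by definable pieces with piecewise decompositions, and the gluing and padding required to merge them into one formula is nontrivial (and it is exactly this bookkeeping that the paper's $\Delta$-formulas replace). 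Even granting that such a merge is possible after switching to $\Bb^{[l]}$, it deserves to be carried out, not waved at.
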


\begin{proof}
Let $\phi(x,y;t)$ be a formula in $\Theta$. As $T$ is $C$-minimal, for all tuples $a\in\K$ and $c\in M$, the set $\phi(a,M;c)$ has a canonical representation as Swiss cheeses, i.e. it is of the form $\bigcup_{i} (b_{i}\sminus b_{i,j})$ where the $b_{i}$ and $b_{i,j}$ are algebraic over $a c$. In particular, there exists $l\in\Nn_{>0}$  and $\LL(c)$-definable functions $H_{\phi,c} : \K<n> \to \Balls{l}$ and $G_{\phi,c} : \K<n> \to \Balls{l}$ such that $M\models \forall y\,(y\in\points{H_{\phi,c}}(a)\sminus \points{G_{\phi,c}}(a) \iffform \phi(a,y;c))$. By compactness, we can find finitely many $\LL$-definable families $(H_{i,\phi,c})_{c\in M}$ and $(G_{i,\phi,c})_{c\in M}$ of functions $\K<n>\to\Balls{l_{i,\phi}}$ such that for any choice of $c$ and $a$ there is an $i$ such that $\phi(a,y;c) \iffform y\in\points{H_{i,\phi,c}}(a)\sminus \points{G_{i,\phi,c}}(a)$. Choosing $l$ to be the maximum of the $l_{i,\phi}$ and using some coding trick, one can find an $\LL$-definable family $(F_{\lambda})_{\lambda\in\Lambda}$ of functions $\K<n>\to\Balls{l}$ such that for any $\phi\in\Theta$, $i$ and $c$ we find $\mu$, $\nu\in\Lambda$ such that $H_{i,\phi,c} = F_{\mu}$ and $G_{i,\phi,c} = F_{\nu}$.

Now let $\Delta(x;t,\mu,\nu) = \{\forall y\,(\phi(x,y;t) \iffform y\in \points{F_{\mu}}(x)\sminus \points{F_{\nu}}(x)) \mid \phi\in\Theta\}$. Then for any $p\in\TP[x,y]<\Funform{\Delta}{F}>(M)$, $\phi\in\Theta$ and tuple $c\in M$, there exists $\mu$, $\nu\in\Lambda(M)$ such that $p(x,y)\vdash \phi(x,y;c) \iffform y\in \points{F_{\mu}}(x)\sminus \points{F_{\nu}}(x)$ and either $p(x,y)\vdash y\in \points{F_{\mu}}(x)\wedge y\nin \points{F_{\nu}}(x)$ in which case $p(x,y)\vdash \phi(x,y;c)$ or not, in which case $p(x,y)\vdash \neg\phi(x,y;c)$.
\end{proof}

Now, let us show that we can refine any $\Delta$ and $F$ into a family verifying all previous hypotheses.

\begin{proposition}[F st balls](Reduction to $\Ballsst{l}$)
Let $A\subseteq M$ and $(F_{\lambda})_{\lambda\in\Lambda}$ be an $\LL(A)$-definable family of functions $\K<n>\to\Balls{l}$. Then there exists an $\LL(A)$-definable family $(G_{\omega})_{\omega\in\Omega}$ of functions $\K<n>\to\Ballsst{l}$ such that for all $\lambda$, there exist $(\omega_{i})_{0\leq i < l}$ such that $F_{\lambda}(x) = \bigcup_{i}G_{\omega_{i}}(x)$ and for all $\omega$ there exists $\lambda$ such that $G_{\omega}(x)\subseteq F_{\lambda}(x)$.
\end{proposition}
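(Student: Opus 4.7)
The plan is to refine $F$ by partitioning each value $F_\lambda(x)$ according to the two invariants that characterise membership in $\Ballsst{l}$: the radius, and whether the balls are open or closed. Take the parameter space $\Omega$ to be an $\LL$-definable set encoding triples $(\lambda,\gamma,\epsilon)$, where $\lambda\in\Lambda$, $\gamma$ ranges over the extended value group (with markers for singletons of radius $+\infty$ and for $\K$ itself of radius $-\infty$) and $\epsilon\in\{0,1\}$ distinguishes closed from open balls. For $\omega=(\lambda,\gamma,\epsilon)$, set
\[G_\omega(x) := \{\,b\in F_\lambda(x) \mid \mathrm{rad}(b)=\gamma,\ b\text{ closed iff }\epsilon=0\,\}.\]
Since this is defined by a quantifier-free $\LL$-formula in $F$, the family $(G_\omega)_{\omega\in\Omega}$ is $\LL(A)$-definable.

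The second requirement is immediate from the definition: for any $\omega=(\lambda,\gamma,\epsilon)$ and any $x$, we have $G_\omega(x)\subseteq F_\lambda(x)$; and $G_\omega(x)$ lies in $\Ballsst{l}$ because its elements are at most $|F_\lambda(x)|\le l$ balls that share one radius and one open/closed status. For the first requirement, fix $\lambda$ and $x$: the balls of $F_\lambda(x)$ fall into at most $l$ equivalence classes for the relation "same generalised radius". Choose representatives $(\gamma_i,\epsilon_i)_{0\le i<k}$, with $k\le l$, let $\omega_i:=(\lambda,\gamma_i,\epsilon_i)$ for $i<k$, and pad up to length $l$ by repeating $\omega_0$; then
\[F_\lambda(x) \;=\; \bigcup_{0\le i<l} G_{\omega_i}(x).\]

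There is no real obstacle here: the statement is a definable bookkeeping reduction whose purpose is to grant access to the well-defined notion of generalised radius, and hence to the order-theoretic tools of Proposition\,\ref{prop:inter balls}, whenever we need to analyse arbitrary $\LL(A)$-definable families of finite-bounded sets of balls. The only point that requires a moment's care is ensuring that the "empty" fibre and the extreme radii $\pm\infty$ are included in the index set, so that $G_\omega(x)=\emptyset$ and $G_\omega(x)=\{\K\}$ can be realised as instances of $G$; this is handled by taking $\Omega$ to be a disjoint union of the relevant definable pieces.
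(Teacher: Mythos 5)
Your proof is correct and follows essentially the same idea as the paper's: split each $F_\lambda(x)$ according to generalised radius (i.e.\ radius and open/closed status) to land in $\Ballsst{l}$. The only cosmetic difference is that you index the refinement by the actual radius value $\gamma\in\Valgp\cup\{\pm\infty\}$, whereas the paper indexes by the rank $i\in\{1,\dots,l\}$ of the radius among those occurring in $F_\lambda(x)$ together with the open/closed bit $j\in\{0,1\}$; the paper's choice keeps the auxiliary index finite, so $\Omega$ is just $\Lambda\times\{1,\dots,l\}\times\{0,1\}$, but both parametrisations are $\LL(A)$-definable and yield the stated covering and inclusion properties.
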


\begin{proof}
For all $\lambda\in\Lambda$, $0 < i \leq l$ and $j=0,1$,  we define $G_{\lambda,i,j}(x) := \{b\in F_{\lambda}(x)\mid b$ is open if $j=0$, closed otherwise and $b$ has the $i$-th smallest radius among the balls in $F_{\lambda}(x)\}$. As $i$ and $j$ only take finitely many values, $G = (G_{\omega})_{\omega\in\Omega}$ can indeed be viewed as an $\LL(A)$-definable family. Then for all $x$, $G_{\omega}(x)\in\Ballsst{l}$. For all $x$ and $\lambda$, $G_{\lambda,i,j}(x)\subseteq F_{\lambda}(x)$ and $F_{\lambda}(x) = \bigcup_{i,j}G_{\lambda,i,j}(x)$. Moreover, at most $l$ of them are non empty.
\end{proof}

\begin{definition}(Generic complement)
We say that $F$ is closed under generic complement over $p$ if for all $\lambda$, $\mu\in\Lambda(M)$ such that $p(x)\vdash F_{\mu}(x)\subseteq F_{\lambda}(x)$, there exists $\kappa\in\Lambda(M)$ such that \[p(x)\vdash F_{\lambda}(x) = F_{\mu}(x)\dunion F_{\kappa}(x).\]
\end{definition}

Note that $p$ can decide any such statement as it is equivalent to $F_{\lambda}(x) = F_{\mu}(x) \cup F_{\kappa}(x)$ and $\points{F_{\mu}}(x)\cap \points{F_{\kappa}}(x) = \emptyset$.

\begin{lemma}[cover irred]
Let $F = (F_{\lambda})_{\lambda\in\Lambda}$ be an $\LL(M)$-definable family of functions $\K<n>\to\Ballsst{l}$, $\Delta(x;t)$ a finite set of $\LL$-formulas adapted to $F$ and $p\in\TP[x]<\Delta>(M)$. Assume that $F$ is closed under generic complement over $p$. Let $\Lambda_{p} := \{\lambda\in\Lambda\mid F_{\lambda}$ is generically irreducible over $p\}$, then for all $\lambda\in\Lambda(M)$ there exists $(\lambda_{i})_{0\leq i < l}\in\Lambda_{p}(M)$ such that $p(x)\vdash F_{\lambda}(x) = \bigcup_{i} F_{\lambda_{i}}(x)$.
\end{lemma}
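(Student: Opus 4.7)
The plan is to proceed by well-founded induction on the relation $\mu \prec_p \lambda$ defined by $p(x) \vdash F_\mu(x) \subsetneq F_\lambda(x)$ together with $p(x) \vdash F_\mu(x) \neq \emptyset$. Since $\Delta$ is adapted to $F$, both statements are decided by $p$, so $\prec_p$ is well-defined. Well-foundedness follows from the observation that any $\prec_p$-descending chain $\lambda \succ_p \mu_1 \succ_p \mu_2 \succ_p \cdots$ produces, upon realization of $p$, a strictly decreasing sequence of non-empty sub-ball-sets of $F_\lambda(x) \in \Ballsst{l}$, and hence has length at most $l$.

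What I would actually prove by $\prec_p$-induction is the stronger statement that for every $\lambda \in \Lambda(M)$ there exist $(\lambda_i)_{0 \le i < l} \in \Lambda_p(M)$ with $p(x) \vdash F_\lambda(x) = F_{\lambda_0}(x) \dunion \cdots \dunion F_{\lambda_{l-1}}(x)$ as a \emph{disjoint} union of ball-sets. The base case is when $F_\lambda$ is itself generically irreducible over $p$: set $\lambda_0 = \lambda$ and $\lambda_i = \lambda_\emptyset$ for $i > 0$, noting that $F_{\lambda_\emptyset}$ being constantly empty is vacuously generically irreducible, so $\lambda_\emptyset \in \Lambda_p$. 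Otherwise some $\mu \in \Lambda(M)$ satisfies $\mu \prec_p \lambda$, and closure under generic complement yields $\kappa$ with $p(x) \vdash F_\lambda(x) = F_\mu(x) \dunion F_\kappa(x)$; because $F_\mu$ is a proper non-empty sub-family of $F_\lambda$, so is $F_\kappa$, giving $\kappa \prec_p \lambda$ as well. Applying the induction hypothesis to $\mu$ and $\kappa$ and concatenating the resulting disjoint decompositions produces a disjoint decomposition of $F_\lambda$ indexed by $2l$ elements, with cross-disjointness between the $\mu_i$-part and the $\kappa_j$-part following from $F_\mu \cap F_\kappa = \emptyset$ generically.

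The heart of the argument is then a counting step: among the $2l$ pieces produced, those that are generically non-empty form a pairwise disjoint collection of non-empty sub-families of $F_\lambda \in \Ballsst{l}$. Evaluated at any realization $a \models p$ in a sufficiently saturated extension, they give pairwise disjoint non-empty subsets of the at-most-$l$-element ball-set $F_\lambda(a)$, so their number is at most $l$. Reindexing the non-empty pieces as $\lambda_0, \ldots, \lambda_{s-1}$ with $s \le l$ and setting $\lambda_i = \lambda_\emptyset$ for $s \le i < l$ produces the required tuple in $\Lambda_p(M)$.

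The main obstacle I anticipate is precisely this counting step. Naively applying the conclusion as stated (a cover, not a partition) and taking the union of the two sub-covers could yield up to $2l$ generically non-empty pieces after a single split, exceeding the bound $l$. The remedy is to strengthen the induction hypothesis to demand a \emph{disjoint} decomposition at each level, which together with the $\Ballsst{l}$ constraint forces the count of non-empty pieces in any realization to be bounded by the generic cardinality of $F_\lambda$, and so by $l$.
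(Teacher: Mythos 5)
Your proof is correct and takes essentially the same approach as the paper: induct, split via closure under generic complement, and bound the number of generically non-empty pieces by $l$. The paper's proof is terser --- it inducts directly on $\card{F_{\lambda}(x)}$ at a realization $x\models p$ and dismisses the counting with ``we cannot cut it in more than $l$ distinct pieces'' --- so your explicit strengthening of the inductive statement to a \emph{disjoint} decomposition is making precise the point the paper leaves implicit, not a genuine divergence in method.
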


\begin{proof}
Let $x\models p$. We work by induction on $\card{F_{\lambda}(x)}$. If there exists $\mu\in\Lambda(M)$ such that $F_{\mu}(x)\subset F_{\lambda}(x)$ and $F_{\mu}(x)\neq \emptyset$, then there exists $\kappa\in\Lambda(M)$ such that $F_{\lambda}(x) = F_{\mu}(x)\dunion F_{\kappa}(x)$. We now apply the induction hypothesis to $F_{\mu}(x)$ and $F_{\kappa}(x)$. Finally, because $\card{F_{\lambda}(x)}\leq l$, we cannot cut it in more than $l$ distinct pieces.
\end{proof}

\begin{proposition}[exists F Delta nice](Reduction to irreducible families)
Let $A\subseteq M$, $(F_{\lambda})_{\lambda\in\Lambda}$ be an $\LL(A)$-definable family of functions $\K<n>\to\Ballsst{l}$ and $\Delta(x;t)$ a finite set of $\LL$-formulas. Then, there exists an $\LL(A)$-definable family $(G_{\omega})_{\omega\in\Omega}$ of functions $\K<n>\to\Ballsst{l}$ and a finite set of $\LL$-formulas $\Theta(x;t,s)\supseteq\Delta(x;t)$ such that $\Theta$ is adapted to $G$ and for any $p\in\TP[x]<\Theta>(M)$:
\begin{thm@enum}
\item\label{nice:inter} $G$ is closed under generic intersection and complement over $p$;
\item\label{nice:sub} For all $\omega\in\Omega(M)$ there is $\lambda\in\Lambda(M)$ such that $p(x)\vdash G_{\omega}(x)\subseteq F_{\lambda}(x)$;
\item\label{nice:equal} For all $\lambda\in\Lambda(M)$, there is $\omega\in\Omega(M)$ such that $p(x)\vdash F_{\lambda}(x) = G_{\omega}(x)$;
\item\label{nice:irr cover} For all $\omega\in\Omega(M)$, there is $(\omega_{i})_{0\leq i <l}\in\Omega_{p}(M)$ such that $p(x)\vdash G_{\omega}(x) = \bigcup_{i}G_{\omega_{i}}(x)$;
\end{thm@enum}
where $\Omega_{p} := \{\omega\in\Omega\mid G_{\omega}$ is generically irreducible over $p\}$.
\end{proposition}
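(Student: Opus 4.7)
The plan is to build $(G,\Theta)$ by first closing $F$ uniformly under intersection and decomposed complement, and then augmenting $\Delta$ enough to decide the resulting combinatorics. For intersections, given $(\lambda_1,\lambda_2)\in\Lambda^2$, Proposition \ref{prop:inter balls} shows that $\Points(F_{\lambda_1}(x))\cap\Points(F_{\lambda_2}(x))$ consists of balls drawn from whichever of $F_{\lambda_1}(x),F_{\lambda_2}(x)$ has the greater generalised radius; in particular the intersection already lies in $\Ballsst{l}$, giving a uniform $\LL(A)$-definable extension of $F$. For complements, given $(\lambda,\mu)$, the set $\points{F_{\lambda}}(x)\sminus\points{F_{\mu}}(x)$ is a Swiss cheese supported on the balls of $F_{\lambda}(x)$; it lives in $\Ballsst{l}$ when $F_\mu(x)$ removes whole balls of $F_\lambda(x)$, and otherwise it decomposes, via Proposition \ref{prop:F st balls} applied to the associated $\Balls{l}$-covering, into at most $l$ members of $\Ballsst{l}$ parametrised uniformly in $(\lambda,\mu)$. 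Add also the constant indices $\lambda_\emptyset$ and $\lambda_\K$.

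Iterating the two operations terminates since every member of the closure is a Boolean combination of at most $l$ balls drawn from the fibres of the original $F$, so the combinatorial complexity is bounded by a function of $l$. One concrete way to make this a single stage is to parametrise the new index set $\Omega$ by tuples $(\lambda_1,\ldots,\lambda_k)\in\Lambda^{k}$ for $k$ bounded in $l$, together with a finite "recipe" choosing a Boolean atom and a decomposition component, setting $G_\omega(x):=\emptyset$ whenever the recipe does not land in $\Ballsst{l}$. Let $\Theta\supseteq\Delta$ consist of $\Delta$ together with, for each clause in the definition of "adapted to $G$", the corresponding $\LL$-formulas; finitely many suffice since each clause involves a bounded number of parameters from $\Omega$.

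Now for any $p\in\TP[x]<\Theta>(M)$, the type $p$ decides every relevant equality, inclusion, radius comparison and decomposition statement. Items \ref{nice:sub} and \ref{nice:equal} are immediate from $F\subseteq G$ and the explicit intersection parametrisation. Generic intersection in \ref{nice:inter} holds by construction of the pairwise intersections, while generic complement follows from the decomposed complements: $p$ decides, via the radius clause of "adapted", whether $F_\mu$'s balls have the same radius as those of $F_\lambda$, so in either case a suitable $\kappa\in\Omega(M)$ with $p(x)\vdash G_\lambda(x)=G_\mu(x)\dunion G_\kappa(x)$ exists by construction. Finally, \ref{nice:irr cover} is precisely Lemma \ref{lem:cover irred} applied to the now-closed-under-generic-complement family $G$ together with the adapted set $\Theta$.

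The main obstacle I expect is the uniform definability of the closure described in the first two paragraphs: one has to express the decomposition of a Swiss cheese into same-radius pieces so that its components are indexed by finitely many definable parameters, and then verify that iterating intersections and complements on these parametrised pieces does not generate genuinely new "shapes" of ball configurations at each iteration. The fibre-wise bound $\card{G_\omega(x)}\leq l$ ultimately saves the situation by bounding combinatorial complexity, but making this fully explicit requires a careful bookkeeping of the Boolean recipes together with a tracking of how generalised radii interact under the two operations.
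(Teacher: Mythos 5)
Your overall strategy matches the paper's: first close $F$ under intersection, then under complement, augment $\Delta$ to decide the resulting combinatorics, and finally invoke Lemma\,\ref{lem:cover irred} for item\,\ref{nice:irr cover}. However, there are two substantive issues.

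First, you conflate the point-set complement $\points{F_\lambda}(x)\sminus\points{F_\mu}(x)$ (a Swiss cheese) with the ball-set complement $F_\lambda(x)\sminus F_\mu(x)$, which is what the definition of generic complement actually requires: it starts from $F_\mu(x)\subseteq F_\lambda(x)$ as sets of balls and asks for $F_\kappa(x)$ with $F_\lambda(x)=F_\mu(x)\dunion F_\kappa(x)$. Because $F_\lambda(x)\sminus F_\mu(x)$ is a subset of $F_\lambda(x)\in\Ballsst{l}$, it is automatically again in $\Ballsst{l}$; there is no Swiss cheese to decompose, and Proposition\,\ref{prop:F st balls}, which only converts a $\Balls{l}$-valued family into a $\Ballsst{l}$-valued one, would not handle a Swiss cheese anyway. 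This confusion overcomplicates your construction and is where the claimed "decomposition into at most $l$ members of $\Ballsst{l}$" drifts away from anything the cited proposition gives you.

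Second, and more importantly, the step you flag as "the main obstacle" is precisely where the proof must actually do work, and your sketch leaves it as a gap. The termination argument "every member of the closure is a Boolean combination of at most $l$ balls drawn from the fibres of the original $F$" is not quite right — intersections already require $l+1$ indices from $\Lambda$, and naive iteration grows the number of parameters without a visible bound. The paper's fix is a normal-form claim: after passing to the closure $H$ under $(l{+}1)$-fold intersection, any Boolean combination of sets $H_{\mu}(x)\subseteq H_{\nu}(x)$ with $H_\nu(x)\in\Ballsst{l}$ can be written as $\bigcap_{j<l}\bigcup_{k<l}\bigl(H^{\epsilon_{j,k}}_{\mu_{j,k}}(x)\cap H_\nu(x)\bigr)$, because the outer intersection and each inner union need at most $l$ terms (one per ball of $H_\nu(x)$ that has to be excluded or included). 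This is what collapses the construction into a single definable stage with parameter space $\Lambda^{l+1}\times(\Lambda^{l+1})^{l^2}\times 2^{l^2}$, and is also what lets one verify in one shot that $G$ is closed under generic intersection and complement by re-expanding the resulting Boolean combination in CNF. Without this bound, the "iterate and terminate" plan does not obviously produce a single $\LL(A)$-definable family, which is what the statement requires.
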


\begin{proof}
Adding them if necessary, we may assume that $F$ contains the constant functions equal to $\emptyset$ and $\{\K\}$ respectively. For all $\uple{\lambda}\in\Lambda^{l+1}$, let $H_{\overline{\lambda}}(x) := \balls(\bigcap_{0\leq i\leq l} \points{F_{\lambda_{i}}}(x))$.  It follows from Proposition\,\ref{prop:inter balls}, that $H = (H_{\uple{\lambda}})_{\uple{\lambda}\in\Lambda^{l+1}}$ is well-defined and that \ref{nice:sub} holds for $H$. Adding finitely many formulas to $\Delta(x;t)$, we obtain $\Xi(x;s)$ which is adapted to $H$. Let $p\in\TP[x]<\Xi>(M)$. Proposition\,\ref{prop:inter balls} also implies that for a given $x$, the intersection of any number of $\points{F_{\lambda}}(x)$ is given by the intersection of $l+1$ of them. Hence it is an instance of $H$. As $\Xi$ is adapted to $H$, we have proved that $H$ is closed under generic intersection over any $\Xi$-type $p$. Condition\,\ref{nice:equal} also clearly holds for $H$.

Let $B\in\Ballsst{l}$, we define $B^{1}$ to be $B$ and $B^{0}$ to be its complement (in $\BallSet$). As previously, to simplify notation, for $\epsilon\in\{0,1\}$, we will write $H^{\epsilon}_{\mu}(x)$ for $(H_{\mu}(x))^{\epsilon}$.

\begin{claim}[bool comb balls]
Let $B\in\Ballsst{l}$. Any Boolean combination of sets $(C_{i})_{i\leq r}\subseteq B$ (where we take the complement in $B$, i.e. $C^{0}\cap B$) lives in $\Ballsst{l}$ and can be written as $\bigcap_{j<l}\bigcup_{k<l}(C_{j,k}^{\epsilon_{j,k}}\cap B)$ where the $C_{j,k}$ are taken among the $C_{i}$ and $\epsilon_{j,k}\in\{0,1\}$.
\end{claim}

\begin{proof}
Such a Boolean combination lives in $\Ballsst{l}$ because it is a subset of $B$. The fact that it can be written as $\bigcap_{j}\bigcup_{k}(C_{j,k}^{\epsilon_{j,k}}\cap B)$ is just the existence of the conjunctive normal form. Moreover, as in Proposition\,\ref{prop:inter balls}, any intersection $\bigcap_{k}C_{j,k}^{\epsilon_{j,k}}\cap B$ for fixed $j$ can be rewritten as the intersection of at most $l$ of them (for each ball from $B$ missing from the intersection, choose a $k$ such that this ball is not in $C_{j,k}^{\epsilon_{j,k}}\cap B$). Similarly, the union can be rewritten as the union of at most $l$ of them by choosing, for every $b\in B$ which appears in the union a $j$ such that $b$ appears in $\bigcup_{k}(C_{j,k}^{\epsilon_{j,k}}\cap B)$.
\end{proof}

For all $\nu\in\Lambda^{l+1}$, $\uple{\mu}\in(\Lambda^{l=1})^{l^{2}}$ and $\uple{\epsilon}\in 2^{l^{2}}$, let $G_{\nu,\overline{\mu},\overline{\epsilon}}(x) = \bigcap_{i<l}\bigcup_{j<l}((H^{\epsilon_{i,j}}_{\mu_{i,j}}(x))\cap H_{\nu}(x))$ whenever all the $H_{\mu_{i,j}}\subseteq H_{\nu}(x)$. Otherwise, let $G_{\nu,\overline{\mu},\overline{\epsilon}}(x) = H_{\nu}(x)$. Adding some more formulas to $\Xi$, we obtain a finite set of formulas $\Theta(x;t,s,u)$ which is adapted to $G$. It is clear that \ref{nice:sub} and \ref{nice:equal} still hold. Furthermore,
\[\points{G_{\nu,\overline{\mu},\overline{\epsilon}}}(x)\cap \points{G_{\sigma,\overline{\tau},\overline{\eta}}}(x) = \bigcap_{i,k}\bigcup_{j,r}(\Points(H_{\mu_{i,j}}^{\epsilon_{i,j}}(x))\cap \Points(H_{\tau_{k,r}}^{\eta_{k,r}}(x))\cap \points{H_{\nu}}(x)\cap \points{H_{\sigma}}(x)).\]
As $H$ is closed under generic intersection there exists $\rho$ such that $\points{H_{\rho}}(x) = \points{H_{\nu}}(x)\cap \points{H_{\sigma}}(x)$. By Proposition\,\ref{prop:inter balls}, $\balls(\Points(H_{\mu_{i,j}}^{\epsilon_{i,j}}(x))\cap \points{H_{\rho}}(x))\subseteq H_{\rho}(x)$ and $\balls(\Points(H_{\tau_{k,r}}^{\eta_{k,r}}(x))\cap \points{H_{\rho}}(x))\subseteq H_{\rho}(x)$. We can conclude by Claim\,\ref{claim:bool comb balls} that $G$ is also closed under generic intersection over $p$. Similarly we show that whenever $G_{\nu,\overline{\mu},\overline{\epsilon}}(x)\subseteq G_{\sigma,\overline{\tau},\overline{\eta}}(x)$ then $G^{0}_{\nu,\overline{\mu},\overline{\epsilon}}(x)\cap G_{\sigma,\overline{\tau},\overline{\eta}}(x)$ is also an instance of $G$, i.e. $G$ is closed under generic complement over $p$. Hence \ref{nice:irr cover} is proved in Lemma\,\ref{lem:cover irred}.
\end{proof}

\section{Quantifiable types} 
\label{sec:imp def}

Let us begin with the example that motivates the definition of quantifiable types. Let $b$ be an open ball in some model of $\ACVF$ and $\Gen{b}$ be its generic type. Let $X$ be any set definable in an enrichment of $\ACVF$. Then all the realisations of $\Gen{b}$ are in $X$, i.e. $\Gen{b}\vdash x\in X$, if and only if there exists $b'\in\BallSet$ such that $b'\subset b$ and $b\sminus b' \subseteq X$. Thus, although for most definable sets $X$, both $X$ and its complement are consistent with $\Gen{b}$, if it happens that any realisation of $\Gen{b}$ is in $X$, then there is a formula which says so. We have just shown that $\Gen{b}$ is quantifiable as a partial $\tL$-type (see Definition\,\ref{def:imp def}) for any enrichment $\tL$ of $\ACVF$. If $(b_{i})_{i\in I}$ is a strict chain of balls, i.e. $P := \bigcap_{i} b_{i}$ is not a ball, the exact same proof shows that the generic type of $P$ is also quantifiable as a partial $\tL$-type, if $P$ is $\tL$-definable.

If $b$ is a closed ball, the situation is somewhat more complicated because $\Gen{b}(x) \vdash x\in X$ if and only if there exists finitely many maximal open subballs $(b_{i})_{0\leq i < k}$ of $b$ such that for all $x\in\K$, $x\in b\sminus \bigcup_{i} b_{i}$ implies $x\in X$. Because the set of maximal open subballs of a given ball is internal to the residue field, to obtain that $\Gen{b}$ is quantifiable (as a partial $\tL$-type), we need to know that the $\tL$-induced structure on $\res$ eliminates $\exists^{\infty}$ to bound the number of maximal open subballs we have to remove. Recall that an $\LL$-theory $T$ eliminates $\exists^{\infty}$ if for every $\LL$-formula $\phi(x;s)$ there is an $n\in\Nn$ such that for all $M\models T$ and $m\in M$, if $\card{\phi(M;m)} < \infty$ then $\card{\phi(M;m)} \leq n$.

The notion of quantifiable type will play a fundamental role in Section\,\ref{sec:approx}. The main result of this section is Corollary\,\ref{cor:imp def gen} which says that, under some more hypothesis on the families of parametrised balls, the types of the form $\Gen{E}[p]$ (see Definition\,\ref{def:gen type}) are quantifiable if $E$ is definable and $p$ is quantifiable. The proof is essentially a parametrised version of the argument above. We then prove that we can refine families of parametrised balls so that they have the necessary properties.

Let $\LL$ be a language and $M$ an $\LL$-structure.

\begin{definition}[imp def](Quantifiable partial $\LL$-types)
Let $p$ be a partial $\LL(M)$-type. We say that $p$ is quantifiable if for all $\LL$-formulas $\phi(x;s)$ there exists an $\LL(M)$-formula $\theta(s)$ such that for all tuples $m\in M$, \[M\models \theta(m)\text{ if and only if }p(x)\vdash\phi(x;m).\]
Let $A\subseteq M$. If we want to specify that $\theta$ is an $\LL(A)$-formula, we will say that $p$ is $\LL(A)$-quantifiable.
\end{definition}

\begin{remark}
\begin{thm@enum}
\item A type $p(x)$ is quantifiable if we can quantify universally and existentially over realisations of $p$, that is for every $\LL$-formula $\phi(x;y)$, "for all $x\models \tprestr{p}{y}$, $\phi(x;y)$ holds" and "there exists an $x\models \tprestr{p}{y}$ such that $\phi(x;y)$ holds" are both first order formulas. Hence the name.
\item There are various ways in which to extend definability to partial types depending on two things: do we want the defining scheme to be ind-definable, pro-definable or definable? And do we want the closure under implication of the partial type also to be definable? Quantifiable partial types correspond to the case where the closure under implication of the type has a definable defining scheme. Although these different notions have often been indistinctively called definability, we feel that it is better to try to distinguish them.
\item\label{rem:imp def is def} The partial types we will consider here are $\Delta$-types for some set $\Delta(x;t)$ of $\LL$-formulas. Note that if $p\in\TP[x]<\Delta>(M)$ is $\LL(A)$-quantifiable, it is $\LL(A)$-definable as a $\Delta$-type, i.e. for any formula $\phi(x;t)\in\Delta$, there is an $\LL(A)$-formula $\defsc{p}{x}\phi(x;t) = \theta(t)$ such that for all tuples $m\in M$, $\phi(x;m)\in p$ if and only if $M\models\defsc{p}{x}\phi(x;m)$. In particular, $p$ has a canonical extension $\tprestr{p}{N}$ to any $N\supsel M$ defined using the same defining scheme. If $M$ was sufficiently saturated, this canonical extension is also  $\LL(A)$-quantifiable.
\end{thm@enum}
\end{remark}

As previously, let now $\LL\supseteq\Ldiv$, $T\supseteq\ACVF$ be a $C$-minimal $\LL$-theory which eliminates imaginaries, $\Real$ be the set of $\LL$-sorts, $\tL$ be an enrichment of $\LL$, $\tT$ an $\tL$-theory containing $T$, $\tM\models\tT$ and $M := \Langrestr{\tM}{\LL}$. We will also be assuming that $\res$ is stably embedded in $\tT$ and that the induced theory on $\res$ eliminates $\exists^{\infty}$. Until the end of the section, quantifiability of types will refer to quantifiability as partial $\tL$-types.

Let $\tA\subseteq\tM$, $A := \Real(\tA)$, $F = (F_{\lambda})_{\lambda\in\Lambda}$ be an $\LL(A)$-definable family of functions $\K<n>\to\Ballsst{l}$ and $\Delta(x,y;t)$ a finite set of $\LL$-formulas where $x\in\K<n>$ and $y\in\K$. Let $p\in\TP[x,y]<\Delta>(M)$ be definable. Assume that $\Delta$ is adapted to $F$ and that $F$ is is generically irreducible and closed under generic intersection over $p$.

\begin{definition}[gen cov](Generic covering property)
We say that $F$ has the generic covering property over $p$ if for any $E\subseteq\Lambda(M)$ and any finite set $(\lambda_{i})_{0\leq i < k}\in\Lambda(M)$ such that for all $\mu\in E$, $p(x,y)\vdash \points{F_{\lambda_{i}}}(x) \subset \points{F_{\mu}}(x)$, there exists $(\kappa_{j})_{0\leq j<l}\in\Lambda(M)$ such that:
\begin{thm@enum}
\item For all $j$, $p(x,y)\vdash$ "the balls in $F_{\kappa_{j}}(x)$ are closed";
\item For all $\mu\in E$ and $j$, $p(x,y)\vdash \points{F_{\kappa_{j}}}(x) \subseteq \points{F_{\mu}}(x)$;
\item For all $i$, $p(x,y)\vdash \points{F_{\lambda_{i}}}(x) \subseteq \bigcup_{j}\points{F_{\kappa_{j}}}(x)$;
\end{thm@enum}
\end{definition}

Note that if $E = \{\lambda_{0}\}$ and $p(x,y)\vdash$ "the balls in $F_{\lambda_{0}}(x)$ are closed", then the generic covering property holds trivially as it suffices to take all $\kappa_{j} = \lambda_{0}$. It will only be interesting if $p(x,y)\vdash$ "the balls in $F_{\lambda_{0}}(x)$ are open" or $E$ does not have a smallest element over $p$, i.e. for all $\lambda\in E$ there exists $\mu\in E$ such that $p(x,y)\vdash \points{F_{\mu}}(x) \subset \points{F_{\lambda}}(x)$.

Let $\cE\subseteq\Lambda$ be $\tL(\tA)$-definable. 

\begin{proposition}[imp def strict open]
Assume that one of the following holds:
\begin{thm@enum}
\item $\cE(\tM)$ does not have a smallest element over $p$;
\item there is a $\lambda_{0}\in \cE(\tM)$ such that for all $\lambda\in \cE(\tM)$ , $p(x,y)\vdash \points{F_{\lambda_{0}}}(x)\subseteq \points{F_{\lambda}}(x)$ and $p(x,y)\vdash$ "the balls in $F_{\lambda_{0}}(x)$ are open".
\end{thm@enum}
Assume also that $p$ is $\tL(\tA)$-quantifiable and $F$ has the generic covering property over $p$, then $\Gen{\cE(\tM)}[p]$ is $\tL(\tA)$-quantifiable.
\end{proposition}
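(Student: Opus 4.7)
The plan is to produce, for each $\tL$-formula $\phi(x,y;s)$, an $\tL(\tA)$-formula $\theta(s)$ such that $\theta(m)$ holds if and only if $\Gen{\cE(\tM)}[p] \vdash \phi(x,y;m)$. The construction proceeds in three stages: a compactness reduction, a quantifiability translation, and a uniform bound via $\exists^{\infty}$-elimination in $\res$.

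First I would invoke compactness. A finite subtype of $\Gen{\cE(\tM)}[p]$ has the form $p(x,y) \cup \{y \in \points{F_{\lambda_i}}(x) : i \leq n\} \cup \{y \notin \points{F_{\mu_j}}(x) : j \leq k\}$, with $\lambda_i \in \cE(\tM)$ and each $\mu_j$ giving balls strictly contained, over $p$, in every $\points{F_\mu}(x)$ for $\mu \in \cE(\tM)$. Closure of $F$ under generic intersection over $p$ collapses the conjunction $\bigwedge_i y \in \points{F_{\lambda_i}}(x)$ to a single formula $y \in \points{F_\nu}(x)$; in case (ii) take $\nu = \lambda_0$, while in case (i) the absence of a minimum in $\cE(\tM)$ lets me find $\lambda \in \cE(\tM)$ with $p(x,y) \vdash \points{F_\lambda}(x) \subseteq \points{F_\nu}(x)$, so one may replace $\nu$ by such a $\lambda$. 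The generic covering property (Definition~\ref{def:gen cov}) then replaces the collection $(\mu_j)$ by indices $(\kappa_j)$ whose balls are closed and satisfy $p(x,y) \vdash \points{F_{\kappa_j}}(x) \subseteq \points{F_\mu}(x)$ for every $\mu \in \cE(\tM)$. Thus $\Gen{\cE(\tM)}[p] \vdash \phi(x,y;m)$ is equivalent to the existence of $\lambda \in \cE(\tM)$ and $\kappa_1, \ldots, \kappa_k \in \Lambda(M)$ (closed, contained over $p$ in all $F_\mu$ for $\mu \in \cE$) such that $p(x,y) \cup \{y \in \points{F_\lambda}(x)\} \cup \{y \notin \points{F_{\kappa_j}}(x) : j \leq k\} \vdash \phi(x,y;m)$.

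Next I would use the $\tL(\tA)$-quantifiability of $p$ to render each of these conditions first-order in the parameters $\lambda, \kappa_1, \ldots, \kappa_k, m$. The final entailment rewrites as
\[
p(x,y) \vdash \phi(x,y;m) \vee y \notin \points{F_\lambda}(x) \vee \bigvee_{j \leq k} y \in \points{F_{\kappa_j}}(x),
\]
which quantifiability of $p$ directly translates into an $\tL(\tA)$-formula. The auxiliary conditions (closedness of the $F_{\kappa_j}$-balls, containment in all $F_\mu$ for $\mu \in \cE$, and $\lambda \in \cE$) are $\Delta$-style formulas decided by $p$ (since $\Delta$ is adapted to $F$) together with definability of $\cE$, hence also $\tL(\tA)$-definable. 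Existentially quantifying over $\lambda$ and the $\kappa_j$ yields, for each fixed $k$, an $\tL(\tA)$-formula $\theta_k(s)$; the characterisation above then says that $\Gen{\cE(\tM)}[p] \vdash \phi(x,y;m)$ is equivalent to $\bigvee_{k \in \Nn} \theta_k(m)$.

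Finally, I need a uniform bound $k \leq N(\phi)$ so that $\theta := \theta_{N(\phi)}$ captures the entire disjunction. This is where the hypothesis that the structure induced on $\res$ eliminates $\exists^{\infty}$ enters. After selecting the $(\kappa_j)$ minimally and choosing a common generalised radius, the distinct closed sub-balls required to cover the failure locus $\points{F_\lambda}(x) \setminus \phi(x,\cdot;m)$ are indexed by classes in a quotient internal to $\res$, in the same spirit as maximal open subballs of a closed ball being parametrised by $\res$. Stable embeddedness of $\res$ lets one write this indexing over $\res$, and $\exists^{\infty}$-elimination then bounds the number of required classes by an integer depending only on $\phi$. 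This is the main obstacle of the proof: in case (ii) the ambient ball is open and so has no canonical "set of maximal closed subballs", and the reduction to a residue-field-parametrised family is precisely what the generic covering property (forcing the covering indices $\kappa_j$ to be closed) was designed to enable.
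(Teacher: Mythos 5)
Your proof is on the right track through the compactness reduction, the use of generic intersection to pin down a minimal positive constraint, and the appeal to quantifiability of $p$ to render the resulting entailment as an $\tL(\tA)$-formula. That matches the paper. But your final paragraph contains a genuine error.

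You write that you need the hypothesis that the structure induced on $\res$ eliminates $\exists^{\infty}$ in order to obtain a uniform bound $k \leq N(\phi)$ on the number of $\kappa_j$'s. This is not needed, and in fact the route you sketch does not work. Look again at Definition~\ref{def:gen cov}: the generic covering property is stated so that for \emph{any} finite input family $(\lambda_i)_{0\leq i<k}$, the output is a family $(\kappa_j)_{0\leq j<l}$ of size exactly $l$, where $l$ is the fixed parameter of the family $F : \K^n \to \Ballsst{l}$. So the uniform bound is built into the hypothesis: once you apply the generic covering property, you have at most $l$ indices $\kappa_j$, independently of $\phi$, of $m$, and of the initial $k$. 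There is a single formula $\theta(s)$, with no infinite disjunction $\bigvee_{k\in\Nn}\theta_k$ to collapse. The $\exists^{\infty}$-in-$\res$ argument belongs to the \emph{closed} ball case (Proposition~\ref{prop:imp def closed}), where the relevant objects---maximal open subballs of a closed ball---are genuinely parametrised by $\res$. Here, in the open-ball or no-smallest-element case, the closed subballs of the exceptional region are \emph{not} parametrised by $\res$ (their radii vary), so the residue-field-internalisation you invoke would not go through; fortunately it is also unnecessary. Your closing remark that the generic covering property ``was designed to enable'' a reduction to a residue-field-parametrised family inverts the logic: the generic covering property was designed precisely so that no such reduction is needed in this case.

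To repair the argument, delete the last paragraph and note instead that the $\kappa_j$'s produced by the generic covering property number at most $l$; one then existentially quantifies over $\lambda_0 \in \cE$ and over the $l$-tuple $\uple{\kappa}$, conjoining (a) the $\tL(\tA)$-formula (via quantifiability of $p$ and adaptedness of $\Delta$) asserting $p(x,y)\vdash \points{F_{\kappa_j}}(x) \subset \points{F_\mu}(x)$ for all $\mu\in\cE$, and (b) the $\tL(\tA)$-formula asserting $p(x,y)\vdash y\in\points{F_{\lambda_0}}(x)\sminus\bigcup_{j<l}\points{F_{\kappa_j}}(x)\impform\phi(x,y;m)$. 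The separation between case (i) and case (ii) matters only to check that the $\kappa_j$'s are \emph{strictly} contained in every $\points{F_\mu}(x)$ for $\mu\in\cE(\tM)$ (in case (i) because $\cE$ has no minimum; in case (ii) because closed balls of the same or smaller radius inside an open ball are strictly contained).
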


\begin{proof}
Let $\phi(x,y;t)$ be an $\tL$-formula. If $\Gen{\cE(\tM)}[p](x,y)\vdash\phi(x,y;m)$, for some tuple $m\in \tM$, then there exists $\lambda_{0}\in \cE(\tM)$ and a finite number of $(\lambda_{i})_{0<i<k}\in\Lambda(M)$ such that for all $\mu\in \cE(\tM)$ and $i>0$,  $p(x,y)\vdash y\in\points{F_{\lambda_{0}}}(x)\sminus\bigcup_{i>0}\points{F_{\lambda_{i}}}(x) \impform \phi(x,y;m)$ and $p(x,y)\vdash \points{F_{\lambda_{i}}}(x)\subset \points{F_{\mu}}(x)$. By the generic covering property, we can find $(\kappa_{j})_{0\leq j<l}\in\Lambda(M)$ such that, for all $j$, $p(x,y)\vdash$ "the balls in $F_{\kappa_{j}}(x)$ are closed", for all $\mu\in \cE(\tM)$ and $j$, $p(x,y)\vdash \points{F_{\kappa_{j}}}(x)\subseteq \points{F_{\mu}}(x)$ and for all $i>0$, $p(x,y)\vdash\points{F_{\lambda_{i}}}(x) \subseteq \bigcup_{j}\points{F_{\kappa_{j}}}(x)$. 

If $\cE(\tM)$ does not have a smallest element over $p$, for all $\mu\in \cE(\tM)$ and $j$, we have that $p(x,y)\vdash \points{F_{\kappa_{j}}}(x)\subset \points{F_{\mu}}(x)$. If $\cE(\tM)$ has a smallest element, because the balls in $F_{\lambda_{0}}(x)$ are open and those in $F_{\kappa_{i}}(x)$ are closed, we also have $p(x,y)\vdash \points{F_{\kappa_{j}}}(x)\subset \points{F_{\lambda_{0}}}(x)$. As the $\bigcup_{j}\points{F_{\kappa_{j}}}(x)$ covers $\bigcup_{i}\points{F_{\lambda_{i}}}(x)$, it follows that:
\[p(x,y)\vdash y\in \points{F_{\lambda_{0}}}(x)\sminus\bigcup_{0\leq j<l}\points{F_{\kappa_{j}}}(x)\impform\phi(x,y;m).\]

 We have just shown that, for all tuples $m\in\tM$, $\Gen{\cE(\tM)}[p](x,y)\vdash\phi(x,y;m)$ implies that:
\[\tM\models\exists\lambda_{0}\in\cE\,\exists\uple{\kappa}\in\Lambda\,\bigwedge_{j<l}\forall\mu\in\cE\,\delta_{1}(\kappa_{j},\mu)\wedge\delta_{2}(\lambda_{0},\uple{\kappa},m)\]
where $\delta_1(\kappa,\mu)$ is an $\tL(\tA)$-formula equivalent to $p(x,y)\vdash \points{F_{\kappa}}(x)\subset \points{F_{\mu}}(x)$ and $\delta_{2}(\lambda_{0},\uple{\kappa},m)$ is an $\tL(\tA)$-formula equivalent to $p(x,y)\vdash y\in \points{F_{\lambda_{0}}}(x)\sminus\bigcup_{j<l}\points{F_{\kappa_{j}}}(x)\impform\phi(x,y;m)$.

The converse is trivial.
\end{proof}

\begin{definition}(Maximal open subball property)
We say that $F$ has the maximal open subball property over $p$ if for all $\lambda_{1}$, $\lambda_{2}\in\Lambda(M)$ such that $p(x,y)\vdash \points{F_{\lambda_{1}}}(x)\subset \points{F_{\lambda_{2}}}(x)$, there exists $(\mu_{i})_{0\leq i <l}\in\Lambda(M)$ such that:
\begin{thm@enum}
\item For all $i$, $p(x,y)\vdash$ "the balls in $F_{\mu_{i}}(x)$ are open";
\item For all $i$, $p(x,y)\vdash\rad(F_{\lambda_{2}}(x)) = \rad(F_{\mu_{i}}(x))$.
\item $p(x,y)\vdash \points{F_{\lambda_{1}}}(x) \subseteq \bigcup_{i}\points{F_{\mu_{i}}}(x)$;
\end{thm@enum}
\end{definition}

Note that when the balls in $F_{\lambda_{2}}(x)$ are open, it suffices to take all $\mu_{i} = \lambda_{2}$. Hence this property is only useful when the balls in $F_{\lambda_{2}}(x)$ are closed.

\begin{proposition}[imp def closed]
Assume that there is a $\lambda_{0}\in \cE(\tM)$ such that for all $\lambda\in \cE(\tM)$, $p(x,y)\vdash \points{F_{\lambda_{0}}}(x)\subseteq \points{F_{\lambda}}(x)$ and that $p(x,y)\vdash$ "the balls in $F_{\lambda_{0}}(x)$ are closed".  Assume also that $p$ is $\tL(\tA)$-quantifiable and that $F$ has the maximal open subball property over $p$, then the type $\Gen{\cE(\tM)}[p]$ is $\tL(\tA)$-quantifiable.
\end{proposition}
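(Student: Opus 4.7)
The plan is to mimic the strategy of Proposition~\ref{prop:imp def strict open}, replacing the generic covering property by the maximal open subball property and adding elimination of $\exists^\infty$ in $\res$ to obtain a uniform bound on the number of sub-balls needed. Fix an $\tL$-formula $\phi(x,y;t)$. The target $\tL(\tA)$-formula $\theta(t)$ should assert the existence of some $\lambda_0\in\cE$ witnessing the generic minimality condition (first-order by $\tL(\tA)$-quantifiability of $p$ and $\tL(\tA)$-definability of $\cE$) together with $\nu_1,\dots,\nu_N\in\Lambda$ such that $p(x,y)\vdash$ ``each $F_{\nu_j}(x)$ consists of open balls of radius $\rad(F_{\lambda_0}(x))$ strictly contained in $F_{\lambda_0}(x)$'' and
\[p(x,y)\vdash y\in\points{F_{\lambda_0}}(x)\sminus\textstyle\bigcup_{j<N}\points{F_{\nu_j}}(x)\impform\phi(x,y;t),\]
where $N$ is a constant depending only on $\phi$. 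The easy direction is immediate: any $(x,y)\models\Gen{\cE(\tM)}[p]$ lies in $\points{F_{\lambda_0}}(x)$ since $\lambda_0$ is generically smallest, and avoids each $\points{F_{\nu_j}}(x)$ since these are proper $\Lambda$-sub-instances of $\lambda_0$ (open of the same radius as the closed $F_{\lambda_0}$), hence excluded by the definition of $\Gen{E}[p]$.

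Conversely, suppose $\Gen{\cE(\tM)}[p]\vdash\phi(x,y;m)$. Compactness and the definition of $\Gen{E}[p]$ yield $\mu_1,\dots,\mu_k\in\Lambda(\tM)$ with $p\vdash\points{F_{\mu_i}}(x)\subset\points{F_{\lambda_0}}(x)$ and
\[p(x,y)\vdash y\in\points{F_{\lambda_0}}(x)\sminus\textstyle\bigcup_i\points{F_{\mu_i}}(x)\impform\phi(x,y;m).\]
Applying the maximal open subball property to each pair $(\mu_i,\lambda_0)$ then produces $kl$ $\Lambda$-instances of open balls of the same radius as $F_{\lambda_0}(x)$ whose union covers $\bigcup_i\points{F_{\mu_i}}(x)$, and hence covers the ``bad region'' inside $\points{F_{\lambda_0}}(x)$. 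This already witnesses the existential in $\theta(m)$, but the number $kl$ depends on $m$; we need to replace it by the fixed constant $N$.

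The uniform bound comes from $\exists^\infty$-elimination in $\res$. For each closed ball $b\in F_{\lambda_0}(x)$, the set of maximal open subballs of $b$ is in $\tL$-definable bijection with $\res$. The subset consisting of those maximal open subballs that meet the bad region $\{y:p(x,y)\text{ consistent},\,\neg\phi(x,y;m)\}$ is $\tL$-definable by stable embeddedness of $\res$ and is finite, being covered by the $kl$ instances produced above. Elimination of $\exists^\infty$ in the $\tL$-induced structure on $\res$ bounds its cardinality by some $N_\phi$ depending only on $\phi$, so the total number of bad maximal open subballs across the at most $l$ balls of $F_{\lambda_0}(x)$ is at most $N:=lN_\phi$. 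Selecting $N$ of the original $kl$ $\Lambda$-instances, one covering each bad maximal open subball, completes the witness for $\theta(m)$. The main technical obstacle is this uniform bounding: setting up the $\tL$-definable $\res$-parametrization, verifying uniform applicability of $\exists^\infty$-elimination in the parameters $(x,b,m)$, and arranging the resulting selection of $\nu_j$'s so that it works uniformly over realizations of $p$ rather than merely pointwise, are the delicate points.
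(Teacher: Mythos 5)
Your strategy is essentially the same as the paper's: reduce to a cover by maximal open subballs via the maximal open subball property (together with generic intersection, which you should mention to get the covering balls to actually be subballs of $F_{\lambda_0}(x)$), and then use elimination of $\exists^\infty$ in $\res$ to replace a size bound depending on $m$ by a uniform $N$. The conversion to a first-order $\theta(t)$ and the easy direction are handled as in Proposition\,\ref{prop:imp def strict open}, as you say.

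The one place your sketch is thinner than the paper's argument is exactly the point you flag as delicate. You assert that the set of maximal open subballs of $F_{\lambda_0}(x)$ meeting the bad region is $\tL$-definable ``by stable embeddedness of $\res$,'' but stable embeddedness alone doesn't make this family definable in $m$; you first need $\tL(\tA)$-\emph{quantifiability of $p$} to turn the condition ``$p(x,y)\nvdash y\in\points{F_{\lambda}}(x)\impform\phi(x,y;m)$ and the balls in $F_{\lambda}(x)$ are maximal open subballs of those in $F_{\lambda_0}(x)$'' into an $\tL(\tM)$-definable set $X_m\subseteq\Lambda$, and only then does stable embeddedness of $\res$ (applied to the family $(Y_{m,x})$ of balls picked out by $X_m$, after the standard identification of maximal open subballs with $\res$ via an affine map) let you invoke elimination of $\exists^\infty$ with a bound uniform in $(x,m)$. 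The paper isolates this as a preliminary claim before the main argument. Also note that you need the generic irreducibility of the $F_\lambda$ (via Proposition\,\ref{prop:inter gen irr}) to conclude that $X_m$ really consists only of (copies of) the finitely many $\mu_i$'s produced by compactness, which is what gives finiteness of $Y_{m,x}$; ``being covered by the $kl$ instances'' is the right intuition but hides an identification of balls that the irreducibility hypothesis is doing. Finally, the paper separately disposes of the degenerate case where the balls in $F_{\lambda_0}(x)$ have radius $+\infty$, which your outline doesn't mention but which causes no real difficulty.
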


\begin{proof}
If the balls in $F_{\lambda_{0}}(x)$ have radius $+\infty$, they are singletons. By irreducibility, $F_{\lambda_{0}}(x)$ does not have any strict subset of the form $F_{\lambda}(x)$. Moreover, $\Gen{\cE(\tM)}[p]\vdash \phi(x,y;m)$ if and only if $p(x,y)\vdash y\in\points{F_{\lambda_{0}}}(x)\impform\phi(x,y;m)$. So we can conclude immediately by $\tL(\tA)$-quantifiable of $p$. We may now assume that the balls in $F_{\lambda_{0}}(x)$ have a radius different from $+\infty$. Let us begin with some preliminary results. Let $\cball{\gamma}{a}$ denote the closed ball of radius $\gamma$ around $a$.

\begin{claim}[exists infty res]
Let $(Y_{\omega,x})_{\omega\in\Omega,x\in\K<n>}$ be a definable family of sets such that for all $\omega\in\Omega$ and $x\in\K<n>$, $Y_{\omega,x}\subseteq \{b\mid b$ is a maximal open subball of some $b'\in F_{\lambda_{0}}(x)\}$. Then there exists $k\in\Nn$ such that for all $\omega\in\Omega$ and $x\in\K<n>$, either $\card{Y_{\omega,x}}\geq\infty$ or $\card{Y_{\omega,x}}\leq k$.
\end{claim}

\begin{proof}
Let $Y_{1,\omega,x,a,c} := \{b\in\BallSet\mid b\in Y_{\omega,x}$, $b$ is a maximal open subball of $\cball{\val(c)}{a}\}$. For any maximal open subball $b$ of $\cball{\val(c)}{a}$, the set $\{(x-a)/c\mid x\in b\}$ is an element of $\res$ which we denote $\resf_{a,c}(b)$. The function $\resf_{a,c}$ is one to one. Let $Y_{2,\omega,x,a,c} := \resf_{a,c}(Y_{1,\omega,x,a,c})$.

Then $Y_{2} = (Y_{2,\omega,x,a,c})_{\omega,x,a,c}$ is an $\tL(\tM)$-definable family of subsets of $\res$. By stable embeddedness of $\res$ in $T$ (as well as compactness and some coding) there exists an $\tL(\res(\tM))$-definable family $(X_{d})_{d\in D}$ where $D\subseteq\res<r>$ for some $r$ such that for all $(\omega,x,a,c)$, there exists $d\in D$ such that $Y_{2,\omega,x,a,c} = X_{d}$. As the theory induced on $\res$ eliminates $\exists^{\infty}$, there exists $s\in\Nn$ such that for all $d\in D$, either $\card{X_{d}}\geq\infty$ or $\card{X_{d}}\leq s$. It follows that for all $(\omega,x,a,c)$, either $\card{Y_{1,\omega,x,a,c}}\geq\infty$ or $\card{Y_{1,\omega,x,a,c}}\leq s$. But there are at most $l$ balls in $F_{\lambda_{0}}(x)$ and each of these balls contains infinitely or at most $s$ maximal open subballs from $Y_{\omega,x}$. Therefore, we have that for all $x$ and $\omega$, $\card{Y_{\omega,x}}\geq\infty$ or $\card{Y_{\omega,x}}\leq l s$.
\end{proof}

Let $X_{m} := \{\lambda\in\Lambda\mid p(x,y)\nvdash y\in \points{F_{\lambda}}(x)\impform\phi(x,y;m)$ and $p(x,y)\vdash$ "the balls in $F_{\lambda}(x)$ are maximal open subballs of the balls in $F_{\lambda_{0}}(x)$" $\}$. By quantifiability of $p$, $X_{m}$ is an $\tL(\tM)$-definable family. Let $Y_{m,x} := \{b\mid\exists\lambda\in X_{m},\,b\in F_{\lambda}(x)\}$. Then by Claim\,\ref{claim:exists infty res}, there exists $k$ such that for all $m$ and $x$, $\card{Y_{m,x}}<\infty$ implies $\card{Y_{m,x}}\leq k$.

Assume that $\Gen{\cE(\tM)}[p](x,y)\vdash\phi(x,y;m)$. Then, there exists $(\mu_{i})_{0\leq i <r}\in\Lambda(M)$ such that $p(x,y)\vdash \points{F_{\mu_{i}}}(x)\subset\points{F_{\lambda_{0}}}(x)$ and $p(x,y)\vdash y\in\points{F_{\lambda_{0}}}(x)\sminus\bigcup_{i}\points{F_{\mu_{i}}}(x) \impform \phi(x,y;m)$. As $F$ has the maximal open subball property over $p$ and is closed under generic intersection, we may assume that $p(x,y)\vdash$ "the balls in the $F_{\mu_{i}}(x)$ are maximal open subballs of the balls in $F_{\lambda_{0}}(x)$".

\begin{claim}
$X_{m}(M) \subseteq \{\lambda\in\Lambda(M)\mid$ for some $i$, $p(x,y)\vdash F_{\lambda}(x) = F_{\mu_{i}}(x)\}$. In particular, $\card{Y_{m,x}}<\infty$ and hence $\card{Y_{m,x}}\leq k$.
\end{claim}

\begin{proof}
Let $\lambda\in X_{m}$. There exists $x,y\models p$ such that $y\in\points{F_{\lambda}}(x)$, the balls in $F_{\lambda}(x)$ are maximal open subballs of the balls in $F_{\lambda_{0}}(x)$ and $\models\neg\phi(x,y;m)$. Hence $y\in\bigcup_{i}\points{F_{\mu_{i}}}(x)$. We may assume that $y\in\points{F_{\mu_{0}}}(x)$ and hence that $\points{F_{\mu_{0}}}(x)\cap\points{F_{\lambda}}(x)\neq\emptyset$. By Proposition\,\ref{prop:inter gen irr}, we must have $\points{F_{\mu_{0}}}(x)\cap\points{F_{\lambda}}(x) = \points{F_{\kappa}}(x)$ for both $\kappa = \lambda$ and $\kappa = \mu_{0}$, i.e. $F_{\lambda}(x) = F_{\mu_{0}}(x)$. Because such an equality is decided by $p$, this holds for all realisations of $p$.

It follows that $Y_{m,x} \subseteq \bigcup_{i} F_{\mu_{i}}(x)$ and therefore that $\card{Y_{m,x}} \leq r l < \infty$.
\end{proof}

Thus for all $(x,y)\models p$, only $k$ balls among the ones in $\bigcup_{i}F_{\mu_{i}}(x)$ cover $\phi(x,\points{{F}_{\lambda_{0}}}(x);m)$. As in Proposition\,\ref{prop:inter balls}, we may assume that for all $i$, $F_{\mu_{i}}(x)\subseteq\bigcup_{j=1}^{k} F_{\mu_{j}}(x)$. It follows that: \[p(x,y)\vdash \bigwedge_{j=1}^{k}\points{F_{\mu_{j}}}(x)\subset \points{F_{\lambda_{0}}}(x) \wedge (y\in\points{F_{\lambda_{0}}}(x)\sminus\bigcup_{i=1}^{k}\points{F_{\mu_{i}}}(x) \impform \phi(x,y;m))\]
where $k$ does not depend on $m$. We can now conclude as in Proposition\,\ref{prop:imp def strict open}.
\end{proof}

\begin{corollary}[imp def gen]
Assume $p$ is $\tL(\tA)$-quantifiable and $F$ has both the generic covering property and the maximal open subball property over $p$. Then $\Gen{\cE(\tM)}[p]$ is $\tL(\tA)$-quantifiable.
\end{corollary}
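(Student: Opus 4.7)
The plan is to combine Propositions~\ref{prop:imp def strict open} and~\ref{prop:imp def closed} via an $\tL(\tA)$-definable case split on the shape of $\cE(\tM)$ over $p$. Partition the possibilities into three cases: (N) $\cE(\tM)$ has no generic minimum over $p$; (O) $\cE(\tM)$ has a generic minimum $\lambda_{0}$ whose $F_{\lambda_{0}}(x)$ consists of open balls (over $p$); (C) the generic minimum has closed balls. These cases are well-defined because if $\lambda_{0}$ and $\lambda_{0}'$ are both generic minima then $p(x,y) \vdash \points{F_{\lambda_{0}}}(x) = \points{F_{\lambda_{0}'}}(x)$, and since $\balls$ is a section of $\points$ on $\Ballsst{l}$, they agree on openness as well.

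The first step is to produce $\tL(\tA)$-sentences $\chi_{\rm N}, \chi_{\rm O}, \chi_{\rm C}$ expressing these three alternatives. This uses two ingredients: $\Delta$ is adapted to $F$, so every $\Delta$-type decides inclusions of the form $\points{F_{\lambda}}(x) \subseteq \points{F_{\mu}}(x)$, their strictness, and whether the balls are closed; and $p$ is $\tL(\tA)$-quantifiable, which makes the predicate ``$p(x,y)\vdash \psi(x,y;\lambda,\mu)$'' an $\tL(\tA)$-formula in $(\lambda,\mu)$ for each $\LL$-formula $\psi$. Quantifying over the $\tL(\tA)$-definable set $\cE$ then yields the three sentences.

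The second step is to fix an arbitrary $\tL$-formula $\phi(x,y;t)$ and to apply each proposition once. Proposition~\ref{prop:imp def strict open}, whose hypotheses are exactly (N) or (O) together with the generic covering property of $F$ over $p$, supplies an $\tL(\tA)$-formula $\theta_{\rm NO}(t)$ such that whenever (N) or (O) holds, $\tM \models \theta_{\rm NO}(m)$ iff $\Gen{\cE(\tM)}[p] \vdash \phi(x,y;m)$. Proposition~\ref{prop:imp def closed}, whose hypothesis is (C) together with the maximal open subball property, supplies an analogous $\tL(\tA)$-formula $\theta_{\rm C}(t)$ for case (C). Setting
\[
\theta(t) \;:=\; \bigl((\chi_{\rm N} \vee \chi_{\rm O}) \wedge \theta_{\rm NO}(t)\bigr) \;\vee\; \bigl(\chi_{\rm C} \wedge \theta_{\rm C}(t)\bigr)
\]
gives an $\tL(\tA)$-formula that, on the unique case $\tM$ falls into, reduces to the matching $\theta_{*}(t)$ and hence witnesses $\tM \models \theta(m) \iff \Gen{\cE(\tM)}[p] \vdash \phi(x,y;m)$.

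No substantial obstacle is expected: the substantive work has already been done in the two preceding propositions, and the only mild care needed is checking that the case split is uniformly $\tL(\tA)$-definable, which follows from the combination of adaptedness of $\Delta$ to $F$ and quantifiability of $p$.
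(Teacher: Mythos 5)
Your proof is correct and follows the same three-way case split (no generic minimum; open minimum; closed minimum) that the paper uses, applying Proposition~\ref{prop:imp def strict open} in the first two cases and Proposition~\ref{prop:imp def closed} in the third. The one thing you add that the paper does not is the $\tL(\tA)$-definable case split and the unified disjunction $\theta(t)$; this is harmless but unnecessary, because quantifiability is a statement about a fixed $\tM$ and a fixed $\cE$, so exactly one of your sentences $\chi_{\rm N},\chi_{\rm O},\chi_{\rm C}$ holds in $\tM$ and your $\theta(t)$ collapses to the corresponding $\theta_{*}(t)$ anyway — one can simply note which case applies and take the formula supplied by the appropriate proposition. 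The only point your argument glosses over that the paper flags explicitly is the degenerate case $\cE(\tM)=\emptyset$ (which you silently fold into case (N), where the hypotheses of Proposition~\ref{prop:imp def strict open} are only vacuously met); the paper handles it by replacing $\cE$ with the set of $\lambda$ for which $F_{\lambda}$ is constantly $\{\K\}$.
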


\begin{proof}
This follows immediately from Propositions\,\ref{prop:imp def strict open} and \ref{prop:imp def closed}. Indeed, either $\cE(\tM)$ is non empty and has no smallest element or it has a smallest element which consists of open balls or it has a smallest element which consists of closed balls. If it is empty, we could, equivalently, take $\cE$ to consist of all the $\lambda\in\Lambda$ such that $F_{\lambda}$ is constant equal to $\K$.
\end{proof}

Let us conclude this section by showing that, as previously, we can find families of balls verifying all the necessary hypotheses. Because both the generic covering property and the maximal open subball property are instances of being able to find large balls in a family, let us first consider the following definition. Recall that $d_{i}(B_{1},B_{2})$ is the $i$-th distance between balls of $B_{1}$ and balls of $B_{2}$ (see Definition\,\ref{def:di})

\begin{definition}(Generic large ball property)
We say that $F$ has the generic large ball property over $p$ if for all $\lambda_{1}$, $\lambda_{2}\in\Lambda(M)$ and $i\in\Nn$, there exists $(\mu_{j})_{0\leq j < l}\in\Lambda(M)$ such that:
\begin{thm@enum}
\item For all $j$, $p(x,y)\vdash$ "the balls in $F_{\mu_{j}}(x)$ are closed";
\item For all $j$, $p(x,y)\vdash \rad(F_{\mu_{j}}(x)) = d_{i}(F_{\lambda_{1}}(x),F_{\lambda_{2}}(x))$.
\item $p(x,y)\vdash \points{F_{\lambda_{1}}}(x) \subseteq \bigcup_{j}\points{F_{\mu_{j}}}(x)$;
\end{thm@enum}
and, if $p(x,y)\vdash \rad(F_{\lambda_{1}}(x)) < d_{i}(F_{\lambda_{1}}(x),F_{\lambda_{2}}(x))$ or $p(x,y)\vdash$ "the balls in $F_{\lambda_{1}}(x)$ are open", there exists $(\rho_{j})_{j <l}\in\Lambda(M)$ such that:
\begin{thm@enum}
\item For all $j$, $p(x,y)\vdash$ "the balls in $F_{\rho_{j}}(x)$ are open";
\item For all $j$, $p(x,y)\vdash \rad(F_{\rho_{j}}(x)) = d_{i}(F_{\lambda_{1}}(x),F_{\lambda_{2}}(x))$.
\item $p(x,y)\vdash \points{F_{\lambda_{1}}}(x) \subseteq \bigcup_{j}\points{F_{\rho_{j}}}(x)$;
\end{thm@enum}
\end{definition}

\begin{definition}(Good representation)
Let $\Delta(x,y;t)$ and $\Theta(x,y;s)$ be two finite sets of $\LL$-formulas where $x\in\K<n>$ and $(F_{\lambda})_{\lambda\in\Lambda}$ and $(G_{\omega})_{\omega\in\Omega}$ be two $\LL$-definable families of functions $\K<n>\to\Ballsst{l}$. We say that $(\Theta,G,x)$ is a good representation of $(\Delta,F,x)$ if for all $\LL(M)$-definable $p\in\TP[x]<\Theta>(M)$:
\begin{thm@enum}
\item\label{gr:adapted} $\Theta$ is adapted to $G$;
\item\label{gr:inter} $(G_{\omega})_{\omega\in\Omega_{p}}$ is closed under generic intersection over $p$;
\item\label{gr:large ball} $(G_{\omega})_{\omega\in\Omega_{p}}$ has the generic large ball property over $p$;
\item\label{gr:decide Delta} $p$ decides all formulas in $\Delta$;
\item\label{gr:cover F} For all $\lambda\in\Lambda(M)$, there exists a finite number of $(\omega_{i})_{0\leq i < l} \in\Omega_{p}(M)$ such that $p(x,y) \vdash F_{\lambda}(x) = \bigcup_{i} G_{\omega_{i}}(x)$.
\end{thm@enum}
where $\Omega_{p} := \{\omega\in\Omega\mid G_{\omega}$ is generically irreducible over $p\}$.
\end{definition}

If we only want to say that \ref{gr:adapted} to \ref{gr:large ball} hold we will say that $(\Theta,G,x)$ is a good representation.

\begin{proposition}[exists good rep](Existence of good representations)
Let $(F_{\lambda})_{\lambda\in\Lambda}$ be any $\LL$-definable family of functions $\K<n>\to\Ballsst{l}$ and $\Delta(x;t)$ any finite set of $\LL$-formulas where $x\in\K<n>$. Then, there exists a good representation $(\Psi,G,x)$ of $(\Delta,F,x)$.
\end{proposition}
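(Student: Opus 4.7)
The plan is to construct the required $(\Psi,G,x)$ in two stages. First, I would enlarge $F$ by $\LL$-definable families of ``large ball covers'' of every radius that can possibly arise as some $d_i(G_{\omega_1}, G_{\omega_2})$, so that the generic large ball property (\ref{gr:large ball}) reduces to property (\ref{gr:cover F}). Second, I would apply Proposition~\ref{prop:exists F Delta nice} to the enlarged family and to $\Delta$; conditions (\ref{gr:adapted}), (\ref{gr:inter}), (\ref{gr:decide Delta}) and (\ref{gr:cover F}) then follow essentially for free. The substantive content is verifying (\ref{gr:large ball}).

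Concretely, for each $\lambda_1,\lambda_2\in\Lambda$, each $i\leq l^2$, and each $\epsilon\in\{c,o\}$, define an $\LL$-definable function $H_{\lambda_1,\lambda_2,i,\epsilon}:\K^n\to\Ballsst{l}$ whose value at $x$ is the set of closed (if $\epsilon=c$) or open (if $\epsilon=o$) balls of radius $d_i(F_{\lambda_1}(x),F_{\lambda_2}(x))$ that contain at least one ball of $F_{\lambda_1}(x)$, with the convention that $H_{\lambda_1,\lambda_2,i,\epsilon}(x):=\emptyset$ when this prescription is incompatible (for instance when $\epsilon=o$ and $F_{\lambda_1}(x)$ consists of closed balls of radius exactly $d_i$). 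Since $|F_{\lambda_1}(x)|\leq l$ and each of its balls sits in a unique ball of the prescribed type and radius, the output indeed lies in $\Ballsst{l}$. Let $F^+$ be $F$ together with all the $H_{\lambda_1,\lambda_2,i,\epsilon}$, parametrised by an enlarged family $\Lambda^+$, and apply Proposition~\ref{prop:exists F Delta nice} to $(F^+,\Delta)$ to obtain $G=(G_\omega)_{\omega\in\Omega}$ and $\Psi\supseteq\Delta$. Then (\ref{gr:adapted}) is immediate, (\ref{gr:inter}) is Proposition~\ref{prop:exists F Delta nice}(\ref{nice:inter}), (\ref{gr:decide Delta}) holds because $\Psi\supseteq\Delta$, and (\ref{gr:cover F}) follows by composing Proposition~\ref{prop:exists F Delta nice}(\ref{nice:equal}) with (\ref{nice:irr cover}), since every $F_\lambda$ lies in $F\subseteq F^+$.

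For (\ref{gr:large ball}), fix a definable $p\in\TP[x]<\Psi>(M)$, $\omega_1,\omega_2\in\Omega_p(M)$, $i\leq l^2$, and $\epsilon\in\{c,o\}$. By Proposition~\ref{prop:exists F Delta nice}(\ref{nice:sub}), pick $\lambda_k\in\Lambda^+(M)$ with $p(x)\vdash G_{\omega_k}(x)\subseteq F^+_{\lambda_k}(x)$ for $k=1,2$. Following the construction in the proof of Proposition~\ref{prop:exists F Delta nice} (generic intersection via Proposition~\ref{prop:inter balls}, then Boolean combinations inside a fixed element of $\Ballsst{l}$ as in Claim~\ref{claim:bool comb balls}), each ball of $G_{\omega_k}(x)$ is literally a ball of $F^+_{\lambda_k}(x)$. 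Hence $d_i(G_{\omega_1}(x),G_{\omega_2}(x))$ equals $d_j(F^+_{\lambda_1}(x),F^+_{\lambda_2}(x))$ for some $j\leq l^2$, and adaptedness of $\Psi$ (enlarged at Stage~1 by the corresponding comparison formulas for $F^+$ if needed) ensures that $p$ decides this $j$ uniformly. The function $H:=H_{\lambda_1,\lambda_2,j,\epsilon}$, placed in $F^+$ for this very purpose, therefore admits, by (\ref{gr:cover F}), a decomposition $p(x)\vdash H(x)=\bigcup_{k<l}G_{\mu_k}(x)$ with $\mu_k\in\Omega_p(M)$. Each ball of $G_{\mu_k}(x)\subseteq H(x)$ inherits radius $d_i(G_{\omega_1},G_{\omega_2})$ and type $\epsilon$ from $H(x)$, and $\points{G_{\omega_1}}(x)\subseteq\points{F^+_{\lambda_1}}(x)\subseteq\points{H}(x)=\bigcup_k\points{G_{\mu_k}}(x)$; so the $\mu_k$ witness the required cover. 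The open/closed alternatives in the definition of (\ref{gr:large ball}) are handled by running the same argument separately for $\epsilon=c$ and $\epsilon=o$, the latter only in the cases where the hypothesis of the second half of the definition holds, which is exactly when $H_{\lambda_1,\lambda_2,j,o}$ is nonempty.

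The crux is the claim that every ball of $G_\omega(x)$ is already a ball of some $F^+_\lambda(x)$; without this the new distances $d_i(G_{\omega_1},G_{\omega_2})$ might escape the radii provided at Stage~1, forcing an unbounded iteration. The claim, however, is preserved by both operations used in the proof of Proposition~\ref{prop:exists F Delta nice}: generic intersection returns subsets by Proposition~\ref{prop:inter balls}, and Boolean combinations within a fixed set of balls preserve the ambient ball family by Claim~\ref{claim:bool comb balls}. Hence a single application of Proposition~\ref{prop:exists F Delta nice} to $F^+$ suffices and no iteration is needed.
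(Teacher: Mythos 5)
Your overall strategy is the same as the paper's: enlarge $F$ by adding definable families of ``large ball covers'', feed the enlarged family into Proposition~\ref{prop:exists F Delta nice}, and then argue that the generic large ball property (\ref{gr:large ball}) transfers. But there is a real gap in the construction of the auxiliary family, and it is exactly the point you flag as the crux.

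Your $H_{\lambda_1,\lambda_2,i,\epsilon}$ carries only \emph{two} $\Lambda$-indices, and its radius $d_i(F_{\lambda_1}(x),F_{\lambda_2}(x))$ is tied to the same $\lambda_1$ that serves as the ``centre''. After applying Proposition~\ref{prop:exists F Delta nice} to $F^+$, the witnesses $\lambda_k\in\Lambda^+$ with $G_{\omega_k}(x)\subseteq F^+_{\lambda_k}(x)$ will in general lie in $\Lambda^+\sminus\Lambda$, so $F^+_{\lambda_k}(x)$ is itself one of the added families $H_{\mu_{k,1},\mu_{k,2},i_k,\epsilon_k}(x)$ — a set of enlarged balls around the balls of $F_{\mu_{k,1}}(x)$. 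Tracing a distance $d_i(G_{\omega_1}(x),G_{\omega_2}(x))$ back to $F$, you do indeed get some $d_{j'}(F_{\nu_1}(x),F_{\nu_2}(x))$ with $\nu_1,\nu_2\in\Lambda$ (your key claim is correct here), but the cover you then need consists of balls of that radius \emph{around the balls of $F_{\mu_{1,1}}(x)$}, and in general $\mu_{1,1}\nin\{\nu_1,\nu_2\}$. That is a three-index object — centre index $\mu_{1,1}$, radius index pair $(\nu_1,\nu_2)$ — and it simply is not an instance of your two-index $F^+$. Nothing in your argument lets you replace $H_{\lambda_1,\lambda_2,j,\epsilon}$ with $\lambda_k\in\Lambda^+$ by a member of the family you actually constructed, so the invocation of (\ref{gr:cover F}) at the end of the second paragraph has no target. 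The ``no unbounded iteration'' conclusion therefore does not follow from the claim that the balls of $G_\omega(x)$ are already balls of $F^+$: the distances do not escape, but the required large balls do.

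The paper repairs this by using a three-index family $H_{\lambda,\mu,\eta,i,j}(x)$ of radius $\min\{d_i(F_\mu(x),F_\eta(x)),\rad(F_\lambda(x))\}$ around the balls of $F_\lambda(x)$ (the $\min$ keeps the new balls genuinely larger than those of $F_\lambda$). This decouples centre from radius, and Lemma~\ref{lem:exists large balls} then shows directly that $H$ itself already satisfies the generic large ball property: any $d_r(H_{\lambda_1,\ldots},H_{\lambda_2,\ldots})$ reduces to a $d_{i}(F_\mu,F_\eta)$ or a $\rad(F_\lambda)$, and the required cover is again an instance $H_{\lambda_1,\mu',\eta',i',1}$. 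Finally the property is pushed down to the refinement $G$ by a separate transfer lemma (Lemma~\ref{lem:large ball to refinement}), rather than extracted ex post from (\ref{gr:cover F}). Adopting the three-index version of your $H$ and inserting a closure statement analogous to Lemma~\ref{lem:exists large balls} is what your proof is missing.
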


\begin{proof}
Let us begin with some lemmas.

\begin{lemma}[exists large balls]
There exists $(H_{\rho})_{\rho\in\Rho}$ an $\LL$-definable family of functions $\K<n>\to\Ballsst{l}$ and $\Xi(x;t,s)\supseteq\Delta(x;t)$ a finite set of $\LL$-formulas adapted to $H$ such that $H$ has the generic large ball property over any $\Xi$-type and for all $\lambda\in\Lambda$, there exists $\rho\in\Rho$ such that $H_{\rho} = F_{\lambda}$.
\end{lemma}

\begin{proof}
For all $\lambda$, $\mu$, $\eta\in\Lambda$ and $i\leq l^{2}$, define $H_{\lambda,\mu,\eta,i,1}(x)$ to be the closed balls with radius $\min\{d_{i}(F_{\mu}(x),F_{\eta}(x)),\rad(F_{\lambda}(x))\}$ around the balls in $F_{\lambda}(x)$. If the balls in $F_{\lambda}(x)$ are open or if they are closed of radius strictly smaller than $d_{i}(F_{\mu}(x),F_{\eta}(x))$, define $H_{\lambda,\mu,\eta,i,0}(x)$ to be the set open balls with radius $d_{i}(F_{\mu}(x),F_{\eta}(x))$ around the balls in $F_{\lambda}(x)$. Otherwise, define $H_{\lambda,\mu,\eta,i,0}(x)$ to be the set of closed balls with radius $\min\{d_{i}(F_{\mu}(x),F_{\eta}(x)),\rad(F_{\lambda}(x))\}$ around the balls in $F_{\lambda}(x)$. By usual coding tricks, we may assume that $H$ is an $\LL$-definable family of functions. Adding finitely many formulas to $\Delta$ we obtain $\Xi(x;t,s)$ which is adapted to $H$. Let $p\in\TP[x]<\Xi>(M)$ and $x\models p$.

Let us first show the closed ball case of the generic large ball property. For all $\lambda_{k}$, $\mu_{k}$, $\eta_{k}\in\Lambda(M)$, $i_{k}$, $j_{k}\in\Nn$, for $k\in\{1,2\}$, and $r\in\Nn$, $d := d_{r}(H_{\lambda_{1},\mu_{1},\eta_{1},i_{1},j_{1}}(x),H_{\lambda_{2},\mu_{2},\eta_{2},i_{2},j_{2}}(x))$ is either the radius of the balls in $H_{\lambda_{k},\mu_{k},\eta_{k},i_{k},j_{k}}(x)$, i.e. $d_{i_{k}}(F_{\mu_{k}}(x),F_{\eta_{k}}(x))$ or $\rad(F_{\lambda_{k}}(x))$, or the distance between two disjoint balls from the $H_{\lambda_{k},\mu_{k},\eta_{k},i_{k},j_{k}}(x)$ in which case it is also the distance between some disjoint balls in the $F_{\lambda_{k}}(x)$. If $d = d_{i_{k}}(F_{\mu_{k}}(x),F_{\eta_{k}}(x))$, it is easy to check that $H_{\lambda_{1},\eta_{k},\mu_{k},i_{k},1}$ has all the suitable properties; and that this one instance suffices. Otherwise there exists some $m$ such that $H_{\lambda_{1},\lambda_{1},\lambda_{2},m,1}(x)$ is suitable.

The same reasoning applies to the open ball case (the extra conditions under which we have to work are just here to ensure that the balls in $F_{\lambda_{1}}(x)$ are indeed smaller than those we are trying to build around them).
\end{proof}

\begin{lemma}[large ball to refinement]
Assume that $F$ has the generic large ball property over any $\Delta$-type. Let $(G_{\omega})_{\omega\in\Omega}$ be any $\LL(M)$-definable family of functions $\K<n>\to\Ballsst{l}$ and $\Theta(x;s)$ be any finite set of $\LL$-formulas adapted to $G$ such that for all $p\in\TP[x]<\Theta>(M)$, we have:
\begin{thm@enum}
\item For all $\omega\in\Omega(M)$, there is $\lambda\in\Lambda(M)$ such that $p(x)\vdash G_{\omega}(x)\subseteq F_{\lambda}(x)$;
\item For all $\lambda\in\Lambda(M)$, there is $(\omega_{i})_{0\leq i <l}\in\Omega(M)$ such that $p(x)\vdash F_{\lambda}(x) = \bigcup_{i}G_{\omega_{i}}(x)$.
\end{thm@enum}
Then $G$ also has the generic large ball property over any $\Theta$-type.
\end{lemma}

\begin{proof}
Let $\omega_{1}$, $\omega_{2}\in\Omega(M)$, $i\in\Nn_{>0}$ and $x\models p$. Then there exists $\lambda_{1}$, $\lambda_{2}\in\Lambda(M)$ such that $G_{\omega_{k}}(x)\subseteq F_{\lambda_{k}}(x)$. Then $d_{i}(G_{\omega_{1}}(x),G_{\omega_{2}}(x))$ is either the radius of one of the balls involved and hence is the radius of one of $F_{\lambda_{k}}(x)$ or the distance between a ball in $G_{\omega_{1}}(x)$ and a ball in $G_{\omega_{2}}(x)$, i.e. the distance between a ball in $F_{\lambda_{1}}(x)$ and one in $F_{\lambda_{2}}(x)$. In both cases, the large closed ball property in $F$ allows us to find $(\mu_{j})_{0\leq j < l}\in\Lambda(M)$ such that $\points{G_{\omega_{1}}}(x)\subseteq \points{F_{\lambda_{1}}}(x)\subseteq \bigcup_{j}\points{F_{\mu_{j}}}(x)$, for all $j$, the balls in $F_{\mu_{j}}(x)$ are closed and their radius is $d_{i}(G_{\omega_{1}}(x),G_{\omega_{2}}(x))$. But, by hypothesis there are $(\rho_{j,k})_{0\leq k <l}\in\Omega(M)$ such that $F_{\mu_{j}}(x) = \bigcup_{k} G_{\rho_{j,k}}(x)$. By picking one $\rho_{j,k}$ per ball in $G_{\omega_{1}}(x)$, we see that $l$ of them are enough to cover $G_{\omega_{1}}(x)$ and we are done. The open ball case is proved similarly as the extra conditions hold for $G_{\omega_{1}}$ and $G_{\omega_{2}}$ if and only if they hold for $F_{\lambda_{1}}$ and $F_{\lambda_{2}}$.
\end{proof}

Adding them if we have to, we may assume that there is an instance of $F$ constant equal to $\emptyset$ and another constant one equal to $\{\K\}$. Let $(H_{\rho})_{\rho\in\Rho}$ and $\Xi$ be as in Lemma\,\ref{lem:exists large balls}. Let $(G_{\omega})_{\omega\in\Omega}$ and $\Theta(x;u)$ be as given by Proposition\,\ref{prop:exists F Delta nice} applied to $H$. Let $p\in\TP[x]<\Theta>(M)$. Then Conditions\,\ref{gr:adapted}, \ref{gr:decide Delta} and \ref{gr:cover F} hold. Condition\,\ref{gr:inter} also holds, by Corollary\,\ref{cor:inter gen irr cor}, and by Lemma\,\ref{lem:large ball to refinement} applied to $(G_{\omega})_{\omega\in\Omega_{p}}$, \ref{gr:large ball} also holds.
\end{proof}

\begin{proposition}
Let $(\Delta(x;t),(F_{\lambda})_{\lambda\in\Lambda},x)$ be a good representation and $p\in\TP[x]<\Delta>(M)$ be $\LL(M)$-definable. Then $F_{p} := (F_{\lambda})_{\lambda\in\Lambda_{p}}$ has the generic covering property and the maximal open subball property over $p$, where $\Lambda_{p} := \{\lambda\in\Lambda\mid F_{\lambda}$ is generically irreducible over $p\}$
\end{proposition}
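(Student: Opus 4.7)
The plan is to derive both properties from the generic large ball property (condition (iii) of good representation) that $F_p$ satisfies over $p$, together with the closure of $F_p$ under generic intersection (condition (ii)). In both cases, the required balls will be obtained by invoking the large ball property at well-chosen pairs $(\lambda_1,\lambda_2)$ and indices $i$, with the outputs lying automatically in $\Lambda_p$ since we have restricted to $F_p$.

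For the maximal open subball property, given $\lambda_1,\lambda_2\in\Lambda_p(M)$ with $p\vdash\points{F_{\lambda_1}}(x)\subsetneq\points{F_{\lambda_2}}(x)$, I would apply the generic large ball property at $i=0$. Because $F_{\lambda_1}$ sits inside $F_{\lambda_2}$ we have $d_0(F_{\lambda_1},F_{\lambda_2}) = \rad(F_{\lambda_2})$, and the strict containment — which $p$ decides since $\Delta$ is adapted to $F$ — forces one of the two trigger conditions of the open subcase: either the radii are strictly ordered, or the balls of $F_{\lambda_1}$ are open. Either alternative activates the open subcase of the large ball property, producing $\rho_j\in\Lambda_p(M)$ of the form required by the definition.

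For the generic covering property, given $E\subseteq\Lambda_p(M)$ and $(\lambda_i)_{i<k}$ as stipulated, the strict containments $F_{\lambda_i}\subsetneq F_\mu$ force $\grad(F_{\lambda_i}) > \grad(F_\mu)$ for every $\mu\in E$; hence a closed ball of radius $\rad(F_{\lambda_i})$ enclosing an $F_{\lambda_i}$-ball automatically fits inside whichever $F_\mu$-ball contains that $F_{\lambda_i}$-ball. Applying the closed subcase of the large ball property to each pair $(\lambda_i,\lambda_i)$, for which $d_0 = \rad(F_{\lambda_i})$, realises these closed hulls as a family inside $\Lambda_p$.

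The main obstacle is respecting the cardinality bound: this raw procedure can yield up to $kl$ closed hulls across all $i$, whereas the conclusion permits only $l$ indices $\kappa_j$. To collapse, I would fix a single $\mu\in E$, partition the closed hulls according to the at most $l$ balls of $F_\mu$ in which they sit, and replace each class by its ambient closed hull inside that $F_\mu$-ball. Realising these $\leq l$ closed balls as an element of $\Lambda$ requires one final application of the large ball property (producing the ambient closed hulls via suitable auxiliary distances $d_i$ read off from pairs among the $\lambda_i$) combined with closure under generic intersection to express them as a single instance of $F$, after which Lemma \ref{lem:cover irred} decomposes the collapsed object into $\leq l$ generically irreducible pieces landing in $\Lambda_p$.
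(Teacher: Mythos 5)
The maximal open subball half of your argument matches the paper's: apply the open subcase of the generic large ball property at $i=0$, where $d_0(F_{\lambda_1},F_{\lambda_2})=\rad(F_{\lambda_2})$, and observe (via generic irreducibility of $F_{\lambda_1}$ and $F_{\lambda_2}$) that the strict inclusion forces either a strict radius gap or openness of $F_{\lambda_1}$, which triggers the open clause. That part is fine.

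The generic covering half has a genuine gap in the collapsing step. You write ``fix a single $\mu\in E$'' and partition the closed hulls according to the balls of $F_\mu(x)$, then replace each class by its ambient closed hull. But the choice of $\mu$ is not innocent: the generic covering property demands $\points{F_{\kappa_j}}(x)\subseteq\points{F_\kappa}(x)$ for \emph{every} $\kappa\in E$, and a single closed ball is contained in a disjoint union of balls only if it is contained in one of them. If your chosen $\mu$ is not the one whose balls give the common refinement over $\bigcap_{\kappa\in E}\points{F_\kappa}(x)$, a class of closed hulls sitting in one $F_\mu$-ball may be scattered across several disjoint $F_\kappa$-balls for a finer $\kappa\in E$; the ambient closed hull of such a class then straddles a gap between $F_\kappa$-balls and is not contained in $\points{F_\kappa}(x)$. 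Making your procedure correct requires two pieces of argument that you omit: (a) that the elements of $E$ are linearly ordered by $\subseteq$ of their point sets over $p$ — the paper derives this from generic irreducibility and Proposition~\ref{prop:inter gen irr}, using that any two $\kappa,\kappa'\in E$ have nontrivial generic intersection since both contain some fixed $F_{\lambda_0}$; and (b) that there actually exists a single $\mu_0\in E$ whose balls separate the finitely many ``intersection components'' — this is the paper's equivalence-class argument: define $y_1\equiv y_2$ on $\bigcap_{\kappa\in E}\points{F_\kappa}(x)$ when $y_1,y_2$ lie in the same ball of every $F_\kappa(x)$, note that separation is inherited down the linear order, combine finitely many witnesses into one $\mu_0$, and conclude from $\card{F_{\mu_0}(x)}\leq l$ that there are at most $l$ classes. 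You also need the explicit check that the ambient closed hull (taken with radius the minimum of the pairwise distances \emph{and} radii among the covered balls, not just distances) fits inside every $F_\kappa(x)$; the paper does this by comparing that minimum to $\rad(F_\kappa(x))$, using that the covered balls lie in a common $F_\kappa$-ball. Finally, your invocation of Lemma~\ref{lem:cover irred} at the end is unnecessary (it also requires generic complement, which a good representation does not supply): the large ball property already produces instances in $\Lambda_p$, and generic intersection of $F_p$ keeps you in $\Lambda_p$.
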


\begin{proof}
Let $x\models p$, $\lambda_{1}$, $\lambda_{2}\in\Lambda_{p}(M)$ be such that $\points{F_{\lambda_{1}}}(x)\subset \points{F_{\lambda_{2}}}(x)$. By the generic large ball property, there exists $\mu_{j}\in\Lambda_{p}(M)$ such that the balls in $F_{\mu_{j}}(x)$ are open of radius $\rad(F_{\lambda_{2}}(x))$ and $\points{F_{\lambda_{1}}}(x)\subseteq\bigcup_{j}\points{F_{\mu_{j}}}(x)$. We have proved the maximal open subball property.

Let now $E\subseteq\Lambda_{p}(M)$ and $(\lambda_{i})_{0\leq i < k}\in\Lambda_{p}(M)$ be such that for all $\mu\in E$, $\points{F_{\lambda_{i}}}(x)\subset\points{F_{\mu}}(x)$. For any $\mu_{1}$, $\mu_{2}\in E$, if the balls in $F_{\mu_{1}}(x)$ are smaller than the balls in $F_{\mu_{2}}(x)$, by irreducibility, as $\points{F_{\mu_{1}}}(x)\cap\points{F_{\mu_{2}}}(x)\supseteq F_{\lambda_{0}}(x)\neq\emptyset$, we have $\points{F_{\mu_{1}}}(x)\subseteq \points{F_{\mu_{2}}}(x)$. Let us define the following equivalence relation on $\bigcap_{\lambda\in E} \points{F_{\lambda}}(x)$: $y_{1}\equiv y_{2}$ if for all $\mu\in E$, $y_{1}$ and $y_{2}$ are in the same ball from $F_{\mu}(x)$. For all non equivalent $y_{1}$ and $y_{2}$, there exists $\mu\in E$ such that $y_{1}$ and $y_{2}$ are not in the same ball from $F_{\mu}(x)$. This also holds for any $\eta\in E$ such that $\points{F_{\eta}}(x)\subseteq \points{F_{\mu}}(x)$. Thus there are at most $l$ equivalence classes and there exists $\mu_{0}\in E$ such that each equivalence class is contained in a different ball of $F_{\mu_{0}}(x)$.

Let $(P_{j})_{j\in J}$ denote these equivalence classes and $B_{j} = \{b\in\bigcup_{i} F_{\lambda_{i}}(x)\mid b\subseteq P_{j}\}$. The set $R_{j} := \{d(b_{1},b_{2})\mid b_{1},b_{2}\in B_{j}\}\cup\{\rad(b)\mid b\in B_{j}\}$ is finite and hence has a minimum $\gamma_j$. By the generic large ball property, there exists $\mu_{j}\in\Lambda_{p}(M)$ such that the balls in $F_{\mu_{j}}(x)$ are closed of radius $\gamma_j$ and one of its balls (call it $b_{0}$) contains one of the balls in $B_{j}$. In fact $b_{0}$ contains all of them as $\gamma_j$ is the minimum of $R_{j}$. For all $\kappa\in E$, all $b\in B_{j}$ are such that $b\subset\points{F_{\kappa}}(x)$. If $\rad(b_{0}) = d(b_{1},b_{2})$ for some some $b_{1}$, $b_{2}\in B_{j}$ then , because $b_{1}$ and $b_{2}$ are in the same ball from $F_{\kappa}(x)$, $\rad(b_{0}) = d(b_{1},b_{2})\leq \rad(F_{\kappa}(x))$. If $\rad(b_{0}) = \rad(b)$ for some $b\in B_{j}$, then because $b$ is inside one of the balls from $F_{\kappa}(x)$, $\rad(b_{0}) = \rad(b)\leq\rad(F_{\kappa}(x))$. In both cases, $b_{0}\subseteq\points{F_{\mu}}(x)$. Let $\eta_{j}$ be such that $\points{F_{\eta_{j}}}(x) = \points{F_{\mu_{j}}}(x)\cap \bigcap_{\kappa\in E}\points{F_{\kappa}}(x)$. Such an $\eta_{j}$ exists by generic intersection and because, by Proposition\,\ref{prop:inter balls}, this intersection is given by the intersection of a finite numbers of its elements. 

Then, as $F_{\eta_{j}}(x) \subseteq F_{\mu_{j}}(x)$, the balls in $F_{\eta_{j}}(x)$ are closed. Obviously, for all $\kappa\in E$, $\points{F_{\eta_{j}}}(x)\subseteq \points{F_{\kappa}}(x)$. Moreover, for all $i$, $\points{F_{\lambda_{i}}}(x)\subseteq \bigcup_{j} \points{F_{\mu_{j}}}(x)$ and for all $\kappa\in E$, $\points{F_{\lambda_{i}}}(x)\subseteq \points{F_{\kappa}}(x)$, hence we also have $\points{F_{\lambda_{i}}}(x)\subseteq \bigcup_{j} \points{F_{\eta_{j}}}(x)$. As there are at most $l$ of the $\eta_{j}$, we are done.
\end{proof}

\section{\texorpdfstring{$\Valgp$}{Gamma}-reparametrisations}
\label{sec:reparam}

Let $\LL\supseteq\Ldiv$, $T\supseteq\ACVF$ be an $\LL$-theory which eliminates imaginaries. Assume that $T$ is $C$-minimal. The two main examples of such theories are $\ACVFG$ and $\ACVFanneq$ where $\Ann$ is some separated Weierstrass system (for example $\bigcup_{m,n}\Zz[X_{0},\ldots,X_{n}][[Y_{0},\ldots,Y_{m}]]$) and $\ACVFann!$ denotes the theory of algebraically closed valued fields with $\Ann$-analytic structure (see \cite{CluLip-An} or \cite[Section\,3]{Rid-ADF}). This structure is considered in the language $\LAQ := \Ldiv\cup\Ann\cup\{^{-1}\}$.

\begin{remark}[EI Valgp]
The value group $\Valgp$ is stably embedded and $o$-minimal in $T$. As $\Valgp$ is an $o$-minimal group, the induced structure on $\Valgp$ eliminates imaginaries.
\end{remark}

\begin{proof}
Let $M\models T$ and $X\subseteq \Valgp$ be a unary $\LL(M)$-definable set. The set $\val^{-1}(X)$ is both a (potentially infinite) union of annuli around $0$ and a finite union of Swiss cheeses. Hence it is a finite union of annuli around $0$ and $X$ must be a finite union of intervals. Therefore, $\Valgp$ is $o$-minimal in $T$. By \cite{HasOns-EmbOmin}, $\Valgp$ is stably embedded in models of $T$.
\end{proof}

Let $M\models T$, $f = (f_{\lambda}:\K<n>\to\Valgp)_{\lambda\in\Lambda}$ be an $\LL(M)$-definable family of functions, $\Delta(x;t)$ be a finite set of $\LL$-formulas and $p\in\TP[x]<\Delta>(M)$. We wish to study the family $f$ and in particular its germs over $p$ (see Definition\,\ref{def:germ}), to show that they are internal to $\Valgp$. This is later used as a partial elimination of imaginaries result in enrichments $\tT$ of $T$ where $\Valgp$ is stably embedded: any subset of these germs definable in $\tT$ is coded in $\eq{\Valgp}$. The idea of the proof is to reparametrise the family of functions.

\begin{definition}[reparam]($\Valgp$-reparametrisation)
An $\LL(M)$-definable family $(g_\omega:\K<n>\to\Valgp)_{\gamma\in G}$, where $G\subseteq \Valgp<k>$ for some $k$, $\Valgp$-reparametrises $f$ over $p$ if for all $\lambda\in\Lambda(M)$, there is $\gamma\in G(M)$ such that \[p(x)\vdash f_{\lambda}(x) = g_{\gamma}(x).\]

An $\LL(M)$-definable family $(g_{\omega,\gamma}:\K<n>\to\Valgp)_{\omega\in\Omega,\gamma\in G}$ of functions $\K<n>\to\Valgp$, where $G\subseteq \Valgp<k>$ for some $k$, uniformly $\Valgp$-reparametrises $f$ over $\Delta$-types if for every $p\in\TP[x]<\Delta>(M)$ there exists $\omega_{0}\in\Omega(M)$ such that $g_{\omega} = (g_{\omega,\gamma})_{\gamma\in G}$ $\Valgp$-reparametrises $f$ over $p$.

We say that $T$ admits uniform $\Valgp$-reparametrisations if for every $\LL(M)$-definable family $f = (f_{\lambda})_{\lambda\in\Lambda}$ of functions $\K<n>\to\Valgp$ there exists a finite set of $\LL$-formulas $\Delta(x;s)$ and an $\LL(M)$-definable family $g = (g_{\omega,\gamma})_{\omega\in\Omega,\gamma\in G}$ of functions $\K<n>\to\Valgp$ which uniformly $\Valgp$-reparametrises $f$ over $\Delta$-types.
\end{definition}

We will say that $\Delta$ is adapted to $f$ (respectively to $g$) when any $\Delta$-type decides when $f_{\lambda_{1}}(x) = f_{\lambda_{2}}(x)$ (respectively $g_{\gamma_{1}}(x) = g_{\gamma_{2}}(x)$).

\begin{definition}[germ]($p$-germ)
Assume that $\Delta$ is adapted to $f$ and that $p$ is $\LL(M)$-definable. We say that $f_{\lambda_{1}}$ and $f_{\lambda_{2}}$ have the same $p$-germ if $p(x)\vdash f_{\lambda_{1}}(x) = f_{\lambda_{2}}(x)$. Let us denote $\germ!{p}{f_{\lambda}}\in M$ the code of the equivalence class of $\lambda$ under the equivalence relation "having the same $p$-germ".
\end{definition}

\begin{proposition}[germ reparam intern]
Let $g$ be a $\Valgp$-reparametrisation of $f$ over $p$, $\Delta$ be adapted to both $f$ and $g$ and $p$ be $\LL(M)$-definable. The set $\{\germ{p}{f_{\lambda}}\mid\lambda\in\Lambda\}$ is internal to $\Valgp$, i.e. there is an $\LL(M)$-definable one to one map from this set into some Cartesian power of $\Valgp$.
\end{proposition}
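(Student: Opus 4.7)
The idea is to use the $\Valgp$-reparametrisation to push the germ equivalence relation from $\Lambda$ down to an equivalence relation on a subset of $\Valgp^k$, and then conclude by elimination of imaginaries in the pure $o$-minimal structure on $\Valgp$ (Remark\,\ref{rem:EI Valgp}).

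First I introduce on $G\subseteq\Valgp^k$ the relation
\[\gamma_1\mathrel{E}\gamma_2 \iff p(x)\vdash g_{\gamma_1}(x)=g_{\gamma_2}(x).\]
Because $\Delta$ is adapted to $g$, the formula $g_{\gamma_1}(x)=g_{\gamma_2}(x)$ is decided by every $\Delta$-type, and since $p$ is $\LL(M)$-definable this means $E$ is an $\LL(M)$-definable equivalence relation. By the reparametrisation hypothesis, for each $\lambda\in\Lambda$ there exists $\gamma\in G$ with $p(x)\vdash f_\lambda(x)=g_\gamma(x)$; send $\lambda$ to the $E$-class of any such $\gamma$. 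Two choices $\gamma,\gamma'$ both satisfy $p\vdash g_\gamma=g_{\gamma'}$, so the assignment is well defined.

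Next I check that this map descends to an injection on $p$-germs. If $f_{\lambda_1}$ and $f_{\lambda_2}$ have the same $p$-germ, picking $\gamma_i$ with $p\vdash f_{\lambda_i}=g_{\gamma_i}$ gives $p\vdash g_{\gamma_1}=g_{\gamma_2}$, hence $\gamma_1\mathrel{E}\gamma_2$. Conversely, if the $p$-germs differ, then since $\Delta$ is adapted to $f$ the type $p$ decides the equality $f_{\lambda_1}(x)=f_{\lambda_2}(x)$ and therefore forces inequality; chasing through the equalities $f_{\lambda_i}=g_{\gamma_i}$ yields $p\vdash g_{\gamma_1}\neq g_{\gamma_2}$, so $\gamma_1\not\mathrel{E}\gamma_2$. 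This produces an $\LL(M)$-definable injection $\germ{p}{f_\lambda}\mapsto[\gamma]_E$ from $\{\germ{p}{f_\lambda}\mid\lambda\in\Lambda\}$ into the quotient $G/E$.

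Finally, by Remark\,\ref{rem:EI Valgp} the value group is stably embedded in $T$, so $G$ and $E$ are already $\LL(\Valgp(M))$-definable in the induced pure $o$-minimal structure on $\Valgp$, and the same remark gives elimination of imaginaries there. Hence there is an $\LL(\Valgp(M))$-definable injection $G/E\hookrightarrow\Valgp^r$ for some $r$, and composing with the previous injection yields the required $\LL(M)$-definable one-to-one map into a Cartesian power of $\Valgp$. The only slightly delicate point is the bookkeeping with the two "adapted" assumptions — one supplies well-definedness of $\lambda\mapsto[\gamma]_E$, the other supplies its injectivity on germs — after which the result is a formal consequence of EI in the $o$-minimal reduct $\Valgp$.
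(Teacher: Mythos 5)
Your proof is correct and follows essentially the same route as the paper. The paper also uses stable embeddedness of $\Valgp$ together with elimination of imaginaries in the induced $o$-minimal structure on $\Valgp$ to realise $\germ{p}{g_{\gamma}}$ as a point of a Cartesian power of $\Valgp$, and then observes that $\germ{p}{f_{\lambda}}\mapsto\germ{p}{g_{\gamma}}$ is an $\LL(M)$-definable injection; your explicit presentation via the equivalence relation $E$ on $G$ is just a slightly more unpacked version of the same argument, with the added (correct) care of checking both well-definedness and injectivity of the descended map.
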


\begin{proof}
As $\Valgp$ is stably embedded in $T$ and eliminates imaginaries (see Remark\,\ref{rem:EI Valgp}), we may assume that $\germ{p}{g_{\gamma}}\in\Valgp$. Now pick any $\lambda$. Let $\gamma$ be such that $p(x)\vdash f_{\lambda}(x) = g_{\gamma}(x)$. Then $\germ{p}{g_{\gamma}}$ only depends on $\germ{p}{f_{\lambda}}$ and not on $\lambda$ or $\gamma$. It follows that the set $\{\germ{p}{f_{\lambda}}\mid\lambda\in\Lambda\}$ is in $\LL(M)$-definable one to one correspondence with a subset of the set $\{\germ{p}{g_{\gamma}}\mid \gamma\in G\}$ which is itself a subset of some Cartesian power of $\Valgp$.
\end{proof}

If $Z_{1}$ and $Z_{2} \subseteq \K$ are finite sets, we define $D(Z_{1},Z_{2}) := \{\val(z_{1}-z_{2})\mid z_{1}\in Z_{1}$ and $z_{2}\in Z_{2}\}$. Let us order the elements in $D(Z_{1},Z_{2})$ as $d_{1} > d_{2} > \cdots > d_{k}$ and let $d_{i}(Z_{1},Z_{2}) := d_{i}$. If $Z_{1} = \{z\}$ is a singleton we will write $d_{i}(z,Z_{2})$.

\begin{proposition}[reparam poly]
Let $t(x,y,\lambda) : \K<n+1+l> \to \K$ be an $\Sortrestr{\LL}{\K}(M)$-term polynomial in $y$, i.e. $t = \sum_{i=0}^{d}t_{i}(x,\lambda)y^{i}$, where $\card{x}= n$, $\card{y} = 1$ and $\card{\lambda} = l$. Let $Z_{\lambda}(x) := \{y\mid t(x,y,\lambda) = 0\}$. Then there exists an $\LL(M)$-definable family $q = (q_{\eta})_{\eta\in\Eta}$ of functions $\K<n>\to\Valgp$ such that for all $N\supsel M$, $x\in\K<n>(N)$ and $y\in\K(N)$, there exists $\mu_{0}\in \Lambda(M)$ such that for all $\lambda\in\Lambda(M)$ there exists $\eta\in\Eta(M)$ and $n$ smaller than the degree of $t$ in $y$ such that:
\[\val(t(x,y,\lambda)) = q_{\eta}(x) + n\cdot d_{1}(y,Z_{\mu_{0}}(x)).\]
\end{proposition}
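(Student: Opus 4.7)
\emph{Plan.} The approach is classical: factor $t$ as a polynomial in $y$ over an algebraic closure of $\K$, rewrite $\val(t(x,y,\lambda))$ as a sum of $\val(y - z_i(x,\lambda))$ over the roots of $t_\lambda(x,\cdot)$ (with multiplicity), and use the ultrametric inequality relative to a single well-chosen reference root to collapse the $y$-dependence into a single linear term.

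Concretely, write $t(x,y,\lambda) = t_d(x,\lambda)\prod_{i=1}^{d}(y - z_i(x,\lambda))$ so that $\val(t(x,y,\lambda)) = \val(t_d(x,\lambda)) + \sum_i \val(y - z_i(x,\lambda))$. Given $(x,y)\in \K<n+1>(N)$, I choose $\mu_0 \in \Lambda(M)$ realising the supremum of $\mu \mapsto d_1(y, Z_\mu(x))$ over $\mu\in\Lambda(M)$; the existence of such $\mu_0$ follows by a saturation argument using the $o$-minimality of $\Valgp$ and the uniform bound on $\deg_y t_\mu$. Let $\delta := d_1(y, Z_{\mu_0}(x))$ and pick $w_0 \in Z_{\mu_0}(x)$ with $\val(y - w_0) = \delta$. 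For any $\lambda\in\Lambda(M)$ and any root $z$ of $t_\lambda(x,\cdot)$, maximality of $\delta$ gives $\val(y - z) \leq \delta$, and the ultrametric inequality yields
\[\val(y - z) = \min\bigl(\delta,\val(w_0 - z)\bigr),\]
since $\val(w_0 - z) < \delta$ forces $\val(y - z) = \val(w_0 - z)$ by strict ultrametric, while $\val(w_0 - z) \geq \delta$ combined with the upper bound forces $\val(y - z) = \delta$. Writing $n := \card{\{i : \val(w_0 - z_i(x,\lambda)) \geq \delta\}}$, one obtains
\[\val(t(x,y,\lambda)) = \Bigl(\val(t_d(x,\lambda)) + \sum_{i : \val(w_0 - z_i) < \delta}\val(w_0 - z_i)\Bigr) + n\,\delta,\]
and the bracketed quantity depends only on $x,\lambda,\mu_0,w_0,n$, not on $y$.

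To package this as an $\LL(M)$-definable family $(q_\eta)_{\eta\in\Eta}$, I parametrise $\eta$ by $(\lambda,\mu_0,n)$ together with a code for the distinguished root $w_0 \in Z_{\mu_0}(x)$ in an imaginary sort. Such codes exist as $\LL(M)$-points because $T$ eliminates imaginaries and the fibres of $Z_{\mu_0}$ are uniformly finite. The partial sum over roots $z_i$ with $\val(w_0 - z_i) < \delta$ is expressed first-order, without naming individual roots, via the slope factorisation of $t_\lambda(x, w_0 + u) \in \K[u]$: the ``low-slope'' factor has constant term whose valuation realises the partial sum, and this factorisation is uniformly first-order definable in $\ACVF$ from the coefficients and from the integer $n$ (which fixes the splitting threshold on the Newton polygon).

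The main technical obstacle is the degenerate case $n = d$ in which all roots of $t_\lambda$ lie in $\cball{\delta}{w_0}$: the formula then forces $\val(t) = \val(t_d(x,\lambda)) + d\delta$, violating the constraint $n < d$. I expect to handle this either by refining the choice of $\mu_0$ so that $d_1(y, Z_{\mu_0}(x))$ strictly exceeds $d_1(y, Z_\lambda(x))$ whenever the problematic clustering occurs, or by peeling off a single linear factor $(y - z_{i_0})$ from $t_\lambda$ at the algebraic-closure level and absorbing the corresponding $\val(y - z_{i_0}) = \delta$ term into an additional branch of the family $\Eta$; in either case the remaining contribution has $n \leq d-1$ and the $x$-independent part is definable by the same slope-factorisation argument.
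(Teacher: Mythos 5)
Your core calculation (factor $t$ into linear factors over the algebraic closure, pick a root $w_0 \in Z_{\mu_0}(x)$ nearest to $y$, and use the ultrametric $\val(y-z) = \min(\delta,\val(w_0 - z))$ to split the sum of $\val(y-z_i)$ into a $y$-independent part plus $n\delta$) is exactly the calculation underlying the paper's proof. The problem is the step where you try to package the $x$-dependent but $y$-independent part into an $\LL(M)$-definable family. You propose to include ``a code for the distinguished root $w_0$'' in the parameter $\eta$. This fails for two reasons. First, $\eta$ must range over $\Eta(M)$, but $w_0 \in Z_{\mu_0}(x)$ depends on $x$ which is a point of the extension $N$, so $w_0$ is not an element of $M$ at all. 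Second, even if you tried instead to make $w_0$ a definable \emph{function} of $x$, $\ACVF$ has no definable choice of root of a polynomial: elimination of imaginaries gives you a code for the finite set $Z_{\mu_0}(x)$, not a definable section selecting an element of it. The slope-factorisation/Newton-polygon rewrite you sketch still takes $w_0$ as input, so it inherits the same obstruction.

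The paper sidesteps this entirely: since $w_0 \in Z_{\mu_0}(x)$ and each $z_i \in Z_\lambda(x)$, each $\val(w_0 - z_i)$ is one of the finitely many values $d_j(Z_\lambda(x), Z_{\mu_0}(x))$ from Definition\,\ref{def:di}, which are $\LL(M)$-definable functions of $x$ given $\lambda,\mu_0$ and the bounded integer index $j$. So the paper's $q$-family has the form $q_{\lambda,k,\uple{j},\eta}(x) = \val(u(x,\lambda)) + \sum_{i\leq k} d_{j_i}(Z_\lambda(x),Z_\eta(x))$ with $u$ the leading coefficient after removing all roots, and the parameters are $\lambda,\eta\in\Lambda$ together with the bounded integers $k,\uple{j}$. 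No choice of individual root is ever made.

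Two smaller points. Your claim that the supremum of $\mu\mapsto d_1(y,Z_\mu(x))$ over $\mu\in\Lambda(M)$ is attained ``by a saturation argument'' is false: this is an externally parametrised subset of $\Valgp$, and since $\Valgp(M)$ may be much smaller than $\Valgp(N)$ the sup can fail to be attained (think $\Valgp(M)=\Qq$ inside $\Valgp(N)=\Rr$). The paper handles this as a genuinely separate case, where every root distance is realised as some $d_j(Z_\lambda(x),Z_\eta(x))$ and the $n\delta$ term drops out ($n=0$). Finally, the degenerate case $n = d$ that worries you is not actually a problem: ``$n$ smaller than the degree'' in the statement should be read as $n \leq d$ (the paper's proof freely allows $Z_1 = Z_\lambda(x)$), so no peeling-off or refinement of $\mu_0$ is needed.
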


\begin{proof}
Let us define $u(x,\lambda) := \frac{t(x,y,\lambda)}{\prod_{\alpha\in Z_{\lambda}(x)}(y-\alpha)^{m_{\alpha}}}$ where $m_\alpha$ denotes the multiplicity of $\alpha$. Let us also define $q_{\lambda,k,\uple{j},\eta}(x) := \val(u(x,\lambda)) + \sum_{i=0}^{k} d_{j_{i}}(Z_{\lambda}(x),Z_{\eta}(x))$
where $k$ is at most the degree of $t$ in $y$ and $j_{i}\leq l^{2}$. Note that because we can code disjunctions on a finite number of integers, $q$ can be considered as an $\LL(M)$-definable family of functions $\K<n>\to\Valgp$.

Let $N\supsel M$, $x\in\K<n>(N)$ and $y\in\K(N)$. First, assume that there exists $\mu_{0}\in\Lambda(M)$ and $\alpha_0\in Z_{\mu_{0}}(x)$ such that $\val(y-\alpha_{0}) = d_{1}(y,Z_{\mu_{0}}(x)) = \max_{\mu}\{d_{1}(y,Z_{\mu}(x))\}$. Now pick any $\lambda\in\Lambda(M)$ and $\alpha\in Z_{\lambda}(x)$.

\begin{claim}
Either $\val(y-\alpha) = d_{1}(y,Z_{\mu_{0}}(x))$ or $\val(y-\alpha) = d_{j_{\alpha}}(Z_{\lambda}(x),Z_{\mu_{0}}(x))$ for some $j_{\alpha}$.
\end{claim}

\begin{proof}
If $\val(y-\alpha) \neq d_{1}(y,Z_{\mu_{0}}(x))$, then $\val(y-\alpha) < d_{1}(y,Z_{\mu_{0}}(x))$. It follows that $\val(y-\alpha) = \val(\alpha - \alpha_{0}) = d_{j}(Z_{\lambda}(x),Z_{\mu_{0}}(x))$ for some $j$.
\end{proof}

Let $Z_{1} := \{\alpha\in Z_{\lambda}(x)\mid \val(y-\alpha) = d_{1}(y,Z_{\mu_{0}}(x))\}$ and $n := \sum_{\alpha\in Z_{1}}m_{\alpha}$. We have:
\[\begin{eqn}\val(t(x,y,\lambda)) &=& \val(u(x,\lambda)) + \sum_{\alpha\in Z_{\lambda}(x)}m_{\alpha}\val(y-\alpha)\\
&&\val(u(x,\lambda)) + \sum_{\alpha\nin Z_{1}}m_{\alpha}d_{j_{\alpha}}(Z_{\lambda}(x),Z_{\mu_{0}}(x)) + n \cdot d_{1}(y,Z_{\mu_{0}}(x))\\
&& q_{\lambda,k,\uple{j},\eta}(x) + n \cdot d_{1}(y,Z_{\mu_{0}}(x))
\end{eqn}\]
for some $k$ and $\uple{j}$.

If there does not exist a maximum in $\{d_{1}(y,Z_{\mu}(x))\}$, for any $\lambda\in\Lambda(M)$, then there exists $\eta\in\Lambda(M)$ and $\alpha_{0}\in Z_{\eta}(x)$ such that $val(y-\alpha_{0}) = d_{1}(y,Z_{\eta}(x)) > d_{1}(y,Z_{\lambda}(x))$. For all $\alpha\in Z_{\lambda}(x)$, $\val(y-\alpha) = \val(\alpha - \alpha_{0}) = d_{j_{\alpha}}(Z_{\lambda}(x),Z_{\eta}(x))$ for some $j_{\alpha}$. It follows that:
\[\begin{eqn}
\val(t(x,y,\lambda)) &=& \val(u(x,\lambda)) + \sum_{\alpha\in Z_{\lambda}(x)} m_{\alpha} d_{j_{\alpha}}(Z_{\lambda}(x),Z_{\eta}(x))\\
&=& q_{\lambda,k,\uple{j},\mu_{0}}(x)
\end{eqn}\]
for some $k$ and $\uple{j}$.
\end{proof}

\begin{proposition}[exists reparam]
Uniform $\Valgp$-reparametrisations exist in $\ACVFG$ and $\ACVFanneq$.
\end{proposition}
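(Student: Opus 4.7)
The plan is to proceed by induction on the arity $n$ of $x$, with Proposition~\ref{prop:reparam poly} (and Weierstrass preparation, in the analytic case) as the inductive engine. The base case $n = 0$ is immediate: each $f_{\lambda}$ is a single element of $\Valgp$, so $(g_{\gamma} := \gamma)_{\gamma\in\Valgp}$ uniformly $\Valgp$-reparametrises $f$ over the unique $\emptyset$-type.

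For the inductive step, the first task is to reduce to a single term $f_{\lambda}(x) = \val(t(x,\lambda))$ where $t$ is polynomial in the last variable $x_n$. In $\ACVFG$, quantifier elimination together with $o$-minimality and stable embeddedness of $\Valgp$ (Remark~\ref{rem:EI Valgp}) implies that on each $\Delta$-type (for a suitably chosen finite $\Delta$), $f_{\lambda}(x)$ is a $\Zz$-linear combination of valuations of $\K$-valued polynomial terms in $(x,\lambda)$, with at most finitely many cases depending on $\lambda$. Rewriting such a combination as a difference $\val(P(x,\lambda)) - \val(Q(x,\lambda))$ and reparametrising each piece separately reduces to the single-term case, while the case distinctions on $\lambda$ are absorbed by passing to a disjoint union of reparametrising families. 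In $\ACVFanneq$, the same quantifier elimination reduces $f_\lambda$ to $\val$ of an $\Ann$-analytic term; applying Weierstrass preparation in the variable $x_n$ with parameters $(x_1,\ldots,x_{n-1},\lambda)$ writes this term, on each of finitely many preparation cells, as a product $U\cdot P$ with $\val(U) = 0$ and $P$ polynomial in $x_n$, and refining $\Delta$ to record the preparation cell brings us back to the polynomial situation.

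With $f_\lambda(x) = \val(t(x,\lambda))$ polynomial in $x_n$ in hand, the plan is to apply Proposition~\ref{prop:reparam poly} with $y := x_n$ and $(x_1,\ldots,x_{n-1})$ playing the role of its $x$. After enlarging $\Delta$ if necessary, this produces, for each $\Delta$-type $p$, an element $\mu_0 \in \Lambda(M)$ depending only on $p$ and an $\LL(M)$-definable family $(q_\eta)_{\eta\in\Eta}$ of functions into $\Valgp$ in $n-1$ variables such that, for every $\lambda$, there exist $\eta(\lambda)$ and $m(\lambda)\in\{0,\ldots,\deg_y t\}$ with
\[p(x)\vdash f_\lambda(x) \;=\; q_{\eta(\lambda)}(x_1,\ldots,x_{n-1}) \;+\; m(\lambda)\cdot d_1\bigl(x_n,\, Z_{\mu_0}(x_1,\ldots,x_{n-1})\bigr).\]
By the induction hypothesis applied to $(q_\eta)_\eta$, there is a finite set $\Delta'$ in the variables $(x_1,\ldots,x_{n-1})$ and an $\LL(M)$-definable family $(q'_{\omega',\gamma'})_{\omega'\in\Omega',\,\gamma'\in G'}$, with $G'\subseteq \Valgp^{k'}$, uniformly $\Valgp$-reparametrising $(q_\eta)$. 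After pulling $\Delta'$ back along the projection to $(x_1,\ldots,x_{n-1})$ and adjoining it to $\Delta$, and encoding the finite set $\{0,\ldots,\deg_y t\}$ into $\Valgp$ via any fixed injection, the family $g_{(\omega',\mu),(\gamma',m)}(x) := q'_{\omega',\gamma'}(x_1,\ldots,x_{n-1}) + m\cdot d_1(x_n, Z_\mu(x_1,\ldots,x_{n-1}))$ yields the required uniform $\Valgp$-reparametrisation of $f$.

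The main obstacle will be the reduction to a single polynomial term. In $\ACVFG$ this is routine, but in $\ACVFanneq$ it demands invoking Weierstrass preparation uniformly in the parameter $\lambda$ over a separated Weierstrass system and carefully packaging the resulting cell decomposition into a finite $\Delta$. Once that reduction is in place, the remainder is essentially bookkeeping around Proposition~\ref{prop:reparam poly}.
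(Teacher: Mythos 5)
Your overall strategy — induction on the number of variables, isolating the last variable, feeding Proposition~\ref{prop:reparam poly} with a polynomial in that last variable, and then invoking the inductive hypothesis on the resulting family of $\Valgp$-valued functions in fewer variables — matches the paper's. The treatment of the analytic case, via a term/cell structure result for $\Ann$-analytic functions in one distinguished variable (the paper cites \cite[Corollary~5.5]{Rid-ADF}, which packages the Weierstrass preparation you invoke), is also essentially the paper's route.

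However, the reduction to a single $\val$ of a polynomial term is a genuine gap. You claim that, after quantifier elimination and stable embeddedness of $\Valgp$, $f_{\lambda}(x)$ is a $\Zz$-linear combination of valuations of polynomials, and then rewrite that as $\val(P)-\val(Q)$. This is not correct: definable functions $\Valgp^{k}\to\Valgp$ in a divisible ordered Abelian group are piecewise $\Qq$-\emph{affine}, not $\Zz$-linear. So after QE one only gets $f_{\lambda}(x)=h\bigl((\val(P_{i}(x,\lambda)))_{i<k}\bigr)$ where $h$ is a piecewise $\Qq$-affine $\Valgp$-definable function with constants from $\Valgp(M)$; a term like $\tfrac{1}{2}\val(P(x,\lambda))$ can occur and cannot be absorbed into $\val$ of any polynomial in $(x,\lambda)$. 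The paper sidesteps this entirely: it never reduces to a single valuation but instead keeps the $\Valgp$-side function $h$ as a black box, applies Proposition~\ref{prop:reparam poly} to \emph{each} polynomial $P_{i}$ separately (producing a family $(q_{i,\eta})$ and offsets $n_{i}\cdot d_{1}(z,Z_{i,\mu_{0,i}}(y))$ per coordinate), reparametrises the combined $q$-family by induction, and then defines $g$ by feeding the reparametrised expressions back into $h$. This composition with $h$ is exactly what makes the $\Qq$-affine pieces and the $\Valgp(M)$-constants harmless, and it is the missing ingredient in your reduction. If you want to salvage your single-term reduction, you would have to reparametrise the full tuple $(\val(P_{i}(x,\lambda)))_{i}$ jointly and still carry $h$ along; at that point you have reproduced the paper's argument.
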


\begin{proof}
Let $f = (f_{\lambda})_{\lambda\in\Lambda}$ be an $\LL(M)$-definable family of  functions $\K<n>\to\Valgp$. We work by induction on $n$. The case $n=0$ is trivial as $f$ is nothing more than a family of points in $\Valgp$ that can be reparametrised by themselves. Let us now assume that $n = m+1$ and $x = (y,z)$ where $\card{z} = 1$. Because $\K$ is dominant, we may assume up to reparametrisation that $\lambda$ is a tuple from $\K$. If $T = \ACVFG$, the graph of $f_{\lambda}$ is given by an $\LG(M)$-formula. If $T = \ACVFanneq$, by \cite[Corollary\,5.5]{Rid-ADF} there exists an $\LG(M)$-formula $\psi(z,\uple{w},\gamma)$ and $\Sortrestr{\LL}{\K}$-terms $\uple{r}(x,\lambda)$ such that $M\models f_{\lambda}(y,z) = \gamma$ if and only if $M\models \psi(z,\uple{r}(y,\lambda),\gamma)$. Taking $\uple{r}$ to be the identity, the graph of $f_{\lambda}$ also has this form when $T = \ACVFG$. By elimination of quantifiers in $\ACVFG$ (or in the two sorted language), we know that $\psi(z,\uple{w},\gamma)$ is of the form $\chi((\val(P_{i}(z,\uple{w})))_{0\leq i < k},\gamma)$ where $\chi$ is an $\Sortrestr{\LG}{\Valgp}$-formula and $P_{i}\in\K(M)[Y,\uple{W}]$. We may also assume that $\chi$ defines a function $h:\Valgp^{k}\to\Valgp$.

Let $t_{i}(y,z,\lambda) = P_{i}(z,\uple{r}(y,\lambda))$ and $q_{i} = (q_{i,\eta})_{\eta\in\Eta_{i}}$ be an $\LL(M)$-definable family of functions $\K<m>\to\Valgp$ as in Proposition\,\ref{prop:reparam poly} with respect to $t_{i}$. By the usual coding tricks we may assume that there is only one family $q = (q_{\eta})_{\eta\in\Eta}$ such that for all $i$ and $\eta\in\Eta_{i}$ there exists $\epsilon\in\Eta$ such that $q_{i,\eta} = q_{\epsilon}$. By induction, there exists a uniform $\Valgp$-reparametrisation for $q$, i.e. there exist a finite set of $\LL$-formulas $\Xi(y;s)$ and an $\LL(M)$-definable family $(u_{\epsilon,\delta})_{\epsilon\in \Epsilon,\delta\in D}$ of functions $\K^{m}\to\Valgp$, where $D\subseteq \Valgp<l>$ for some $l$, such that for any $p\in\TP[y]<\Xi>(M)$, for some $\epsilon_{0}\in\Epsilon(M)$, $(u_{\epsilon_{0},\delta})_{\delta\in D}$ is a $\Valgp$-reparametrisation of $q$. Let $Z_{i,\lambda}(y) := \{z\mid P_{i}(y,z,\lambda) = 0\}$ and $g_{\epsilon,\uple{\mu},\uple{\delta},\uple{n}}(y,z) := h((u_{\epsilon,\delta_{i}}(x) + n_{i}\cdot d_{1}(z,Z_{i,\mu_{i}}(y)))_{0\leq i < k})$, $\phi_{\uple{n}} :=$"$f_{\lambda}(y,z) = g_{\epsilon,\mu,\uple{\delta},\uple{n}}(y,z)$" and $\Delta := \Xi\cup\{\phi_{\uple{n}}\mid \uple{n}\in\Nn\}$. For all $p\in\TP[y,z]<\Delta>(M)$, there exists $\epsilon_{0}\in\Epsilon(M)$ such that $(u_{\epsilon_{0},\delta})_{\delta\in D}$ $\Valgp$-reparametrises $q$ over $\Langrestr{p}{\Xi}$. Let $(y,z)\models p$. By Proposition\,\ref{prop:reparam poly} there exists a tuple $\uple{\mu}_{0}\in\Lambda(M)$ such that for all $\lambda\in\Lambda(M)$, there exists tuples $\uple{\eta}\in\Eta(M)$ and $\uple{n}$ such that $\val(t_{i}(y,z,\lambda)) = q_{\eta_{i}}(y) + n_{i}\cdot d_{1}(y,Z_{i,\mu_{0,i}}(x))$. As $y\models\Langrestr{p}{\Xi}$, there exists $\delta_{i}\in D(M)$ such that $q_{\eta_{i}}(y) = u_{\epsilon_{0},\delta_{i}}(y)$. Therefore,
\[\begin{eqn}
f_{\lambda}(y,z) &=& h((\val(t_{i}(y,z,\lambda)))_{0\leq i < k})\\
&=& h((u_{\epsilon_{0},\delta_{i}}(y) + n_{i}\cdot d_{1}(y,Z_{i,\mu_{0,i}}(x)))_{0\leq i < k})\\
&=& g_{\epsilon_{0},\uple{\mu_{0}},\uple{\delta},\uple{n}}(y,z).
\end{eqn}\]
Because $p$ decides such equalities, this holds in fact for all realisations of $p$. We have just shown that $(g_{\epsilon_{0},\uple{\mu_{0}},\uple{\delta},\uple{n},})_{\uple{\delta}\in D,\uple{n}\in\Nn}$ reparametrises $f$ over $p$. But because $\uple{\delta}$ is a tuple from $\Valgp$ and disjunctions on a finite number of bounded integers can be coded in $\Valgp$, it is in fact a $\Valgp$-reparametrisation.
\end{proof}

\begin{question}
Do uniform $\Valgp$-reparametrisations exist in all $C$-minimal extensions of $\ACVF$?
\end{question}

\section{Approximating sets with balls}
\label{sec:approx}

As before, let $\tL\supseteq\LL\supseteq\Ldiv$ be languages, $\Real$ be the set of $\LL$-sorts, $T\supseteq\ACVF$ be a $C$-minimal $\LL$-theory which eliminates imaginaries and admits $\Valgp$-reparametrisations, $\tT$ an $\tL$-theory containing $T$, $\tN\models \tT$, $N := \Langrestr{\tN}{\LL}$ and $\tA = \acltLeq(\tA)\subseteq\tNeq$. Let us assume that $\res$ and $\Valgp$ are stably embedded in $\tT$ and that the induced theories on $\res$ and $\eq{\Valgp}$ eliminate $\exists^{\infty}$.

In this section we bring together all the work we have done in Sections\,\ref{sec:type ball}, \ref{sec:imp def} and \ref{sec:reparam} to construct definable types, in order to prove Theorem\,\ref{thm:dens def}. The core of the work is done in Lemma\,\ref{lem:ind approximate} where we show that we can enrich a quantifiable partial $\tL$-type with formulas of the form $y\in F_{\lambda}(x)$, where $F_{\lambda}$ is an $\LL$-definable family of functions $\K^{n}\to\Ballsst{l}$, while maintaining consistency with a given $\tL$-definable set. Once this is done, it is only a question of proving the various reductions sketched in the introduction. In Proposition\,\ref{prop:ind approx}, we show that we can enrich a quantifiable partial $\tL$-type with arbitrary formulas while maintaining consistency with a given $\tL$-definable set. Finally, in Proposition\,\ref{prop:approximate}, we show that every strict $(\tL,\star)$-definable set $X$ (see Definition\,\ref{def:strict star-def}) is consistent with a definable $\LL$-type.

Note that, even though all the types which are constructed in this section are $\LL$-types (or $\Delta$-types for some set $\Delta$ of $\LL$-formulas), they are definable using $\tL(\tN)$-formulas: for every $\phi(x;t)\in\Delta$, there exists an $\tL(\tN)$-formula $\defsc{p}{x}\phi(x;t)$ such $\phi(x;m)\in p$ if and only if $\tN\models\defsc{p}{x}\phi(x;m)$. One of the goals of \cite{RidSim-NIP} is to show that, under some more hypotheses, such types are indeed $\LL(N)$-definable.

\begin{lemma}[ind approximate]
Let $Y\subset\K^{n+1}$ be an $\tLeq(\tA)$-definable set and $(\Delta(x,y;t),(F_{\lambda})_{\lambda\in\Lambda},x)$ be a good representation where $x\in\K^{n}$. Let $p(x,y)\in\TP[x,y]<\Delta>(N)$ be $\tLeq(\tA)$-quantifiable (as a partial $\tLeq$-type) and consistent with $Y$. Assume that there exists an $\LL(N)$-definable family $g=(g_{\gamma})_{\gamma\in G}$ of functions $\K^{n}\to\Valgp$ which $\Valgp$-reparametrises the family $(\rad\comp F_{\lambda})_{\lambda\in\Lambda}$ over $p$.

Then there exists a type $q(x,y)\in\TP[x,y]<\Funform{\Delta}{F}>(N)$ which is $\tLeq(\tA)$-quantifiable and consistent with $p$ and $Y$.
\end{lemma}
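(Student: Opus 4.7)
The plan is to realize $q$ as $\Gen{E}[p]$ for an appropriately chosen $\tLeq(\tA)$-definable set $E \subseteq \Lambda_p(N)$, where $\Lambda_p$ denotes the set of $\lambda$'s for which $F_\lambda$ is generically irreducible over $p$. Proposition \ref{prop:Gen compl} then supplies completeness of $q$ as a $\Funform{\Delta}{F}$-type as soon as $\Gen{E}[p]$ is consistent, and quantifiability is delivered by Corollary \ref{cor:imp def gen}: the good-representation hypothesis, together with the proposition following the definition of good representation, ensures that $F$ restricted to $\Lambda_p$ has both the generic covering and the maximal open subball properties over $p$. The remaining task is therefore to produce an $\tLeq(\tA)$-definable $E$ for which $\Gen{E}[p]$ is consistent with $Y$.

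The natural candidate admissible locus is the $\tLeq(\tA)$-definable set
\[
S := \{\lambda \in \Lambda_p(N) \;:\; p(x,y) \cup \{y \in \points{F_\lambda}(x)\} \cup Y \text{ is consistent}\},
\]
whose definability follows from quantifiability of $p$ together with $\tLeq(\tA)$-definability of $Y$ and of $\Lambda_p$ (Corollary \ref{cor:inter gen irr cor}). Using the $\Valgp$-reparametrisation $g$, the $p$-germs of elements of $S$ are identified with an $\tLeq(\tA)$-definable subset of $\Valgp^{k}$; invoking stable embeddedness of $\Valgp$ in $\tT$ and its nice structure (cf.\ Remark \ref{rem:EI Valgp}), one then picks out a canonical ``deepest'' radius $\gamma^{*}$ in a definable way, and takes $E$ to be the subset of $S$ whose radius germs realize $\gamma^{*}$ when the maximum is attained, or are cofinal towards $\gamma^{*}$ otherwise.

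Consistency of $\Gen{E}[p]$ with $Y$ is then checked by compactness: given finitely many $\lambda_i \in E$ and $\mu_j \in \Lambda(N)$ with $p \vdash \points{F_{\mu_j}}(x) \subsetneq \points{F_{\lambda_i}}(x)$, one collapses the constraints $y \in \points{F_{\lambda_i}}(x)$ to a single $y \in \points{F_\nu}(x)$ via generic intersection (Proposition \ref{prop:inter gen irr}) with $\nu \in S$ of radius $\gamma^{*}$, and splits each $\mu_j$ into generically irreducible pieces using Lemma \ref{lem:cover irred}. Pieces strictly below radius $\gamma^{*}$ lie outside $S$ by maximality, so the clauses ``$y \notin \points{F_{\mu_j}}(x)$'' are free on realizations of $p \wedge y \in \points{F_\nu}(x) \wedge Y$; pieces at radius $\gamma^{*}$ that are not $p$-equivalent to $\nu$ are automatically disjoint from $F_\nu$ over $p$ by generic irreducibility. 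The main obstacle I foresee is precisely making $\gamma^{*}$ and the branch selection at radius $\gamma^{*}$ definable over $\tA$: different generically irreducible $\lambda$'s at the same radius can index disjoint balls, yet $\Gen{E}[p]$ demands $y$ in \emph{every} $F_\lambda$ for $\lambda \in E$; resolving this likely requires a secondary reparametrisation to coordinatize the ``centers'' of the balls of radius $\gamma^{*}$, together with a case-split according to whether the critical balls are open or closed, where the closed-ball subcase leans on the maximal open subball property supplied by the good representation.
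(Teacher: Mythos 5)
Your scaffolding matches the paper's: you correctly aim for $q = \Gen{E}[p]$, correctly invoke Proposition\,\ref{prop:Gen compl} for completeness, Corollary\,\ref{cor:imp def gen} for quantifiability, the properties of good representations for the hypotheses of that corollary, and the $\Valgp$-reparametrisation to make the radii (via germs) internal to $\Valgp$. But the construction of $E$ has a genuine gap, and you have put your finger on exactly where it is without filling it.

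The obstacle you identify is real and fatal to the ``deepest radius'' plan: at any fixed radius, $S$ contains $\lambda$'s whose balls are pairwise disjoint over $Y$, and $\Gen{E}[p]$ requires $y\in\points{F_\lambda}(x)$ \emph{simultaneously} for all $\lambda\in E$, so such an $E$ is inconsistent. Your proposed repair --- coordinatise the ``centers'' by a secondary reparametrisation --- is not viable: centers live in $\K$, not $\Valgp$, so this would leave the realm of $\Valgp$-internality in which the whole argument (stable embeddedness of $\Valgp$, elimination of $\exists^\infty$) operates. More fundamentally, insisting that $E$ be \emph{definable} over $\tA$ is asking too much. The paper's proof only produces an $E$ that is \emph{algebraic} over $\tA$, which suffices because $\tA = \acltLeq(\tA)$.

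The missing idea is to replace ``radius'' by a preorder $\Ordeq$ on $\Lambda_p$ defined \emph{relative to $Y$}: $\lambda\Ordeq\mu$ iff $p\vdash(y\in\points{F_\lambda}(x)\wedge (x,y)\in Y)\impform y\in\points{F_\mu}(x)$. After quotienting by the associated equivalence $\sim$, the resulting poset is a tree (disjoint balls are simply $\Ordeq$-incomparable, so there is never any need to choose between them at the same radius). One takes $E$ to be a single $\sim$-class $\widehat\lambda(\tN)$ that is a leaf of the tree in the sense that $\Gen{\widehat\lambda(\tN)}[p]$ is already consistent with $Y$. The existence of such a class with $\code{\widehat\lambda}\in\tA$ comes from two facts you do not use: (1) along any branch the map $\widehat\mu\mapsto\code{\germ{p}{\grad(F_\mu)}}$ is injective with values in $\eq{\Valgp}$ (this is where the reparametrisation is used), and (2) if $\Gen{\widehat\lambda(\tN)}[p]$ is inconsistent with $Y$, $\widehat\lambda$ has only finitely many immediate $\Ordeq$-predecessors, all algebraic over $\tA\code{\widehat\lambda}$. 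Starting at the root $\widehat{\K}$ and descending, a failure to terminate would yield an infinite branch contradicting elimination of $\exists^\infty$ in $\eq{\Valgp}$. Your compactness argument for consistency of $\Gen{E}[p]$ is superfluous once $E$ is chosen this way, since the leaf condition is precisely consistency with $Y$.
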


We are looking for a type $q=\Gen{E}[p]$ so most of the work consists in finding the right $E$.

\begin{proof}
Let $\Lambda_{p} := \{\lambda\in\Lambda\mid F_{\lambda}$ is generically irreducible over $p\}$. We define a preorder $\Ordeq$ on $\Lambda_{p}$ by \[\lambda \Ordeq \mu\text{ if and only if }p(x,y)\vdash (y\in\points{F_{\lambda}}(x)\wedge (x,y)\in Y) \impform y\in\points{F_{\mu}}(x).\] By $\tLeq(\tA)$-quantifiability of $p$, $\Ordeq$ is $\tLeq(\tA)$-definable. Let $\sim$ be the associated equivalence relation, i.e. $\lambda \sim \mu$ if and only if $p(x,y)\vdash (\points{F_{\lambda}}(x)\wedge (x,y)\in Y) \iffform (y\in\points{F_{\mu}}(x)\wedge (x,y)\in Y)$. The preorder $\Ordeq$ induces an order on $\Lambda_{p}/{\sim}$ that we will also denote $\Ordeq$. We denote by $\widehat{\lambda}\subseteq\Lambda_{p}$ the $\sim$-class of $\lambda$. The set $\Lambda_{p}/{\sim}$ has a greatest element, $\widehat{\K}$, given by the class of any $\lambda\in\Lambda_{p}$ such that $F_{\lambda}$ is constant equal to $\{\K\}$. It also has a smallest element, $\widehat{\emptyset}$, given by the class of any $\lambda\in\Lambda_{p}$ such that $F_{\lambda}$ is constant equal to $\emptyset$. Because $p$ is consistent with $Y$, $\widehat{\K}\neq \widehat{\emptyset}$.

\begin{claim}[tree]
Let $\lambda\in\Lambda_{p}\sminus\widehat{\emptyset}$, then $\Ordeq$ totally orders $\{\widehat{\mu} : \mu\in\Lambda_{p}\wedge\lambda\Ordeq\mu\}$.
\end{claim}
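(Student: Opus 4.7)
The plan is to show total order directly: given any two upper bounds $\mu_1,\mu_2\in\Lambda_p$ of $\lambda$ under $\Ordeq$, I will exhibit the $\Ordeq$-comparability of $\widehat{\mu_1}$ and $\widehat{\mu_2}$ by invoking Proposition\,\ref{prop:inter gen irr}. Since $\Delta$ is adapted to $F$, the $\Delta$-type $p$ decides the comparison of generalized radii, so, by symmetry, I may assume that $p(x,y)$ implies that the balls in $F_{\mu_1}(x)$ have smaller or equal generalized radius than those in $F_{\mu_2}(x)$. Because $p$ is quantifiable, hence definable, Corollary\,\ref{cor:inter gen irr cor} ensures that $(F_\nu)_{\nu\in\Lambda_p}$ is closed under generic intersection over $p$, so the hypotheses of Proposition\,\ref{prop:inter gen irr} are met for $\mu_1,\mu_2$.

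Proposition\,\ref{prop:inter gen irr} then yields two cases. In the first, $p(x,y)\vdash \points{F_{\mu_1}}(x)\cap \points{F_{\mu_2}}(x) = \points{F_{\mu_1}}(x)$, i.e.\ $p(x,y)\vdash \points{F_{\mu_1}}(x)\subseteq \points{F_{\mu_2}}(x)$; this immediately implies $\mu_1\Ordeq\mu_2$ from the very definition of $\Ordeq$, and we are done. In the second case, $p(x,y)\vdash \points{F_{\mu_1}}(x)\cap \points{F_{\mu_2}}(x)=\emptyset$, and I will argue that this contradicts the assumption $\lambda\notin\widehat{\emptyset}$. Indeed, combining $\lambda\Ordeq\mu_1$ and $\lambda\Ordeq\mu_2$, any realisation $(a,b)$ of $p$ with $b\in\points{F_\lambda}(a)$ and $(a,b)\in Y$ would satisfy $b\in\points{F_{\mu_1}}(a)\cap\points{F_{\mu_2}}(a)=\emptyset$, which is impossible. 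Hence
\[p(x,y)\vdash \neg\bigl(y\in\points{F_\lambda}(x)\wedge (x,y)\in Y\bigr),\]
so $(y\in\points{F_\lambda}(x)\wedge (x,y)\in Y)$ is $p$-equivalent to $(y\in\points{F_{\lambda_\emptyset}}(x)\wedge (x,y)\in Y)$, giving $\lambda\sim\lambda_\emptyset$, i.e.\ $\lambda\in\widehat{\emptyset}$, contradicting our hypothesis.

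There is no real obstacle here: the argument is essentially a diagram chase once one has the two structural inputs, namely the closure of the irreducible subfamily under generic intersection (from the good representation plus quantifiability of $p$) and the dichotomy of Proposition\,\ref{prop:inter gen irr} (which requires only that $\Delta$ is adapted to $F$ in order to compare generalized radii). The mild subtlety worth flagging is that the comparison $\lambda\Ordeq\mu$ is defined \emph{relative to} $Y$, so one must be careful to use the upper bound assumption on both $\mu_1$ and $\mu_2$ \emph{inside} $Y$ when deriving the contradiction; this is exactly what makes the non-emptiness of the $p$-trace of $F_\lambda\cap Y$ (encoded by $\lambda\notin\widehat{\emptyset}$) the right hypothesis to use.
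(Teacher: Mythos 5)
Your proof is correct and takes essentially the same approach as the paper: reduce to Proposition\,\ref{prop:inter gen irr} after noting (via adaptedness of $\Delta$) that one may assume the balls in $F_{\mu_1}(x)$ have generalised radius $\leq$ those in $F_{\mu_2}(x)$, then use $\lambda\notin\widehat{\emptyset}$ to rule out the empty-intersection case. The only difference is organizational: the paper first uses the witness from $\lambda\notin\widehat{\emptyset}$ to conclude $\points{F_{\mu_1}}(x)\cap\points{F_{\mu_2}}(x)\neq\emptyset$ and then applies the dichotomy to obtain the inclusion directly, whereas you invoke the dichotomy first and dispose of the disjoint alternative by deriving $\lambda\sim\lambda_\emptyset$ as a contradiction — the same content read in the other direction.
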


\begin{proof}
Let $\mu_{1}$, $\mu_{2}\in\Lambda_{p}(N)$ such that $\lambda\Ordeq\mu_{i}$. Because $\lambda\nin\widehat{\emptyset}$ there exists $(x,y)\models p$ such that $y\in \points{F_{\lambda}}(x) \wedge (x,y)\in Y$.  As $\lambda\Ordeq\mu_{i}$, we also have $y\in\points{F_{\mu_{i}}}(x)$. Hence $\points{F_{\mu_{1}}}(x) \cap \points{F_{\mu_{2}}}(x)\neq\emptyset$. By Proposition\,\ref{prop:inter gen irr}, we may assume $\points{F_{\mu_{1}}}(x) \subseteq \points{F_{\mu_{2}}}(x)$. Then $\mu_{1}\Ordeq\mu_{2}$.
\end{proof}

Hence $((\Lambda_{p}/{\sim})\sminus\{\widehat{\emptyset}\},\Ordeq)$ is a tree with the root on the top. Let us now show that the branches of this tree are internal to $\Valgp$. Let $h(\lambda) := \germ{p}{\grad(F_{\lambda})}$. By Proposition\,\ref{prop:germ reparam intern}, we may assume (after adding some parameters) that the image of $h$ is in some Cartesian power of $\Valgp$. Let us also define $\push{h} : \widehat{\lambda}\mapsto \code{h(\widehat{\lambda})}$. By stable embeddedness of $\Valgp$, $\push{h}$ takes its values in $\eq{\Valgp}$.

\begin{claim}[internal branch]
Pick any $\lambda\in\Lambda_{p}\sminus\widehat{\emptyset}$, then the function $\push{h}$ is injective on $\{\widehat{\mu} : \lambda\Ordeq\mu\}$.
\end{claim}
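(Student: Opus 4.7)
The plan is to argue the contrapositive: assuming $\widehat{\mu_1}\neq\widehat{\mu_2}$ with $\lambda\Ordeq\mu_i$ for $i=1,2$, I will show $h(\widehat{\mu_1})\cap h(\widehat{\mu_2})=\emptyset$, which immediately yields $\code{h(\widehat{\mu_1})}\neq\code{h(\widehat{\mu_2})}$, i.e., $\push{h}(\widehat{\mu_1})\neq\push{h}(\widehat{\mu_2})$. It therefore suffices to establish the following key fact: if $\nu_1,\nu_2\in\Lambda_p(N)$ satisfy $\lambda\Ordeq\nu_i$ and $\widehat{\nu_1}\neq\widehat{\nu_2}$, then $p(x,y)\nvdash\grad(F_{\nu_1}(x))=\grad(F_{\nu_2}(x))$, hence $h(\nu_1)\neq h(\nu_2)$. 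The reduction to the key fact uses transitivity of $\Ordeq$: any $\nu_i\sim\mu_i$ inherits $\lambda\Ordeq\nu_i$, and $\widehat{\nu_i}=\widehat{\mu_i}$, so a common element $h(\nu_1)=h(\nu_2)\in h(\widehat{\mu_1})\cap h(\widehat{\mu_2})$ would contradict the key fact applied to $(\nu_1,\nu_2)$.

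To prove the key fact, I first apply Claim~\ref{claim:tree} to conclude that $\nu_1$ and $\nu_2$ are $\Ordeq$-comparable; without loss of generality $\nu_1\Ord\nu_2$ (strictly). Since $\lambda\nin\widehat{\emptyset}$ and $p$ is consistent with $Y$, there exists a realization $(x,y)\models p$ with $y\in\points{F_\lambda}(x)\wedge(x,y)\in Y$, and for this realization $y\in\points{F_{\nu_1}}(x)\cap\points{F_{\nu_2}}(x)$. Since $p$ decides radius comparisons (adaptedness of $\Delta$ to $F$), Proposition~\ref{prop:inter gen irr} applied to the generically irreducible families $F_{\nu_1}$ and $F_{\nu_2}$ forces $p$ to decide either $\points{F_{\nu_1}}(x)\subseteq\points{F_{\nu_2}}(x)$ or the reverse. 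The reverse would give $y\in\points{F_{\nu_2}}(x)\wedge(x,y)\in Y\impform y\in\points{F_{\nu_1}}(x)$, hence $\nu_2\Ordeq\nu_1$, contradicting $\nu_1\Ord\nu_2$. So $p\vdash\points{F_{\nu_1}}(x)\subseteq\points{F_{\nu_2}}(x)$.

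Finally, suppose for contradiction that $p\vdash\grad(F_{\nu_1}(x))=\grad(F_{\nu_2}(x))$. Then the balls of both families have the same type and the same radius, so any two such balls are either disjoint or equal; combined with the pointwise inclusion, this upgrades to the ball-set inclusion $p\vdash F_{\nu_1}(x)\subseteq F_{\nu_2}(x)$ (a fact decided by $p$ by adaptedness). Since $F_{\nu_1}(x)$ is non-empty on the chosen realization, and hence generically non-empty, generic irreducibility of $F_{\nu_2}$ over $p$ forces $p\vdash F_{\nu_1}(x)=F_{\nu_2}(x)$, and in particular $\nu_1\sim\nu_2$, contradicting $\widehat{\nu_1}\neq\widehat{\nu_2}$. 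Hence $p\vdash \grad(F_{\nu_1}(x))\neq\grad(F_{\nu_2}(x))$ (in fact $\grad(F_{\nu_1}(x))>\grad(F_{\nu_2}(x))$, consistent with the chain structure), and so $h(\nu_1)\neq h(\nu_2)$. The main subtlety is this last step: upgrading pointwise containment of generically irreducible families into ball-set containment via the equal generalised radius hypothesis, which is where I expect to have to be most careful with the adaptedness conditions on $\Delta$.
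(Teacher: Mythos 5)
Your proof is correct and follows essentially the same route as the paper: reduce to a pair of $\Ordeq$-comparable representatives via Claim~\ref{claim:tree}, derive a strict point-set inclusion, and conclude that the generalised radii differ, so the $h$-germs differ. The paper's own proof is terse at the crucial step (it simply asserts that $\points{F_{\mu_1}}(x)\subset\points{F_{\mu_2}}(x)$ gives $\grad(F_{\mu_1}(x))\neq\grad(F_{\mu_2}(x))$ without comment); you supply the missing justification that this implication genuinely needs the generic irreducibility of both $F_{\nu_1}$ and $F_{\nu_2}$ (equal generalised radius would upgrade the pointwise inclusion to a ball-set inclusion and then collapse to $\nu_1\sim\nu_2$), which is exactly the subtlety worth spelling out. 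You also get the inequality sign right ($\grad(F_{\nu_1}(x))>\grad(F_{\nu_2}(x))$, as the smaller point set must have balls of larger generalised radius), where the paper writes $<$, though this is immaterial since only $\neq$ is used.
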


\begin{proof}
Let $\mu_{1}$ and $\mu_{2}$ be such that $\lambda\Ordeq\mu_{i}$. We have seen in Claim\,\ref{claim:tree}, that we may assume that $p(x,y)\vdash\points{F_{\mu_{1}}}(x) \subseteq \points{F_{\mu_{2}}}(x)$. Let $(x,y)\models p$. If $\widehat{\mu_{1}} \neq \widehat{\mu_{2}}$ then we must have $\points{F_{\mu_{1}}}(x) \subset \points{F_{\mu_{2}}}(x)$. In particular, $\grad(F_{\mu_{1}}(x)) < \grad(F_{\mu_{2}}(x))$ and $h(\mu_{1})\neq h(\mu_{2})$. In fact, we have just shown that for all $\omega_{i}\in\widehat{\mu_{i}}$, $h(\omega_{1})\neq h(\omega_{2})$. Hence $\push{h}(\widehat{\mu_{1}})\neq \push{h}(\widehat{\mu_{2}})$.
\end{proof}

Let $\lambda\in\Lambda_{p}(N)$ be such that $\code{\widehat{\lambda}}\in \tA$. If $\Gen{\widehat{\lambda}(\tN)}[p]$ is consistent with $Y$, it is, in particular, consistent and consistent with $p$. By Proposition\,\ref{prop:Gen compl}, it is a complete $\Funform{\Delta}{F}$-type. By Corollary\,\ref{cor:imp def gen}, $\Gen{\widehat{\lambda}(\tN)}[p]$ is $\tLeq(\tA)$-quantifiable. It follows that taking $q = \Gen{\widehat{\lambda}(\tN)}[p]$ works. Therefore, it suffices to find a $\lambda\in\Lambda_{p}(N)$ such that  $\code{\widehat{\lambda}}\in\tA$ and $\Gen{\widehat{\lambda}(\tN)}[p]$ is consistent with $Y$.

\begin{claim}[incons next]
Let $\lambda\in\Lambda_{p}(N)$. If $\widehat{\lambda}\neq\widehat{\emptyset}$ and $\Gen{\widehat{\lambda}(\tN)}[p]$ is not consistent with $Y$, then there exists $\mu$ such that $\widehat{\mu}$ is an immediate $\Ordeq$-predecessor of $\widehat{\lambda}$ and $\code{\widehat{\mu}}\in \acltLeq(\tA\code{\widehat{\lambda}})$.
\end{claim}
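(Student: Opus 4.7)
The plan is to extract finite witnesses of the inconsistency via compactness, lift each witness up the $\Ord$-chain to an immediate $\Ordeq$-predecessor of $\widehat{\lambda}$ using the structural properties of the good representation, and then invoke quantifiability of $p$ together with $\exists^{\infty}$-elimination in $\res$ to conclude that the resulting set of immediate predecessors is finite and $\tLeq(\tA\code{\widehat{\lambda}})$-definable, hence contained in $\acltLeq(\tA\code{\widehat{\lambda}})$.

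First, from the inconsistency of $\Gen{\widehat{\lambda}(\tN)}[p]$ with $Y$ and compactness I would obtain tuples $(\mu_{i})_{i<k}\in\Lambda(\tN)$ with $p(x,y)\vdash\points{F_{\mu_{i}}}(x)\subset\points{F_{\lambda'}}(x)$ for every $\lambda'\in\widehat{\lambda}$, and finitely many $\lambda'_{j}\in\widehat{\lambda}(\tN)$, such that $p(x,y)\wedge\bigwedge_{j}y\in\points{F_{\lambda'_{j}}}(x)\wedge\bigwedge_{i}y\nin\points{F_{\mu_{i}}}(x)\wedge(x,y)\in Y$ is inconsistent. Because each $\lambda'_{j}\sim\lambda$ forces $(y\in\points{F_{\lambda'_{j}}}(x)\wedge(x,y)\in Y)\iffform(y\in\points{F_{\lambda}}(x)\wedge(x,y)\in Y)$ modulo $p$, the inconsistency collapses to $p(x,y)\vdash(y\in\points{F_{\lambda}}(x)\wedge(x,y)\in Y)\impform\bigvee_{i}y\in\points{F_{\mu_{i}}}(x)$. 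I would take $k$ minimal, which ensures that each $\mu_{i}$ is essential in the cover and that $\widehat{\mu_{i}}\Ord\widehat{\lambda}$ strictly.

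Next, I would lift each $\mu_{i}$ to an immediate $\Ordeq$-predecessor $\nu_{i}$ of $\widehat{\lambda}$. Claim\,\ref{claim:tree} tells us that the set $\{\widehat{\rho}:\widehat{\mu_{i}}\Ordeq\widehat{\rho}\Ord\widehat{\lambda}\}$ is linearly ordered by $\Ord$, and by Claim\,\ref{claim:internal branch} it embeds injectively into $\eq{\Valgp}$ via $\push{h}$. When the balls of $F_{\lambda}$ are closed of radius $\gamma_{\lambda}$, the maximal open subball construction available through the generic large ball property of the good representation produces open balls of radius $\gamma_{\lambda}$ containing each ball of $F_{\mu_{i}}$; the resulting $\widehat{\nu_{i}}$ is then an immediate $\Ordeq$-predecessor of $\widehat{\lambda}$ with $\widehat{\mu_{i}}\Ordeq\widehat{\nu_{i}}$. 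When the balls of $F_{\lambda}$ are open, there is no immediate predecessor in the ball-containment tree, so one must exploit the $\sim$-collapse induced by $Y$ together with the minimality of $k$ and the quantifiability of $p$ to force the existence of a definable immediate $\Ordeq$-predecessor arising purely from the $Y$-refinement.

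Finally, the collection $\{\widehat{\nu_{i}}\}_{i<k}$ is $\tLeq(\tA\code{\widehat{\lambda}})$-definable, since the relations $\Ord,\Ordeq$ and the ``essential cover'' condition are definable via the quantifiability of $p$ as a partial $\tLeq$-type; and it is finite, with cardinality bounded by a constant depending on $l$ and the $\exists^{\infty}$-elimination in $\res$ that controls the number of maximal open subballs a closed ball can contribute, exactly as in Claim\,\ref{claim:exists infty res}. Hence every $\widehat{\nu_{i}}$ lies in $\acltLeq(\tA\code{\widehat{\lambda}})$, and taking $\mu:=\nu_{1}$ yields the desired immediate predecessor. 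The principal obstacle is the open-ball case of the lifting step, where the ball-containment tree admits no immediate predecessor and one must argue entirely through the $Y$-refined equivalence that a definable, bounded family of immediate $\Ordeq$-predecessors genuinely arises from any minimal inconsistency witness; controlling this while maintaining the finite size bound is the delicate combinatorial part of the argument.
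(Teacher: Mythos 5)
The paper's proof is considerably more direct than your proposal and hinges on a combinatorial observation you are missing. After extracting, as you do, the finite cover
\[
p(x,y)\vdash \bigl(y\in\points{F_{\lambda}}(x)\wedge (x,y)\in Y\bigr)\impform y\in\bigcup_{i=1}^{k}\points{F_{\mu_{i}}}(x),
\]
the paper does \emph{not} try to ``lift'' each $\mu_{i}$ to an immediate predecessor via geometric constructions. Instead it shows the entire interval $\{\widehat{\kappa}\mid\mu_{i}\Ordeq\kappa\Ordeq\lambda\text{ for some }i\}$ is already finite, by observing that for any such $\kappa$,
\[
p(x,y)\vdash \bigl(y\in\points{F_{\kappa}}(x)\wedge (x,y)\in Y\bigr)\iffform\bigvee_{i\in I}\bigl(y\in\points{F_{\mu_{i}}}(x)\wedge (x,y)\in Y\bigr),
\]
where $I=\{i\mid\points{F_{\mu_{i}}}(x)\cap\points{F_{\kappa}}(x)\neq\emptyset\}$ (this uses irreducibility and Proposition\,\ref{prop:inter gen irr} to sort out the containments). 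Thus $\widehat{\kappa}$ is determined by the subset $I\subseteq\{1,\dots,k\}$, so the interval has at most $2^{k}$ elements, it is $\tLeq(\tA\code{\widehat{\lambda}})$-definable by quantifiability of $p$, and one can now simply \emph{choose} each $\mu_{i}$ maximal in this finite definable interval to get the immediate $\Ordeq$-predecessors. No appeal to $\exists^{\infty}$-elimination, the generic large ball property, or the maximal open subball property is needed at this stage.

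Your proposal has two genuine gaps. First, the lifting step does not work even in the closed-ball case: the $\nu_{i}$ you obtain as a maximal open subball of $F_{\lambda}$ containing $F_{\mu_{i}}$ is only an immediate predecessor in the ball-containment order, not in $\Ordeq$, which is the order quotiented by the $Y$-refined equivalence $\sim$. A maximal open subball could be $\sim$-equivalent to $\lambda$, or there could be several $\sim$-classes strictly between it and $\widehat{\lambda}$; nothing in your argument rules this out. You flag the open-ball case as ``the delicate combinatorial part'' and leave it open, but the same difficulty already occurs in the closed-ball case. Second, you invoke $\exists^{\infty}$-elimination in $\res$ (à la Claim\,\ref{claim:exists infty res}) to bound the number of predecessors, but that is the wrong tool here: the finiteness is the elementary counting argument on subsets of $\{1,\dots,k\}$ above, and the rôle of $\exists^{\infty}$-elimination in $\eq{\Valgp}$ in the surrounding Lemma\,\ref{lem:ind approximate} is a different one — it bounds the length of a chain of predecessors when iterating this claim, not the number of immediate predecessors at a single step.
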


\begin{proof}
As $\Gen{\widehat{\lambda}(\tN)}[p]$ is not consistent with $Y$, there exists $(\mu_{i})_{0<i<k}\in\Lambda_{p}(N)$ such that $\mu_{i}\Ord\lambda$ and $p(x,y)\vdash y\in\points{F_{\lambda}}(x)\wedge (x,y)\in Y \impform y\in \bigcup_{i=1}^{k}\points{F_{\mu_{i}}}(x)$. We may assume that for all $i$, $\mu_{i}\nin\widehat{\emptyset}$ and that $p(x,y)\vdash \points{F_{\mu_{i}}}(x)\cap \points{F_{\mu_{j}}}(x) = \emptyset$ for all $i\neq j$. Let $\kappa\in\Lambda_{p}(N)$ be such that $\mu_{i_{0}}\Ordeq\kappa\Ordeq \lambda$ for some $i_{0}$. Because $\mu_{i_{0}}\Ordeq\kappa$, we have $p(x,y)\vdash \points{F_{\kappa}}(x)\cap\points{F_{\mu_{i_{0}}}}(x)\neq \emptyset$. If $p(x,y)\vdash \points{F_{\kappa}}(x) \subseteq \points{F_{\mu_{i_{0}}}}(x)$ then $\kappa\Ordeq\mu_{i_{0}}$ and hence $\kappa \sim \mu_{i}$.

Also, as $\kappa\Ordeq\lambda$, we have $p(x,y)\vdash (y\in\points{F_{\kappa}}(x)\wedge (x,y)\in Y)\impform y\in \points{F_{\lambda}}(x) \impform y\in \points{F_{\mu_{i}}}(x)$. For any $i\neq i_{0}$, if $p(x,y)\vdash \points{F_{\kappa}}(x)\cap\points{F_{\mu_{i}}}(x)\neq \emptyset$ then we must have $p(x,y)\vdash \points{F_{\mu_{i}}}(x) \subseteq \points{F_{\kappa}}(x)$. Therefore, we have $p(x,y)\vdash (y\in\points{F_{\kappa}}(x)\wedge (x,y)\in Y) \iffform \bigvee_{i\in I}(y\in\points{F_{\mu_{i}}}(x)\wedge (x,y)\in Y)$, where $I = \{i\mid \points{F_{\mu_{i}}}(x)\cap \points{F_{\kappa}(x)}\neq\emptyset\}$. It follows that the set $\{\widehat{\kappa}\mid \mu_{i}\Ordeq\kappa\Ordeq\lambda$ for some $i\}$ is finite. In particular we could choose $\mu_{i}$ such that there is no $\kappa$ such that $\widehat{\mu}_{i} \Ord \widehat{\kappa}\Ord \widehat{\lambda}$. The $\widehat{\mu}_{i}$ are the (finitely many) direct $\Ordeq$-predecessors of $ \widehat{\lambda}$ and therefore $\widehat{\mu}_{i}\in \acltLeq(\tA\code{\widehat{\lambda}})$.
\end{proof}

Let us assume that there does not exist $\lambda$ such that $\code{\widehat{\lambda}}\in\tA$ and $\Gen{\widehat{\lambda}(\tN)}[p]$ is consistent with $Y$. Starting with  $\widehat{\lambda_{0}} = \widehat{\K} \in\tA$, we construct, using Claim\,\ref{claim:incons next}, a sequence $(\lambda_{i})_{i\in\omega}$ such that $\widehat{\lambda}_{i+1}$ is a direct $\Ordeq$-predecessor of $\widehat{\lambda}_{i}$. For all $i$, we have $\card{\{\widehat{\mu}\mid \widehat{\lambda_{i}}\Ordeq\widehat{\mu}\}} = i+1 = \card{\push{h}(\{\widehat{\mu}\mid \widehat{\lambda_{i}}\Ordeq\widehat{\mu}\})}$, contradicting the elimination of $\exists^{\infty}$ in $\eq{\Valgp}$. This concludes the proof
\end{proof}

\begin{proposition}[ind approx]
Let $Y\subseteq \K<n+m>$ be an $\tLeq(\tA)$-definable set and $\Delta(x,y;t)$ and $\Theta(y;s)$ be finite sets of $\LL$-formulas where $\card{x} = n$ and $\card{y} = m$. Let $p\in\TP[x,y]<\Delta>(N)$ be $\tLeq(\tA)$-quantifiable and consistent with $Y$. Then there exists a finite set of $\LL$-formulas $\Xi(x,y;s,t,r)\supseteq\Delta\cup\Theta$ and a type $q\in\TP[x,y]<\Xi>(N)$ which is $\tLeq(\tA)$-quantifiable and consistent with $p$ and $Y$.
\end{proposition}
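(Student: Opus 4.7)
I would proceed by induction on $m = \card{y}$, using Lemma\,\ref{lem:ind approximate} as the main engine of the inductive step. The base case $m=0$ is immediate: $\Theta(\,;s)$ is a finite set of formulas in the parameter $s$ alone, so each instance $\theta(c)$ is a sentence of $\tL$ decided by $\tN$. Taking $\Xi := \Delta\cup\Theta$ and augmenting $p$ by $\theta(c)$ (resp.\ $\neg\theta(c)$) whenever $\tN\models\theta(c)$ (resp.\ $\tN\not\models\theta(c)$) yields the desired $q$, with the trivial defining scheme $\theta(s)\mapsto\theta(s)$ witnessing quantifiability.

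For the inductive step, write $y=(y',y_m)$ with $y'\in\K<m-1>$ and first prepare a good representation adapted to the last coordinate $y_m$. The plan is to apply Proposition\,\ref{prop:exists F Delta} to $\Theta$, placing $y'$ in the $x$-slot and $y_m$ as the single $\K$-variable, to produce an $\LL$-definable family $F$ of functions $\K<m-1>\to\Balls{l}$ and a finite set $\Delta_1(y';s_1)$ such that every $\Funform{\Delta_1}{F}$-type decides $\Theta$. Refining $F$ via Proposition\,\ref{prop:F st balls} and then applying Proposition\,\ref{prop:exists good rep} to $(\Delta_1,F,y')$ yields a good representation $(\Psi_0,G,y')$ with $\Psi_0\supseteq\Delta_1$. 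Using the standing hypothesis that $T$ admits uniform $\Valgp$-reparametrisations, I enlarge $\Psi_0$ by a finite set $\Psi_1(y';s_2)$ controlling the reparametrisation of $(\rad\comp G_\omega)_{\omega\in\Omega}$, and set $\Psi := \Psi_0\cup\Psi_1$; crucially, $\Psi$ involves only the variable $y'$. I then reinterpret $p$ as a type in the variables $(\tilde{x},y')$ with $\tilde{x}:=(x,y_m)\in\K<n+1>$, and invoke the induction hypothesis with $x,y,\Theta$ replaced by $\tilde{x},y',\Psi$: this delivers a type $p'$ extending $p$, $\tLeq(\tA)$-quantifiable, consistent with $Y$, which is a $\Xi'$-type for some $\Xi'\supseteq\Delta\cup\Psi$.

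Finally, extending $G$ trivially to depend only on its $y'$-coordinates, I view $(\Psi,G,(x,y'))$ as a good representation in the enlarged $x$-slot $(x,y')$, and note that the uniform scheme from the previous reparametrisation step yields a $\Valgp$-reparametrisation of $(\rad\comp G_\omega)_\omega$ over $p'$ since $p'$ is a $\Psi_1$-type. An application of Lemma\,\ref{lem:ind approximate} to this good representation and $p'$ then produces a type $q$ extending $p'$ (and hence $p$), $\tLeq(\tA)$-quantifiable, consistent with $Y$, which decides all $\Funform{\Psi}{G}$-formulas and therefore, via the good representation and Proposition\,\ref{prop:exists F Delta}, all formulas in $\Theta$. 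Taking $\Xi := \Delta\cup\Theta\cup\Xi'\cup\Funform{\Psi}{G}$ completes the induction. The main obstacle I foresee is the variable bookkeeping of the inductive step: one must arrange that $\Psi$ lies entirely in the $y'$-variable (guaranteed only because Proposition\,\ref{prop:exists F Delta} was applied with $y'$ in the $x$-slot) so that $\Psi$ legitimately plays the role of $\Theta$ in the induction hypothesis; and one must verify that Lemma\,\ref{lem:ind approximate} applies unchanged when the input type is the larger $\Xi'$-type $p'$ rather than exactly a $\Psi$-type, which it does because its proof only uses $p'$'s decisions on $\Psi$-formulas and its output extends $p'$ by ball formulas.
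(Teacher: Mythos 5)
Your proposal is correct and follows essentially the same route as the paper's proof: induct on $\card{y}$, isolate the last coordinate as the ball variable, apply Proposition\,\ref{prop:exists F Delta} and Propositions\,\ref{prop:F st balls}, \ref{prop:exists good rep} over the remaining $y'$-coordinates to obtain a good representation and a reparametrisation scheme (a finite set of formulas in $y'$ only), feed that into the induction hypothesis with the last coordinate bundled into the $x$-slot, and finally apply Lemma\,\ref{lem:ind approximate} to add the ball formulas. The variable-bookkeeping point you flag — that the new $\Theta$-slot must depend only on $y'$ — is exactly the constraint the paper's proof respects, and your remark that the good representation and the reparametrisation remain valid when the controlling type is enlarged is the correct justification for chaining the induction output into Lemma\,\ref{lem:ind approximate}.
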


\begin{proof}
We proceed by induction on $\card{y}$. The case $\card{y} = 0$ is trivial. Let us now assume that $y = (z,w)$ where $\card{w} = 1$. By Proposition\,\ref{prop:exists F Delta} there exists $\Phi(z;u)$ a finite set of $\LL$-formulas and $F = (F_{\lambda})_{\lambda\in\Lambda}$ an $\LL$-definable family of functions $\K^{m-1}\to \Balls{l}$ such that $\Funform{\Phi}{F}$ decides any formula in $\Theta$. By Propositions\,\ref{prop:F st balls} and \ref{prop:exists good rep} we can assume that the pair $F_{\lambda} : \K^{m-1}\to\Ballsst{l}$ and $(\Phi,F,z)$ is a good representation. We can make $F$ into an $\LL$-definable family of functions $\K<n+m-1>\to\Ballsst{l}$ by setting $G_{\lambda}(x,z) = F_{\lambda}(z)$. As $T$ admits $\Valgp$-reparametrisations, there exists $\Upsilon(x,z;v)$ such that for any $p\in\TP[y]<\Upsilon>(N)$, there exists a $\Valgp$-reparametrisation $(g_{\gamma})_{\gamma}$ of $(\rad\comp G_{\lambda})_{\lambda\in\Lambda}$ over $p$.

By induction applied to $\Delta(x,z,w;t)$, $\Phi(z;u)\cup\Upsilon(z;v)$ and $p$, we obtain a finite set of $\LL$-formulas $\Omega(x,w,z;r) \supseteq \Delta\cup\Phi\cup\Upsilon$ and a type $q_{1}\in\TP[x,z,w]<\Omega>(N)$ which is $\tLeq(\tA)$-quantifiable and consistent with $p$ and $Y$. We can now apply Lemma\,\ref{lem:ind approximate} to $Y$, $(\Omega,G,(x,z))$, $q_{1}$ and $g$ to find a type $q_{2}\in\TP[x,w,z]<\Funform{\Omega}{G}>(N)$ which is $\tLeq(\tA)$-quantifiable and consistent with $q_{1}$ and $Y$. As all the formulas in $\Theta$ are decided by $\Funform{\Omega}{G}$, we may assume that $q_{2}$ is in fact a $(\Funform{\Omega}{G}\cup\Theta)$-type. Then $\Xi = \Funform{\Omega}{G}\cup\Theta$ and $q = q_{2}$ are suitable.
\end{proof}

\begin{proposition}[approximate]
Let $X$ be non empty strict $(\tLeq(\tA),x)$-definable. Let $\Delta(x;t)$ be a countable set of $\LL$-formulas. Then there exists an $\tLeq(\tA)$-definable type $p\in\TP[x]<\Delta>(N)$ consistent with $X$.
\end{proposition}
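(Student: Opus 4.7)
The plan is to construct $p$ as the union of a chain of $\tLeq(\tA)$-quantifiable complete partial types $(p_n)_{n<\omega}$ obtained by iteratively applying Proposition\,\ref{prop:ind approx}, adding one formula of $\Delta$ at each stage. Enumerate $\Delta = \{\phi_n(x_n;t_n) : n < \omega\}$, where each $x_n$ is a finite subtuple of $x$, and set $V_n := x_0\cup\cdots\cup x_n$, a finite subtuple of $x$. By strict $(\tLeq(\tA),x)$-definability of $X$, the projection $X_n$ of $X$ onto the $V_n$-coordinates is $\tLeq(\tA)$-definable. Since $\K$ is dominant in $T$, we may assume, after substituting $\LL$-definable surjections $\K^{k_i}\to S$ for variables $x_i$ of sorts $S\neq\K$ and pulling $X$ back, that every coordinate of $x$ lies in $\K$; a definable type on the new $\K$-valued tuple pushes forward to the desired definable type on the original variable.

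We build inductively a finite set of $\LL$-formulas $\Delta_n\supseteq\{\phi_0,\ldots,\phi_n\}$ and a complete $\tLeq(\tA)$-quantifiable $\Delta_n$-type $p_n\in\TP[V_n]<\Delta_n>(N)$ consistent with $X_n$, with $p_n\subseteq p_{n+1}$. Start with $p_{-1}$ the trivial empty type on the empty tuple. For the induction step, apply Proposition\,\ref{prop:ind approx} with $Y := X_{n+1}$, taking its first variable block empty and its second block equal to $V_{n+1}$, current $\Delta := \Delta_n$ (legitimately viewed as formulas in the $V_{n+1}$-variables, since $\Delta_n$ does not mention the new coordinates $V_{n+1}\setminus V_n$), $\Theta := \{\phi_{n+1}\}$, and input $p_n$ viewed as a complete $\Delta_n$-type on $V_{n+1}$; this input is still $\tLeq(\tA)$-quantifiable (its defining scheme is unchanged), and it remains consistent with $X_{n+1}$ because $X_{n+1}$ projects onto $X_n$, so any realization of $p_n$ in $X_n$ lifts to a realization in $X_{n+1}$. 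The proposition then delivers $\Delta_{n+1}$ and $p_{n+1}$ with the required properties. The main conceptual point of this setup is that collapsing everything into the second variable block sidesteps the restriction in Proposition\,\ref{prop:ind approx} that $\Theta$ involve only that block.

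Set $p := \bigcup_n p_n$. Every $\phi\in\Delta$ is some $\phi_n$ and is decided by $p_n\subseteq p$, so $p$ is a complete $\Delta$-type on $x$. By the remark following Definition\,\ref{def:imp def}, each $p_n$ is $\tLeq(\tA)$-definable as a $\Delta_n$-type, and the defining schemes cohere along the chain $p_n\subseteq p_{n+1}$, so $p$ is $\tLeq(\tA)$-definable as a $\Delta$-type. Finally, $p$ is consistent with $X$ by compactness: any finite subset of $p$ is contained in some $p_n$, hence consistent with $X_n$; by strict definability, any realization in $X_n$ is the projection of a realization in $X$, so the finite subset is consistent with $X$ itself. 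The only real work is done by Proposition\,\ref{prop:ind approx} at each stage; everything else is bookkeeping of variables, parameters, and the transfer of consistency along the projections $X\to X_n$.
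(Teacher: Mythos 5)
Your proof follows the paper's argument exactly: reduce to $\K$-coordinates via dominance, iteratively apply Proposition\,\ref{prop:ind approx} to decide one formula of $\Delta$ at a time while preserving $\tLeq(\tA)$-quantifiability and consistency with the relevant finite projection of $X$, then take the union over the chain. The only (cosmetic) difference is that you invoke Proposition\,\ref{prop:ind approx} with an empty first variable block and $V_{n+1}$ as the second, whereas the paper matches the blocks $x$ and $y$ against $x_{\leq j}$ and $x_{j+1}$; your bookkeeping avoids any worry about whether those blocks overlap, but it is the same construction.
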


\begin{proof}
We may assume that \(X\subseteq \K<n>\) for some \(n\in\Nn\). Let $\{\phi_{j}(x_{j};t_{j})\mid j<\omega\}$ be an enumeration of all formulas in $\Delta$ where $|x_{j}| < \infty$. Let $\Delta_{-1} := \emptyset$ and $p_{-1} := \emptyset$. We construct, for all $j$, a finite set $\Delta_{j}(x_{\leq j};s_{j})$ of $\LL$-formulas and a type $p_{j} \in \TP[\leq x_{j}]<\Delta_{j}>(N)$ such that for all $j<\omega$, $\Delta_{j}\cup\{\phi_{j}\} \subseteq \Delta_{j+1}$, $p_{j+1}$ is $\tLeq(\tA)$-quantifiable and consistent with $p_{j}$ and $X$. Let us assume that $p_{j}$ and $\Delta_{j}$ have been constructed. Let $Y_{j+1}$ be the projection of $X$ on the variables $x_{\leq j+1}$. Then $Y_{j+1}$ is $\tLeq(\tA)$-definable. We can then apply Proposition\,\ref{prop:ind approx} to $\Delta_{j}(x_{\leq j};s_{j})$, $\{\phi(x_{j+1};t_{j+1})\}$, $p_{j}$ and $Y_{j+1}$ in order to obtain $p_{j+1}$. As $Y_{j+1}$ is the projection of $X$ on the variables which appear in $p_{j}$ and $p_{j+1}$, and that $p_{1}$, $p_{j+1}$ and $Y$ are consistent, it follows that $p_{j}$, $p_{j+1}$ and $X$ are also consistent. We can now take $p := \bigcup_{j<\omega} p_{j}$. As each $p_{j}$ is $\tLeq(\tA)$-definable (as a $\Delta_{j}$-type), so is $p$ and thus $\restr{p}{\Delta}$.
\end{proof}

We now prove the main result we have been aiming for.

\begin{theorem}[dens def]
Let $\tL\supseteq\LL\supseteq\Ldiv$ be languages, $\Real$ be the set of $\LL$-sorts, $T\supseteq\ACVF$ be a $C$-minimal $\LL$-theory which eliminates imaginaries and admits $\Valgp$-reparametrisations. Let $\tT$ a complete $\tL$-theory containing $T$ such that $\K$ is dominant in $\tT$ and:
\begin{thm@enum}
\item\label{hyp:exists infty} The sets $\res$ and $\Valgp$ are stably embedded in $\tT$ and the induced theories on $\res$ and $\eq{\Valgp}$ eliminate $\exists^{\infty}$;

\item\label{hyp:QE} For any $\tN\models\tT$, $A = \K(\dcl[\tL](A)) \subseteq \tN$ and any $\tL(A)$-definable set $X\subseteq\K^{n}$, there exists an $\tL$-definable bijection $f : \K^{n} \to Y$ such that $f(X) = Y\cap Z$ where $Z$ is $\LL(A)$-definable; note that $f$ has to be defined without parameters.
\end{thm@enum}
Then for any $\tN\models\tT$, any countable set $\Delta(x;t)$ of $\tL$-formulas and any non empty $\tL(\tN)$-definable set $X(x)$, there exists $p\in\TP<\Delta>(\tN)$ which is consistent with $X$ and $\tLeq(\acltLeq(\code{X}))$-definable.

If, moreover, the following holds:
\begin{thm@enum}[2]
\item\label{hyp:nice model} There exists $\tM\models\tT$ such that $\Langrestr{\tM}{\LL}$ is uniformly stably embedded in every elementary extension;
\end{thm@enum}
then, the type $p$ can be assumed to be $\tL(\Real(\acltLeq(\code{X})))$-definable
\end{theorem}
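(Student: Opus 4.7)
The strategy is to reduce Theorem \ref{thm:dens def} to Proposition \ref{prop:approximate} by using hypothesis \eqref{hyp:QE} to translate between $\tL$-formulas and $\LL$-formulas up to parameter-free $\tL$-definable bijections. First, by $\K$-dominance in $\tT$, we pull $X$ and the variable tuple $x$ back along a parameter-free $\tL$-definable surjection, reducing to $X \subseteq \K^n$ with $x$ consisting of $\K$-variables (this does not enlarge $\acltLeq(\code{X})$). Setting $\tA := \acltLeq(\code{X})$ and using $\K$-dominance to replace imaginary defining parameters with $\K$-tuples, we obtain a real set $A \subseteq \K(\tN)$ with $A = \K(\dcl[\tL](A))$ such that $X$ is $\tL(A)$-definable.

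Now apply \eqref{hyp:QE} to $X$: there are a parameter-free $\tL$-definable bijection $f : \K^n \to Y$ and an $\LL(A)$-definable set $Z$ with $f(X) = Y \cap Z =: W$. Similarly, for each $\phi_i(x; t_i) \in \Delta$, apply \eqref{hyp:QE} to the parameter-free $\tL$-definable set $\{(y, t_i) : \phi_i(f^{-1}(y); t_i)\}$ to obtain a parameter-free $\tL$-bijection $g_i$ and a parameter-free $\LL$-definable set $Z_i$ such that $\phi_i(f^{-1}(y); t_i)$ is equivalent to $g_i(y, t_i) \in Z_i$. Apply Proposition \ref{prop:approximate} to the non-empty strict $(\tLeq(\tA), y)$-definable set $W$ with a countable $\Delta_0$ of $\LL$-formulas, to obtain an $\tLeq(\tA)$-definable $\LL$-type $q(y)$ consistent with $W$. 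A careful inspection of Lemma \ref{lem:ind approximate} and Corollary \ref{cor:imp def gen} shows that $q$ is in fact $\tLeq(\tA)$-quantifiable as a partial $\tL$-type: for every $\tL$-formula $\chi(y; s)$, the set of $c$ such that $q \vdash \chi(y; c)$ is $\tLeq(\tA)$-definable. Applying this to $\chi_i(y; s) := g_i(y, s) \in Z_i$ produces $\tLeq(\tA)$-formulas $\theta_i(s)$ with $\tN \models \theta_i(c)$ iff $q \vdash g_i(y, c) \in Z_i$, equivalently iff the pushforward $p := (f^{-1})_{*}(q)$ satisfies $p \vdash \phi_i(x; c)$. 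This gives the desired $\tLeq(\tA)$-definable defining scheme $(\theta_i)_{i<\omega}$ for $p \in \TP<\Delta>(\tN)$, and $p$ is consistent with $X$ since $q$ is consistent with $W$.

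The main technical difficulty is preserving $\tL$-quantifiability of $q$ through the Lindenbaum-style union in Proposition \ref{prop:approximate}: Lemma \ref{lem:ind approximate} delivers quantifiable extensions only when the added formulas are $\LL$-ball conditions, so in the inductive construction one must at each step translate the $\tL$-formulas $\chi_i$ via \eqref{hyp:QE} into $\LL$-conditions on auxiliary variables before applying Lemma \ref{lem:ind approximate}. For the \emph{moreover} part under \eqref{hyp:nice model}, uniform stable embeddedness of $\Langrestr{\tM}{\LL}$ in elementary extensions ensures that $\tLeq(\tN)$-definable subsets of $\LL$-sorts are already $\LL(\tN \cap \tM)$-definable; combining this with elimination of imaginaries in $T$ (which replaces $\LL$-parameters in $\tA$ by real ones in $\Real(\tA)$) converts the $\tLeq(\tA)$-definable defining scheme into a $\tL(\Real(\tA))$-definable one.
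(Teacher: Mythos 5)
Your argument has a genuine gap at its key step: you claim that the type $q$ produced by Proposition~\ref{prop:approximate} is $\tLeq(\tA)$-quantifiable as a partial $\tL$-type, and you attribute this to a "careful inspection" of Lemma~\ref{lem:ind approximate} and Corollary~\ref{cor:imp def gen}. This is not correct. Lemma~\ref{lem:ind approximate} does produce $\tLeq(\tA)$-\emph{quantifiable} types at each finite stage, but the $q$ coming out of Proposition~\ref{prop:approximate} is the union $\bigcup_{j<\omega} p_j$ of an $\omega$-chain, and that union is only \emph{definable} as a $\Delta_0$-type — not quantifiable. The paper explicitly remarks on exactly this obstruction in the discussion following the theorem: "The present proof relies on a induction that cannot be carried out beyond $\omega$ because the union of quantifiable types might not be quantifiable." Since $\chi_i(y;s):=g_i(y,s)\in Z_i$ is a genuine $\tL$-formula (the $g_i$ are $\tL$-definable), definability of $q$ as an $\LL$ $\Delta_0$-type gives you no control over whether $q\vdash\chi_i(y;c)$, and your defining scheme $(\theta_i)$ does not exist by the argument you give. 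A second, related problem is that $\Delta_0$ is never specified; choosing it is the crux of the matter.

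You do, in the last paragraph, identify the right fix — "translate the $\tL$-formulas $\chi_i$ via~\eqref{hyp:QE} into $\LL$-conditions on auxiliary variables before applying Lemma~\ref{lem:ind approximate}" — and this is essentially what the paper's proof does, but you present it as an open difficulty rather than carrying it out. Concretely, the paper sets $\prol[\omega](x):=(f(x))_{f\in F}$ where $F$ is the family of \emph{all} parameter-free $\tL$-definable bijections out of $\K^n$, applies Hypothesis~\eqref{hyp:QE} to each fiber set $\phi(\tN;m)$ (so the bijection acts on $x$ only, keeping variables and parameters separate — unlike your $g_i$ which jointly transforms $(y,t_i)$), uses compactness to get finitely many $(f_i,g_i,\psi_i)$ per $\phi\in\Delta$, and collects all the resulting $\LL$-formulas $\psi_i$ into the countable set $\Theta$ \emph{before} invoking Proposition~\ref{prop:approximate}. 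Because every $\phi(x;m)$ is decided by the $\Theta$-type of $\prol[\omega](x)$, definability (not quantifiability) of the resulting $\LL$-$\Theta$-type is already enough to yield the $\tLeq(\tA)$-definable scheme for the $\Delta$-type in $x$. Finally, for the "moreover" clause the paper appeals to \cite[Corollary\,1.7]{RidSim-NIP}; your sketch via "stable embeddedness plus elimination of imaginaries in $T$" is too coarse — one needs the cited result, which is the reason Hypothesis~\eqref{hyp:nice model} is phrased as it is.
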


\begin{proof}
Let $\tA := \acltLeq(\code{X})$. We may assume that $X\subset\K^{n}$ for some $n$. Indeed, let $S_{i}$ be the sorts such that $X\subseteq \prod S_{i}$.  Since $\K$ is dominant, there is an $\tL$-definable surjection $\pi :\K<n> \to \prod S_{i}$. If we find $p$ consistent with $Y := \pi^{-1}(X)$ and $\tL(\acltLeq(\code{Y}))$-definable, then $\push{\pi}[p]$ is consistent with $X$ and $\tL(\tA)$-definable. So we may assume that $X\subseteq \K<n>$

Let $F := \{f$ is an $\tL$-definable bijection whose domain is $\K^{n}\}$ and $\prol[\omega](x) := (f(x))_{f\in F}$. Then $\prol[\omega](X)$ is strict $(\tLeq(\tA),\star)$-definable. Pick any $\phi(x;t)\in\Delta(x;t)$. As $\K$ is dominant we may assume $t$ is a tuple of variables from $\K$ too. By (ii), for all tuples $m\in\K(\tN)$, there exists $(f : \K^{n}\to Y) \in F$ and an $\tL$-definable map $g$ (into $\K<l>$ for some $l$) such that $f(\phi(\tN;m)) = Y(\tN)\cap Z(\tN)$ where $Z$ is $\LL(g(m))$-definable. As $\tN$ is arbitrary, we may assume that it is sufficiently saturated and, by compactness, there exists a finite number of $(f_{i}:\K^{n}\to Y_{i})\in F$, $\tL$-definable maps $g_{i}$ and $\LL$-formulas $\psi_{i}(y_{i};s_i)$ such that for any tuple $m\in\K(\tN)$ there exists $i_{0}$ such that $f_{i_{0}}(\phi(\tN;m)) = \psi_{i_{0}}(\tN;g_{i_{0}}(m))\cap Y_{i_{0}}(\tN)$. Let $\Theta(y;s)$ be the (countable) set of all $\psi_i(y_{i};s_i)$ that can appear for a $\phi(x;t)\in\Delta(x;t)$.

By Proposition\,\ref{prop:approximate}, there exists an $\tL(\tA)$-definable type $p\in\TP[y]<\Theta>(N)$ consistent with $\prol[\omega](X)$. Let $q = \{x\mid \prol[\omega](x)\models p\}$. Then $q$ is consistent with $X$. There remains to show that it is a complete $\Delta$-type and that it is $\tL(\tA)$-definable. Pick $\phi(x;t)\in \Delta(x;t)$. Let $f_i$, $g_i$, $\psi_i$, $m$ and $i_0$ be as above. Let $c_{1}$, $c_{2}\models q$. Assume that $\models \phi(c_{1};m)$. Then $f_{i_{0}}(c_{1}) \in \psi_{i_{0}}(\tN;g_{i_{0}}(m))\cap Y_{i_{0}}(\tN)$. As $\prol[\omega](c_{1})$ and $\prol[\omega](c_{2})$ have the same $\Theta(y;s)$-type over $\tN$ and $f_{i_{0}}(c_{2})\in Y_{i_{0}}(\tN)$, we also have $f_{i_{0}}(c_{2}) \in  \psi_{i_{0}}(\tN;g_{i_{0}}(m))\cap Y_{i_{0}}(\tN) = f_{i_{0}}(\phi(\tN;m))$. Because $f_{i_{0}}$ is a bijection, $\models \phi(c_{2};m)$. As for definability, we have just shown that $\phi(x;m)\in q$ if and only if $\psi_{i_{0}}(y_{i};g_{i_{0}}(m)) \in p$ for some $i_{0}$ such that $f_{i_{0}}(\phi(\tN;m)) = \psi_{i_{0}}(\tN;g_{i_{0}}(m))\cap Y_{i_{0}}(\tN)$ but that can be stated with an $\tL(\tA)$-formula.

If Hypothesis\,\ref{hyp:nice model} holds,by \cite[Corollary\,1.7]{RidSim-NIP} that the type $q$ is $\LL(\Real(\tA))$-definable and hence, $p$ is $\tL(\Real(\tA))$-definable.
\end{proof}

\begin{question}
Can the restriction on the cardinality of $\Delta$ be lifted to obtain the density of complete definable $\tL$-types even when $\tL$ is not countable?
\end{question}

The main problem is to prove Proposition\,\ref{prop:approximate} without any cardinality assumption on $\Delta$. The present proof relies on a induction that cannot be carried out beyond $\omega$ because the union of quantifiable types might not be quantifiable.

\begin{corollary}[VDF dens def]
Let $M\models \VDF$. Any $\LGD(M)$-definable set $X$ is consistent with an $\LG(\Geom(\acleq(\code{X})))$-definable $p\in\TP(M)$.
\end{corollary}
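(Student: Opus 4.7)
The plan is to apply Theorem~\ref{thm:dens def} to the setting $T = \ACVFG$ (with $\LL = \LG$, $\Real = \Geom$) and $\tT = \VDFG$ (with $\tL = \LGD$), taking $\Delta$ to be the set of all $\LGD$-formulas. This $\Delta$ is countable since $\LGD$ has finitely many sorts and symbols, so a complete $\Delta$-type over $M$ is the same as a complete $\LGD$-type. The background requirements on $T$ are already in hand: $\ACVFG$ is $C$-minimal, eliminates imaginaries, and admits $\Valgp$-reparametrisations by Proposition~\ref{prop:exists reparam}.

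Hypothesis~(i) of the theorem will reduce to the known-VDF theorem together with standard tameness: $\res$ is stably embedded and a pure model of $\DCF[0]$, which is $\omega$-stable and hence eliminates $\exists^{\infty}$; and $\Valgp$ is stably embedded and a pure divisible ordered abelian group, which is $o$-minimal and eliminates imaginaries, so $\eq{\Valgp}$ eliminates $\exists^{\infty}$ as well. Hypothesis~(ii) will be obtained from quantifier elimination for $\VDF$ in the one-sorted language: given $A = \K(\dcl[\LGD](A)) \subseteq \tN$ (so $A$ is closed under $\D$) and an $\LGD(A)$-definable set $X \subseteq \K^{n}$, there exist $k \in \Nn$, an $\LG$-formula $\theta$, and a tuple $a \in A$ with $X = \{x \mid \theta(\prol[k](x); \prol[k](a))\}$. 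Since $\prol[k](a) \in A$, the map $f(x) := \prol[k](x)$ is a parameter-free $\LGD$-definable bijection from $\K^{n}$ onto its image $Y$, and $f(X) = Y \cap Z$ where $Z := \{y \mid \theta(y; \prol[k](a))\}$ is $\LG(A)$-definable. Hypothesis~(iv) is the uniform stable embeddedness condition needed to upgrade from $\tLeq$-definability to $\tL(\Real(\cdot))$-definability; it will come from the appendix, which shows that a suitable (e.g.~maximally complete) model of $\VDFG$ has its $\LG$-reduct uniformly stably embedded in every $\VDFG$-elementary extension.

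Once the hypotheses are verified, Theorem~\ref{thm:dens def} produces a complete $\LGD$-type $p \in \TP(M)$ consistent with $X$ and $\LGD(\Geom(\acleq(\code{X})))$-definable. By Proposition~\ref{prop:prol def} this $\LGD$-definability is equivalent to $\tpprol[\omega](p)$ being $\LG(\Geom(\acleq(\code{X})))$-definable, which is exactly the form of definability asserted in the corollary. The only non-routine verification is (iv), which depends on the substantive appendix material on uniform stable embeddedness in pairs of models of $\VDFG$; the remaining verifications reduce to quantifier elimination in $\VDF$ and the pure structure of $\res$ and $\Valgp$.
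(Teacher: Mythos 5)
Your proposal follows the paper's proof closely: apply Theorem~\ref{thm:dens def} with $T = \ACVFG$, $\tT = \VDFG$, then verify the hypotheses via stable embeddedness and purity of $\res$ and $\Valgp$ (hypothesis~(i)), quantifier elimination and the $\prol[n]$ bijection (hypothesis~(ii)), and the appendix's uniform stable embeddedness of Hahn fields (the final hypothesis). These verifications match the paper essentially verbatim. Two small points are worth flagging, though.

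First, your claim that $\LGD$ has ``finitely many sorts and symbols'' is not right: $\LG$ has the countably many sorts $\Latt[n]$, $\Tor[n]$ for $n\in\Nn$. The conclusion that the set of all $\LGD$-formulas is countable still holds, just for the weaker reason that $\LGD$ is countable. Relatedly, the parenthetical ``(e.g.~maximally complete)'' for the final hypothesis is a little loose: maximal completeness gives separatedness of extensions (Proposition~\ref{prop:SCimpSep}), but Corollary~\ref{cor:Hahn unif stab emb} also needs the value group $\Rr$ to be uniformly stably embedded in elementary extensions, which is special to $\Rr$ (completeness plus $o$-minimality), not a feature of all maximally complete models. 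The paper is deliberate in choosing $k((t^{\Rr}))$ with $k\models\DCF[0]$.

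Second, your final step is slightly off target. You read the theorem's second conclusion as producing an $\LGD(\Geom(\acleq(\code{X})))$-definable $p$ and then invoke Proposition~\ref{prop:prol def} to conclude $\tpprol[\omega](p)$ is $\LG(\Geom(\acleq(\code{X})))$-definable. But the corollary asserts that $p$ itself is $\LG(\Geom(\acleq(\code{X})))$-definable, i.e.\ that the defining scheme of the complete $\LGD$-type $p$ can be taken in $\LG$. Proposition~\ref{prop:prol def} does not add a fourth equivalent condition saying ``$p$ is $\LG(\Geom(A))$-definable''; it only gives $\LGD(\Geom(A))$-definability of $p$ from $\LG$-definability of $\tpprol[\omega](p)$. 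The intended reading is that the ``$\tL$'' in the theorem's displayed conclusion should be ``$\LL$'' (the proof of the theorem actually establishes that $q$ is $\LL(\Real(\tA))$-definable via \cite[Corollary\,1.7]{RidSim-NIP}), so the corollary follows directly with no detour through $\tpprol[\omega]$.
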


\begin{proof}
This follows from Theorem\,\ref{thm:dens def}, taking $T$ to be $\ACVFG$ and $\tT$ to be $\VDFG$. The fact that $\ACVFG$ admits $\Valgp$-reparametrisations is proved in Proposition\,\ref{prop:exists reparam}.

Hypothesis\,\ref{hyp:exists infty} follows from Theorem\,\ref{str Valgp} and \ref{str res} and the fact that both $\DCF[0]$ and $\DOAG$ eliminate $\exists^{\infty}$. Hypothesis\,\ref{hyp:QE} is an easy consequence of elimination of quantifiers: let $\phi(x;s)$ be an $\LGD$-formula such that $x$ and $s$ are tuples of field variables, then there exists an $\Ldiv$-formula $\psi(u;t)$ and $n\in\Nn$ such that $\phi(x;s)$ is equivalent modulo $\VDF$ to $\psi(\prol[n](x);\prol[n](s))$, i.e. for all $m\in\tN$, $\prol[n]$ is an $\LDdiv$-definable bijection between $\phi(\tN;m)$ and $\psi(x,\prol[n](m))\cap\prol[n](\K<\card{x}>)$. Hypothesis\,\ref{hyp:nice model} follows from the fact that if $k\models\DCF[0]$ then the Hahn field $k((t^{\Rr}))$ (with the derivation described in Example\,\ref{ex:Hahn VDF}), is a model of $\VDF$. By Corollary\,\ref{cor:Hahn unif stab emb} the underlying valued field is uniformly stably embedded in every elementary extension.
\end{proof}

\section{Imaginaries and invariant extensions}
\label{sec:EI IE}

In this section, we investigate the link between the density of definable types, elimination of imaginaries and the invariant extension property (see Definition\,\ref{def:inv ext}). I am very much indebted to \cite{Hru-EIACVF,Joh-EIACVF} for making me realise that the density of definable types could play an important role in proving elimination of imaginaries. To be precise, we will show that both the elimination of imaginaries and the invariant extension property follow from the density of types invariant over real parameters.

In the following proposition, we show that the density of $\Delta$-types invariant over real parameters for finite $\Delta$ suffices to prove weak elimination of imaginaries.

\begin{proposition}[EI crit]
Let $T$ be an $\LL$-theory and $\Real$ a set of its sorts such that for all $N\models T$, all non empty $\LL(N)$-definable sets $X$ and all $\LL$-formulas $\phi(x;s)$ (where $x$ is sorted as $X$), there exists $p\in\TP[x]<\phi>(N)$ which is consistent with $X$ and $\aut(N)[\Real(\acleq(\code{X}))]$-invariant. Then $T$ weakly eliminates imaginaries up to $\Real$.
\end{proposition}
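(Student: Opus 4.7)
The plan is to show weak elimination of imaginaries up to $\Real$ by proving that, in a sufficiently saturated homogeneous $N \models T$, every $\LL(N)$-definable set $X$ is fixed setwise by every element of $\aut(N)[B]$, where $B := \Real(\acleq(\code{X}))$. First I would suppose, toward a contradiction, that some $\sigma \in \aut(N)[B]$ satisfies $\sigma(X) \neq X$. Letting $\phi_0(x;a)$ be a defining formula for $X$, I would set
\[Y := X \triangle \sigma(X), \qquad \phi(x;s,t) := \phi_0(x;s) \wedge \neg \phi_0(x;t),\]
and observe that $Y$ is a nonempty $\LL(N)$-definable set depending \emph{symmetrically} on the pair $(X, \sigma(X))$, so $\code{Y}$ is fixed by any automorphism swapping $e := \code{X}$ with $\sigma(e)$.

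Applying the density hypothesis to $Y$ and $\phi$, I obtain a complete $\phi$-type $p \in \TP[x]<\phi>(N)$ consistent with $Y$ and $\aut(N)[B_Y]$-invariant, for $B_Y := \Real(\acleq(\code{Y}))$. Since $Y$ decomposes as the disjoint union $(X \setminus \sigma(X)) \sqcup (\sigma(X) \setminus X)$, whose pieces are defined respectively by $\phi(x;a,\sigma(a))$ and $\phi(x;\sigma(a),a)$, completeness of $p$ together with its consistency with $Y$ forces \emph{exactly one} of these two mutually incompatible formulas to belong to $p$. The next step is to produce a swapping automorphism $\tau \in \aut(N)[B_Y]$ exchanging $e$ and $\sigma(e)$, so that up to definitional equivalence $\tau$ sends $(a,\sigma(a))$ to $(\sigma(a),a)$; this uses the symmetry of $\code{Y}$ in $\{e,\sigma(e)\}$ together with a back-and-forth or saturation argument. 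The $\aut(N)[B_Y]$-invariance of $p$ then yields $\phi(x;a,\sigma(a)) \in p$ if and only if $\phi(x;\sigma(a),a) \in p$, contradicting the parity established above.

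\textbf{Main obstacle.} The delicate step is arranging the swapping automorphism $\tau$ to lie in $\aut(N)[B_Y]$ rather than merely in $\aut(N)[\code{Y}]$: since $B_Y \subseteq \acleq(\code{Y})$, an automorphism fixing $\code{Y}$ pointwise need only act on $B_Y$ setwise, so a swap may non-trivially permute some elements of $B_Y$. I would address this by first establishing the conclusion under the additional assumption that the $\aut(N)[B]$-orbit of $e$ has size exactly~$2$: in that case $Y$ itself is $\aut(N)[B]$-invariant and hence $\LL(B)$-definable, so $B_Y \subseteq B$ and the original $\sigma$ lies directly in $\aut(N)[B_Y]$, closing the argument. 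The general case would then be reduced to this base situation either by an induction on the orbit size or by iteratively applying the construction to a well-chosen sequence of conjugate pairs.
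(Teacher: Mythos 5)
Your strategy is genuinely different from the paper's, and it has a real gap in the general case that you have not resolved.

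The paper's proof is much shorter and sidesteps the difficulties you run into. It reduces to the case where $X$ is a class of an $\LL$-definable equivalence relation $E$ with defining formula $\phi(x,y)$, and then applies the density hypothesis to $X$ with \emph{this} $\phi$. The point is that $X$ is itself an instance of $\phi$, so the complete $\phi$-type $p$ consistent with $X$ must contain $\phi(x,a)$ for $a\in X$, hence $p(x)\vdash x\in X$. For any $\sigma\in\aut(N)[A]$ with $A = \Real(\acleq(\code{X}))$, invariance gives $\sigma(p)=p\vdash x\in\sigma(X)$, so $X\cap\sigma(X)\neq\emptyset$; since $X$ and $\sigma(X)$ are $E$-classes, they are either equal or disjoint, forcing $\sigma(X)=X$. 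No symmetric difference, no swapping automorphism, no case analysis on orbit size.

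Your proposal instead proceeds by contradiction, applies the hypothesis to $Y = X\triangle\sigma(X)$ with $\phi(x;s,t)=\phi_0(x;s)\wedge\neg\phi_0(x;t)$, correctly extracts that the complete $\phi$-type $p$ contains exactly one of $\phi(x;a,\sigma(a))$, $\phi(x;\sigma(a),a)$, and then wants a $\tau\in\aut(N)[B_Y]$ swapping $\code{X}$ and $\sigma(\code{X})$. You identify this as the obstacle, and it is indeed where the proof breaks. When the orbit of $\code{X}$ under $\aut(N)[B]$ has size $>2$, the set $\code{Y}$ need not be $\aut(N)[B]$-invariant (e.g.\ $\sigma(\code{Y})$ is the code of $\sigma(X)\triangle\sigma^2(X)$, which is typically a different imaginary), so $B_Y\not\subseteq B$ and $\sigma\notin\aut(N)[B_Y]$. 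Worse, $\sigma$ does not even preserve $B_Y$ setwise, and there is no a priori reason for $\code{X}$ and $\sigma(\code{X})$ to have the same type over $B_Y$; so the swapping automorphism may simply not exist. Your fallback — "induction on the orbit size" or "iteratively applying the construction to a well-chosen sequence of conjugate pairs" — is not an argument: it is unclear what auxiliary definable set with strictly smaller orbit one would pass to, nor how a contradiction would propagate back to the original $X$. The orbit-size-$2$ base case you work out does hold, but it does not extend. You should adopt the paper's reduction to $E$-classes and exploit the disjointness of distinct classes instead.
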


\begin{proof}
Let $M$ be a sufficiently saturated and homogeneous model of $T$, $E$ be any $\LL$-definable equivalence relation, $X$ be one of its classes in $M$, $\phi(x,y)$ be an $\LL$-formula defining $E$ and $A =  \Real(\acltLeq(\code{X}))$. By hypothesis, there exists an $\aut(N)[A]$-invariant type $p\in\TP[x]<\phi>(M)$ consistent with $X$. Because $X$ is defined by an instance of $\phi$, we have in fact $p(x)\vdash x\in X$. For all $\sigma\in\aut(N)[A]$, $\sigma(X)$ is another $E$-class and $\sigma(p) = p \vdash x\in X$. It follows that $\sigma(X)\cap X\neq \emptyset$ and $X = \sigma(X)$. Therefore $\code{X}\in\dcleq(A) = \dcleq(\Real(\acltLeq(\code{X})))$, i.e. $X$ is weakly coded in $\Real$.
\end{proof}

Let us now consider the invariant extension property.

\begin{proposition}[inv dens ext]
Let $T$ be an $\LL$-theory, $A\subseteq M$ for some $M\models T$. The following are equivalent:
\begin{thm@enum}
\item For all $\LL(A)$-definable non empty sets $X(x)$, $\phi(x;s)$ an $\LL$-formula and $N\models T$, $A\subseteq $, there exists $p\in\TP[x]<\phi>(N)$ such that $p$ is $\aut(N)[A]$-invariant and consistent with $X$;
\item $T$ has the invariant extension property over $A$.
\end{thm@enum}
\end{proposition}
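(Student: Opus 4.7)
The direction $(ii) \Rightarrow (i)$ is immediate: given a non-empty $\LL(A)$-definable $X$ and an $\LL$-formula $\phi(x;s)$, pick $q \in \TP(A)$ containing ``$x \in X$'' (which exists since $X$ is non-empty and $\LL(A)$-definable), use $(ii)$ to extend $q$ to an $\aut(N)[A]$-invariant $\tilde q \in \TP(N)$, and take $p$ to be the $\phi$-part of $\tilde q$. Then $p \in \TP[x]<\phi>(N)$ is $\aut(N)[A]$-invariant (since $\tilde q$ is), and $p$ is consistent with $X$ because $\tilde q$ already contains ``$x \in X$''.

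For $(i) \Rightarrow (ii)$, my plan is a compactness argument. Given $q \in \TP(A)$, I consider the partial type over $N$
\[\Sigma(x) := q(x) \cup \{\phi(x;m) \leftrightarrow \phi(x;\sigma(m)) : \phi \text{ an } \LL\text{-formula},\, m \in N,\, \sigma \in \aut(N)[A]\},\]
whose completions in $\TP(N)$ are exactly the $\aut(N)[A]$-invariant extensions of $q$. By compactness, consistency of $\Sigma$ reduces to that of each finite fragment $\Sigma_0$, which involves only finitely many formulas $\phi_1,\ldots,\phi_n$, parameters $m_i$ and $\sigma_i(m_i)$, and a finite $q_0 \subseteq q$. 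To realize $\Sigma_0$ I would produce an $\aut(N)[A]$-invariant complete $\{\phi_1,\ldots,\phi_n\}$-type over $N$ consistent with the $\LL(A)$-definable set $X := q_0(N)$: any realization in an elementary extension then satisfies $q_0$ and, by invariance of that type, satisfies all the biconditionals of $\Sigma_0$.

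This amounts to strengthening hypothesis $(i)$ from a single formula to a finite $\Delta = \{\phi_1,\ldots,\phi_n\}$, which I would prove by a single-formula coding. Choose distinct $\emptyset$-definable constants $e_1,\ldots,e_n$ (available since $\LL \supseteq \Ldiv$) and set
\[\phi^*(x; s_1,\ldots,s_n, e) := \bigvee_{i=1}^n (e = e_i \wedge \phi_i(x;s_i)),\]
so that $\phi^*(x; s_1,\ldots,s_n, e_i)$ is semantically equivalent to $\phi_i(x;s_i)$ (in particular its truth value does not depend on $s_j$ for $j \neq i$). Apply $(i)$ to $\phi^*$ and $X$ to obtain an $\aut(N)[A]$-invariant $\phi^*$-type $p^*$ consistent with $X$, and then define the $\Delta$-type $p$ by declaring $\phi_i(x;m) \in p$ iff $\phi^*(x; s_1,\ldots,s_n, e_i) \in p^*$ for the choice $s_i = m$, $s_j = 0$ for $j \neq i$.

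The hardest part will be verifying that $p$ has the four required properties: well-definedness of the extraction (the choice of the padding $s_j = 0$ is irrelevant thanks to the semantic equivalence above); completeness as a $\Delta$-type and consistency with $X$ (both inherited from $p^*$); and most delicately, $\aut(N)[A]$-invariance. The invariance check reduces to invariance of $p^*$ applied at the specific tuples used in the extraction, using that the $e_i$ and the padding zeros all lie in $\dcl(\emptyset) \subseteq A$ and are therefore fixed by every $\sigma \in \aut(N)[A]$. Once this finite-$\Delta$ version of $(i)$ is in hand, the compactness argument above yields the desired invariant extension.
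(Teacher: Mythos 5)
Your proof is correct and follows the paper's approach: both directions proceed as in the paper, characterizing invariant extensions as completions of a partial type of biconditionals and invoking compactness. You are more explicit than the paper on the reduction from a single formula to a finite $\Delta$: the paper simply asserts that hypothesis (i) ``exactly implies'' the consistency of every finite fragment of $p\cup\Theta$, whereas you supply the disjunctive coding via named constants that makes this precise (valid in the paper's intended setting where $\LL\supseteq\Ldiv$, though strictly speaking the proposition is stated for an arbitrary $\LL$-theory, so a fully general argument would code via equality of padding variables rather than distinct $\emptyset$-definable constants). One further minor point in your favor: you index the biconditionals by automorphisms $\sigma\in\aut(N)[A]$, which is cleaner than the paper's indexing by pairs $(a,b)$ with $\tp(a)[A]=\tp(b)[A]$; the two agree only when $N$ is sufficiently homogeneous over $A$, and the paper's claim that hypothesis (i) directly yields consistency of each finite fragment of $p\cup\Theta$ implicitly requires passing to such a homogeneous extension, a step your formulation sidesteps.
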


\begin{proof}
 Let us first show that (ii) implies (i). Let $N\models T$, $X(x)$ be an $\LL(A)$-definable non empty set, $\phi(x;s)$ an $\LL$-formula and $p\in\TP[x](A)$ be any type containing $X$. Let $q\in\TP[x](N)$ be an $\aut(N)[A]$-invariant extension of $p$. Then $\restr{q}{\phi}$ is consistent with $X$.

Conversely, let $\Theta = \{\phi(x;a)\iffform\phi(x;b)\mid a,b\in N\text{ and }\tp(a)[A] = \tp(b)[A]\}$. Then $q\in\TP[x](N)$ is invariant if and only if $\Theta\subseteq q$. Pick any $p\in\in\TP[x](A)$. Hypothesis (i) exactly implies that every finite subset of $p\cup\Theta$ is consistent, so, by compactness, there exists $c\models p\cup\Theta$, then $q = \tp(a)[N]$is an invariant extension of $p$.
\end{proof}

\begin{theorem}[EI IE crit]
In the setting of Theorem\,\ref{thm:dens def}, $\tT$ eliminates imaginaries and has the invariant extension property.
\end{theorem}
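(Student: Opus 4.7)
The plan is to derive elimination of imaginaries from Proposition \ref{prop:EI crit} and the invariant extension property from Proposition \ref{prop:inv dens ext}, in both cases by feeding each proposition a suitably invariant type produced by Theorem \ref{thm:dens def}.

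For weak elimination of imaginaries, let $\tN \models \tT$, let $X$ be a non empty $\tL(\tN)$-definable set and $\phi(x;s)$ an $\tL$-formula. Applying Theorem \ref{thm:dens def} to the countable set $\Delta = \{\phi\}$, and using hypothesis \ref{hyp:nice model}, I obtain a type $p \in \TP[x]<\phi>(\tN)$ consistent with $X$ and $\tL(\Real(\acltLeq(\code{X})))$-definable; in particular $p$ is $\aut(\tN)[\Real(\acltLeq(\code{X}))]$-invariant. Proposition \ref{prop:EI crit}, applied to $\tT$ with the set of sorts $\Real$, then yields weak elimination of imaginaries up to $\Real$. To upgrade to full EI, it suffices to code finite sets of tuples from $\Real$ inside $\Real$; this is already supplied by elimination of imaginaries for $T$ in the language $\LL$, since the $\LL$-formula providing the code remains valid in $\tT \supseteq T$.

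For the invariant extension property, fix $A = \acl(A) \subseteq \tM \models \tT$ and an elementary extension $\tN$. By Proposition \ref{prop:inv dens ext}, it suffices to produce, for every non empty $\tL(A)$-definable set $X$ and every $\tL$-formula $\phi(x;s)$, a type $p \in \TP[x]<\phi>(\tN)$ consistent with $X$ and $\aut(\tN)[A]$-invariant. Apply Theorem \ref{thm:dens def} with $\Delta = \{\phi\}$ to obtain $p$ consistent with $X$ and $\tLeq(\acltLeq(\code{X}))$-definable, hence $\aut(\tN)[\acleq(\code{X})]$-invariant. Since $X$ is $\tL(A)$-definable, $\code{X} \in \dcleq(A)$ and so $\acleq(\code{X}) \subseteq \acleq(A)$; by the elimination of imaginaries just established and the assumption $A = \acl(A)$, we have $\acleq(A) = A$, whence $p$ is automatically $\aut(\tN)[A]$-invariant.

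The main obstacle is calibrating the level of invariance that Proposition \ref{prop:EI crit} demands. The generic output of Theorem \ref{thm:dens def} is only definable over the imaginary closure $\acltLeq(\code{X})$, whereas EI crit requires invariance over the real points $\Real(\acleq(\code{X}))$, and an automorphism fixing only those real points could a priori move the imaginary $\code{X}$. This is exactly what hypothesis \ref{hyp:nice model} of Theorem \ref{thm:dens def} bypasses, by upgrading the definability of $p$ from $\tLeq(\acltLeq(\code{X}))$ to $\tL(\Real(\acltLeq(\code{X})))$; once that is in place, the invariant extension property then falls out effortlessly from the identity $\acleq(A) = A$.
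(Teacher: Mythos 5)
Your proposal is correct and tracks the paper's own proof almost line for line: apply Theorem\,\ref{thm:dens def} with the singleton $\Delta = \{\phi\}$ to feed the density hypothesis into Proposition\,\ref{prop:EI crit} (for weak EI, then upgrading via coding of finite sets in $\Real$, which is inherited from $T \subseteq \tT$) and into Proposition\,\ref{prop:inv dens ext} (for the invariant extension property). You also rightly emphasise that Hypothesis\,\ref{hyp:nice model} is what bridges the gap between $\tLeq(\acltLeq(\code{X}))$-definability and the $\aut(\tN)[\Real(\acltLeq(\code{X}))]$-invariance that Proposition\,\ref{prop:EI crit} actually needs --- a point the paper's terse proof leaves implicit. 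One minor variation: for the invariant extension part you derive $\aut(\tN)[A]$-invariance by first establishing EI and then arguing via $\acleq(A)$; the paper can instead invoke Hypothesis\,\ref{hyp:nice model} again to get $\tL(\Real(\acltLeq(\code{X})))$-definability directly, and then $\Real(\acltLeq(\code{X})) \subseteq \acl(A) = A$ gives $\tL(A)$-definability without any reference to EI. Both routes work; yours introduces a harmless sequential dependence (EI first, then IE) that the paper avoids. A small wording caveat: the assertion $\acleq(A) = A$ should read $\acleq(A) = \dcleq(A)$ (since $A \subseteq M$ consists of real points, $\acleq(A)$ still contains imaginaries definable over $A$); the conclusion you draw --- that $\aut(\tN)[A]$ fixes $\acleq(\code{X})$ pointwise --- is nonetheless correct.
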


\begin{proof}
Weak elimination of imaginaries follows from Proposition\,\ref{prop:EI crit} and the invariant extension property follows from Proposition\,\ref{prop:inv dens ext}. In both cases the assumption on density of invariant $\phi$-types follows from Theorem\,\ref{thm:dens def}. Elimination of imaginaries then follows as any finite set in $\Real$ is also definable in $T$ and hence are coded in $T$.
\end{proof}

\begin{corollary}[VDF EI IE]
The theory $\VDFG$ eliminates imaginaries and has the invariant extension property.
\end{corollary}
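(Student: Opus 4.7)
The plan is to derive this as an immediate consequence of Theorem \ref{thm:EI IE crit}, applied in the same setting used for Corollary \ref{cor:VDF dens def}: take $T := \ACVFG$, $\tT := \VDFG$, and let the real sorts $\Real$ be the geometric sorts $\Geom$. Since Theorem \ref{thm:EI IE crit} yields both elimination of imaginaries and the invariant extension property whenever the hypotheses of Theorem \ref{thm:dens def} hold, the whole argument collapses to reinvoking the hypothesis-check already carried out inside the proof of Corollary \ref{cor:VDF dens def}.

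For the record, the verification would run as follows. The base theory $\ACVFG$ is $C$-minimal, eliminates imaginaries, and admits $\Valgp$-reparametrisations by Proposition \ref{prop:exists reparam}. Stable embeddedness and purity of $\res$ and $\Valgp$ in $\VDFG$ (Theorem \ref{str Valgp} and Theorem \ref{str res}), combined with elimination of $\exists^{\infty}$ in $\DCF[0]$ and $\DOAG$, give hypothesis \ref{hyp:exists infty}. Hypothesis \ref{hyp:QE} is immediate from quantifier elimination in $\VDF$: every $\LGD$-formula on field variables is equivalent, via some prolongation $\prol[n]$, to an $\Ldiv$-formula, so $\prol[n]$ provides the required $\tL$-definable bijection onto its image. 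Hypothesis \ref{hyp:nice model} is supplied by the Hahn field $k((t^{\Rr}))$ with $k\models\DCF[0]$ equipped with the contractive derivation of Example \ref{ex:Hahn VDF}, whose underlying valued field is uniformly stably embedded in every elementary extension by Corollary \ref{cor:Hahn unif stab emb}.

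There is no remaining obstacle: Theorem \ref{thm:EI IE crit} applies verbatim and delivers both conclusions. In particular, the step from weak elimination of imaginaries (as produced by Proposition \ref{prop:EI crit}) to full elimination is absorbed into Theorem \ref{thm:EI IE crit} itself, using that finite sets in the geometric sorts are already coded in $\ACVFG$ and hence in $\VDFG$. All the substantive work has been done upstream in the abstract density result Theorem \ref{thm:dens def} and in the criterion Theorem \ref{thm:EI IE crit}; this corollary is essentially a bookkeeping step identifying the abstract hypotheses with their concrete instantiations for $\VDF$.
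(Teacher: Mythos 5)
Your proposal is correct and follows exactly the same route as the paper: invoke Theorem\,\ref{thm:EI IE crit} with $T = \ACVFG$ and $\tT = \VDFG$, and observe that the hypotheses of Theorem\,\ref{thm:dens def} have already been verified in the proof of Corollary\,\ref{cor:VDF dens def}. Your expanded recapitulation of that hypothesis check (using Proposition\,\ref{prop:exists reparam}, Theorem\,\ref{str Valgp}, Theorem\,\ref{str res}, quantifier elimination via $\prol[n]$, and Corollary\,\ref{cor:Hahn unif stab emb}) matches the paper's argument point for point.
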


\begin{proof}
This follows from Theorem\,\ref{thm:EI IE crit}. The fact that $\VDFG$ verifies the hypotheses of Theorem\,\ref{thm:dens def} is proved in the proof of Corollary\,\ref{cor:VDF dens def}.
\end{proof}

\appendix
\part*{Appendix}

\section{Uniform stable embeddedness of Henselian valued fields}
\label{sec:unif stab emb}

The goal of this section is to study stable embeddedness in pairs of valued fields and, in particular, to show that there exist models of $\ACVF$ uniformly stably embedded in every elementary extension. These models are used to prove that there are models of $\VDF$ whose underlying valued field is stably embedded in every elementary extension in the proof of Theorem\,\ref{thm:VDF}. These results are valid in any characteristic.

Following Baur, let us first introduce the notion of a separated pair of valued fields.

\begin{definition}[sep pair](Separated pair)
Let $K\subseteq L$ be an extension of valued fields. Call a tuple $a\in L$ $K$-separated if for any tuple $\lambda\in K$, $\val(\sum_{i}\lambda_{i}a_{i}) = \min_{i}\{\val(\lambda_{i}a_{i})\}$. The pair $K\subseteq L$ is said to be separated if any finite dimensional sub-$K$-vector space of $L$ has a $K$-separated basis.
\end{definition}

Recall that a maximally complete field is a field where every chain of balls has a point. Let us now recall a well known result of \cite{Bau-Sep}.

\begin{proposition}[SCimpSep]
If $K$ is maximally complete, any extension $K\subseteq L$ is separated.
\end{proposition}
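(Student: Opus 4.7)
The plan is to induct on the dimension $n$ of the finite dimensional $K$-subspace $V \subseteq L$, reducing to a single step: given a $K$-separated basis $a_1, \ldots, a_{n-1}$ of a subspace $V_0$ and an $a_n \in V \setminus V_0$, find $c \in V_0$ such that $a_1, \ldots, a_{n-1}, a_n - c$ is $K$-separated. The case $n = 0, 1$ is trivial, so the whole content lies in this replacement step.

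The key observation is that $a_n - c$ extends the $K$-separated basis precisely when $\val(a_n - c)$ attains the supremum of the set
\[
S := \{\val(a_n - v) : v \in V_0\}.
\]
Indeed, suppose $\gamma_0 = \val(a_n - c)$ is the maximum of $S$. Given any $\lambda_1, \ldots, \lambda_n \in K$ with $\lambda_n \neq 0$, write $\sum_i \lambda_i a_i + \lambda_n(a_n - c) = \lambda_n(a_n - w)$ for $w = c - \lambda_n^{-1}\sum_{i<n}\lambda_i a_i \in V_0$. Since $\val(a_n - w) \leq \gamma_0$, the sum has valuation $\leq \val(\lambda_n(a_n - c))$, and then standard case analysis using the inductive $K$-separatedness of $a_1, \ldots, a_{n-1}$ yields the desired equality $\val(\sum) = \min_i \val(\lambda_i e_i)$.

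The main obstacle, and the only place maximal completeness enters, is showing that the supremum of $S$ is attained. Assuming it is not, pick a sequence $(v_\alpha)_{\alpha < \kappa} \subseteq V_0$ such that $\gamma_\alpha := \val(a_n - v_\alpha)$ is strictly increasing and cofinal in $\sup S$. Writing $v_\alpha = \sum_{i<n} \mu_\alpha^{(i)} a_i$, the $K$-separatedness of $a_1, \ldots, a_{n-1}$ gives for $\alpha < \beta$
\[
\min_{i<n}\bigl(\val(\mu_\alpha^{(i)} - \mu_\beta^{(i)}) + \val(a_i)\bigr) = \val(v_\alpha - v_\beta) \geq \gamma_\alpha,
\]
so each coordinate sequence $(\mu_\alpha^{(i)})_\alpha$ is pseudo-Cauchy in $K$. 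By maximal completeness of $K$, each $(\mu_\alpha^{(i)})$ has a pseudo-limit $\mu^{(i)} \in K$; set $c := \sum_{i<n} \mu^{(i)} a_i \in V_0$. A direct computation using the pseudo-limit property gives $\val(c - v_\alpha) \geq \gamma_\alpha$ for all $\alpha$, whence
\[
\val(a_n - c) \geq \min\bigl(\val(a_n - v_\alpha), \val(v_\alpha - c)\bigr) \geq \gamma_\alpha.
\]
Since this holds cofinally, $\val(a_n - c) \geq \sup S$, and because $c \in V_0$ and $a_n \notin V_0$ so $\val(a_n - c) \in S$, the supremum is in fact attained at $c$, contradicting our assumption. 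Hence the maximum exists and the induction goes through.
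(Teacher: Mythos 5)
The paper itself gives no proof of this proposition --- it is recalled as a known result of Baur --- so I assess your argument on its own terms. Your reduction of the inductive step to ``$\sup S$ is attained,'' and the ultrametric case analysis showing that a maximizer $c$ yields a $K$-separated basis $a_1,\ldots,a_{n-1},a_n-c$, are both correct. The place where maximal completeness must enter is also correctly identified.

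There is, however, a gap at the final step. From $\val(\mu_\alpha^{(i)} - \mu_\beta^{(i)}) \geq \gamma_\alpha - \val(a_i)$ for $\alpha < \beta$ you conclude that each coordinate sequence $(\mu_\alpha^{(i)})_\alpha$ is pseudo-Cauchy, but this need not hold: the inequality is only a lower bound, with equality forced only at the coordinate realizing the minimum in $\val(v_\alpha - v_\beta)$, and that minimizing index can vary with $\alpha$. If it alternates between two coordinates, a given coordinate sequence may repeat sporadically, so that $\val(\mu_\alpha^{(i)} - \mu_{\alpha+1}^{(i)}) = \infty$ at some stages, and the strict monotonicity defining a pseudo-Cauchy sequence fails. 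The conclusion you want --- existence of $\mu^{(i)} \in K$ with $\val(\mu^{(i)} - \mu_\alpha^{(i)}) \geq \gamma_\alpha - \val(a_i)$ for all $\alpha$ --- is nevertheless correct, and the clean way to obtain it is the other characterization of maximal completeness, which the paper in fact takes as its primary definition: every chain of balls has a point. The closed balls $B_\alpha^{(i)} := \{x \in K : \val(x - \mu_\alpha^{(i)}) \geq \gamma_\alpha - \val(a_i)\}$ form a nested chain, since your bound shows $\mu_\beta^{(i)} \in B_\alpha^{(i)}$ for $\beta > \alpha$ while the valuative radii $\gamma_\alpha - \val(a_i)$ are strictly increasing, so $B_\beta^{(i)} \subseteq B_\alpha^{(i)}$. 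Hence $\bigcap_\alpha B_\alpha^{(i)}$ is nonempty, and any $\mu^{(i)}$ in this intersection does exactly what you need. With this replacement your argument is complete.
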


Following \cite{CubDel,Del-ExtQp}, let us give the links between separation of the pair $K\subseteq L$ and uniform stable embeddedness of $K$ in $L$. But first let us define this last notion.

\begin{definition}(Uniform stable embeddedness)
Let $M$ be an $\LL$-structure and $A\subseteq M$. We say that $A$ is uniformly stably embedded if for all formulas $\phi(x;t)$ there exists a formula $\chi(x;s)$ such that for all tuples $b\in M$ there exists a tuple $a\in A$ such that $\phi(A,b) = \chi(A,a)$.
\end{definition}

The proof of Proposition\,\ref{prop:unif field st emb} is taken almost word for word from the one in \cite{CubDel}, although we put more emphasis on uniformity here. Let $\LL$ denote the two sorted language for valued fields.

\begin{proposition}[unif field st emb]
Let $M\models\ACVF$ and $\phi(x;s)$ an $\LL$-formula where $x$ is a tuple of $\K$-variables. There exists an $\restr{\LL}{\Valgp}$-formula $\psi(y;u)$ and polynomials $Q_{i}\in\Zz[\uple{X},\uple{T}]$ such that for any $N\substr M$, where the pair $\K(N)\subseteq\K(M)$ is separated, and any $a\in M$, there exists $b\in \K(N)$ and $c\in \Valgp(M)$ such that $\phi(N;a) = \psi(\val(\uple{Q}(N,b));c)$.
\end{proposition}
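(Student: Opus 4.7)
The plan is to use quantifier elimination for $\ACVF$ in the two sorted language to reduce $\phi$ to a Boolean combination of atomic valuation formulas, and then to exploit the separation hypothesis to expand polynomial values at $a$ as $\K(N)$-linear combinations of a $\K(N)$-separated sequence whose valuations become the new $\Valgp$-parameters $c$.

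First, I would rewrite $\phi(x;s)$, modulo $\ACVF$, as a Boolean combination of atomic formulas of the forms $P(x;s)=0$ and $\val(P(x;s))\leq\val(R(x;s))$, with $P,R$ ranging over a finite list $\mathcal{P}\subseteq\Zz[\uple{X},\uple{S}]$ depending only on $\phi$. Let $\mathcal{I}$ denote the finite set of $x$-monomial multi-indices appearing in the polynomials of $\mathcal{P}$, and set $d_{0} := \card{\mathcal{P}}\cdot\card{\mathcal{I}}$. For any $a$, each $P\in\mathcal{P}$ writes as $P(x;a)=\sum_{I\in\mathcal{I}} P_{I}(a)\,x^{I}$ for fixed $P_{I}\in\Zz[\uple{S}]$. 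The $\K(N)$-linear span $V$ of $\{P_{I}(a) : P\in\mathcal{P},\,I\in\mathcal{I}\}\subseteq\K(M)$ has dimension at most $d_{0}$.

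Next, since the pair $\K(N)\subseteq\K(M)$ is separated, $V$ admits a $\K(N)$-separated basis, which I would pad by zero entries into a length-$d_{0}$ sequence $(e_{j})_{1\leq j\leq d_{0}}$. The identity $\val(\sum_{j}\mu_{j}e_{j})=\min_{j}\{\val(\mu_{j}e_{j})\}$ persists for all $\mu_{j}\in\K(N)$ because zero entries contribute $+\infty$ harmlessly to the $\min$. Expanding $P_{I}(a)=\sum_{j}\lambda^{P}_{I,j}e_{j}$ with $\lambda^{P}_{I,j}\in\K(N)$ yields $P(x;a)=\sum_{j=1}^{d_{0}} e_{j}\,Q^{P}_{j}(x)$ where $Q^{P}_{j}(x):=\sum_{I}\lambda^{P}_{I,j}\,x^{I}\in\K(N)[x]$. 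For $x\in\K(N)^{\card{x}}$, each $Q^{P}_{j}(x)\in\K(N)$, and separatedness gives
\[\val(P(x;a))=\min_{j}\bigl\{\val(e_{j})+\val(Q^{P}_{j}(x))\bigr\},\qquad P(x;a)=0 \iff \bigwedge_{j}Q^{P}_{j}(x)=0.\]

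Re-indexing the pairs $(P,j)$ as $i=1,\ldots,m$ and setting $Q_{i}(\uple{X},\uple{T}):=\sum_{I}T_{i,I}\,\uple{X}^{I}\in\Zz[\uple{X},\uple{T}]$ produces a polynomial list $\uple{Q}$ whose length depends only on $\phi$. With $b:=(\lambda^{P}_{I,j})_{P,I,j}\in\K(N)$ and $c:=(\val(e_{j}))_{j}\in\Valgp(M)$, substituting this rewriting into the Boolean combination of the first step, and handling the vanishing subcases $Q^{P}_{j}(x)=0$ via an explicit Boolean case split, produces the desired $\restr{\LL}{\Valgp}$-formula $\psi(\uple{y};\uple{u})$. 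The main obstacle is uniformity: the list $\uple{Q}$ and the formula $\psi$ must be independent of $N$ and $a$, which is enforced by the a priori dimension bound $\dim_{\K(N)} V\leq d_{0}$ (depending only on the structural data from quantifier elimination) and by padding the separated sequence to the fixed length $d_{0}$. A secondary technical nuisance is that $\Valgp$ carries no top element $+\infty$, so the vanishing of any $Q^{P}_{j}(x)$ must be accommodated by case splits in $\psi$ rather than by introducing an infinity symbol.
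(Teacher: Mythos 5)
Your proposal is correct and follows essentially the same route as the paper: quantifier elimination to reduce to valuation-comparison formulas, separated basis of the $\K(N)$-span of the parameter-dependent coefficients, expansion of each polynomial $P$ in terms of that basis, and the identity $\val(P(x;a))=\min_j\{\val(e_j)+\val(Q^P_j(x))\}$ valid for $x$ in $\K(N)$, with $b$ the coordinate tuple and $c$ the tuple of valuations $\val(e_j)$. Your remark about handling the vanishing of some $Q^P_j(x)$ via case splits is a slightly more careful treatment of a point the paper leaves implicit; otherwise there is no substantive difference.
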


\begin{proof}
By elimination of quantifiers (and the fact that $\K$ is dominant), we may assume that $\phi(x;a)$ is of the form $\psi(\val(\uple{P}(x)))$ where $\uple{P}$ is a tuple of polynomials from $\K(M)[\uple{X}]$, $n\in\Nn$ and $\psi$ is an $\restr{\LL}{\lt}$-formula. Let us write each $P_{i}$ as $\sum_{\mu} a_{i,\mu}\uple{X}^{\mu}$. As the pair $\K(N)\subseteq\K(M)$ is separated, the $\K(N)$-vector space generated by the $a_{i,\mu}$ is generated by a $\K(N)$-separated tuple $\uple{d}\in\K(M)$. Note that $\card{\uple{d}}\leq\card{\uple{a}}$ and adding zeros to $\uple{d}$ we may assume $\card{\uple{d}} = \card{\uple{a}}$. For each $i$ and $\mu$, find $\lambda_{i,\mu,j}\in\K(N)$ such that $a_{i,\mu} = \sum_{j}\lambda_{i,\mu,j}d_{j}$. We can rewrite each $P_{i}$ as $\sum_{j} d_{j} Q_{i,j}(\uple{X},\uple{\lambda})$, where $Q_{i,j}\in \Zz[\uple{X},\uple{T}]$ does not depend on $\uple{a}$. For all $x\in K(N)$ we have $\val(P_{i}(x)) = \min_{j}\{\val(d_{j}Q_{i,j}(x,\uple{\lambda}))\}$. The proposition now follow easily by taking $b = \uple{\lambda}$ and $c = \val(\uple{d})$.
\end{proof}

\begin{theorem}[AKEstembACVF]
Let $K\subseteq L$ be a separated pair of valued fields such that $L$ is algebraically closed. Then $K$ is stably embedded in $L$ if and only if $\Valgp(K)$ is stably embedded in $\Valgp(L)$, as an ordered Abelian group. Moreover, if $\Valgp(K)$ is uniformly stably embedded in $\Valgp(L)$, then $K$ is uniformly stably embedded in $L$.
\end{theorem}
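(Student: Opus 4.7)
The plan is to use Proposition \ref{prop:unif field st emb} to reduce stable embeddedness of $K$ in $L$ to stable embeddedness of $\Valgp(K)$ in $\Valgp(L)$ as an ordered abelian group.

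For the forward direction, suppose $K$ is stably embedded in $L$. Let $Y \subseteq \Valgp(K)^n$ be definable in the pure ordered abelian group $\Valgp(L)$ with parameters. Since the valuation is quantifier-free $\LL$-definable, the preimage
\[\tilde Y := \{(x_1,\ldots,x_n) \in (K^\times)^n : (\val(x_1),\ldots,\val(x_n)) \in Y\}\]
is $\LL(L)$-definable as a subset of $L^n$. By stable embeddedness of $K$ in $L$, $\tilde Y$ is $\LL(K)$-definable, and hence $Y = \val(\tilde Y)$ is definable with parameters from $K$. By stable embeddedness and purity of $\Valgp$ in $\ACVF$, these parameters can be taken in $\Valgp(K)$.

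For the main direction, suppose $\Valgp(K)$ is stably embedded in $\Valgp(L)$. Let $\phi(x;s)$ be an $\LL$-formula with $x$ a tuple of field variables, and let $a \in L$. Apply Proposition \ref{prop:unif field st emb} with $M = L$, $N = K$: there is an $\restr{\LL}{\Valgp}$-formula $\psi(y;u)$ and polynomials $Q_i \in \Zz[\uple{X},\uple{T}]$, depending only on $\phi$, together with $b \in K$ and $c \in \Valgp(L)$, such that
\[\phi(K;a) = \{x \in K^n : \Valgp(L) \models \psi(\val(\uple{Q}(x,b)); c)\}.\]
The set $\psi(\Valgp(L); c) \cap \Valgp(K)^{|\uple{Q}|}$ is definable in the pure ordered abelian group $\Valgp(L)$ with parameter $c$, so by stable embeddedness of $\Valgp(K)$ there exist an ordered-group formula $\chi(y;z)$ and a tuple $d \in \Valgp(K)$ such that this intersection equals $\chi(\Valgp(K); d)$. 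Since $\val(\uple{Q}(x,b)) \in \Valgp(K)^{|\uple{Q}|}$ for every $x \in K^n$, we obtain
\[\phi(K;a) = \{x \in K^n : \Valgp(K) \models \chi(\val(\uple{Q}(x,b)); d)\},\]
which is $\LL(K)$-definable (lift $d$ to $K^\times$ via $\val$ if desired).

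For the uniform statement, $\psi$ and $\uple{Q}$ are determined by $\phi$ alone, so uniform stable embeddedness of $\Valgp(K)$ in $\Valgp(L)$ lets us choose the formula $\chi$ uniformly in $c$. Only $b$ and $d$ then depend on $a$, yielding a single formula $\chi(\val(\uple{Q}(x,b));d)$ witnessing uniform stable embeddedness of $K$ in $L$. The main obstacle I anticipate is essentially notational, namely keeping straight the distinction between \emph{definable in $L$} and \emph{definable in the pure ordered abelian group $\Valgp(L)$}, and tracking that parameters can be absorbed at each step. The substantive input is the separation hypothesis, used precisely to invoke Proposition \ref{prop:unif field st emb} and rewrite any field formula on $K$ as an ordered-group formula applied to valuations of polynomials with integer coefficients.
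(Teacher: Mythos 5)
Your argument for the substantive ``if'' direction and for the uniformity clause is correct and follows essentially the same route as the paper: both rest entirely on Proposition~\ref{prop:unif field st emb}, which rewrites a field formula on $K$ as an ordered-group condition on $\val(\uple{Q}(x,b))$, after which stable embeddedness of $\Valgp(K)$ in $\Valgp(L)$ finishes the job. The paper's own proof is just the single sentence ``This follows immediately from Proposition~\ref{prop:unif field st emb}.''

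Your ``only if'' direction, however, has a gap. After pulling $Y$ back and tracing onto $K^n$ to get an $\LL(K)$-definable $\tilde Y$, you push forward via $\val$ and assert that $Y\cap\Valgp(K)^n=\val(\tilde Y)$ ``is definable with parameters from $K$.'' But $\tilde Y$ is a subset of $K^n$, and its image under $\val$ is not an $\LL(L)$-definable set: writing it down would require quantifying over $K$, which is not a definable subfield of $L$. The obvious repair --- take the image of the $\LL(K)$-formula $\theta(x;e)$ defining $\tilde Y$, i.e.\ the $\LL(K)$-definable set $\val(\theta(L;e))$ --- also fails, because $\theta(L;e)$ only agrees with $\val^{-1}(Y)$ on $K^n$, so $\val(\theta(L;e))\cap\Valgp(K)^n$ need not coincide with $Y\cap\Valgp(K)^n$. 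And even once the right $\LL(K)$-definable subset of $\Valgp(L)^n$ is identified, stable embeddedness and purity of $\Valgp$ in $L\models\ACVF$ only hands you parameters in $\Valgp(L)$, not in $\Valgp(K)$; some further argument is needed to force them down into $\Valgp(K)$. For what it is worth, the paper's one-line proof does not address this direction either, and it is the ``if'' direction together with the uniformity clause that is actually used later (in Corollary~\ref{cor:Hahn unif stab emb}).
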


\begin{proof}
This follows immediately from Proposition\,\ref{prop:unif field st emb}.
\end{proof}

\begin{remark}
The computations of Proposition\,\ref{prop:unif field st emb} also applies to the $\ltf$ map (and the higher order leading terms $\ltf[n] : \K \to \K/1+n\Mid = \lt[n]$ in the mixed characteristic case). We get that $\ltf[n](P_{i}(x)) = \sum_{j}\ltf[n](d_{j}Q_{i,j}(x,\uple{\lambda}))$.

It follows that if the pair $K\subseteq L$ is separated and $L$ is a characteristic zero Henselian field, $K$ is stably embedded in $L$ if and only if $\bigcup_n\lt[n](K)$ is stably embedded in $\bigcup_n\lt[n](L)$. If we add angular components (which corresponds to splittings of $\lt[n]$) and restrict to the unramified case (either residue characteristic zero or positive residue characteristic $p$ and $\val(p)$ is minimal positive), then $K$ is stably embedded in $L$ if and only of $\Valgp(K)$ is stably embedded in $\Valgp(L)$ and $\res(K)$ is stably embedded in $\res(L)$.
\end{remark}

\begin{corollary}[Hahn unif stab emb]
Let $k$ be any algebraically closed field. The Hahn field $K := k((t^{\Rr}))$ is uniformly stably embedded (as a valued field) in any elementary extension.
\end{corollary}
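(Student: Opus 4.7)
The strategy is to combine the two preceding tools — separation of maximally complete pairs (Proposition \ref{prop:SCimpSep}) and the valued-field transfer result (Theorem \ref{thm:AKEstembACVF}) — so as to reduce the claim to a purely order-theoretic statement: uniform stable embeddedness of $\Rr$, as a $\DOAG$, in any elementary extension of itself.

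First I would observe that $K = k((t^{\Rr}))$ is algebraically closed as a valued field (because $k$ is algebraically closed and the value group $\Rr$ is divisible), so $K \models \ACVF$ and any elementary extension $L \succeq K$ is again a model of $\ACVF$. Hahn fields are maximally complete, so Proposition \ref{prop:SCimpSep} gives that the pair $K \subseteq L$ is separated. Theorem \ref{thm:AKEstembACVF} therefore reduces the corollary to showing that $\Valgp(K) = \Rr$ is uniformly stably embedded in $\Valgp(L)$ as a $\DOAG$. Since $\Valgp$ is stably embedded and pure in $\ACVF$, $\Valgp(L)$ is a pure elementary extension of $(\Rr,+,<)$ as a $\DOAG$; the problem is intrinsic to $\DOAG$.

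Thus everything is reduced to the following claim: for every $\DOAG$-formula $\phi(x;s)$ there is a $\DOAG$-formula $\chi(x;u)$ such that for every $H \succeq \Rr$ and every $b \in H^{|s|}$, one has $\phi(\Rr;b) = \chi(\Rr;a)$ for some $a \in \Rr^{|u|}$. By quantifier elimination in $\DOAG$, $\phi$ is a Boolean combination of atomic inequalities $\sum_i q_i x_i + \sum_j r_j s_j \ \square\ 0$ with $q_i, r_j \in \Qq$ and $\square \in \{<,\leq,=\}$. After substituting $b$, each atomic piece becomes $\sum_i q_i x_i + \alpha \ \square\ 0$ for some $\alpha \in H$. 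The trace on $\Rr^n$ depends only on the cut induced by $\alpha$ on $\Rr$; and, crucially, by Dedekind-completeness of $\Rr$, every such cut is determined by a single element $c \in \Rr \cup \{\pm\infty\}$ together with one bit recording the sign of $\alpha - c$ (telling strict vs.\ non-strict, or the two "infinite" cases). These data can be encoded by finitely many parameters from $\Rr$ (absorbing the bits as extra real coordinates $0$ or $1$), and since $\phi$ has a fixed finite number of atomic subformulas, one obtains a single formula $\chi(x;u)$ which works uniformly for every $H$ and every $b$.

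The main obstacle is neither the valued-field step (cleanly handled by Proposition \ref{prop:SCimpSep} and Theorem \ref{thm:AKEstembACVF}) nor the quantifier-elimination for $\DOAG$, but the bookkeeping in the $\DOAG$ reduction: one must verify that the strict/non-strict flags attached to each cut can be folded into the parameter tuple $u$ so that \emph{one} formula $\chi$ covers all possible cases at once, rather than a disjunction whose length would depend on $b$. Once this uniformity is secured, chaining it with Theorem \ref{thm:AKEstembACVF} produces the desired uniform witness for the original valued-field formula $\phi$, completing the proof.
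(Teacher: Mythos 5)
Your outer strategy is exactly the paper's: observe that $K$ is maximally complete and algebraically closed, invoke Proposition~\ref{prop:SCimpSep} to get that any extension $K\subseteq L$ is separated, and then apply Theorem~\ref{thm:AKEstembACVF} to reduce the claim to uniform stable embeddedness of $\Rr$ (as an ordered abelian group) in any $\DOAG$ elementary extension. Where you diverge is in the final step: the paper dispatches the $\DOAG$ reduction with a single citation to a general result of Chernikov and Simon (stating that a complete $o$-minimal structure is uniformly stably embedded in its elementary extensions), whereas you give a direct, self-contained argument --- quantifier-eliminate in $\DOAG$, observe that after substituting the external parameter each atomic inequality traces out a half-space whose boundary is a cut on $\Rr$, and use Dedekind-completeness to replace each external cut by an element of $\Rr\cup\{\pm\infty\}$ together with a strict/non-strict flag, then fold the finitely many flag combinations into the parameter tuple (or equivalently into a fixed finite disjunction, with length bounded by the number of atomic subformulas of $\phi$, hence independent of the external parameter). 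Your argument is correct and has the virtue of being elementary and self-contained, essentially reproving the special case of the Chernikov--Simon theorem that is needed here; the paper's version is shorter but offloads this to an external reference. One small point worth making explicit in your write-up: $\Valgp(L)$ is indeed a pure $\DOAG$ elementary extension of $\Rr$ because $\Valgp$ is pure and stably embedded in $\ACVF$, which licenses working entirely within $\DOAG$ in the last step --- you state this in passing, but it is the hinge that makes the reduction legitimate.
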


\begin{proof}
The field $K$ is Henselian, as are all Hahn fields. Its residue field $k$ is algebraically closed and its value group $\Rr$ is divisible. It follows that $K$ is algebraically closed. By Proposition\,\ref{prop:SCimpSep}, any extension $K\subseteq L$ is separated. By Theorem\,\ref{thm:AKEstembACVF}, it suffices to show that $\Rr$ is uniformly stably embedded (as an ordered group) in any elementary extension. But that follows from the fact that $(\Rr,<)$ is complete and $(\Rr,+,<)$ is $o$-minimal, see \cite[Corollary\,64]{CheSim-Ext2}.
\end{proof}

\begin{remark}
An easy consequence of this result is that the constant field $\cst{\K}$ is stably embedded in models of $\VDF$. Indeed by quantifier elimination, we only need to show that $\cst{\K}$ is stably embedded in $\K$ as a valued. But that follows from Corollary\,\ref{cor:Hahn unif stab emb} and the fact that for any $k\models\DCF[0]$, $K = k((t^{\Rr})) \models\VDF$ (for the derivation described in Example\,\ref{ex:Hahn VDF}) and its constant field $\cst{K} = \cst{k}((t^{\Rr}))$ is uniformly stably embedded in $K$.

It then follows from quantifier elimination that $\cst{\K}$ is a pure algebraically closed field.
\end{remark}


\printsymbols
\printbibli
\end{document}